\theoremstyle{plain}
\newtheorem{theorem}{Theorem}[section]
\newtheorem{corollary}[theorem]{Corollary}
\newtheorem{lemma}[theorem]{Lemma}
\newtheorem{proposition}[theorem]{Proposition}
\theoremstyle{problem}
\theoremstyle{definition}
\newtheorem{definition}[theorem]{Definition}
\theoremstyle{remark}
\newtheorem{remark}[theorem]{Remark}
\numberwithin{equation}{section}
 \newcounter{thlistctr}
 \newenvironment{thlist}{\
 \begin{list}%
 {\alph{thlistctr}}%
 {\setlength{\labelwidth}{2ex}%
 \setlength{\labelsep}{1ex}%
 \setlength{\leftmargin}{6ex}%
 \usecounter{thlistctr}}}%
 {\end{list}}
\begin{document}
\title[Bases for Boolean algebras] {Johnson's Axioms revisited:  Bases for \\Boolean algebras 
containing identities of associative type. I}
\dedicatory{ Dedicated to the memories of my 
daughter Geeta and my son Sanjay} 
\author{Hanamantagouda P. Sankappanavar}
\address{Department of Mathematics\\
         State University of New York\\
         New Paltz, New York 12561}
\email{sankapph@hawkmail.newpaltz.edu}
\keywords{Boolean algebra, base, 
identity of associative type, base for Boolean algebras containing an identity of associative type }
\subjclass[2000]{$Primary:03G25, 06D20, 06D15; \, Secondary:08B26, 08B15$}

\begin{abstract}
This paper is inspired by Johnson's 1892 paper 
where he has given an axiomatization for the variety of Boolean algebras (equivalently, for classical propositional calculus).  The fact that Johnson's axioms include the associative law, the most well-known identity of associative type of length 3, led us naturally to  the question as to whether there are axiom systems for Boolean algebras that include an identity of associative type, other than the associative law.  It turns out that the answer to this question is positive in the strongest sense possible.  In fact, corresponding to each of the 14 non-equivalent identities of associative type, there is a base containing that identity.
Our goal, in this sequence of papers, of which the present paper is the first, is to describe 
 bases for Boolean algebras, 
each of which contains at least one identity of associative type of length 3.
In this paper, we prove, firstly, that Johnson's third axiom is redundant.  Secondly, 
we give several equational bases--some 2-bases and some 3-bases--for the variety of Boolean algebras, each containing an identity of associative type, other than the associative law.   Each of these bases can be easily converted into an axiomatization for the classical propositional logic, as well.    
\end{abstract}

\maketitle

\section{Introduction}

 This paper is the first in a series of  papers inspired by W. E. Johnson's 1892 paper \cite{Jo92c} where he has given a 5-base (i.e. an equational axiomatization consisting of exactly five independent axioms), in the language $\langle \land, \neg \rangle$, for the variety of Boolean algebras (equivalently, for the classical propositional calculus).  The fact that Johnson's axioms include the associative law, the most well-known identity of associative type of length 3, led us naturally to  the question as to whether there are axiom systems for Boolean algebras that include an identity of associative type, other than the associative law.  It turns out that the answer to this question is positive in the strongest sense possible.  In fact, interestingly enough, corresponding to each of the 14 non-equivalent identities of associative type, we have found a base containing that identity.  Of these bases, some are 2-bases (i.e., having exactly two independent identities) and others are 3-bases (having exactly three identities).  We wish to present these findings in this sequence of  papers.  In this paper, we first prove that one of Johnson's axioms, namely the third axiom, is redundant.  
Secondly, as our main goal, we present in this sequence of papers, through a systematic and exhaustive search, several equational bases for Boolean algebras, each of which contains at least one of the identities (Ai), $i=1, 2, \cdots, 14$, of associative type of length 3, thus giving a strongest positive answer to the question raised above.  More precisely, we present 
four kinds 
of (equational) bases for Boolean algebras such that 
  each base of the $j$th kind, $j=1,2,3,$, contains (exactly) $j$ identities from \{(Ai), $i=1, 2, \cdots, 14\}$.
 Some of the (Ai)'s 
 lead to 2-bases, some others to 3-bases, and still others to 4-bases.  For example, each of (A4), A5), (A6), (A8), (A13) and (A18), together with one more identity-- a variation of Johnson's fifth identity--yields a 2-base for the variety of Boolean algebras.  In this paper, we present 2-bases containing (A6), (A8), (A5), (A13) and 3-bases containing (A9) and (A1), while others will appear in the sequel to this paper.

\medskip
\section{Johnson's Axioms Revisited: A Simplification of Johnson's Axioms} 

In this section, we show that Johnson's third axiom is redundant.
 Johnson wrote in \cite{Jo92c} the following (and we quote):

\begin{quote}

 ``The following are the formal universal laws of propositional
 synthesis, expressed by means of =, the symbol of equivalence:
 \begin{itemize}
 \item[I] The Commutative Law: \ $xy = yx,$
 \item[II] The Associative Law: \quad \ \ $xy.z = x.yz$,
\item[III] The Law of Tautology: \ \ \ \ $xx = x, $ 
\item[IV] The Law of Reciprocity: \ \ $\overline{\overline{x}} = x,$   
\item[V] The Law of Dichotomy: \ \ \  $\overline{x} = \overline{xy}\ \overline{x\overline{y}}$.'' \\ 
\end{itemize}
\end{quote}

From the above axioms it is clear that Johnson's connectives are conjunction and negation, represented by juxtaposition and a superimposed bar, respectively.
For the sake of convenience, we will recast the Johnson's axioms given above into an algebraic form (in modern terminology and notation) in the following theorem:

\begin{theorem}{\rm(}Johnson \cite{Jo92c}{\rm)} An algebra ${\bf B} = \langle B, \land, ' \rangle$ is a
    {\it Boolean algebra} if and only if the following five axioms hold in ${\bf B}$:
\begin{enumerate}
   \item[{\rm(J1)}] $x \land y \approx y \land x,$
      \item[{\rm(J2)}] $(x \land y) \land z \approx x \land (y \land z)$,
   \item[{\rm(J3)}] ${x \land x  \approx x},$  
   \item[{\rm(J4)}]  ${x'' \approx x},$
   \item[{\rm(J5)}] ${x' \approx (x \land y)' \land (x \land y')'}$.
\end{enumerate}
\end{theorem}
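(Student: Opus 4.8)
The plan is to prove the two implications separately. For the ``only if'' direction I would simply verify that every Boolean algebra satisfies (J1)--(J5). Axioms (J1)--(J4) are among the defining (or immediately derivable) laws of Boolean algebras, so nothing is needed there. For (J5) I would expand the right-hand side using De~Morgan's laws and distributivity: $(x \land y)' \land (x \land y')' = (x' \lor y') \land (x' \lor y) = x' \lor (y \land y') = x' \lor 0 = x'$, which is exactly the left-hand side. This direction is routine.

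The substance is the ``if'' direction: assuming (J1)--(J5), I must recover the full Boolean structure using only $\land$ and $'$. First, (J1)--(J3) say that $\langle B, \land \rangle$ is a commutative idempotent semilattice, so it carries the induced partial order $x \le y \iff x \land y \approx x$, with $\land$ as meet. I would then introduce the dual operation $x \lor y := (x' \land y')'$ and set out to prove that $\langle B, \land, \lor, '\rangle$, together with suitable term-defined constants, forms a Boolean algebra.

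The workhorse is (J5). Applying (J4) to both sides turns it into the self-dual ``case-split'' identity $x \approx (x \land y) \lor (x \land y')$, and from this single identity, by carefully chosen substitutions, I would extract in turn: the semilattice laws for $\lor$ together with the two absorption laws (so that $\langle B, \land, \lor\rangle$ is a lattice); the fact that the element $x \land x'$ does not depend on $x$, yielding a least element $0$ (and dually $x \lor x'$ a greatest element $1$) with the complementation laws $x \land x' \approx 0$ and $x \lor x' \approx 1$; and finally one distributive law. The remaining distributive law, and the dual statements generally, then follow from the De~Morgan definition of $\lor$ and from the fact that the whole axiom system is invariant under the substitution $x \mapsto x'$.

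I expect the main obstacle to be purely combinatorial: the converse has no structural shortcut and amounts to discovering the correct sequence of substitutions into (J5) so that each required Boolean law drops out, each step invoking only laws already established. The most delicate point should be showing that $x \land x'$ is genuinely constant---that is, the existence of a true bottom element rather than merely a family of minimal elements---since complementation and the bounds all depend on it. Once $0$ and one distributive law are in hand, I anticipate the remaining lattice-theoretic and complementation identities to follow quickly by duality.
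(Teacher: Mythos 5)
First, a point of orientation: the paper contains no proof of this theorem at all. It is stated with attribution to Johnson \cite{Jo92c} and taken on faith; indeed the converse halves of the paper's own results (Theorems \ref{T2.2}, \ref{T4.1}, etc.) ultimately reduce back to it. So your proposal is not being measured against a proof in the paper but against the classical literature (Johnson \cite{Jo92c}, Huntington \cite{eH33i}, Meredith--Prior \cite{cM68}). Within your proposal, the ``only if'' direction is correct and complete. For the converse, your skeleton (meet-semilattice from (J1)--(J3), join defined by $x \lor y := (x' \land y')'$, duality via the De Morgan involution, then bounds, complements, and distributivity extracted from the case-split form $x \approx (x \land y) \lor (x \land y')$ of (J5)) is sound; and the step you single out as most delicate --- that $x \land x'$ is genuinely constant --- is in fact the easy part: the paper's Lemmas \ref{23} and \ref{24} derive exactly this from (J1), (J2), (J4), (J5) in a few lines, without even using (J3).

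The genuine gap is distributivity, which your plan names but defers entirely to ``carefully chosen substitutions.'' By the stage where that step is needed, what you will have assembled is a bounded lattice with an order-reversing involution satisfying $x \land x' \approx 0$ and $x \lor x' \approx 1$ --- and that structure alone does \emph{not} imply distributivity: the hexagon $O_6$ (two chains $0<a<b<1$ and $0<c<d<1$ with $a'=d$, $b'=c$) satisfies all of it yet $b \land (d \lor a) = b \neq a = (b\land d)\lor(b\land a)$. So distributivity must be wrung out of (J5) itself by a further nontrivial equational argument; in lattice-theoretic language, the case-split identity says every pair of elements commutes in the orthomodular sense, and the fact that an ortholattice with this property is Boolean is a real theorem (of Foulis--Holland type), not a routine computation. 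Since this single step is where essentially all the content of Johnson's theorem is concentrated, the proposal as it stands is a roadmap rather than a proof. A legitimate way to close the gap without reconstructing Huntington's argument from scratch: note that (J4) and (J5) immediately give (J5') (substitute $x'$ for $x$ in (J5) and cancel double primes), and then invoke the fact --- recorded in the paper as Theorem \ref{T9.1}, and attributed there to Huntington's fourth set of axioms \cite{eH33i} via Kalman and Meredith--Prior \cite{cM68} --- that (J1), (J2), (J5') already form a base for Boolean algebras.
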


Let $\mathbb{BA}$ denote the variety of Boolean algebras.  Recall that a {\bf base} for the variety $\mathbb{BA}$ is an independent set of identities that defines the variety $\mathbb{BA}$. Let $n \in \mathbb{N}$.  An $n$-base is a base of size $n$.


We will now show that the Johnson's axiom $\rm (J3)$ is redundant. 

\begin{theorem} \label{T2.2}
    The following axioms form a 4-base for $\mathbb{BA}$. 
\begin{enumerate}
   \item[{\rm(J1)}] $x \land y \approx y \land x,$
     \item[{\rm(J2)}] ${x \land ( y \land z )\approx (x \land y) \land z},$
   \item[{\rm(J4)}]  ${x'' \approx x},$
   \item[{\rm(J5)}] ${x' \approx (x \land y)' \land (x \land y')'}$.
\end{enumerate}
\end{theorem}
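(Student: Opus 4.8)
The plan is to reduce the statement to a single redundancy fact. By the theorem of Johnson quoted above, the five identities (J1)--(J5) define $\mathbb{BA}$; hence $\mathrm{Mod}\{(\mathrm{J1}),(\mathrm{J2}),(\mathrm{J4}),(\mathrm{J5})\} \supseteq \mathbb{BA}$ automatically, so to prove that the four identities \emph{define} $\mathbb{BA}$ it suffices to derive (J3), namely $x \land x \approx x$, from (J1), (J2), (J4) and (J5). Throughout I would work in an arbitrary model of these four identities, writing $\land$ multiplicatively (by juxtaposition) so that by (J1) and (J2) it is just a commutative, associative product, and using (J4) freely to cancel double bars.

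First I would extract the key instance of (J5). Substituting $y := x$ (equivalently $y := x'$, which gives the same thing after (J4)) into (J5) and reordering the product by (J1) yields
\begin{equation*}
x' \approx (x \land x')' \land (x \land x)'. \tag{$\ast$}
\end{equation*}
Thus everything reduces to showing that the factor $(x\land x')'$ may be absorbed, i.e. that
\begin{equation*}
(x \land x')' \land (x\land x)' \approx (x\land x)',
\end{equation*}
for then $(\ast)$ collapses to $(x\land x)' \approx x'$, and applying the congruence (replacement) rule of equational logic together with (J4) to both sides gives $x\land x \approx ((x\land x)')' \approx (x')' \approx x$, which is (J3).

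The substantive work, and the step I expect to be the main obstacle, is to establish that $e := (x\land x')'$ behaves as the greatest element, i.e. that it is a (in fact, \emph{the} constant) two-sided identity for $\land$, so that the factor $e$ in $(\ast)$ disappears. I would obtain this through a further sequence of substitutions into (J5). A useful auxiliary is the symmetric companion of (J5) got by interchanging $x \leftrightarrow y$, namely $y' \approx (x\land y)' \land (x' \land y)'$, and its consequence under the substitution $y := x\land y$, namely $(x\land y)' \approx (x\land x\land y)' \land (x\land x'\land y)'$; specialising the auxiliary variable then controls the behaviour of the terms $x\land x'$ (a least element) and $(x\land x')'$ (its complement) and shows the latter absorbs under $\land$. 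Because the available machinery is deliberately weak --- only a commutative, associative product, an involution, and the single ``mixed'' law (J5), with no distributivity, no De Morgan laws, and no constants yet available --- this collapse is not immediate and requires a careful, somewhat lengthy equational chain; it is exactly the kind of derivation that an automated theorem prover such as Prover9 can verify.

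Finally, for independence I would exhibit, for each of the four axioms, a finite algebra $\langle A, \land, {}' \rangle$ satisfying the other three but refuting the chosen one; such models are readily produced by a finite model finder (Mace4). For (J5) one may take explicitly the two-element meet-semilattice with $'$ the identity map: (J1) and (J2) hold since $\land$ is a commutative, associative meet, (J4) holds because $x'' = x$ trivially, but (J5) fails since its right-hand side reduces to $x\land y$. The remaining three models --- breaking commutativity, associativity, and the involution law respectively, while retaining the other identities (including (J5)) --- are found the same way, and together they show the four identities are independent. Combining the redundancy of (J3) with this independence gives the asserted 4-base.
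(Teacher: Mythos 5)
Your reduction is correct as far as it goes, and it is in fact the same reduction the paper makes: your identity $(\ast)$ is precisely the paper's Lemma~\ref{28} (namely $x' = 0' \land (x\land x)'$, where $0 := x\land x'$), and the paper likewise finishes by showing $(x\land x)' = 0' \land (x\land x)'$, so that $x' = (x\land x)'$ and (J4) yields $x\land x = x$. The problem is that your proposal stops exactly where the proof has to start. The absorption identity $(x\land x')' \land (x\land x)' \approx (x\land x)'$ --- which you yourself single out as ``the main obstacle'' --- is never derived: you sketch a direction (the symmetric companion of (J5), the substitution $y := x\land y$, ``specialising the auxiliary variable'') and then hand the task to Prover9. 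In a redundancy proof of this kind the explicit equational chain \emph{is} the entire mathematical content; asserting that a prover could find it is not a derivation. Moreover, your plan quietly relies on two further facts that are nontrivial in this weak system and are also left unproved: that $x\land x'$ is a constant (this is the paper's Lemma~\ref{23} together with Lemma~\ref{24}, and already requires a genuinely clever computation), and that its complement is a two-sided unit for $\land$, which is strictly stronger than the single absorption instance you actually need. The paper obtains the absorption through the concrete chain of Lemma~\ref{210} ($(x\land y)' = 0' \land (x\land y\land x)'$), Lemma~\ref{211}, and Lemma~\ref{212} ($(x\land y\land x)' = (y\land x)'$); some such explicit chain is what your write-up is missing.

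A smaller point: a ``4-base'' also requires independence of the four identities. Your countermodel for the independence of (J5) (the two-element meet-semilattice with $'$ the identity map) is correct, but for (J1), (J2), (J4) you again defer to a model finder, so that part is incomplete as well --- though, to be fair, the paper's own proof of Theorem~\ref{T2.2} confines itself to deriving (J3) and does not treat independence either.
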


        The proof of the above theorem depends on the following lemmas, where (J1), (J2), (J4) and (J5) are assumed as hypotheses. In view of (J2), we frequently use $x \land y \land z$ for $x \land (y \land z)$ and $(x \land y) \land z$.  Moreover,  (J1) and (J2) will be used frequently in the proofs without being mentioned explicitly.

\begin{lemma} \label{23} 
    ${x \land (x \land y')' \approx y \land (y \land x' )'}$.    
\end{lemma}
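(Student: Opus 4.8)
The plan is to prove the identity purely equationally from (J1), (J2), (J4), (J5), since at this early stage we have neither idempotency ((J3) is exactly what we are trying to eliminate) nor distributivity. A quick semantic check in a Boolean algebra shows both sides equal $x \land y$ (using $(x \land y')' = x' \lor y$ and $x \land (x' \lor y) = x \land y$), but that computation relies on distributivity and complementation laws we do not yet have, so it serves only as a sanity check and as a hint that the two sides should coincide.

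The key observation I would exploit is that (J5), which as written computes a \emph{complement} $x'$, can be turned into a formula for $x$ \emph{itself} by substituting $x \mapsto x'$ and applying (J4): this yields $x \approx (x' \land y)' \land (x' \land y')'$, and, symmetrically (substituting $x \mapsto y'$ and $y \mapsto x$ in (J5)), $y \approx (y' \land x)' \land (y' \land x')'$. These two consequences of (J5) are the engine of the proof.

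The main step is then to meet the first of these formulas for $x$ with the factor $(x \land y')'$, obtaining $x \land (x \land y')' \approx (x' \land y)' \land (x' \land y')' \land (x \land y')'$ after reassociating by (J2). The payoff is that the last two factors, $(x' \land y')' \land (x \land y')'$, can be reordered by (J1) into $(y' \land x)' \land (y' \land x')'$, which is \emph{exactly} the right-hand side of the second formula above and hence collapses to $y$. What remains is $(x' \land y)' \land y$, and a final application of (J1) rewrites this as $y \land (y \land x')'$, the desired right-hand side.

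The part that requires the decisive idea, rather than routine manipulation, is recognizing that one should feed $x'$ (not $x$) into (J5) so as to express $x$ as a meet of two complements, and then pick precisely the factor $(x \land y')'$ to multiply by, so that a sub-product matches an instance of (J5) for $y$. Once that pairing is spotted, the rest is only bookkeeping with (J1) and (J2). The genuine obstacle is that the absence of idempotency forbids the more obvious substitutions—for instance, expanding the inner complement $(x \land y')'$ directly—which tend to generate irreducible self-meets such as $x \land x$; the symmetric double use of (J5) described above is what sidesteps this and makes the argument go through cleanly.
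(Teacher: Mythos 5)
Your proof is correct and is essentially the same as the paper's: both substitute $x = (x' \land y)' \land (x' \land y')'$ (i.e., (J5) applied to $x'$ together with (J4)), reassociate by (J2) so that the last two factors become, after applications of (J1), the (J5)-expansion $(y' \land x)' \land (y' \land x')'$, collapse that to $y$, and finish by commutativity. The only cosmetic difference is that you pre-package (J5) and (J4) into a direct formula for $y$, whereas the paper passes through $y''$ and then applies (J4).
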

\begin{proof}
\begin{align*}
    x \land (x \land y')'
    &= [(x' \land y)' \land (x' \land y')'] \land (x \land y')'  &\text{by axiom (J5) and (J4)}\\
    &=(x' \land y)' \land [(y' \land x)' \land (y' \land x')]'\\
    &= (x' \land y)' \land y'' &\text {by axiom (J5)}\\
    &= (x' \land y)' \land y &\text {by axiom (J4)}\\
    &= y \land (y \land x')'.
\end{align*}
Hence, the lemma is proved.
\end{proof}

\begin{lemma} \label{24} 
    $x \land x' = y \land y'$.
\end{lemma}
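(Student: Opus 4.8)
The plan is to avoid trying to \emph{simplify} $x \land x'$ and instead to \emph{expand} it, using (J5) twice with a single auxiliary variable $y$, and then to recollapse the pieces so that the outcome is manifestly symmetric in $x$ and $y$ (hence independent of $x$). First I would record the two instances of (J5) that expand $x$ and $x'$ against the \emph{same} $y$: applying (J5) with $x$ replaced by $x'$ and then using (J4) gives $x = (x' \land y)' \land (x' \land y')'$, while (J5) applied directly gives $x' = (x \land y)' \land (x \land y')'$. Meeting these and using (J1), (J2) to drop parentheses, I obtain $x \land x'$ as a meet of the four complemented factors $(x' \land y)'$, $(x' \land y')'$, $(x \land y)'$, $(x \land y')'$.

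The crucial step is to regroup these four factors not by their first coordinate ($x$ versus $x'$) but by their second coordinate ($y$ versus $y'$). Pairing the two factors containing $y$ and, separately, the two containing $y'$, I would use (J1) inside each complement to rewrite $(x' \land y)' \land (x \land y)'$ as $(y \land x')' \land (y \land x)'$, and $(x' \land y')' \land (x \land y')'$ as $(y' \land x')' \land (y' \land x)'$. Each pair now has exactly the shape of the right-hand side of (J5): with $t = x'$ (so $t' = x$ by (J4)), the expression $(y \land t)' \land (y \land t')'$ collapses to $y'$, and $(y' \land t)' \land (y' \land t')'$ collapses to $y'' = y$. Hence $x \land x' = y' \land y = y \land y'$, which is the assertion.

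The main obstacle is recognizing which way to push the computation. The obvious route through Lemma~\ref{23}, repeatedly rewriting subterms of the form $x \land (x \land z)'$, merely permutes factors and cycles back to the start without ever forcing a collapse, since every such rewrite preserves the value; I expect this to be the natural first (and fruitless) attempt. The productive idea runs the opposite way: one first introduces the auxiliary variable $y$ symmetrically into both $x$ and $x'$ via (J5), thereby \emph{increasing} complexity, and only then reads (J5) backwards to eliminate $x$ entirely. The one point demanding genuine care is the bookkeeping of primes in the two backward applications of (J5), where (J4) is needed to identify $t'$ with $x$ and $y''$ with $y$; getting the pairing right is what makes the right-hand side symmetric and the dependence on $x$ vanish.
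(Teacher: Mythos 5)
Your proof is correct, but it is not the proof the paper gives, and the two routes are genuinely different. The paper first proves Lemma \ref{23}, $x \land (x \land y')' \approx y \land (y \land x')'$, and then derives the claim by expanding only $x'$: $x \land x' = x \land [(x \land y)' \land (x \land y')'] = (x \land y)' \land [x \land (x \land y')'] = (x \land y)' \land [y \land (y \land x')'] = y \land [(y \land x)' \land (y \land x')'] = y \land y'$, the third equality being Lemma \ref{23}. So your side remark that the route through Lemma \ref{23} ``merely permutes factors and cycles back'' is mistaken: a single application of that lemma inside the (J5)-expansion of $x'$ is exactly what forces the collapse in the paper. Your own argument dispenses with Lemma \ref{23} entirely: you expand \emph{both} $x$ (via (J5) instantiated at $x'$, then (J4)) and $x'$ (via (J5)) against the same auxiliary $y$, rearrange the resulting four complemented factors by AC and congruence so that they are paired by their $y$-coordinate rather than their $x$-coordinate, and then read (J5) backwards twice, one pair collapsing to $y'$ and the other to $y'' = y$ by (J4); every step is a legitimate equational inference from (J1), (J2), (J4), (J5). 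What your approach buys is self-containedness and symmetry; what it costs is heavier bookkeeping with four-fold meets. It is also worth noting that your symmetric-expansion technique is precisely the one the paper itself employs later for the analogous statements in other axiom systems (Lemma \ref{421} and Lemma \ref{92}, where $x \land x' = y \land y'$ is obtained from (J5') together with commutativity and associativity), so your proof in effect transports that method back to Johnson's original setting.
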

\begin{proof}
\begin{align*}
               x \land x' 
               &= x \land [(x \land y)' \land (x \land y')']  &\text{by axiom (J5)}\\
              &= (x \land y)' \land [x \land (x \land y')'] &\text{by axiom (J1) and (J2)}\\
             &= (x \land y)' \land  [y \land (y \land x')'] &\text{by Lemma \ref{23}}\\
            &= y \land [(y \land x)' \land (y \land x')'] &\text{by axiom (J1) and (J2)}\\
           &= y \land y' &\text{by axiom (J5)},
\end{align*}
proving the lemma.
\end{proof}

    It should be mentioned that Meredith and Prior ~\cite{cM68} (on page 213) had a different proof of the above lemma.

In view of the above lemma, the following definition is unambiguous.
\begin{definition} \label{25} 
    We set $x \land x' := 0$.
\end{definition}

\begin{lemma} \label{28} 
    $0' \land (x \land x)'=x'$.
\end{lemma}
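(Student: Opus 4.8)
The plan is to specialize the one genuinely substantive axiom, (J5), to the diagonal case $y := x$, and then read off the claim using Definition~\ref{25}. Recall that (J5) reads $x' \approx (x \land y)' \land (x \land y')'$. Setting $y := x$ makes $y' = x'$, so the axiom collapses to $x' = (x \land x)' \land (x \land x')'$. This already has the shape we want, with $(x \land x)'$ appearing as one of the two factors.

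The key observation is that the remaining factor is precisely the complement of the constant $0$. By Definition~\ref{25} we have $x \land x' = 0$, and hence $(x \land x')' = 0'$. Substituting this back into the instantiated form of (J5) gives $x' = (x \land x)' \land 0'$, and a single application of the commutative law (J1) rewrites the right-hand side as $0' \land (x \land x)'$, which is exactly the asserted identity. Thus the whole argument is: instantiate (J5) at $y=x$, invoke Definition~\ref{25}, and commute.

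I do not anticipate any real obstacle here. The only point requiring a moment's attention is recognizing that the useful substitution is $y = x$, so that the first factor of (J5) becomes $(x \land x)'$ while the second factor becomes the complement of the constant $0$; attempting instead to manipulate the term $0' \land (x \land x)'$ from the outside would be a detour. In particular, neither Lemma~\ref{23} nor Lemma~\ref{24} is needed beyond the well-definedness of $0$ that they already guarantee through Definition~\ref{25}.
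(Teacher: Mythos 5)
Your proof is correct and takes essentially the same route as the paper: a single instantiation of (J5) that makes one factor equal to $(x\land x')'=0'$, followed by Definition~\ref{25}. The only (inessential) difference is the choice of substitution --- the paper sets $y:=x'$, so the factors come out as $(x\land x')'\land(x\land x'')'$ and it finishes with (J4), whereas you set $y:=x$ and finish with (J1); both are equally valid one-line arguments.
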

\begin{proof}
  $x' = (x \land x')' \land (x \land x'')'= 0' \land (x \land x)' \text{ by (J5)}$.
\end{proof}

\begin{lemma} \label{210} 
    $(x \land y)' = 0' \land (x \land y \land x)'$.
\end{lemma}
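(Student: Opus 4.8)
The plan is to obtain the identity by a single structural application of (J5) to the compound term $x \land y$, and then to reduce the resulting second conjunct to $0'$. Concretely, I would instantiate (J5) with $x \land y$ playing the role of its first variable and $x$ playing the role of its second, obtaining $(x \land y)' = ((x \land y) \land x)' \land ((x \land y) \land x')'$. Using (J1) and (J2), the first conjunct is exactly $(x \land y \land x)'$, i.e.\ the term appearing on the right-hand side of the claim, so everything reduces to showing that the second conjunct $((x \land y) \land x')'$ equals $0'$.

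Rearranging with (J1) and (J2) gives $(x \land y) \land x' = (x \land x') \land y$, and by Definition~\ref{25} this is $0 \land y$. Hence the lemma will follow once I establish the annihilation identity $0 \land y = 0$, whence $((x \land y) \land x')' = (0 \land y)' = 0'$. This annihilation fact is the crux of the argument and the step I expect to be the main obstacle: at this stage we do not yet have idempotence (J3) available --- indeed (J3) is precisely what this section is working toward --- so the naive simplification of $0 \land y$ is blocked and a less direct route is required.

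To prove $0 \land y = 0$ I would proceed as follows. By Lemma~\ref{24}, Definition~\ref{25} and (J1) we may write $0 = y \land y' = y' \land y$, so that $0 \land y = (y' \land y) \land y = y' \land (y \land y)$ by (J1) and (J2). Now I replace the leading $y'$ using Lemma~\ref{28} in the form $y' = 0' \land (y \land y)'$, giving $y' \land (y \land y) = 0' \land \bigl((y \land y)' \land (y \land y)\bigr)$. The inner meet is $(y \land y) \land (y \land y)'$, which is $0$ by Definition~\ref{25} (it is $a \land a'$ with $a = y \land y$); and finally $0' \land 0 = 0 \land 0' = 0$, again by Definition~\ref{25}. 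Thus $0 \land y = 0$.

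Combining the pieces, the second conjunct equals $0'$, and so $(x \land y)' = (x \land y \land x)' \land 0' = 0' \land (x \land y \land x)'$ by (J1), which is the assertion. The whole argument is driven by the complementation identity $a \land a' = 0$ (Definition~\ref{25}), used twice inside the annihilation step, together with the single application of (J5); the only genuinely delicate point is engineering the cancellation $(y \land y) \land (y \land y)' = 0$ so as to dispense with $0 \land y$ without any appeal to tautology.
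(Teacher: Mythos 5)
Your proof is correct, and its skeleton coincides with the paper's: both open with the same instantiation of (J5) (first variable $x \land y$, second variable $x$), read off the first conjunct as $(x \land y \land x)'$, and are then left with reducing the second conjunct $((x \land y) \land x')' = (y \land 0)'$ to $0'$. The difference lies entirely in how that reduction is justified. The paper does it in one line, citing (J5); but (J5) only converts a single negation $a'$ into a meet $(a \land b)' \land (a \land b')'$ or back, so no single application of it (even modulo (J1), (J2)) can establish $(y \land 0)' = 0'$ --- that equation amounts to the annihilation law $y \land 0 = 0$, which is not proved anywhere in Section 2 before this lemma, so the paper's citation is at best a compressed placeholder for a longer argument. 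You correctly identified this as the crux and supplied the missing derivation: $0 \land y = y' \land (y \land y)$ by (J1)/(J2), then $y' = 0' \land (y \land y)'$ by Lemma \ref{28}, and regrouping by (J2) yields $0' \land [(y \land y) \land (y \land y)'] = 0' \land 0 = 0$, where each cancellation is the identity $a \land a' = 0$ (Lemma \ref{24}, Definition \ref{25}) instantiated at a compound term --- legitimate, since Lemma \ref{24} is an identity and may be instantiated at $y \land y$ and at $0$ itself. So your argument follows the same route as the paper's but fills in the gap at its only delicate step; the cost is a few extra lines, the benefit is that every step is an honest instance of the stated hypotheses and previously proved lemmas.
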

\begin{proof}
\begin{align*}
    (x \land y)' &= [(x \land y) \land x]' \land [(x \land y) \land x']' &\text{by (J5)}\\
                 &= (x \land y \land x)' \land (y \land 0)'  &\text{by (J1) and (J2)}\\         
                 &= (x \land y \land x)' \land 0' &\text{by (J5)}\\
                 &= 0' \land (x \land y \land x)',
\end{align*}
which proves the lemma.
\end{proof}

\begin{lemma} \label{211} 
    $0' \land [(x \land y) \land (x \land y)]' = [y \land (x \land y)]' $.
\end{lemma}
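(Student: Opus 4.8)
The plan is to recognize that the asserted identity is, after rearranging by (J1) and (J2), simply an instance of Lemma \ref{210}. The one genuine subtlety is that, since the entire point of this section is to \emph{derive} idempotence, I must resist replacing any $y \land y$ (or $x \land x$) by a single copy; every manipulation has to proceed through commutativity and associativity alone, treating each conjunction as the multiset of its factors and nothing more.

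Concretely, I would read the statement from left to right and first regroup the inner term of the left-hand side. The terms $(x \land y) \land (x \land y)$ and $x \land (y \land y) \land x$ both have the factors $x, x, y, y$, so they are equal by (J1) and (J2), and hence $0' \land [(x \land y) \land (x \land y)]' = 0' \land (x \land (y \land y) \land x)'$. The key step is then to apply Lemma \ref{210} with $x$ playing the role of its first variable and the term $y \land y$ playing the role of its second, reading the identity $(s \land t)' = 0' \land (s \land t \land s)'$ from right to left; this collapses $0' \land (x \land (y \land y) \land x)'$ to $(x \land (y \land y))'$. A final regrouping finishes the job: $x \land (y \land y)$ has factors $x, y, y$, exactly those of $y \land (x \land y)$, so (J1) and (J2) give $(x \land (y \land y))' = [y \land (x \land y)]'$, the required right-hand side. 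The whole argument is thus a short three-line chain whose substance is the substitution $t := y \land y$ into Lemma \ref{210}.

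I expect the only place demanding real care to be the bookkeeping of factors, namely checking that the substitution $t := y \land y$ reproduces exactly the groupings that appear in the statement and that no step quietly absorbs a repeated factor. It is worth flagging one tempting but unhelpful route: applying Lemma \ref{28} with $x := x \land y$ immediately yields $0' \land [(x \land y) \land (x \land y)]' = (x \land y)'$, which is correct but produces the term $(x \land y)'$ rather than $[y \land (x \land y)]'$. These two differ precisely by a repeated copy of $y$, and identifying them would require exactly the kind of absorption that only becomes legitimate once idempotence is established. The substitution into Lemma \ref{210}, by contrast, never needs to drop a factor, and so it is the clean path; indeed, combining the present lemma with that observation from Lemma \ref{28} yields $(x \land y)' = [y \land (x \land y)]'$ as a by-product, which I would expect to be useful later in reaching (J3).
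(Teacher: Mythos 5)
Your proof is correct, and it is careful on exactly the right point (no hidden appeal to idempotence), but it takes a genuinely different route from the paper's. The paper proves the lemma by a fresh application of (J5): starting from the right-hand side it writes $[y \land (x \land y)]' = [(y \land x \land y) \land x]' \land [(y \land x \land y) \land x']'$, observes that the second conjunct contains $x \land x'$ and therefore collapses to $0'$, and then identifies $(y \land x \land y \land x)'$ with $[(x \land y) \land (x \land y)]'$ under the blanket (J1)/(J2) convention; in other words, it repeats the computational template of Lemma \ref{210} rather than citing that lemma. You instead obtain the statement as a substitution instance of Lemma \ref{210} itself, replacing $y$ by $y \land y$ and doing (J1)/(J2) bookkeeping on both sides, with no new appeal to (J5). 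Each route buys something: the paper's argument is self-contained modulo (J5), whereas yours is shorter and quarantines the one delicate step --- the collapse of a term containing $x \land x'$ down to $0$, which tacitly uses an absorption of the form $t \land 0 = 0$ and is justified in the paper only by a bare citation of (J5) in Lemma \ref{210} and not annotated at all in the present proof --- inside Lemma \ref{210} alone, adding nothing beyond commutativity, associativity, and substitution. It is also worth noting that your key move (rearranging to $x \land (y \land y) \land x$ and reading Lemma \ref{210} right to left with $y := y \land y$) is precisely the move the paper makes in its very next proof, that of Lemma \ref{212}; and your closing observation, that Lemma \ref{28} combined with the present lemma yields $(x \land y)' = [y \land (x \land y)]'$ whereas a direct identification would illegitimately presuppose (J3), matches the role that Lemma \ref{212} plays in the paper's development.
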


\begin{proof}
\begin{align*}
    [y \land (x \land y)]'
       &=[(y \land x \land y)  \land x]' \land [(y \land x \land y) \land x']'
        &\text{by (J5)}\\
        &=(y \land x \land y \land x)' \land 0' \\
        &= 0' \land (y \land x \land y \land x)'  &\text{by (J1)},
\end{align*}
proving the lemma.
\end{proof}

\begin{lemma}\label{212} 
    $(x \land y \land x)' = (y \land x)' $.
\end{lemma}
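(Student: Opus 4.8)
The plan is to manufacture an auxiliary identity by recognizing that two of the preceding lemmas share a common left-hand side, and then to finish by a harmless renaming of variables. First I would instantiate Lemma~\ref{28}, which reads $0' \land (x \land x)' \approx x'$, at $x := x \land y$; using (J2) to flatten the nested meet, this yields $0' \land (x \land y \land x \land y)' \approx (x \land y)'$. Next I would observe that the left-hand side obtained here is, term for term, the left-hand side of Lemma~\ref{211}, namely $0' \land [(x \land y) \land (x \land y)]'$.

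Since the two expressions $0' \land (x \land y \land x \land y)'$ coincide, I can equate the right-hand sides of the two lemmas, obtaining $(x \land y)' \approx [y \land (x \land y)]'$. After using (J2) to rewrite $[y \land (x \land y)]'$ as $(y \land x \land y)'$, this gives the intermediate identity $(x \land y)' \approx (y \land x \land y)'$.

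The final step is purely formal: since this last identity holds for all values of the variables, I would substitute $x := y$ and $y := x$ simultaneously, which turns $(x \land y)' \approx (y \land x \land y)'$ into $(y \land x)' \approx (x \land y \land x)'$, which is exactly the claimed statement $(x \land y \land x)' \approx (y \land x)'$.

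The only genuinely delicate point---and the one I would flag as the main obstacle---is spotting that Lemma~\ref{28}, taken at $x \land y$, and Lemma~\ref{211} produce the identical guarded expression $0' \land (x \land y \land x \land y)'$; once that coincidence is noticed the proof is immediate. I would not expect any difficulty from the commutativity and associativity bookkeeping, since (J1) and (J2) are, by the standing convention of this section, used freely and without comment.
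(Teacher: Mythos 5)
Your proof is correct, but it follows a genuinely different route from the paper's. You combine Lemma~\ref{28} (instantiated at $x := x \land y$, giving $0' \land [(x \land y)\land(x \land y)]' = (x\land y)'$) with Lemma~\ref{211}, equate right-hand sides to get $(x \land y)' = (y \land x \land y)'$, and finish by swapping the variables $x$ and $y$. The paper instead starts from $(x \land y \land x)'$, applies Lemma~\ref{211}, shuffles the four-fold meet by repeated use of (J1)/(J2), and then invokes Lemma~\ref{210} twice to collapse it down to $(y \land x)'$; it never uses Lemma~\ref{28} at all. Your route buys several things: it is shorter, it bypasses Lemma~\ref{210} entirely, it needs no commutativity bookkeeping (only substitution instances and the flattening convention from (J2)), and it avoids the spot where the paper's displayed computation is least clean (the paper's fourth line carries an extra $0' \land$ prefix that does not literally match the cited Lemma~\ref{210} and reads as a typo). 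The paper's route, being a direct left-to-right rewriting of the target term, is more in the mechanical equational-chain style of the rest of the section, but yours is the more economical argument; the one insight it requires, as you note, is noticing that Lemma~\ref{28} at $x \land y$ and Lemma~\ref{211} share the same left-hand side.
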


\begin{proof}
\begin{align*}
     (x \land y \land x)' &= 0' \land [y \land x \land y \land x]'  &\text{ by Lemma \ref{211}}\\
                                  &= 0' \land [x \land y \land x \land y]'  \\  
                                  &= 0' \land [x \land (y \land y) \land x]' \\
                                   &= 0' \land [x \land (y \land y)]' &\text{ by Lemma \ref{210}},\\
                                   &= 0' \land [y \land (x \land y)]' \\
                                   &= (y \land x)'  &\text{ by Lemma \ref{210}},
\end{align*}
Hence the lemma.
\end{proof}

\medskip
We are now ready to prove Theorem \ref{T2.2}\\

\begin{proof}{\bf of Theorem \ref{T2.2}}:\\ 
We need to prove the identity $x \land x =x  $.\\
Now,
\begin{align*}
   x \land x &= (x \land x)'' \\
                &= [0' \land (x \land x \land x)']'  &\text{ by Lemma \ref{210}}\\
                  &= [0' \land (x \land x)']' &\text{ by Lemma \ref{212}}\\
                  &= x''&\text{ by Lemma \ref{28}}
                   &= x,\\
\end{align*} 
proving Theorem \ref{T2.2}.
\end{proof}

Thus, the third axiom of Johnson is redundant.

\vspace{1cm}

\section{Identities of Associative Type of length 3}

\medskip
\vspace{.5pt}
\par
Since the axioms in Theorem 2.1 (or Theorem \ref{T2.2}) include the associative law, which is one of the identities of associative type of length 3 (see definition below), the following question presents itself naturally:\\

{\bf Question A:} For every identity I of associative type of length 3, is there a base for the variety of Boolean algebras containing the identity I ? \\  

We will prove that the answer to Question A is in the affirmative for identities of associative type of length 3.  In fact, we will show that corresponding to each of the 14 identities of associative type of length 3 (see below), there is a base for Boolean algebras containing that identity. It turns out that, 
of these bases, some are actually 2-bases and others are 3-bases. 
A detailed account of these will be given in a sequence of papers of which the present paper is the first one and \cite{Sa24} will be the sequel. 

For the benefi to the reader, we will now recall the definition of an identity of associative type of length 3, 
from \cite{CoSa18}, generalizing the associative law. 

\begin{definition} 
An identity $p \approx q$ is of {\it asociative type of length 3} if the terms $p$ and $q$ will both contain three distinct variables, say $x,y,z$, that occur in any order and that are grouped in one of the two (obvious) ways.  
\end{definition}

For a more detailed discussion about these and the ones of length $n$, in general, please refer to \cite{CoSa18, CoSa19}.  

The following identities of associative type of length 3 play a crucial role in the next proposition.

Let $\Gamma$ denote the set consisting of the following $14$ identities of associative type (of length 3) in the  language $\langle \land \rangle$, where $\land $ is a binary operation:
\begin{thlist}   
\item[A1]   $x \land (y \land  z) \approx (x \land   y) \land z$   
\item[A2]  $x \land  (y \land z) \approx x \land (z \land y)$
\item[A3]   $x \land  (y \land  z) \approx (x \land  z) \land y$ 
\item[A4]   $x \land  (y \land  z) \approx y \land  (x \land  z)$
\item[A5]   $x \land  (y \land z) \approx (y \land  x) \land z$
\item[A6]    $x \land (y \land  z) \approx y \land  (z \land  x)$
\item[A7]    $x \land  (y \land z) \approx (y \land  z) \land x$
\item[A8]   $x \land (y \land  z) \approx  (z \land  x) \land y$
\item[A9]    $x \land  (y \land  z) \approx z \land  (y \land  x)$
\item[A10]   $x \land  (y \land  z) \approx (z \land  y) \land x$
\item[A11]    $(x \land y) \land z \approx (x \land  z) \land  y$ 
\item[A12]   $(x \land  y) \land  z \approx (y \land  x) \land z$ 
\item[A13]    $(x \land  y) \land z \approx (y \land  z) \land x$ 
\item[A14]   $(x \land y) \land z \approx (z \land y) \land  x$. 
\end{thlist}

The following proposition is crucial for us and is easily provable, or see \cite{CoSa18} for a proof.

\begin{proposition} \label{propo_neces_identities} 
Let $\mathbb G$ be the variety of all groupoids of type $\langle \land \rangle$, $\land$ being binary, and let $\mathbb{A}i$ be the subvariety of $\mathbb G$ defined by the identity ${\rm(Ai)} \in \Gamma$.
Let $\mathbb V$ denote the subvariety of $\mathbb G$ defined by a single identity of associative type of length 3.  Then $ \mathbb V =\mathbb Ai$, for some $i \in \{1, 2, \cdots, 14\}$.  
\end{proposition}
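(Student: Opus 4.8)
The plan is to exploit the natural action of the symmetric group $S_3$ on variable names and reduce the whole problem to an orbit count. First I would fix coordinates for the twelve terms of associative type of length $3$: each is determined by a permutation of $(x,y,z)$ (six choices of the linear order in which the variables appear) together with one of the two groupings, the \emph{left} grouping $(a \land b) \land c$ or the \emph{right} grouping $a \land (b \land c)$. A nontrivial identity of associative type is then an unordered pair of two \emph{distinct} such terms, and there are $\binom{12}{2}=66$ of them. The key lemma I would establish is that renaming variables preserves the defined variety: if $\sigma \in S_3$ and $p \approx q$ is an identity, then a groupoid satisfies $\sigma(p) \approx \sigma(q)$ if and only if it satisfies $p \approx q$, since as an assignment $b$ ranges over all valuations so does $b \circ \sigma$. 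Consequently $p \approx q$ and $\sigma(p) \approx \sigma(q)$ define the same subvariety of $\mathbb G$, and it suffices to classify the $66$ identities up to the induced $S_3$-action (calling two identities $S_3$-equivalent when one is obtained from the other by such a renaming) and to check that (A1)--(A14) meet every orbit.

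Next I would record the structure of this action. Renaming variables permutes the contents of the positions but never alters the grouping, so it preserves the \emph{shape} (left or right) of each term; moreover $S_3$ acts freely and transitively on the six left terms and, separately, freely and transitively on the six right terms. Hence the $66$ identities split into three $S_3$-stable families according to the shapes of their two sides: left--left ($15$ pairs), right--right ($15$ pairs), and left--right ($36$ pairs). I would then count orbits in each family. For the left--right family, transitivity lets me move the right side to $x \land (y \land z)$; since the stabiliser of a single right term is trivial, there is no residual freedom, so the six possible left sides give six distinct orbits, represented exactly by (A1), (A3), (A5), (A7), (A8) and (A10).

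For the right--right family I would move one side to the term with the identity ordering, reducing each pair to the form $\{e,\rho\}$ with $\rho \neq e$ (identifying right terms with elements of $S_3$, the action being left translation). Because the pair is unordered, $\{e,\rho\}$ and $\{e,\rho^{-1}\}$ lie in the same orbit, so the orbits are indexed by the unordered classes $\{\rho,\rho^{-1}\}$ of nonidentity elements: the three transpositions and the single class formed by the two $3$-cycles, giving $4$ orbits. These are represented by (A2), (A4), (A6) and (A9); in particular the only right--right pair not listed explicitly, namely $x \land (y \land z) \approx z \land (x \land y)$, falls into the $3$-cycle orbit of (A6). The identical argument for the left--left family yields $4$ orbits, represented by (A11)--(A14). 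Summing, $6+4+4=14$ orbits in all, and a short check that the fourteen listed identities lie in pairwise distinct orbits shows that they form a complete transversal. By the key lemma, every identity of associative type of length $3$ is therefore $S_3$-equivalent to exactly one (Ai) and defines the variety $\mathbb{A}i$.

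The step I expect to need the most care is the orbit count for the right--right and left--left families: the fact that the two $3$-cycles collapse into one orbit (so each family has $4$ rather than $5$ orbits) comes precisely from the pairs being unordered, and it is what makes the final tally land on $14$ instead of $16$. I would also make explicit the convention that the two terms are distinct (the degenerate identity $p \approx p$ merely defines $\mathbb G$ and is excluded), and verify the freeness and transitivity of the $S_3$-action on each shape class, since the triviality of the point stabilisers is exactly what forces the left--right orbits to number six and pins down the representatives.
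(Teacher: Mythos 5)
Your proposal is correct, but note that the paper itself contains no proof of this proposition at all: it simply declares it ``easily provable'' and defers to the cited 2018 paper of Cornejo and Sankappanavar, where the classification is carried out essentially by listing the candidate identities and identifying those that are relettered versions of one another. Your argument packages that same underlying idea --- invariance of the defined subvariety under renaming of variables --- into a clean group-theoretic framework: the key lemma (a groupoid satisfies $p \approx q$ iff it satisfies $\sigma(p) \approx \sigma(q)$, because valuations $b$ and $b \circ \sigma$ range over the same set) is exactly the justification the citation-based approach leaves implicit, and the rest is a transparent orbit count. Your decomposition into the three shape families ($15+15+36=66$ unordered pairs), the freeness of the $S_3$-action on each shape class, and the identification of the left--left and right--right orbits with the classes $\{\rho,\rho^{-1}\}$ of nonidentity elements of $S_3$ (three transpositions plus one class of $3$-cycles, hence $4$ orbits per family) are all accurate; one can cross-check the total $6+4+4=14$ by Burnside's lemma, since each transposition fixes exactly $6$ of the $66$ pairs and the $3$-cycles fix none, giving $(66+3\cdot 6)/6=14$. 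Your explicit matching of representatives to (A1), (A3), (A5), (A7), (A8), (A10) for the mixed family and (A2), (A4), (A6), (A9), respectively (A11)--(A14), for the uniform families is also correct, as is your care on the two points the paper glosses over: excluding the degenerate identity $p \approx p$ (which defines all of $\mathbb{G}$ and would otherwise falsify the statement), and observing that since there are exactly $14$ orbits and $14$ listed identities, the listed identities must occupy pairwise distinct orbits in order to form a transversal. What your approach buys is a self-contained, verifiable proof with a built-in consistency check on the count; what the paper's citation buys is brevity, at the cost of the reader having to trust or chase the reference.
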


\medskip
\section{A 2-base for Boolean algebras containing the identity (A6) of associative type}

From now on, we will show that, for every identity (Ai) of associative type, there is a base that for $\mathbb{BA}$ that contains (Ai). 
    In this Section, we consider the case when $i=6$.  In other words, we present a 2-base for Boolean algebras, which contains the identity (A6) of associative type.  Interestingly, an identity obtained by a slight modification of the identity (J5) of Johnson, called (J5'), plays an important role in some of the bases that will be presented.
    
\begin{theorem} \label{T4.1}  
    The following identities form a 2-base for the variety of Boolean algebras:
\begin{enumerate}
   \item[{\rm(A6)}] $x \land (y \land  z) \approx y \land  (z \land  x)$,
   \item[{\rm(J5')}] ${x \approx (x' \land y)' \land (x' \land y')'}$.
\end{enumerate}
\end{theorem}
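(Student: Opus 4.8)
The plan is to reduce everything to Theorem \ref{T2.2}: I would show that (A6) and (J5') together imply the four Johnson axioms (J1), (J2), (J4), (J5), after which Theorem \ref{T2.2} guarantees that every model is a Boolean algebra. The opposite (soundness) direction is routine and I would record it first — every Boolean algebra satisfies (A6) because $\land$ is there commutative and associative, and it satisfies (J5') because $(x'\land y)'\land(x'\land y')' = (x\lor y')\land(x\lor y) = x\lor(y\land y') = x$. The substance of the theorem is thus the completeness direction, namely deriving Johnson's four axioms from the two given identities.

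Two of the four reductions are easy, and I would dispose of them first. Applying (A6) twice yields the full cyclic law $x\land(y\land z) = y\land(z\land x) = z\land(x\land y)$. Granting commutativity (J1) for the moment, associativity (J2) is then immediate: $(x\land y)\land z = z\land(x\land y) = x\land(y\land z)$, the first equality by (J1) and the second by the cyclic law. Likewise, granting the involution law (J4), axiom (J5) follows from (J5') by substituting $x'$ for $x$: (J5') becomes $x' = (x''\land y)'\land(x''\land y')'$, and replacing $x''$ by $x$ via (J4) gives exactly (J5). So everything hinges on establishing (J1) and (J4).

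These two remaining identities are the heart of the matter, and I expect them to require a chain of auxiliary lemmas in the spirit of Lemmas \ref{23}--\ref{212}: produce a candidate constant (show $x\land x'$ is independent of $x$ and set $0 := x\land x'$ as in Definition \ref{25}), then manufacture progressively simpler consequences of (J5') by feeding carefully chosen terms into its free variable $y$ and rotating factors by the cyclic law. The main obstacle, I believe, is a genuine circularity that must be broken: the quickest route to (J4) would be to already have (J5) in hand (then $x'' = (x'\land y)'\land(x'\land y')' = x$ by (J5')), but I derived (J5) from (J4), so neither can be bootstrapped from the other for free. The real work is to obtain a first independent foothold — either commutativity or involution — using only the cyclic rearrangements permitted by (A6), which is strictly weaker than commutativity and hence does not by itself let one transpose two factors. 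I would attack commutativity first, since once (J1) holds the cyclic law upgrades to full associativity and the negation manipulations simplify considerably.

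Finally, to conclude that $\{(\mathrm{A6}),(\mathrm{J5'})\}$ is a \emph{base} rather than merely a defining set, I would verify independence by exhibiting, for each of the two identities, a model of the other in the signature $\langle \land, '\rangle$ in which the chosen identity fails; small finite groupoids equipped with a unary operation, located by computer search, typically suffice for this step.
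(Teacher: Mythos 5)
Your reductions are sound and they match the paper's architecture exactly: soundness is routine (the paper dispatches it by noting $\mathbf{2}$ satisfies both identities, which suffices since $\mathbb{BA}$ is generated by $\mathbf{2}$); the cyclic law $x\land(y\land z)=y\land(z\land x)=z\land(x\land y)$ follows from (A6) applied twice; (J2) follows from (J1) plus the cyclic law; and (J5) follows from (J5') plus (J4) by the substitution $x\mapsto x'$. You have also correctly located the crux — obtaining (J1) and (J4) from (A6) and (J5') alone — and correctly judged the order of attack (commutativity first, then involution), which is precisely the order the paper follows.

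But that crux is where your proposal stops, and it is the entire mathematical content of the theorem. You write that you ``expect'' a chain of auxiliary lemmas obtained by feeding chosen terms into the free variable of (J5') and rotating with (A6), but you do not produce any of them, and nothing in your text shows that such a chain exists or terminates in (J1). The paper needs roughly eighteen lemmas to get there (Lemmas \ref{42}--\ref{419}), passing through non-obvious intermediate identities such as $(x'\land y)'\land[(x'\land y')'\land z]=z\land x$ (Lemma \ref{49}), $(x\land y)\land z=(z\land y)\land x$ (Lemma \ref{416}), $[x\land(y'\land z')']\land(y'\land z)'=y\land x$ (Lemma \ref{417}), and $(x\land y)\land z=x\land(z\land y)$ (Lemma \ref{418}), before commutativity falls out (Lemma \ref{419}); involution then requires the further step of showing $x\land x'$ is constant (Lemma \ref{421}) so that $0$ can be introduced and used in the proof of $x''=x$ (Lemma \ref{424}). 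A proof that says ``the real work is to obtain a first foothold'' and then does not obtain it is an outline, not a proof: as it stands, the completeness direction is unproven. (Your closing remark on independence is a fair point — the paper defines a base as an \emph{independent} defining set but never verifies independence for this pair — and your finite-model method would settle it; that, however, does not repair the missing derivation of (J1) and (J4).)
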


   We now present a sequence of lemmas leading to the proof of the above theorem.
  But, first, we note that (A6) is clearly equivalent to the following identity:
    
    (A6') ${x \land ( y \land z )\approx z \land (x \land y)}$. 
    
    {\bf In the following lemmas we assume that} (A6') {\bf and} (J5') {\bf are given.}

\begin{lemma} \label{42}
    $x \land [(y \land z) \land u] = u \land [z \land (x \land y)]$.
\end{lemma}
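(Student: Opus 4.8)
The plan is to derive the identity using nothing but two applications of (A6'), since at this stage we do not yet have associativity at our disposal and must respect parenthesizations exactly. Recall that (A6') reads $x \land (y \land z) \approx z \land (x \land y)$; read from left to right, it is a ``cyclic rotation'' that pulls the rightmost factor of a product of three entries all the way to the front. The left-hand side $x \land [(y \land z) \land u]$ is already shaped like the left-hand side of (A6'), provided we treat the compound term $y \land z$ as a single entry.

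First I would apply (A6') to the outermost product, grouping the three factors as $x$, $(y \land z)$, and $u$. With these assignments, (A6') yields
\[
x \land [(y \land z) \land u] = u \land [x \land (y \land z)],
\]
moving $u$ to the front while leaving the inner block $x \land (y \land z)$ intact. Second, I would apply (A6') once more, this time to the inner subterm $x \land (y \land z)$ with factors $x$, $y$, $z$, obtaining $x \land (y \land z) = z \land (x \land y)$. Substituting this into the previous line gives
\[
u \land [x \land (y \land z)] = u \land [z \land (x \land y)],
\]
which is exactly the desired right-hand side, completing the argument.

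I do not expect a genuine obstacle here: the only care required is in bookkeeping the bracket structure, making sure each invocation of (A6') is matched to a genuinely three-fold product parenthesized on the right (i.e. of the form $\text{(factor)} \land (\text{(factor)} \land \text{(factor)})$), since we cannot reassociate freely. In particular, treating $y \land z$ as an atomic factor in the first step, and then ``unpacking'' it in the second step, is the whole trick. Because both steps are direct rewrites by the single hypothesis (A6'), no use of (J5') is needed for this lemma, and the result is purely equational.
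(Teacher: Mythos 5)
Your proof is correct and is essentially identical to the paper's own two-line argument: first apply (A6') to the outer product with $y\land z$ treated as a single factor to get $u \land [x \land (y \land z)]$, then rewrite the inner subterm by (A6') to obtain $u \land [z \land (x \land y)]$. No discrepancy to report.
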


\begin{proof}
\begin{align*}
   x \land [(y \land z) \land u] &= u \land [x \land (y \land z)] &\text{by (A6')}\\
                                 &= u\land [ z \land (x \land y)] &\text{by (A6')},
\end{align*}
which proves the lemma.
\end{proof}

\begin{lemma}\label{43}
    $(x \land y) \land (z \land u) = x \land [(y \land u) \land z]$.
\end{lemma}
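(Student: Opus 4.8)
The plan is to work entirely within the meagre toolkit available at this point: the cyclic identity (A6) (equivalently (A6')), which says that $a \land (b \land c)$ is unchanged under a cyclic rotation of $a,b,c$, together with Lemma~\ref{42}. In particular, neither commutativity nor the full associative law is yet at our disposal, so every legal move must either be a cyclic rotation of an expression already in the shape $a \land (b \land c)$, or an application of Lemma~\ref{42}. The strategy is therefore to nudge the left-hand side $(x \land y) \land (z \land u)$ by a single cyclic rotation into \emph{exactly} the right-hand shape of Lemma~\ref{42}, and then read that lemma backwards to land on the desired right-hand side.

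Concretely, I would first view the left-hand side as $A \land (z \land u)$ with $A := x \land y$ sitting in the outer-left slot, and apply (A6) to rotate it to $z \land (u \land A) = z \land [u \land (x \land y)]$. The key observation is that this is precisely a substitution instance of the right-hand side $u \land [z \land (x \land y)]$ of Lemma~\ref{42}: matching the innermost subproduct $x \land y$ in both expressions and renaming the two outer variables forces the instance
\[
   x \land [(y \land u) \land z] \;=\; z \land [u \land (x \land y)].
\]
Reading this equality from right to left then yields $(x \land y) \land (z \land u) = z \land [u \land (x \land y)] = x \land [(y \land u) \land z]$, which is the claim. Thus the entire proof reduces to one cyclic rotation followed by one invocation of Lemma~\ref{42}.

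The step I expect to be the only real obstacle is spotting that a single rotation of the (left-then-right) associated product $(x \land y) \land (z \land u)$ lands on the right-hand side of Lemma~\ref{42} rather than on something that still needs further untangling. Because we cannot commute factors or reassociate at will, the bookkeeping of \emph{which} variable occupies \emph{which} slot is exactly what makes the match work, and the fact that both sides carry the common innermost factor $x \land y$ is what forces the substitution. If the direct rotation failed to align, the fallback would be to apply (A6') a second time (a double rotation) and/or to feed the result through Lemma~\ref{42} in the forward direction; but the one-rotation route above already closes the argument cleanly.
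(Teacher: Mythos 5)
Your proof is correct and takes essentially the same approach as the paper's: both arguments use only cyclic rotations via (A6)/(A6') together with a single invocation of Lemma~\ref{42}, differing merely in bookkeeping (the paper rotates once, applies Lemma~\ref{42} forward, then rotates twice more; you rotate to the lemma's right-hand shape under a swap of $z$ and $u$ and read it backwards). Since (A6) is just a double application of (A6'), your use of (A6) in place of (A6') is harmless.
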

\begin{proof}
\begin{align*}
    (x \land y) \land (z \land u) 
             &= u \land [(x \land y) \land z] &\text{by (A6')}\\
             &= z \land [y \land (u \land x)] &\text{by Lemma \ref{42}}\\ 
             &= z \land [x \land (y \land u)] &\text{by (A6')}\\
             &= x \land [(y \land u) \land z] &\text{by (A6')},
\end{align*}
proving the lemma.
\end{proof}

\begin{lemma}\label{44}
    $x \land [(y \land z) \land u] = y \land [(z \land x) \land u]$.
\end{lemma}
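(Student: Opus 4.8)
The plan is to reduce both sides of the desired identity to one common term by applying Lemma~\ref{42} twice and then reconciling the two outcomes with (A6'). Reading Lemma~\ref{42} in the schematic form $a \land [(b \land c) \land d] = d \land [c \land (a \land b)]$, I would first instantiate it with $(a,b,c,d) = (x,y,z,u)$, so that the left-hand side $x \land [(y \land z) \land u]$ is rewritten as $u \land [z \land (x \land y)]$.

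Next I would apply the same lemma to the right-hand side, now under the cyclic relabeling $(a,b,c,d) = (y,z,x,u)$, which turns $y \land [(z \land x) \land u]$ into $u \land [x \land (y \land z)]$. It then remains only to check that the two results coincide, i.e. that $u \land [z \land (x \land y)] = u \land [x \land (y \land z)]$. This is immediate from (A6') itself, which in the form $x \land (y \land z) \approx z \land (x \land y)$ identifies the two inner factors $z \land (x \land y)$ and $x \land (y \land z)$. Chaining the three equalities yields the statement:
\begin{align*}
x \land [(y \land z) \land u] &= u \land [z \land (x \land y)] &\text{by Lemma~\ref{42}}\\
&= u \land [x \land (y \land z)] &\text{by (A6')}\\
&= y \land [(z \land x) \land u] &\text{by Lemma~\ref{42}}.
\end{align*}

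The only point requiring care is the bookkeeping of the (asymmetric) substitutions into Lemma~\ref{42}: the three argument positions play distinct roles, so an incorrect matching would generate a different term and break the chain. Beyond that there is no genuine obstacle here; in particular, Lemma~\ref{43} is not needed for this step, which is a short symmetric-reduction computation rather than a structural argument.
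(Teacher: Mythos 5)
Your proposal is correct and takes essentially the same approach as the paper: the paper's proof is the direct three-step chain of cyclic rotations $x \land [(y \land z) \land u] = u \land [x \land (y \land z)] = u \land [y \land (z \land x)] = y \land [(z \land x) \land u]$, each step by (A6'), and your two invocations of Lemma~\ref{42} simply repackage those same (A6')-rotations (Lemma~\ref{42} being nothing but (A6') applied twice), with one extra reconciliation step in the middle. Your substitution bookkeeping is accurate, so there is nothing to correct.
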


\begin{proof}
\begin{align*}
   x \land [(y \land z) \land u] 
                  &= u \land [x \land (y \land z)] &\text{by (A6')}\\
                  &= u \land [y \land (z \land x)] &\text{by (A6')}\\
                  &= y \land [(z \land x) \land u] &\text{by (A6')},
\end{align*}
hence the proof is complete.
\end{proof}

\begin{lemma} \label{45}
    $(x \land y) \land (z \land u) = z \land [(y \land (u \land x)]$.
\end{lemma}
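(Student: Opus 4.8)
The stated right-hand side appears to contain a stray parenthesis; I read the target as $(x \land y) \land (z \land u) = z \land [y \land (u \land x)]$.

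The plan is to derive this identity purely by equational rewriting, using only (A6') together with the already-established Lemmas \ref{43} and \ref{42}; the negation axiom (J5') plays no role here. The strategy is to first collapse the fully parenthesized left-hand side into a single ``outer-factor-times-bracket'' shape using Lemma \ref{43}, then re-associate the bracket with Lemma \ref{42} so that $z$ migrates to the front, and finally repair the inner factor with one application of (A6').

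Concretely, I would proceed as follows. First apply Lemma \ref{43} verbatim to obtain $(x \land y) \land (z \land u) = x \land [(y \land u) \land z]$. Next apply Lemma \ref{42}, whose shape is $a \land [(b \land c) \land d] = d \land [c \land (a \land b)]$, with the matching $a=x$, $b=y$, $c=u$, $d=z$; this yields $x \land [(y \land u) \land z] = z \land [u \land (x \land y)]$. Finally, rewrite the inner factor $u \land (x \land y)$ using (A6') (in the form $a \land (b \land c) = c \land (a \land b)$ with $a=u$, $b=x$, $c=y$) to get $u \land (x \land y) = y \land (u \land x)$, so that the whole expression becomes $z \land [y \land (u \land x)]$, as required.

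I do not expect any genuine obstacle: every step is a direct instance of an identity already in hand, and the derivation needs no induction or case analysis. The only point demanding care is the bookkeeping in the middle step, namely correctly matching the four variable slots of Lemma \ref{42} to the term $x \land [(y \land u) \land z]$, since a mismatched substitution there is the easiest way to introduce an error. A quick sanity check is that each of the three rewrites preserves the multiset of variables $\{x,y,z,u\}$ and merely permutes their positions and groupings.
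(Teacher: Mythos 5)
Your proof is correct. Reading the right-hand side as $z \land [y \land (u \land x)]$ (the extra parenthesis in the statement is indeed a typo), your chain
$(x \land y) \land (z \land u) = x \land [(y \land u) \land z]$ by Lemma \ref{43}, $= z \land [u \land (x \land y)]$ by Lemma \ref{42} with exactly the substitution you specify, $= z \land [y \land (u \land x)]$ by one application of (A6') inside the bracket, is valid; and since Lemmas \ref{42} and \ref{43} are established before this lemma and depend only on (A6'), there is no circularity. The paper, however, takes a different and self-contained route: it starts from the right-hand side and applies (A6') four times in succession, $z \land [y \land (u \land x)] = z \land [x \land (y \land u)] = z \land [u \land (x \land y)] = u \land [(x \land y) \land z] = (x \land y) \land (z \land u)$, never invoking the earlier lemmas. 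The two derivations in fact meet at the common intermediate term $z \land [u \land (x \land y)]$. Yours buys brevity and modularity by reusing previously proved consequences of (A6'); the paper's version buys independence, proving the identity from the axiom alone, which also makes it robust against any reshuffling of the lemma order. Since Lemmas \ref{42} and \ref{43} are themselves pure (A6')-consequences, the two proofs unfold to essentially the same underlying rewriting.
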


\begin{proof}
\begin{align*}
    z \land [y \land (u \land x)]
            &= z \land [x \land (y \land u)] &\text{by (A6')}\\
            &= z \land [u \land (x \land y)] &\text{by (A6')}\\
            &= u \land [(x \land y) \land z] &\text{by (A6')}\\
            &= (x \land y) \land (z \land u) &\text{by (A6')},
 \end{align*}
proving the lemma.
\end{proof}

\begin{lemma}\label{46}
    $x \land [(y \land z) \land u] = u \land [z \land (x \land y)]$.
\end{lemma}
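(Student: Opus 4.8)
The plan is to derive the right-hand side from the left-hand side by direct rewriting under (A6') alone; in fact the displayed equation is word-for-word the conclusion of Lemma \ref{42}, so a single citation of that lemma already suffices. I would nonetheless record the self-contained two-step derivation, since it uses only the permutation rule (A6') and appeals neither to (J5') nor to the intervening four-variable identities (Lemmas \ref{43} and \ref{45}).

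First I would strip off the outermost factor $u$ by applying (A6') to the outer bracketing, reading the instance $x \land (P \land u) = u \land (x \land P)$ with $P := y \land z$; this turns the left-hand side into $u \land [x \land (y \land z)]$. Then I would rewrite the remaining inner product once more via (A6'), namely $x \land (y \land z) = z \land (x \land y)$, which lands on $u \land [z \land (x \land y)]$, the desired right-hand side.

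There is essentially no obstacle to overcome here: the identity follows from two unconditional applications of the single permutation law (A6'), with no case analysis and no use of negation. The one point worth flagging is that this conclusion reproduces Lemma \ref{42} verbatim, which suggests either that it is being restated for convenient reference in the computations to follow or that a slightly different four-variable rearrangement was intended; under either reading, the equation as written is immediate from (A6').
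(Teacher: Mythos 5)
Your proof is correct and is essentially identical to the paper's own: both apply (A6') once to the outer structure to obtain $u \land [x \land (y \land z)]$ and once to the inner product to reach $u \land [z \land (x \land y)]$. Your observation that the statement duplicates Lemma \ref{42} verbatim (with the same proof) is also accurate.
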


\begin{proof}
\begin{align*}
    x \land [(y \land z) \land u]
                 &= u \land [x \land (y \land z)] &\text{by (A6')}\\
                 &= u \land [z \land (x \land y)] &\text{by (A6')}.
\end{align*}
Hence, the proof is complete.
\end{proof}

\begin{lemma}\label{47}
    $(x \land y) \land (z \land u) = y \land [(u \land x) \land z]$.
\end{lemma}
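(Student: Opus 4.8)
The plan is to derive the identity by a short chain of applications of (A6') together with one of the already-established lemmas, starting from the left-hand side $(x \land y) \land (z \land u)$ and rearranging the nesting so that (A6') applies cleanly at each step. Only (A6') (in the global form $p \land (q \land r) = r \land (p \land q)$) will be needed; (J5') plays no role in this particular lemma.

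First I would invoke Lemma~\ref{45}, which rewrites $(x \land y) \land (z \land u)$ as $z \land [y \land (u \land x)]$. This is the natural entry point because its right-hand side already exhibits the block $u \land x$ that appears, in the order $(u \land x) \land z$, in the target expression; thus only the outer nesting remains to be adjusted.

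Next I would apply (A6') to $z \land [y \land (u \land x)]$, reading the inner term $y \land (u \land x)$ as $q \land r$ with $q = y$ and $r = u \land x$, so that $p \land (q \land r) = r \land (p \land q)$ turns it into $(u \land x) \land (z \land y)$. A second application of (A6') to $(u \land x) \land (z \land y)$, now reading $z \land y$ as $q \land r$ with $q = z$ and $r = y$, produces $y \land [(u \land x) \land z]$, which is exactly the right-hand side. Schematically:
\begin{align*}
(x \land y) \land (z \land u)
   &= z \land [y \land (u \land x)] &\text{by Lemma~\ref{45}}\\
   &= (u \land x) \land (z \land y) &\text{by (A6')}\\
   &= y \land [(u \land x) \land z] &\text{by (A6')}.
\end{align*}

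There is no genuine obstacle here: once Lemma~\ref{45} is used to expose the block $u \land x$, the remainder is pure bookkeeping. The one point that requires a little care is matching each subterm to the pattern $p \land (q \land r)$ in the correct grouping, since a careless rotation can simply cycle back to the starting expression (for instance, repeatedly applying (A6') to $(x \land y) \land (z \land u)$ without first splitting off $u \land x$ returns to where one began). Choosing Lemma~\ref{45} as the point of departure is precisely what avoids that loop.
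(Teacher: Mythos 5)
Your proof is correct: every instantiation of (A6') (in the pattern $p \land (q \land r) = r \land (p \land q)$) checks out, and the appeal to Lemma~\ref{45} is legitimate, since that lemma precedes this one and is itself proved from (A6') alone, so there is no circularity. The paper takes a slightly different, self-contained route: it starts from the right-hand side $y \land [(u \land x) \land z]$ and applies (A6') three times in succession --- first rotating the outer term to get $z \land [y \land (u \land x)]$, then rotating the inner block to get $z \land [u \land (x \land y)]$, and finally rotating the outer term once more to reach $(x \land y) \land (z \land u)$ --- without invoking any earlier lemma. The two arguments are of exactly the same nature (rotation bookkeeping with (A6')); indeed, if you unfold Lemma~\ref{45} into its four (A6') steps, your chain becomes a six-step (A6') derivation, so the paper's version is the more economical one as a raw derivation, while yours is shorter on the page because it reuses work already done. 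Your observation that (J5') is not needed here also agrees with the paper.
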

\begin{proof}
\begin{align*}
   y \land [(u \land x) \land z]
             &= z \land [y \land (u \land x)] &\text{by (A6')}\\
             &=z \land [u \land (x \land y)] &\text{by (A6')}\\
             &= (x \land y) \land (z \land u) &\text{by (A6')},
\end{align*}
whence the lemma is proved. 
\end{proof}

\begin{lemma}\label{48}
    $(x \land y) \land (z \land u) = x \land [(y \land u) \land z]$.
\end{lemma}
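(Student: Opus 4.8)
The plan is to notice first that the asserted identity is, verbatim, the statement already established as Lemma~\ref{43}; so one perfectly legitimate route is simply to cite that lemma. To keep the argument self-contained and in the uniform style of this section, however, I would instead re-derive it directly from (A6') alone, mirroring the three-step rearrangements used in Lemmas~\ref{47} and \ref{45}. The only tool needed is (A6') in the form $a \land (b \land c) = c \land (a \land b)$, applied repeatedly with carefully chosen subterms.

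Concretely, I would start from the right-hand side $x \land [(y \land u) \land z]$ and proceed in three moves. First, reading (A6') with $a = x$, $b = y \land u$, and $c = z$, I rewrite it as $z \land [x \land (y \land u)]$. Second, I apply (A6') to the inner factor $x \land (y \land u)$, turning it into $u \land (x \land y)$, so the whole expression becomes $z \land [u \land (x \land y)]$. Third, one more application of (A6'), now with the outer grouping $z \land (u \land Q)$ where $Q = x \land y$, collapses this to $(x \land y) \land (z \land u)$, which is exactly the left-hand side. Chaining these equalities gives the lemma.

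Since each step is a single instance of (A6') and no auxiliary lemma is strictly required, I expect no genuine obstacle here. The one thing to watch is the bookkeeping of which subterm plays the role of each variable in (A6') at every stage, precisely as in the preceding lemmas; a careless reading of the bracketing is the sole place an error could slip in. As a built-in sanity check, the fact that this identity coincides with that of Lemma~\ref{43} — exactly as Lemma~\ref{46} coincides with Lemma~\ref{42} — confirms that the rearrangement has been carried out consistently.
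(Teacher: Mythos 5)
Your proof is correct and essentially the same as the paper's: the paper obtains $x \land [(y \land u) \land z] = z \land [u \land (x \land y)]$ by citing Lemma~\ref{42} (which is exactly your first two applications of (A6'), inlined) and then applies (A6') once more, which is precisely your third step. Your side observation is also accurate --- the statement is verbatim Lemma~\ref{43} (just as Lemma~\ref{46} repeats Lemma~\ref{42}), so citing that lemma would suffice.
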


\begin{proof}
\begin{align*}
    x \land [(y \land u) \land z] &= z \land [u \land (x \land y)] &\text{by Lemma \ref{42}} \\ 
                                  &= (x \land y) \land (z \land u) &\text{by (A6')}.
\end{align*}
Hence the proof is complete.
\end{proof}

\begin{lemma}\label{49}
    $(x' \land y)' \land [(x' \land y')' \land z] = z \land x$.
\end{lemma}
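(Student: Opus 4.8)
The plan is to read the left-hand side as a single instance of the pattern appearing in (A6') and then collapse the resulting inner conjunction by means of (J5'). The key observation is that (J5') asserts precisely that $(x' \land y)' \land (x' \land y')' = x$; hence the two outermost factors on the left-hand side, namely $(x' \land y)'$ and $(x' \land y')'$, are exactly the pair whose meet (in that order) is $x$. So all that is needed is to maneuver these two factors next to each other while pulling $z$ out in front, which is a single application of the associative-type identity.

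First I would apply (A6') to the whole left-hand side. Reading (A6') in the form $u \land (v \land w) \approx w \land (u \land v)$ and setting $u := (x' \land y)'$, $v := (x' \land y')'$, and $w := z$, the identity rewrites
\[
(x' \land y)' \land [(x' \land y')' \land z] = z \land [(x' \land y)' \land (x' \land y')'].
\]
Second, the inner bracket $(x' \land y)' \land (x' \land y')'$ is exactly the right-hand side of (J5'), which equals $x$; substituting this in yields $z \land x$, as claimed.

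There is essentially no obstacle here: the lemma is a two-line consequence once the left-hand side is recognized as matching the shape of (A6'). The only point requiring a moment of care is to apply (A6') in the correct orientation, so that it is $z$ (rather than one of the primed terms) that is brought to the front; the mirrored form (A6') recorded just before the lemmas is the convenient version for this, and no appeal to the more elaborate Lemmas \ref{42}--\ref{48} is needed.
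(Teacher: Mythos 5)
Your proposal is correct and is essentially identical to the paper's own proof: one application of (A6') in the form $x \land (y \land z) \approx z \land (x \land y)$ with the substitution $x \mapsto (x' \land y)'$, $y \mapsto (x' \land y')'$, $z \mapsto z$, followed by collapsing the inner bracket via (J5'). The paper likewise makes no use of Lemmas \ref{42}--\ref{48} here.
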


\begin{proof}
\begin{align*}
    (x' \land y)' \land [(x' \land y')' \land z]
             &= z \land [(x' \land y)' \land (x' \land y')'] &\text{by (A6')}\\
             &= z \land x &\text{by (J5')},
\end{align*}
proving the lemma.
\end{proof}

\begin{lemma}\label{410}
    $[x \land (y' \land z)'] \land [u \land (y' \land z')'] = x \land (y \land u)$.
\end{lemma}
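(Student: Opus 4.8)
The plan is to exploit the shape of the two ``complicated'' conjuncts on the left-hand side. Observe that $(y' \land z)'$ and $(y' \land z')'$ are exactly the two factors occurring on the right of (J5'): substituting $x \mapsto y$ and $y \mapsto z$ in (J5') gives $(y' \land z)' \land (y' \land z')' \approx y$. Thus, if the left-hand side can be regrouped so that these two factors sit together inside a single product $(y' \land z)' \land (y' \land z')'$, then a single application of (J5') will collapse that block to $y$ and produce the target $x \land (y \land u)$.

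The required regrouping is furnished by the rearrangement lemmas already established. Abbreviating $a := (y' \land z)'$ and $b := (y' \land z')'$, the left-hand side is $[x \land a] \land [u \land b]$, which has the form $(x \land y) \land (z \land u)$. Applying Lemma \ref{43} (equivalently Lemma \ref{48}) with the matching $y \mapsto a$, $z \mapsto u$, $u \mapsto b$ yields $[x \land a] \land [u \land b] = x \land [(a \land b) \land u]$, so that $a$ and $b$ become adjacent, exactly as wanted. This single step is the crux of the argument, since all the earlier lemmas exist precisely to make such a regrouping available from (A6') alone.

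Concretely, I would write the computation in two lines: first invoke Lemma \ref{43} to reach $x \land \{[(y' \land z)' \land (y' \land z')'] \land u\}$, then apply (J5') to rewrite the inner bracket as $y$, arriving at $x \land (y \land u)$. The one place demanding care, and the likeliest source of a slip, is the variable bookkeeping in both steps: one must confirm that the two factors are matched to the correct slots of (J5') (namely $x \mapsto y$, $y \mapsto z$) so that their product is genuinely $y$ and not some other variable. Once this matching is verified there is no remaining obstacle, and the lemma follows from the two indicated rewrites.
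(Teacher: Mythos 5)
Your proof is correct and is essentially the paper's own argument: the paper likewise applies Lemma \ref{48} (which, as you note, states the same identity as Lemma \ref{43}) to regroup the left-hand side as $x \land [\{(y' \land z)' \land (y' \land z')'\} \land u]$, and then collapses the inner block to $y$ by (J5') with the substitution $x \mapsto y$, $y \mapsto z$. Your variable bookkeeping in both steps checks out exactly.
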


\begin{proof}
\begin{align*}
    [x \land (y' \land z)'] \land [u \land (y' \land z')']
         &= x \land [ \{(y' \land z)' \land (y' \land z')'\} \land u] &\text{by Lemma \ref{48}}\\         
         &= x \land (y \land u) &\text{by (J5')},
\end{align*}
completing the proof.
\end{proof}

\begin{lemma}\label{411}
    $[(x \land y) \land z] \land u = (u' \land t)' \land [(x \land y) \land \{z \land (u' \land t')'\}]$.
\end{lemma}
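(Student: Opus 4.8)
The plan is to work from the right-hand side and reduce it to the left-hand side, using only (A6') to rearrange the grouping and then a single appeal to Lemma~\ref{49}. The key observation is that Lemma~\ref{49}, after renaming its variables $x,y,z$ to $u,t,s$, says that prefixing any term $s$ by the two factors $(u' \land t)'$ and $(u' \land t')'$ in that particular nested pattern collapses to $s \land u$. So the whole strategy is to massage the right-hand side into exactly that shape, with $s = (x \land y) \land z$.

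First I would focus on the inner bracket on the right, namely $(x \land y) \land \{z \land (u' \land t')'\}$, and apply (A6') in the form $a \land (b \land c) = c \land (a \land b)$ with $a = x \land y$, $b = z$, and $c = (u' \land t')'$. This pulls the factor $(u' \land t')'$ out to the front, turning the inner bracket into $(u' \land t')' \land \{(x \land y) \land z\}$. Substituting this back, the right-hand side becomes
\[
(u' \land t)' \land \big[(u' \land t')' \land \{(x \land y) \land z\}\big].
\]

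At this point the expression matches the left-hand side of Lemma~\ref{49} under the substitution $x \mapsto u$, $y \mapsto t$, $z \mapsto (x \land y) \land z$. Applying that lemma collapses the whole expression to $\{(x \land y) \land z\} \land u = [(x \land y) \land z] \land u$, which is precisely the left-hand side of the lemma, completing the argument.

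I do not expect a genuine obstacle here, since the proof is just one structural rearrangement via (A6') followed by one invocation of Lemma~\ref{49}. The only point that requires care is carrying out the variable substitution into Lemma~\ref{49} correctly: the compound term $(x \land y) \land z$ must be recognized as playing the role of the single variable $z$ in that lemma, while $u$ and $t$ take the roles of $x$ and $y$.
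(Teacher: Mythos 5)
Your proof is correct and is essentially the paper's own argument read in the opposite direction: the paper starts from the left-hand side, applies Lemma~\ref{49} to introduce the factors $(u'\land t)'$ and $(u'\land t')'$, and then shuffles the inner bracket with (A6'), whereas you start from the right-hand side, do one (A6') shuffle, and then collapse with Lemma~\ref{49}. Your version is in fact slightly more economical (the paper's chain contains two (A6') steps that cancel each other), but the key ingredients and their roles are identical.
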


\begin{proof}
\begin{align*}
    [(x \land y) \land z] \land u
           &= (u' \land t)' \land [(u' \land t')' \land \{(x \land y) \land z \}]
                          &\text{by Lemma \ref{49}}\\   
          &= (u' \land t)' \land [z \land \{(u' \land t')' \land (x \land y)\}]
           &\text{by (A6')}\\
           &= (u' \land t)' \land [z \land \{x \land (y \land (u' \land t')')\}]
           &\text{by (A6')}\\
           &= (u' \land t)' \land [z \land\{(u' \land t')' \land (x \land y )\}]
           &\text{by (A6')}\\
           &= (u' \land t)' \land [(x \land y) \land \{z \land (u' \land t')'\}]
           &\text{by (A6')},
\end{align*}
which proves the lemma.
\end{proof}

\begin{lemma}\label{412}
  $[(x \land y) \land z] \land u = x \land [u \land (z \land y)]$.  
\end{lemma}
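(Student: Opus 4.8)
The plan is to begin from Lemma~\ref{411}, which has already been engineered to expose two ``complementary'' subterms. Abbreviating $A := (u' \land t)'$ and $B := (u' \land t')'$, Lemma~\ref{411} reads $[(x \land y) \land z] \land u = A \land [(x \land y) \land (z \land B)]$, and by (J5') these subterms satisfy $A \land B = u$. I would next observe that the right-hand side we are after, $x \land [u \land (z \land y)]$, is precisely the conclusion of Lemma~\ref{410} with its free parameters specialized to $x$ and $z \land y$; concretely,
\[
 [x \land A] \land [(z \land y) \land B] \;=\; x \land [u \land (z \land y)].
\]
Hence it suffices to establish the purely associative-type identity
\[
 A \land [(x \land y) \land (z \land B)] \;=\; [x \land A] \land [(z \land y) \land B],
\]
which no longer refers to complementation and should follow from (A6') alone.

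For that rearrangement I would first repackage the inner factor. Using Lemma~\ref{45} followed by one application of (A6') one checks that $(x \land y) \land (z \land B) = (B \land x) \land (z \land y)$, so the left-hand side becomes $A \land [(B \land x) \land (z \land y)]$. A single cyclic rotation by (A6') moves the block $(z \land y)$ to the front, yielding $(z \land y) \land [A \land (B \land x)]$. Finally, reading Lemma~\ref{45} backwards (with $p = x$, $q = A$, $r = z \land y$, $s = B$) recognizes $(z \land y) \land [A \land (B \land x)]$ as the product of pairs $[x \land A] \land [(z \land y) \land B]$, which completes the displayed identity. Chaining Lemma~\ref{411}, this rearrangement, and Lemma~\ref{410} then delivers the claim.

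The hard part will not be any single calculation but the choice of \emph{how} to recombine $A$ and $B$. The most tempting route---regrouping $(B \land x) \land (z \land y)$ by Lemma~\ref{43} into $B \land [(x \land y) \land z]$ and then collapsing $A \land (B \land \,\cdot\,)$ by Lemma~\ref{49}---merely reverses Lemma~\ref{411} and returns us to the original left-hand side, so it is circular and must be avoided. The reason for routing the recombination through Lemma~\ref{410} instead is that it consumes $A$ and $B$ in a \emph{different} grouping and, crucially, eliminates the auxiliary variable $t$ introduced in Lemma~\ref{411}, leaving a term in $x, y, z, u$ only. Keeping this in view, and arranging the (A6')-rotations so that the expression lands in exactly the product-of-pairs shape required by Lemma~\ref{410}, is the only genuine subtlety.
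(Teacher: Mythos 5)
Your proof is correct and takes essentially the same route as the paper: both open with Lemma~\ref{411} to introduce the complementary factors $(u' \land t)'$ and $(u' \land t')'$, and both close by letting Lemma~\ref{410} absorb them, which (as you note) is exactly what eliminates the auxiliary variable $t$. The only difference is the intermediate shuffle---the paper applies Lemma~\ref{44} once and then finishes with two (A6$'$) rotations \emph{after} invoking Lemma~\ref{410}, whereas you rearrange via Lemma~\ref{45} and (A6$'$) \emph{before} invoking it, so that Lemma~\ref{410} lands directly on $x \land [u \land (z \land y)]$; this is a cosmetic reordering of the same argument.
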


\begin{proof} 
\begin{align*}
    [(x \land y) \land z] \land u \\
        &{\hspace{-2cm}}=(u' \land t)' \land [(x \land y) \land \{z \land (u' \land t')' \}] &\text{by Lemma \ref{411}}\\  
        &{\hspace{-2cm}}= x \land [\{y \land (u' \land t)'\} \land \{z \land (u' \land t')' \}  &\text{by Lemma \ref{44}}\\ 
        &{\hspace{-2cm}}= x \land [y \land (u \land z)] &\text{by Lemma \ref{410}}\\ 
        &{\hspace{-2cm}}= x \land [z \land (y \land u)] &\text{by (A6')}\\
        &{\hspace{-2cm}}= x \land [u \land (z \land y)] &\text{by (A6')}, 
\end{align*}
completing the proof.
\end{proof}

\begin{lemma} \label{412a}
    $(x' \land y)' \land [z \land \{(x' \land y')' \land u \}] = (u \land z) \land x$.
\end{lemma}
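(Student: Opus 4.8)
The target right-hand side is $(u \land z) \land x$, and essentially the only tool that extracts a bare variable $x$ from the two negated subterms $(x' \land y)'$ and $(x' \land y')'$ is (J5'), already packaged conveniently as Lemma \ref{49}: namely $(x' \land y)' \land [(x' \land y')' \land w] = w \land x$ for any $w$. Read from right to left with $w := u \land z$, this identity rewrites the desired right-hand side $(u \land z) \land x$ as $(x' \land y)' \land [(x' \land y')' \land (u \land z)]$. So the plan is to transform the given left-hand side into this expression, after which Lemma \ref{49} closes the argument.

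Comparing the left-hand side $(x' \land y)' \land [z \land \{(x' \land y')' \land u\}]$ with the target $(x' \land y)' \land [(x' \land y')' \land (u \land z)]$, the outer factor $(x' \land y)'$ already matches, so it suffices to show that the two bracketed inner terms coincide, i.e. $z \land \{(x' \land y')' \land u\} = (x' \land y')' \land (u \land z)$. This is a single instance of (A6'), $a \land (b \land c) = c \land (a \land b)$, taken with $a := (x' \land y')'$, $b := u$, $c := z$; indeed, that substitution yields $(x' \land y')' \land (u \land z) = z \land \{(x' \land y')' \land u\}$ directly. Thus one application of (A6') reassociates the inner term as needed.

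Putting the two steps together: first apply (A6') once inside the bracket to convert $z \land \{(x' \land y')' \land u\}$ into $(x' \land y')' \land (u \land z)$, then invoke Lemma \ref{49} with its bound variable replaced by $u \land z$ to collapse $(x' \land y)' \land [(x' \land y')' \land (u \land z)]$ to $(u \land z) \land x$. I expect the only real obstacle to be recognizing that Lemma \ref{49} is the right instrument and that the right-hand side should be read as an instance of it; once that observation is made, the verification is a single reassociation via (A6'), so the whole proof should occupy just two lines of display computation.
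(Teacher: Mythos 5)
Your proof is correct and uses only material available before Lemma \ref{412a}: the instance of (A6') with $p:=(x'\land y')'$, $q:=u$, $r:=z$ gives $(x'\land y')'\land(u\land z)=z\land\{(x'\land y')'\land u\}$, replacement of equals inside the outer term is legitimate in equational reasoning, and Lemma \ref{49} with its variable instantiated to $u\land z$ then yields $(u\land z)\land x$. However, your route differs from the paper's. The paper instead invokes Lemma \ref{45}, $(x\land y)\land(z\land u)=z\land[y\land(u\land x)]$, read from right to left under the substitution $x\mapsto u$, $y\mapsto z$, $z\mapsto(x'\land y)'$, $u\mapsto(x'\land y')'$, which transforms the whole left-hand side in one stroke into $(u\land z)\land[(x'\land y)'\land(x'\land y')']$, and then collapses the right factor by (J5'). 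So the difference is which previously proved permutation lemma carries the load: you keep the factor $(x'\land y)'$ fixed and move $z$ inward so that the already-packaged (J5')-collapse of Lemma \ref{49} applies, whereas the paper permutes all four factors at once and applies (J5') directly. Both are two-line proofs, but yours is slightly leaner when fully unrolled --- it costs two applications of (A6') plus one of (J5') (one (A6') explicit, one hidden in Lemma \ref{49}), while Lemma \ref{45} is itself a chain of four (A6') applications; on the other hand, the paper's choice makes the final (J5') step visible in the proof text rather than burying it in a cited lemma.
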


\begin{proof}
\begin{align*}
    (x' \land y)' \land [z \land \{(x' \land y')' \land u \}]\\
               &= (u \land z) \land [(x' \land y)' \land (x' \land y')'] \text{ by Lemma \ref{45}}\\ 
               &= (u \land z) \land x  \quad \text{  by (J5')}
\end{align*}
\end{proof}

\begin{lemma}\label{413}
    $(x \land y) \land z = x \land [\{y \land (z' \land u')'\} \land (z' \land u)']$.
\end{lemma}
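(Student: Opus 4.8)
The plan is to read the right-hand side as a disguised instance of Lemma~\ref{43}, after first resolving the mismatch in variables by expanding $z$ through (J5'). The key observation is that the extra variable $u$ appearing on the right but not on the left is exactly the kind of variable (J5') introduces, so (J5') should be the tool that creates it. Concretely, I would apply the instance of (J5') obtained by substituting $z$ for $x$ and $u$ for $y$, giving $z = (z' \land u)' \land (z' \land u')'$, and rewrite the left-hand side as $(x \land y) \land z = (x \land y) \land [(z' \land u)' \land (z' \land u')']$.

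With the left-hand side in this form, the second argument is now the product of two factors, so Lemma~\ref{43}, namely $(x \land y) \land (z \land u) = x \land [(y \land u) \land z]$, applies directly. I would substitute $(z' \land u)'$ for the third argument of Lemma~\ref{43} and $(z' \land u')'$ for the fourth argument. This transforms $(x \land y) \land [(z' \land u)' \land (z' \land u')']$ into $x \land [\{y \land (z' \land u')'\} \land (z' \land u)']$, which is precisely the claimed right-hand side. Thus the whole argument is a two-line computation: one application of (J5') followed by one application of Lemma~\ref{43}.

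The only point that genuinely requires care is tracking the order of the two factors produced by (J5'): the identity yields $z = (z' \land u)' \land (z' \land u')'$ with $(z' \land u)'$ as the \emph{first} factor, and Lemma~\ref{43} then pairs $y$ with the \emph{second} factor $(z' \land u')'$ while moving the first factor $(z' \land u)'$ outward to the right. Verifying that these slots line up correctly is the one place where a sign/order error could creep in, but once the substitution pattern of Lemma~\ref{43} is matched against the target, everything fits. I expect no substantive obstacle here, since all the associativity-type rearrangements that would otherwise be needed are already absorbed into Lemma~\ref{43}; no fresh appeal to (A6') is required beyond what that lemma has already packaged.
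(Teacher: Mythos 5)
Your proof is correct, and it rests on the same underlying idea as the paper's---the extra variable $u$ is created by replacing $z$ with its (J5') expansion $(z' \land u)' \land (z' \land u')'$, after which one packaged rearrangement identity finishes the job---but the two proofs route through different lemmas. The paper does not substitute (J5') in place: it first applies Lemma~\ref{49} read right to left, obtaining $(x \land y) \land z = (z' \land u)' \land [(z' \land u')' \land (x \land y)]$ with the expansion already commuted to the left of $x \land y$, and then finishes with Lemma~\ref{42}. You keep $x \land y$ on the left, writing $(x \land y) \land z = (x \land y) \land [(z' \land u)' \land (z' \land u')']$ by a plain substitution instance of (J5'), and then apply Lemma~\ref{43} with its last two variables instantiated as $(z' \land u)'$ and $(z' \land u')'$, which yields $x \land [\{y \land (z' \land u')'\} \land (z' \land u)']$ exactly. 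Your bookkeeping at the delicate point is right: Lemma~\ref{43} pairs $y$ with the second factor $(z' \land u')'$ and moves the first factor $(z' \land u)'$ to the outer right slot, matching the target. The net effect is the same two-step computation; your version is marginally more economical (one derived lemma instead of two), while the paper's version leans on Lemma~\ref{49}, a (J5')-plus-(A6') package it reuses elsewhere, so neither has a real advantage beyond taste.
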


\begin{proof}
\begin{align*}
    (x \land y) \land z &= (z' \land u)' \land [(z' \land u')' \land (x \land y)] &\text{ by Lemma \ref{49}} \\      
         &= x \land [\{y \land (z' \land u')'\} \land (z' \land u)'] &\text{ by Lemma \ref{42}},  
\end{align*}
yielding the proof.
\end{proof}

\begin{lemma}\label{414}
    $(x' \land y)' \land [\{(x' \land y')' \land z\} \land u] = z \land (x \land u)$.
\end{lemma}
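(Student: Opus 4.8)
The plan is to mirror the strategy already used in Lemma~\ref{49} and Lemma~\ref{412a}: the two negated factors $(x' \land y)'$ and $(x' \land y')'$ must be brought next to each other so that (J5') collapses their product to $x$, after which a single application of (A6') rearranges the three surviving variables into the target shape $z \land (x \land u)$. The only real decision is which rearrangement lemma to use to achieve the regrouping without disturbing the positions of $z$ and $u$.

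First I would apply Lemma~\ref{42} to the left-hand side. Reading that lemma as $x \land [(y \land z) \land u] = u \land [z \land (x \land y)]$ and substituting $x \mapsto (x' \land y)'$, $y \mapsto (x' \land y')'$, with $z$ and $u$ unchanged, the left-hand side $(x' \land y)' \land [\{(x' \land y')' \land z\} \land u]$ is rewritten as $u \land [z \land \{(x' \land y)' \land (x' \land y')'\}]$. This is precisely the arrangement that isolates the two negated factors as the innermost product, which is the whole point of invoking Lemma~\ref{42} rather than trying to apply (A6') directly to the nested expression.

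Next I would invoke (J5') in the form $(x' \land y)' \land (x' \land y')' = x$ to replace that innermost product by $x$, leaving $u \land (z \land x)$. Finally, (A6') — which in the instance $z \land (x \land u) = u \land (z \land x)$ reverses exactly this expression — yields the claimed right-hand side $z \land (x \land u)$, completing the argument.

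I do not expect a serious obstacle: the proof is a three-line chain Lemma~\ref{42} $\to$ (J5') $\to$ (A6'). The one point demanding care is matching the parenthesization of the hypothesis to Lemma~\ref{42} so that $(x' \land y)'$ genuinely plays the role of the leftmost factor and $z$, $u$ land in their intended slots; once that substitution is set up correctly, the remainder is mechanical.
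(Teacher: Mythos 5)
Your proof is correct and follows essentially the same route as the paper: both rearrange the left-hand side into $u \land [z \land \{(x' \land y)' \land (x' \land y')'\}]$, collapse the inner product to $x$ by (J5'), and finish by reading (A6') as $z \land (x \land u) = u \land (z \land x)$. The only difference is that you reach the key intermediate form in a single step by citing Lemma \ref{42} (which is indeed available at this point), whereas the paper re-derives the same rearrangement through several explicit applications of (A6'); your version is a legitimate shortening, not a different argument.
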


\begin{proof}
\begin{align*}
    (x' \land y)' \land [\{(x' \land y')' \land z\} \land u]
          &= u \land [(x' \land y)' \land \{(x' \land y')'  \land z\} &\text{ by (A6')}\\
          &=(x' \land y')' \land [\{z \land (x' \land y)'\} \land u]  &\text{ by (A6')}\\
          &= u \land [(x' \land y')' \land \{z \land (x' \land y)'\}] &\text{ by (A6')}\\ 
          &= u \land [(x' \land y)' \land \{(x' \land y')' \land z\}] &\text{ by (A6')}\\            
          &= u \land [z \land \{(x' \land y)' \land (x' \land y')'\}]  &\text{ by (A6')}\\                    
          &= u \land (z \land x) &\text{ by (B2)}\\
          &= x \land (u \land z) &\text{ by (A6')}\\
          &= z \land (x \land u) &\text{ by (A6')},
\end{align*}
proving the lemma.
\end{proof}

\begin{lemma}\label{415}
    $[(x \land y) \land z] \land u = x \land [z \land (y \land u)]$.
\end{lemma}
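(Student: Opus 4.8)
The plan is to build directly on Lemma~\ref{412}, which already expresses the left-hand side in the shape $x \land [\,\cdots\,]$, so that only the innermost grouping needs to be rearranged. Concretely, Lemma~\ref{412} gives
\[
[(x \land y) \land z] \land u = x \land [u \land (z \land y)],
\]
whereas the target of Lemma~\ref{415} has the form $x \land [z \land (y \land u)]$. Keeping the outer $x$ fixed, it therefore suffices to show that the two inner factors agree, namely that $u \land (z \land y) = z \land (y \land u)$.

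This last equality is an immediate instance of (A6'). Reading (A6') as $a \land (b \land c) = c \land (a \land b)$ with the substitution $a := z$, $b := y$, $c := u$ yields $z \land (y \land u) = u \land (z \land y)$, which is exactly the required identity (read right-to-left). Substituting this back into the bracket produced by Lemma~\ref{412} then converts $x \land [u \land (z \land y)]$ into $x \land [z \land (y \land u)]$, closing the chain.

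Thus the whole argument reduces to a two-step computation: first invoke Lemma~\ref{412} to pull everything into the form $x \land [u \land (z \land y)]$, and then apply a single instance of (A6') to the interior parenthesization. I expect no real obstacle here, since the bulk of the combinatorial rearrangement has already been absorbed into the earlier lemmas (especially Lemmas~\ref{411} and~\ref{412}); the only point requiring care is matching the variable pattern of (A6') correctly, so that the $u$ migrating inward lands in the intended position. An alternative route starting from the ``balanced'' form $(x \land y) \land (z \land u)$ via Lemma~\ref{48} seems possible but less direct, because the left-hand side here is the left-nested product $[(x \land y) \land z] \land u$ rather than a product of two pairs; so I would favour the Lemma~\ref{412} approach outlined above.
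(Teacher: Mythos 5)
Your proposal is correct and follows essentially the same route as the paper: both start from Lemma~\ref{412} to obtain $x \land [u \land (z \land y)]$ and then rearrange the inner bracket using (A6'). The only (immaterial) difference is that you use a single instance of (A6') read right-to-left, where the paper chains two forward applications of (A6') (and in fact the paper's displayed proof contains a superfluous final line returning to $x \land [u \land (z \land y)]$, evidently a typo).
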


\begin{proof}
\begin{align*}
     [(x \land y) \land z] \land u 
           &= x \land [u \land (z \land y)] &\text{ by Lemma \ref{412}} \\ 
           &= x \land [y \land (u \land z)] &\text{ by (A6')}\\
           &= x \land [z \land (y \land u)] &\text{ by (A6')}\\
           &= x \land [u \land (z \land y)] &\text{ by (A6')},
\end{align*}
whence the lemma is proved.
\end{proof}

\begin{lemma}\label{416}
    $(x \land y) \land z = (z \land y) \land x$.
\end{lemma}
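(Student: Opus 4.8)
The statement to be proved is the associative-type identity (A14), namely $(x\land y)\land z\approx(z\land y)\land x$. The plan is to start from a four-variable identity that is already available and then shed the auxiliary variable. First I would apply Lemma~\ref{412}, which gives $[(x\land y)\land z]\land u=x\land[u\land(z\land y)]$, and then rewrite the right-hand side by (A6$'$) in the form $a\land(b\land c)=c\land(a\land b)$, taking $a=x$, $b=u$, $c=z\land y$, to obtain
\[
[(x\land y)\land z]\land u=(z\land y)\land(x\land u).
\]
This is encouraging: the right-hand side already displays the target pattern $(z\land y)\land(\,\cdot\,)$, and the whole obstruction is the spurious occurrence of $u$, sitting as the trailing factor $\land u$ on the left and inside $x\land u$ on the right. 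So the problem is reduced to eliminating $u$.

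The second step is precisely that elimination, and the mechanism is (J5$'$) together with the absorption lemmas, Lemma~\ref{49} and Lemma~\ref{410}, whose role throughout this section is to fuse a matched pair of Johnson gadgets $(x'\land t)'$ and $(x'\land t')'$ back into the single variable $x$ via $x=(x'\land t)'\land(x'\land t')'$. Accordingly I would specialize $u$ to such a gadget and re-expand the surviving factor by Lemma~\ref{413}, which inserts exactly a gadget pair for the outer variable of a left-heavy triple; the aim is to arrange matters so that the pair introduced by Lemma~\ref{413} is the very pair consumed by Lemma~\ref{410}. When the two gadgets meet in the adjacency and order demanded by (J5$'$) they collapse, the auxiliary variable disappears, $x\land u$ reduces to $x$, and one is left with $(z\land y)\land x$.

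The delicate point, and the step I expect to be the main obstacle, is the bookkeeping that brings these gadgets into the exact position required by (J5$'$) and Lemma~\ref{410}. In this section neither the law of double negation $x''\approx x$ nor any form of commutativity is yet available, so $(x'\land t)'$ and $(x'\land t')'$ fuse to $x$ only when they are adjacent and in the correct left-to-right order; a pair that appears reversed, or separated by another factor, cannot simply be recombined. Since the only permutation tool on hand is the cyclic rule (A6$'$), which rotates a right-associated triple but never transposes two of its factors, together with the product-of-products rearrangements (for instance Lemma~\ref{48}), routing the terms so that the gadgets end up correctly placed will require a carefully chosen sequence of (A6$'$) rotations. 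Once that alignment is achieved the collapse is immediate, and everything else is a mechanical application of (A6$'$).
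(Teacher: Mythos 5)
Your first step is genuine progress and is correct: Lemma~\ref{412} followed by one rotation does give $[(x\land y)\land z]\land u=(z\land y)\land(x\land u)$. But everything after that is a plan, not a proof, and the plan is precisely the part that carries all the content of the lemma. You acknowledge that the ``bookkeeping'' which brings the two gadgets into the adjacency and order demanded by (J5$'$) is the main obstacle, and you never exhibit it; moreover, the mechanism you name does not assemble. Lemma~\ref{413} introduces the gadget pair of the \emph{third} variable of a left-heavy triple, split across two nesting levels, while Lemma~\ref{410} consumes a pair sitting in the \emph{second} slots of a product of two products; rotating the output of Lemma~\ref{413} with (A6$'$) and then collapsing with Lemma~\ref{49} or~\ref{410} merely undoes the expansion and returns $(x\land y)\land z$. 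The same circularity afflicts the natural instantiation of your displayed identity: put $x\mapsto(x'\land t)'$ and $u\mapsto(x'\land t')'$, so the right side collapses by (J5$'$) to $(z\land y)\land x$; you must then show $[((x'\land t)'\land y)\land z]\land(x'\land t')'=(x\land y)\land z$, and the obvious tools (Lemma~\ref{412} followed by Lemma~\ref{49}) turn this back into $(z\land y)\land x$, proving only $(z\land y)\land x=(z\land y)\land x$. The structural point is one you half-state yourself: (A6$'$) preserves the cyclic order of the atoms of a right-associated triple, so no sequence of rotations and matched collapses that simply undo one another can ever produce the transposition $x\leftrightarrow z$; a genuinely four-variable regrouping must intervene between expansion and collapse.

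The regrouping that works is exactly what the paper supplies and your sketch omits: Lemma~\ref{412a} (absorption with the two gadgets \emph{separated}, one at the head and one nested inside) together with Lemma~\ref{415}, $[(a\land b)\land c]\land d=a\land[c\land(b\land d)]$. The paper expands $(z\land y)\land x$ by Lemma~\ref{412a}, applies Lemma~\ref{415} right-to-left so that $(x'\land y)'$ and $(x'\land y')'$ become adjacent in the correct order, and collapses by (J5$'$). Your own setup can be finished the same way: from $[((x'\land t)'\land y)\land z]\land(x'\land t')'$ apply Lemma~\ref{412}, rotate the inner triple twice by (A6) to get $(x'\land t)'\land[y\land\{(x'\land t')'\land z\}]$, then apply Lemma~\ref{415} right-to-left to obtain $[\{(x'\land t)'\land(x'\land t')'\}\land y]\land z=(x\land y)\land z$. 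That missing chain --- not the reduction you did carry out --- is the proof, so as it stands the proposal has a genuine gap.
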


\begin{proof}
\begin{align*}
   (z \land y) \land x
            &= (x' \land y)' \land [y \land \{(x' \land y')' \land z\}] &\text{ by  Lemma \ref{412a}}\\        
        &=[\{(x' \land y)' \land (x' \land y')'\} \land y] \land z  &\text{ by  Lemma \ref{415}}\\ 
        &= (x \land y) \land z &\text{ by (J5')},
\end{align*}
which proves the lemma.
\end{proof}

\begin{lemma} \label{417}
    $[x \land (y' \land z')'] \land (y' \land z)' = y \land x$.
\end{lemma}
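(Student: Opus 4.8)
The plan is to observe that the right-hand side $y \land x$ emerges almost immediately once the left-hand side is regrouped so that the two negated terms $(y' \land z)'$ and $(y' \land z')'$ become adjacent: by (J5'), applied with $y$ playing the role of the free variable $x$ and $z$ playing the role of $y$, one has $y \approx (y' \land z)' \land (y' \land z')'$, so any adjacent occurrence of that pair collapses to $y$. Thus the whole argument should reduce to a single structural reshuffle followed by one application of (J5').

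Concretely, I would abbreviate $p := (y' \land z')'$ and $q := (y' \land z)'$, so that the left-hand side is $(x \land p) \land q$. The one structural move needed is Lemma \ref{416}, namely $(a \land b) \land c = (c \land b) \land a$, applied with $a = x$, $b = p$, and $c = q$; this rewrites $(x \land p) \land q$ as $(q \land p) \land x$. Note that this already puts the two negated terms side by side and, crucially, in the correct order $q \land p = (y' \land z)' \land (y' \land z')'$, which matches the right-hand side of (J5') verbatim.

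Finally I would evaluate the inner product: by (J5') (with the substitution just described) we get $q \land p = y$, whence $(q \land p) \land x = y \land x$, the desired conclusion. This completes the chain, and unlike the earlier lemmas of this section no long sequence of (A6') rewrites is required, since Lemma \ref{416} packages the associative-type reshuffling in one step.

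The only point demanding care is the bookkeeping in that single application of Lemma \ref{416}: one must line the factors up so that precisely $(y' \land z)'$ and $(y' \land z')'$ end up adjacent and in the order $(y' \land z)'$ before $(y' \land z')'$ that (J5') requires. There is no genuine obstacle beyond matching this order exactly; if the order came out reversed one would instead need (J1), which is not yet available, so verifying that Lemma \ref{416} delivers the terms in the orientation dictated by (J5') is the crux.
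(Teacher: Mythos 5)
Your proposal is correct and is exactly the paper's proof: a single application of Lemma \ref{416} to rewrite $[x \land (y' \land z')'] \land (y' \land z)'$ as $[(y' \land z)' \land (y' \land z')'] \land x$, followed by one application of (J5') to collapse the bracketed pair to $y$. Your check that the pair comes out in the order required by (J5') is the same (correct) bookkeeping implicit in the paper's two-line derivation.
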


\begin{proof}
 \begin{align*}
      [x \land (y' \land z')'] \land (y' \land z)' \\
            &= [(y' \land z)' \land (y' \land z')'] \land x &\text{ by  Lemma \ref{416}}\\ 
            &= y \land x, &\text{ by (J5')}, 
\end{align*}
thus the proof is complete.
\end{proof}

\begin{lemma} \label{418}
    $(x \land y) \land z  = x \land (z \land y)$.
\end{lemma}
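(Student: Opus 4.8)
The plan is to prove Lemma~\ref{418} in just two moves, reusing the two-variable expansion of a left-associated product supplied by Lemma~\ref{413} together with the collapse identity of Lemma~\ref{417}.

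First I would apply Lemma~\ref{413}, which gives, for an arbitrary auxiliary variable $u$,
\[
(x \land y) \land z = x \land \big[\{y \land (z' \land u')'\} \land (z' \land u)'\big].
\]
This is the step that does the real work: via Lemma~\ref{49} (itself a consequence of (J5') and (A6')) it rewrites the single factor $z$ into the two-variable shape $\{\,\cdot \land (z' \land u')'\} \land (z' \land u)'$, at the cost of introducing $u$. I would keep $u$ free for the moment.

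Next I would recognize that the inner bracket $\{y \land (z' \land u')'\} \land (z' \land u)'$ is precisely the left-hand side of Lemma~\ref{417} after renaming its variables. Indeed Lemma~\ref{417} reads $[a \land (b' \land c')'] \land (b' \land c)' = b \land a$, and matching $a = y$, $b = z$, $c = u$ turns the bracket into $z \land y$. Substituting this back yields $(x \land y) \land z = x \land (z \land y)$, which is the claim.

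The only point requiring care — and the step I would watch most closely — is the bookkeeping of the auxiliary variable $u$: it is manufactured by (J5') inside Lemma~\ref{413} but does not occur in the conclusion, so I must check that Lemma~\ref{417} absorbs it completely and that the resulting equality is genuinely independent of the choice of $u$. Once the two substitutions are lined up correctly this is immediate, and no appeal to commutativity (which is not yet available at this stage) is needed. I note in passing that the alternative route through Lemma~\ref{416}, which gives $(x \land y) \land z = (z \land y) \land x$, would still leave the outer $\land$ to be commuted, so the Lemma~\ref{413}/Lemma~\ref{417} path is the more economical one.
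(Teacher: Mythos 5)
Your proof is correct and is essentially the paper's own proof: the paper also derives $(x \land y) \land z = x \land [\{y \land (z' \land u')'\} \land (z' \land u)']$ and then collapses the bracket to $z \land y$ via Lemma~\ref{417} (the paper's citation of Lemma~\ref{414} for the first step is evidently a typo for Lemma~\ref{413}, whose statement is exactly what is used and what you use). Your substitution $a=y$, $b=z$, $c=u$ into Lemma~\ref{417} and your observation that the auxiliary variable $u$ is absorbed are both accurate, so nothing further is needed.
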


\begin{proof}
\begin{align*}
    (x \land y) \land z
         &= x \land [\{y \land (z' \land u')'\} \land (z' \land u)'] &\text{ by  Lemma \ref{414}}  \\           
         &= x \land (z \land y) &\text{ by lemma \ref{417}},  
\end{align*}
whence the lemma is proved.
\end{proof}

\begin{lemma}\label{419}
    $x \land y  = y \land x$.
\end{lemma}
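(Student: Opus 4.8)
The plan is to read off $y \land x$ directly from Lemma \ref{417} and then show that the very same expression also reduces to $x \land y$, so that the two are forced to coincide. Concretely, Lemma \ref{417} already gives
\[
y \land x = [x \land (y' \land z')'] \land (y' \land z)',
\]
for an auxiliary variable $z$, so all that remains is to evaluate the right-hand side a second way.

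First I would notice that this right-hand side has the shape $(A \land B) \land C$ with $A = x$, $B = (y' \land z')'$ and $C = (y' \land z)'$, which is exactly the pattern to which Lemma \ref{418} (namely $(x \land y) \land z = x \land (z \land y)$) applies; it pulls $x$ to the front and swaps the two negated factors, producing $x \land [(y' \land z)' \land (y' \land z')']$. Second, I would read (J5') \emph{backwards}: the instance of (J5') with $x,y$ replaced by $y,z$ says precisely $y = (y' \land z)' \land (y' \land z')'$, so the bracketed factor collapses to $y$ and the whole expression becomes $x \land y$. Chaining the three steps yields
\begin{align*}
y \land x &= [x \land (y' \land z')'] \land (y' \land z)' &\text{by Lemma \ref{417}}\\
          &= x \land [(y' \land z)' \land (y' \land z')'] &\text{by Lemma \ref{418}}\\
          &= x \land y &\text{by (J5')},
\end{align*}
which is the desired commutativity.

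The genuinely hard part is already behind us: it lives in the long chain of associative-type lemmas (especially Lemmas \ref{416}--\ref{418}) that upgrade the single cyclic identity (A6') into honest re-bracketing and factor-swapping rules. Given those, the only subtlety in the final step is recognizing that (J5') must be used in the reconstituting direction (building $y$ out of $(y' \land z)'$ and $(y' \land z')'$) rather than the expanding direction in which it is usually applied, together with checking that the bracket pattern of Lemma \ref{417} matches the hypothesis pattern of Lemma \ref{418} under the identifications $A=x$, $B=(y' \land z')'$, $C=(y' \land z)'$. No new identity needs to be derived, so I anticipate no real obstacle beyond this bookkeeping.
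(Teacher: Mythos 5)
Your proof is correct and is essentially identical to the paper's own proof: both start from Lemma \ref{417}, regroup via Lemma \ref{418} with exactly the same identification of factors, and collapse the bracketed term to $y$ by reading (J5') in the reconstituting direction. No differences worth noting.
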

\begin{proof}
\begin{align*}
    y \land x &= [x \land (y' \land z')'] \land (y' \land z)' &\text{ by Lemma \ref{417}}\\    
              &= x \land [(y' \land z)' \land (y' \land z')'] &\text{ by  Lemma \ref{418}} \\   
              &= x \land y &\text{ by (J5')},
\end{align*}
which proves the lemma.
\end{proof}
\begin{corollary} \label{420}
    $x \land (y \land z)  = (x \land y) \land z$.
\end{corollary}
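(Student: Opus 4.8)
The plan is to observe that associativity now drops out immediately from the two results just established, namely Lemma~\ref{418} together with the commutativity proved in Lemma~\ref{419}; essentially all of the genuine work has already been done in the long chain of groupoid manipulations built from (A6') and (J5'). First I would invoke Lemma~\ref{418}, which gives $(x \land y) \land z = x \land (z \land y)$. This rewrites the left-grouped product as a right-grouped one, but with the last two factors transposed. The only remaining task is to undo that transposition.

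To do so, I would apply commutativity (Lemma~\ref{419}) to the inner factor $z \land y$, replacing it by $y \land z$. This converts $x \land (z \land y)$ into $x \land (y \land z)$, which is exactly the right-hand side of the claimed identity. Chaining the two equalities then yields the associative law (A1). Concretely, the derivation I have in mind is
\begin{align*}
   (x \land y) \land z &= x \land (z \land y) &\text{by Lemma \ref{418}}\\
                       &= x \land (y \land z) &\text{by Lemma \ref{419}}.
\end{align*}

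I do not anticipate any obstacle in this final step: it is a one-line synthesis, and the difficulty was entirely concentrated in the earlier lemmas, in particular in bootstrapping from the ``twisted'' associativity variants (such as Lemmas~\ref{412} and~\ref{415}) up to the clean transposition law of Lemma~\ref{418} and then to full commutativity in Lemma~\ref{419}. With both associativity and commutativity now in hand, alongside (J5'), one has recovered the structural identities needed to conclude that the pair $\{(\mathrm{A6}), (\mathrm{J5'})\}$ indeed forms a base for $\mathbb{BA}$, completing the argument of Theorem~\ref{T4.1}.
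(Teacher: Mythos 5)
Your proof is correct and takes exactly the approach the paper intends: the paper's own proof of Corollary~\ref{420} consists of the single instruction ``use the preceding two lemmas,'' which is precisely your chain of applying Lemma~\ref{418} to get $(x \land y) \land z = x \land (z \land y)$ and then commutativity (Lemma~\ref{419}) to the inner factor. Your write-up simply makes explicit the one-line computation the paper leaves to the reader.
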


\begin{proof}
     Use the preceding two lemmas.
\end{proof}

Next, we will show that $x''=x$.

\begin{lemma} \label{421}
    $x \land x'= y \land y'$.
\end{lemma}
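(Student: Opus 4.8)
\textbf{Proof proposal for Lemma \ref{421}.}

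The statement to prove is $x \land x' = y \land y'$, which asserts that the expression $a \land a'$ is independent of the element $a$. The plan is to follow the same strategy that succeeded for the analogous Lemma \ref{24} in the Johnson setting, namely to manipulate $x \land x'$ using (J5') until the roles of $x$ and $y$ can be exchanged, and then read off that the resulting expression is symmetric. By this point in the development we have earned a great deal of structural machinery: associativity (Corollary \ref{420}) and commutativity (Lemma \ref{419}) now both hold, so I may freely rearrange and reassociate all meets. This is a significant advantage over the analogous step in Section 2, where only (A6')-style rearrangement was available.

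First I would take the defining identity (J5') in the form $x \approx (x' \land y)' \land (x' \land y')'$ and apply it after meeting both sides with $x'$, or more directly substitute to obtain an expression for $x \land x'$ in terms of the building blocks $(x' \land y)'$ and $(x' \land y')'$. Concretely, I expect to start from $x \land x'$, replace one occurrence of $x$ (or $x'$) using (J5') or its companion Lemma \ref{417}/\ref{49}, and then use full associativity and commutativity to regroup the four-fold meet so that a subexpression of the form $z \land z'$ appears in a position symmetric in $x$ and $y$. The goal is to reach an expression that is manifestly invariant under swapping $x \leftrightarrow y$; since equality is transitive, applying the identical chain of steps to $y \land y'$ then yields the same expression, giving $x \land x' = y \land y'$.

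The main obstacle I anticipate is purely bookkeeping rather than conceptual: I must be careful that the auxiliary variable introduced by (J5') (the $y$ appearing in $(x' \land y)'$ and $(x' \land y')'$) is handled consistently so that the final symmetric form genuinely has $x$ and $y$ on equal footing and does not secretly depend on a third spectator variable. With associativity and commutativity in hand the rearrangement itself is routine, so the delicate point is choosing the substitution instance of (J5') that makes the symmetry transparent in the fewest steps. I would likely mirror the five-line computation of Lemma \ref{24}: expand $x \land x'$ via (J5'), pull $x$ inside using commutativity and associativity, invoke the already-proved symmetric lemma (the analogue of Lemma \ref{23}, which here should follow quickly from \ref{417} together with \ref{419} and \ref{420}) to interchange $x$ and $y$, and then collapse back using (J5') to land on $y \land y'$.
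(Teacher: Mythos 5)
Your proposal takes essentially the same route as the paper: its proof expands both $x$ and $x'$ by (J5') with the auxiliary variable chosen to be $y'$, uses Lemma \ref{419} and Corollary \ref{420} to regroup the resulting four-fold meet into a form manifestly symmetric under $x \leftrightarrow y$, and then collapses back by (J5') to obtain $y \land y'$. The two points you single out---that the auxiliary variable in (J5') must be a (primed copy of) $y$ rather than a spectator variable, and that commutativity and associativity make the regrouping routine---are exactly the substance of the paper's computation, so no separate analogue of Lemma \ref{23} is needed.
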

\begin{proof}
\begin{align*}
    x \land x' 
         &=[(x' \land y')' \land (x' \land y'')'] \land [(x'' \land y')' \land (x'' \land y'')']
              & \text{  by (J5')}\\
              &=[(y' \land x')' \land (y'' \land x')'] \land [(y' \land x'' )' \land (y'' \land x'')']   
              &\text{ by Lemma \ref{419}} \\
              &=[(y' \land x')' \land (y' \land x'' )' ] \land [(y'' \land x')' \land (y'' \land x'')'] \\
              &\hspace{.5cm} \text{ by Lemma \ref{419} and 
              Corollary \ref{420}}   \\
               &=y \land y' &\text{  by (J5')},
\end{align*}
proving the lemma.
\end{proof}

In view of the preceding lemma, we can make the following definition.
\begin{definition} \label{423}
    Let $x \land x' := 0$.
\end{definition}

\begin{lemma} \label{424}
    $x''=x$.
\end{lemma}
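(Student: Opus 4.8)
The plan is to extract the double-negation law from (J5') by first building up the arithmetic of the constant $0 := x \land x'$ (Definition~\ref{423}) together with its complement $0'$, and only at the very end performing a ``rank reduction'' that collapses $x'''$ back to $x'$. Throughout I use commutativity (Lemma~\ref{419}) and associativity (Corollary~\ref{420}) freely, and I repeatedly use that any term of the form $s \land s'$ equals $0$.

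\emph{Step 1 (absorption $y \land 0 = 0$).} Meeting (J5') with $x' \land y$ and regrouping gives $x \land (x' \land y) = [(x' \land y)' \land (x' \land y)] \land (x' \land y')' = 0 \land (x' \land y')'$, while the left-hand side also equals $(x \land x') \land y = 0 \land y$; hence $0 \land y = 0 \land (x' \land y')'$ for all $x,y$. Substituting $x := y'$ makes $x' \land y' = y'' \land y' = 0$, so the right-hand side becomes $0 \land 0' = 0$, which yields $0 \land y = 0$, i.e.\ $y \land 0 = 0$.

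\emph{Step 2 (the unit law $x \land 0' = x$).} Putting $y := 0$ in (J5') and invoking Step~1 gives $x = (x' \land 0)' \land (x' \land 0')' = 0' \land (x' \land 0')'$. Next, since $0' \land 0'' = 0$ and $0'' \land 0 = 0$ (Step~1), the instance of (J5') with $x := 0'$ and $y := 0$ simplifies to $0' = (0'' \land 0)' \land (0'' \land 0')' = 0' \land 0'$, so $0'$ is idempotent. Meeting the displayed formula $x = 0' \land (x' \land 0')'$ with $0'$ and using this idempotence then gives $x \land 0' = (0' \land 0') \land (x' \land 0')' = 0' \land (x' \land 0')' = x$, so $0'$ is a two-sided identity.

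\emph{Step 3 (collapsing negations and conclusion).} The unit law with $x := 0''$ gives $0'' = 0'' \land 0' = 0$. Now apply (J5') with $x$ replaced by $x'$, namely $x' = (x'' \land y)' \land (x'' \land y')'$, and take $y := 0'$ (so $y' = 0'' = 0$): using $x'' \land 0' = x''$ (unit) and $x'' \land 0 = 0$ (Step~1), the right-hand side becomes $x''' \land 0' = x'''$, whence $x''' = x'$. Finally, the instance of (J5') for $x''$, namely $x'' = (x''' \land y)' \land (x''' \land y')'$, together with $x''' = x'$ and (J5') itself, yields $x'' = (x' \land y)' \land (x' \land y')' = x$. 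The main obstacle is exactly this bootstrapping: (J5') only ever lets one recover $a$ from $a'$, so a direct attempt to compute $x''$ stalls on the uncontrolled term $x'''$. The substitutions above are engineered to force first $0'' = 0$ and then $x''' = x'$, after which one more application of (J5') closes the loop; arranging the auxiliary facts (absorption, idempotence of $0'$, the unit law) in the correct order is the delicate point.
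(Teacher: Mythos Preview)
Your argument is correct, but it is substantially longer than the paper's. You build a whole arithmetic of $0$ and $0'$ (absorption $y\land 0=0$, idempotence of $0'$, the unit law $x\land 0'=x$, then $0''=0$ and $x'''=x'$) before finally invoking (J5') once more. The paper bypasses all of this machinery: it simply instantiates (J5') with $y:=x''$ to get
\[
x=(x'\land x'')'\land(x'\land x''')'=0'\land(x'\land x''')',
\]
and instantiates (J5') with $x$ replaced by $x''$ and $y:=x'$ to get
\[
x''=(x'''\land x')' \land (x'''\land x'')'=(x'''\land x')' \land 0'.
\]
Commutativity (Lemma~\ref{419}) makes these two expressions identical, so $x''=x$ in three lines. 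The point is that the ``uncontrolled'' term $x'''$ that you work hard to eliminate appears symmetrically in both displayed formulas, and commutativity alone is enough to match them; no absorption, unit law, or reduction $x'''=x'$ is needed. Your approach does have the side benefit of establishing those auxiliary facts, which are independently useful, but for the bare statement $x''=x$ the paper's two well-chosen substitutions are far more economical.
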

\begin{proof}
    Since $(x' \land x'')' \land (x' \land x''')' = x$, we have $0' \land (x' \land x''')' = x$.  Also,
    from $(x''' \land x')' \land (x''' \land x'')' = x''$, we get $(x''' \land x') \land 0' = x''$.  Hence it follows that $x''=x$.
\end{proof}

\begin{proof} {\bf of Theorem \ref{T4.1}:} \\
Observe that, in view of Lemma \ref{419} and Lemma \ref{420}, we see that (J1) and (J2) hold, while (J4) holds in view of Lemma \ref{424}.  It is also easy to see that (J5') and Lemma \ref{424} imply (J5).  
Thus, the axioms of Theorem \ref{T4.1} imply the axioms of Theorem \ref{T2.2}.  For the converse, it suffices to observe that the Boolean algebra $\mathbf{2}$ satisfies the axioms of Theorem \ref{T4.1}.
Hence the proof of Theorem \ref{T4.1} is complete. 
\end{proof}

\medskip

\section{A 2-base for Boolean algebras containing the identity (A8)}

      In this section we present a 2-base containing the identity (A8) of associative type.
    
\begin{theorem} \label{T5.1}
    The following identities form a 2-base for Boolean algebras:
\begin{enumerate}
   \item[{\rm(A8)}] ${x \land ( y \land z )\approx (z \land x) \land y}$  
   \item[{\rm(J5')}] ${x \approx (x' \land y)' \land (x' \land y')'}$.  
\end{enumerate}
\end{theorem}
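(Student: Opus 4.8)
The plan is to imitate, step for step, the development carried out for Theorem~\ref{T4.1}, with (A8) now playing the role that (A6') played there. First I would record a more convenient equivalent form of the hypothesis: reading (A8) $x \land (y \land z) \approx (z \land x) \land y$ backwards and relabelling gives the ``rotation'' law $(x \land y) \land z \approx y \land (z \land x)$, which, just like the passage from (A6) to (A6'), is the shape most convenient for pushing a factor out of a left grouping into a right grouping. Throughout, (A8) and (J5') are taken as the standing hypotheses, and I would state this convention explicitly before the lemmas, as is done at the start of Section~4.

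With only (A8) available I would first grind out the purely associative-type rearrangement lemmas for products of three and four variables --- the analogues of Lemmas~\ref{42}--\ref{48} --- which let one shuttle the four factors of a doubled product $(x \land y) \land (z \land u)$ among its various left- and right-normed presentations. These are obtained by nothing more than iterated application of (A8) and cost only bookkeeping. The decisive input is (J5'): substituting the Dichotomy pattern $(x' \land y)' \land (x' \land y')'$, which (J5') asserts equals $x$, into these rearrangements yields ``collapsing'' lemmas --- the counterparts of Lemmas~\ref{49}, \ref{410}, \ref{412a}, \ref{414} and \ref{417} --- in which an entire complemented block contracts to a single variable.

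Chaining the rearrangement lemmas with the collapsing lemmas, I would then derive commutativity $x \land y \approx y \land x$ and afterwards associativity $x \land (y \land z) \approx (x \land y) \land z$, following the route of Lemmas~\ref{416}--\ref{419} and Corollary~\ref{420}. This is the heart of the argument and the step I expect to be the main obstacle: by Proposition~\ref{propo_neces_identities}, (A8) in isolation defines a proper subvariety of groupoids that is neither commutative nor associative, so commutativity and associativity can emerge only from a careful interleaving of (A8) with the complementation content of (J5'). Finding the exact chain of substitutions that makes the complemented blocks cancel in the right order is the genuinely delicate part, and because (A8) permutes its variables differently from (A6), the precise sequence of four-variable identities required here will differ in detail from those used in Section~4, even though the overall skeleton is parallel.

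Once (J1) and (J2) are in hand, $x'' \approx x$ follows exactly as in Lemma~\ref{424}, by substituting suitable complemented terms into (J5') and simplifying. At that point (J1), (J2) and (J4) all hold, and (J5') together with $x'' \approx x$ yields (J5) --- replace $x$ by $x'$ in (J5') and apply (J4) --- so the two axioms imply the entire $4$-base of Theorem~\ref{T2.2} and hence define a subvariety of $\mathbb{BA}$. For the reverse inclusion it suffices, as in the proof of Theorem~\ref{T4.1}, to verify that the two-element Boolean algebra $\mathbf{2}$ satisfies (A8) and (J5'); since $\mathbb{BA} = \mathbb{V}(\mathbf{2})$, both identities then hold throughout $\mathbb{BA}$, and the axiom set defines exactly $\mathbb{BA}$. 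Finally, independence --- that neither axiom follows from the other, so the base genuinely has size two --- is confirmed routinely by exhibiting small groupoids satisfying one identity but not the other.
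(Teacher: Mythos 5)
Your overall architecture is sound and matches the paper's in outline: take (A8) and (J5') as standing hypotheses, grind out rearrangement lemmas from (A8) alone, collapse complemented blocks via (J5'), reduce to a previously established axiom system, and obtain the converse by checking the two-element algebra $\mathbf{2}$ (your preliminary observations are also correct: the rotation form $(x \land y) \land z \approx y \land (z \land x)$ is a relettering of (A8), and (J5) does follow from (J5') together with $x'' \approx x$). But the proposal has a genuine gap exactly where you yourself locate ``the genuinely delicate part'': you never exhibit the chain of substitutions that produces commutativity and associativity (or any other sufficient reduction target) from (A8) and (J5'). For a theorem of this kind, that chain is not a detail to be filled in later --- it \emph{is} the entire mathematical content. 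You cannot argue that some interleaving of (A8) with (J5')-collapses must exist without already assuming the semantic claim you are trying to prove (derivability only follows from validity by completeness once one knows (A8) and (J5') axiomatize $\mathbb{BA}$, which is the theorem itself). A plan that defers this step therefore proves nothing; as it stands, the heart of the proof is missing.

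The paper fills this gap differently from what you sketch, and more economically. Rather than re-deriving (J1) and (J2) from scratch and targeting the 4-base of Theorem~\ref{T2.2} (your plan), it derives the single identity (A6), namely $x \land (y \land z) \approx y \land (z \land x)$, from (A8) and (J5') through the chain of Lemmas~\ref{L0}--\ref{L23}, and then simply invokes Theorem~\ref{T4.1}, where the labor of extracting (J1), (J2), (J4) and (J5) from (A6) and (J5') has already been carried out. Your reduction target would also suffice in principle, but it forces you to replicate all of Section 4's work with a differently-permuting identity instead of reusing it. If you want to complete your argument along the most efficient lines, the concrete milestone to aim for is Lemma~\ref{L23}, i.e.\ the identity (A6) itself; everything after that is already in the paper.
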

      We now present a sequence of lemmas leading to the proof of the above theorem.

{\bf In the following lemmas we assume that} (A8) {\bf and} (J5') {\bf are given.}

\begin{lemma} \label{L0}  
 $(x \land y) \land (z \land u) = y \land [u \land (x \land z)].$  
\end{lemma}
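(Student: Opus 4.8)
The plan is to prove the identity $(x \land y) \land (z \land u) = y \land [u \land (x \land z)]$ directly from (A8) alone, without invoking (J5'), since this is a purely associative-type rearrangement. Recall (A8) states $x \land (y \land z) = (z \land x) \land y$. The strategy is to start from the right-hand side $y \land [u \land (x \land z)]$, repeatedly apply (A8) to shuffle the parenthesization and variable positions, and land on the left-hand side $(x \land y) \land (z \land u)$.

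First I would note the basic moves (A8) provides: it converts a right-associated product $a \land (b \land c)$ into the left-associated $(c \land a) \land b$, and (reading it backwards) converts $(c \land a) \land b$ into $a \land (b \land c)$. So (A8) lets me peel a factor off the inside of a right-associated triple and move it to the front of a left-associated triple, cyclically permuting. The key computational step is to treat $(x \land z)$ as a single block when first applying (A8) to $y \land [u \land (x \land z)]$, which immediately yields $[(x \land z) \land y] \land u$; then I would need to break apart the block $(x \land z)$ and re-associate the four factors into the target shape $(x \land y) \land (z \land u)$. This will require alternating forward and backward applications of (A8) to route the variables into their destined slots.

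The main obstacle will be bookkeeping: with four variables there are several intermediate four-fold products, and each application of (A8) only rearranges a three-element pattern, so I must be careful that at each stage the term genuinely matches the shape $a \land (b \land c)$ or $(c \land a) \land b$ before I rewrite. I expect to need a short chain of four to six equalities, each justified by (A8), analogous in spirit to the derivations in Lemmas 4.2--4.8 of the (A6) section. A convenient tactic is to prove the statement by exhibiting the chain from the right-hand side and reversing, so I would write the display as

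\begin{align*}
   y \land [u \land (x \land z)]
        &= [(x \land z) \land y] \land u &\text{by (A8)}\\
        &= \cdots &\text{by (A8)}\\
        &= (x \land y) \land (z \land u),
\end{align*}

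filling the intermediate lines with the appropriate (A8) rewrites. Since (A8) is the only hypothesis genuinely needed here and it acts associatively-cyclically, I am confident such a chain exists; the work is entirely in selecting the order of rewrites so that $x,y,z,u$ migrate to their final positions.
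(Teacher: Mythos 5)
Your instinct that only (A8) is needed is correct and matches the paper, whose proof is exactly a two-step (A8) rewrite with no use of (J5'). But your proposal has a genuine gap: the entire middle of your display is ``$\cdots$'', justified only by your stated confidence that ``such a chain exists.'' In this setting that confidence is precisely what has to be established. (A8) is a single identity of associative type; it does not give commutativity or free rearrangement of four factors, so whether two particular four-variable terms are connected by (A8)-rewrites is exactly the content of the lemma and cannot be waved at. Worse, your one concrete step steers into the long way around: applying (A8) at the top level of $y \land [u \land (x \land z)]$ (treating $x \land z$ as a block) gives $[(x \land z) \land y] \land u$, and from that term the only available (A8)-moves either undo the step or begin a long excursion --- one can still reach $(x \land y) \land (z \land u)$, but it takes seven further applications of (A8), passing through $[z \land (y \land x)] \land u$, $(y \land x) \land (u \land z)$, $x \land [(u \land z) \land y]$, $x \land [z \land (y \land u)]$, $[(y \land u) \land x] \land z$, and $[u \land (x \land y)] \land z$. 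So your estimate of ``four to six equalities'' would not be met on your chosen route, and none of these steps appear in your proposal.

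The short completion --- which is the paper's proof read in reverse --- rewrites the \emph{inner} factor first rather than the top level:
\begin{align*}
y \land [u \land (x \land z)] &= y \land [(z \land u) \land x] &&\text{by (A8) applied to } u \land (x \land z),\\
&= (x \land y) \land (z \land u) &&\text{by (A8) applied at the top, with } y,\ z \land u,\ x \text{ in place of } x,y,z.
\end{align*}
Both steps use (A8) in the same direction; stated from left to right, this is the paper's two-line proof. The lesson is that in pure rewriting arguments of this kind the choice of redex matters: your top-level move and the inner move both ``match'' (A8), but they send you to opposite points of the cycle of terms that (A8) generates here, and only the exhibited chain, not an existence claim, constitutes a proof.
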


\begin{proof}
\begin{align*}
(x \land y )\land (z  \land u)
       &= y \land [(z \land u) \land x] &\text{by (A8)}\\
       &= y \land [u \land (x \land z]] &\text{by (A8)},
\end{align*}
whence the lemma.
\end{proof}

\begin{lemma} \label{L1} 
 $[(x \land y) \land z] \land u = (x \land z) \land (y \land u).$  
\end{lemma}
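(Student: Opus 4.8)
The goal is to prove Lemma \ref{L1}, namely that
\[
[(x \land y) \land z] \land u = (x \land z) \land (y \land u),
\]
working only from the hypotheses (A8) and (J5'). Since every other lemma in this section has been established by repeatedly rewriting bracketings using (A8) in the form $x \land (y \land z) \approx (z \land x) \land y$, my plan is to do the same here: treat this as a pure term-rewriting exercise in the groupoid satisfying (A8), massaging one side into the other by a short chain of (A8)-applications, and invoking Lemma \ref{L0} where it shortens the work.

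The natural strategy is to start from the right-hand side $(x \land z) \land (y \land u)$, because it already has the shape of a product of two two-fold products, which is exactly what Lemma \ref{L0} consumes. Applying Lemma \ref{L0} with the substitution $x \mapsto x,\ y\mapsto z,\ z \mapsto y,\ u\mapsto u$ gives
\[
(x \land z) \land (y \land u) = z \land [u \land (x \land y)].
\]
From here I would re-associate this three-fold expression back down to the fully left-bracketed form on the left-hand side using (A8). Reading (A8) as $a \land (b \land c) = (c \land a) \land b$, one application to $z \land [u \land (x \land y)]$ (with $a=z$, $b=u$, $c = x\land y$) yields $[(x \land y) \land z] \land u$, which is precisely the left-hand side. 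So the whole proof should collapse to essentially two steps.

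I expect the only real obstacle to be bookkeeping: making sure each (A8) rewrite is applied with a correct and explicit variable substitution, since (A8) is not symmetric and a careless permutation of $x,y,z$ will silently produce a different (and false-to-the-goal) term. The safe way to carry this out is to write the displayed \verb|align*| chain from the right-hand side to the left-hand side, annotating each line with \text{by (A8)} or \text{by Lemma \ref{L0}}, and to verify at each line that the multiset of variables and the bracketing match what (A8) or Lemma \ref{L0} actually produces. There is no genuine difficulty beyond this routine verification, because (A8) together with its consequences already generates enough rearrangements (as the earlier lemmas in the (A6) section demonstrate for the analogous identity) to reach any target regrouping of a four-variable product; the task is simply to find the shortest such chain and record it cleanly.
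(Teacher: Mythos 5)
Your proposal is correct and takes essentially the same route as the paper: the paper's own proof also starts from $(x \land z) \land (y \land u)$, rewrites it to $z \land [u \land (x \land y)]$ by two applications of (A8) (which is exactly the content of Lemma \ref{L0} under your substitution), and then applies (A8) once more to reach $[(x \land y) \land z] \land u$. Your invocation of Lemma \ref{L0} merely packages the paper's first two (A8) steps, so both arguments pass through the identical intermediate term and your two-step chain is a valid (indeed slightly tidier) rendering of the same proof.
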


\begin{proof}
\begin{align*}
    (x \land z) \land (y \land u)] \\
         &= z \land \{(y \land u) \land x\} &\text{ by (A8)}\\
         &= z \land [u \land (x \land y)] &\text{ by (A8)}\\
         &= [(x \land y) \land z] \land u &\text{ by (A8)},
\end{align*}
completing the proof.
\end{proof}

\begin{lemma} \label{L2} 
$x \land y = (x' \land w')' \land  [y \land (x' \land w)'] $  
\end{lemma}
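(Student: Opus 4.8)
The plan is to obtain this identity directly from (J5') followed by a single application of (A8), with no appeal to any of the structural consequences developed later in the section; in particular, double negation is not yet available at this stage, and the argument will not need it. The key observation is that the two complemented subterms $(x' \land w')'$ and $(x' \land w)'$ on the right-hand side of the statement are precisely the two factors produced by (J5') when its auxiliary variable is instantiated to $w$.

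First I would instantiate (J5'), namely $x \approx (x' \land y)' \land (x' \land y')'$, by substituting $w$ for $y$. This yields $x = (x' \land w)' \land (x' \land w')'$. Conjoining $y$ on the right of both sides then gives
\begin{align*}
x \land y &= [(x' \land w)' \land (x' \land w')'] \land y.
\end{align*}
Next I would apply (A8) to the right-hand side, read in the orientation $(z \land u) \land v = u \land (v \land z)$, taking $z := (x' \land w)'$, $u := (x' \land w')'$, and $v := y$. This rewrites the expression as $(x' \land w')' \land [y \land (x' \land w)']$, which is exactly the claimed right-hand side, completing the derivation.

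I do not expect any genuine obstacle here, since the whole argument is a two-line calculation. The only point that requires care is choosing the correct orientation of (A8) and matching its variables so that the factor $(x' \land w')'$, rather than $(x' \land w)'$, ends up in the outer (left) position; arranging this pairing correctly is what places the two complemented terms in precisely the positions demanded by the statement. Everything else is routine once the instantiation of (J5') is fixed.
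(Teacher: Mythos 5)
Your proposal is correct and coincides with the paper's own proof: both derive $x \land y = [(x' \land w)' \land (x' \land w')'] \land y$ from (J5') with auxiliary variable $w$ and then apply (A8) once, in exactly the orientation you describe, to obtain $(x' \land w')' \land [y \land (x' \land w)']$. Your explicit matching of variables in (A8) is a sound reading of the axiom $x \land (y \land z) \approx (z \land x) \land y$, so there is nothing to add.
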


\begin{proof}
\begin{align*}
x \land y &= [(x' \land w)' \land (x' \land w')'] \land y &\text{by (J5')}\\
&=(x' \land w')' \land  [y \land (x' \land w)']  &\text{by (A8)},
\end{align*}
proving the lemma.
\end{proof}

\begin{lemma} \label{L3} 
 $(x \land z) \land (y \land u) = [(x' \land w')' \land z] \land [\{y \land (x' \land w)'\} \land u]. $  
\end{lemma}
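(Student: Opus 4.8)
The goal is to prove Lemma~\ref{L3}:
\[
(x \land z) \land (y \land u) = [(x' \land w')' \land z] \land [\{y \land (x' \land w)'\} \land u].
\]
The plan is to recognize that the left-hand side has exactly the shape $(\text{something}) \land (\text{something})$ that appeared in Lemma~\ref{L2}, and that the right-hand side has the four-factor nested shape handled by Lemma~\ref{L1}. So the natural strategy is to start from the right-hand side, collapse the two inner bracketed expressions using Lemma~\ref{L1}, and then invoke Lemma~\ref{L2} to rewrite $x \land y$ back into the bracketed form, thereby meeting the left-hand side. The only tools available are (A8), (J5'), and Lemmas~\ref{L0}--\ref{L2}, so the argument must be assembled from these alone.

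Concretely, I would first apply Lemma~\ref{L1} to the right-hand side. Lemma~\ref{L1} states $[(a \land b) \land c] \land d = (a \land c) \land (b \land d)$; matching the right-hand side of Lemma~\ref{L3} against this with the appropriate substitution should turn the fully-bracketed four-factor expression into a product of two binary terms, one of which is precisely $(x' \land w')' \land [y \land (x' \land w)']$. Then I would recognize this binary term, by Lemma~\ref{L2}, as equal to $x \land y$, so the whole right-hand side collapses to a product whose first component is $x \land y$ and whose second is the pair $(z,u)$. A final associativity-type rearrangement via (A8) (together with Lemma~\ref{L0} or Lemma~\ref{L1} as needed) should rewrite this into $(x \land z) \land (y \land u)$, the left-hand side.

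The main obstacle is bookkeeping: keeping the six variables $x,y,z,u,w$ (and whatever bound variable Lemma~\ref{L1} contributes) correctly aligned through the substitution into Lemma~\ref{L1}, since the subterms $(x' \land w')'$ and $(x' \land w)'$ must land in the exact slots that Lemma~\ref{L2} expects. Because the variety here is not yet known to be commutative or associative in the naive sense (we are still building up to those facts), I cannot freely reorder or reassociate; every step must cite (A8), (J5'), or one of the established lemmas explicitly. The likely cleanest route is therefore to run the chain of equalities from the right-hand side to the left, applying Lemma~\ref{L1} once, then Lemma~\ref{L2} once, and finally (A8) to fix the grouping, rather than trying to transform the left-hand side forward, where the correct intermediate factorization would be harder to guess.
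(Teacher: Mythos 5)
Your proposal is correct and is essentially the paper's own proof run in reverse: the paper starts from the left side and applies Lemma~\ref{L1} (to get $[(x \land y) \land z] \land u$), then Lemma~\ref{L2} (to expand $x \land y$ as $(x' \land w')' \land [y \land (x' \land w)']$), then Lemma~\ref{L1} again to reach the right side, which is exactly your L1--L2--L1 chain read backwards. The only blemishes are descriptive: after the Lemma~\ref{L2} collapse the expression is $[(x \land y) \land z] \land u$ rather than ``$x \land y$ paired with $(z,u)$'', and the final rearrangement needs only Lemma~\ref{L1} itself, not (A8) or Lemma~\ref{L0}; neither slip affects the argument.
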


\begin{proof}
\begin{align*}
 (x \land z) \land (y \land u) &= [(x \land y) \land z] \land u &\text{by Lemma \ref{L1}}\\ 
                                                   &= [\{(x' \land w')' \land (y \land (x' \land w'))\} \land z] \land u  &\text{by Lemma \ref{L2}}\\ 
                                                    &= [(x' \land w')' \land z] \land [\{y \land (x' \land w)'\} \land u] &\text{by Lemma \ref{L1}},   
\end{align*}
proving the lemma.
\end{proof}

\begin{lemma} \label{L4} 
 $[(x \land y) \land z] \land (u \land w) = (z \land x) \land [w \land (y \land u)].$ 
\end{lemma}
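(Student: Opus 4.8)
The plan is to derive Lemma~\ref{L4} directly from the identity (A8), $x \land (y \land z) \approx (z \land x) \land y$, applying it three times; no appeal to negation or (J5') is needed, since the target is a negation-free identity and every structural consequence of (A8) in the groupoid already follows from (A8) itself. Rather than transform the left-hand side $[(x \land y) \land z] \land (u \land w)$, I would start from the right-hand side $(z \land x) \land [w \land (y \land u)]$, which is easier to unravel because its innermost factor $w \land (y \land u)$ already matches the left pattern of (A8).

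First I would rewrite that inner factor: applying (A8) with the roles $(x,y,z) \mapsto (w,y,u)$ turns $w \land (y \land u)$ into $(u \land w) \land y$, so the right-hand side becomes $(z \land x) \land [(u \land w) \land y]$. Next I would apply (A8) to the whole expression, now viewing $z \land x$, $u \land w$, and $y$ as the three operands, i.e. $(x,y,z) \mapsto (z \land x,\; u \land w,\; y)$; this moves $u \land w$ into the outer right position and yields $[y \land (z \land x)] \land (u \land w)$. Finally, a third application of (A8), with $(x,y,z) \mapsto (y,z,x)$, rewrites the left factor $y \land (z \land x)$ as $(x \land y) \land z$, giving exactly the left-hand side $[(x \land y) \land z] \land (u \land w)$.

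The only real work here is bookkeeping: each invocation of (A8) cyclically permutes the three operands while simultaneously re-bracketing them, so the point to get right is the matching of each subterm to the correct slot $x$, $y$, or $z$ of (A8). I expect this matching---especially in the middle step, where the compound terms $z \land x$ and $u \land w$ must each be treated as a single operand---to be the only place an error could slip in; there is no conceptual obstacle. An alternative route would start from the left-hand side and pass through the structural identity of Lemma~\ref{L1} (taking its $u$ to be $u \land w$), but the threefold application of (A8) above is the most economical and is the version I would record.
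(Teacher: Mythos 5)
Your proof is correct and is essentially the paper's own argument: the paper rewrites the left-hand side as $[y \land (z \land x)] \land (u \land w)$ by one application of (A8) and then invokes Lemma~\ref{L0}, which itself is just two further applications of (A8), so when unwound the paper's chain passes through exactly your intermediate terms $[y \land (z \land x)] \land (u \land w)$ and $(z \land x) \land [(u \land w) \land y]$, merely written from the left-hand side instead of the right. Your substitutions into (A8) (including treating $z \land x$ and $u \land w$ as single operands) all check out, so there is no gap.
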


\begin{proof}
\begin{align*}   
[(x \land y) \land z)] \land (u \land w) &=  [y \land (z \land x)] \land (u \land w) &\text{by (A8)}\\
                                                          &= (z \land x) \land [w \land (y \land u)] &\text{by Lemma \ref{L0}} 
\end{align*}
which completes the proof.
\end{proof}

\begin{lemma} \label{L5} 
$[(x \land y) \land z] \land u = (z \land x) \land (u \land y).$ 
\end{lemma}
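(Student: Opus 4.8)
The plan is to establish Lemma~\ref{L5} directly from (A8), using exactly two applications of that identity and none of the preceding lemmas of the section (although, as a consistency check, the outcome must agree with Lemma~\ref{L1}). The only tool I would rely on is that (A8), written $x \land (y \land z) \approx (z \land x) \land y$, may be read in either direction and instantiated at subterms as well as at the whole term. In particular its right-to-left reading is the relabelled rule $(Z \land X) \land Y \approx X \land (Y \land Z)$, and it is this form, applied twice with different matchings, that drives the computation.

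First I would rewrite the left-associated inner factor of the left-hand side. Matching $(x \land y) \land z$ against the pattern $(Z \land X) \land Y$ with $Z = x$, $X = y$, $Y = z$ gives $(x \land y) \land z = y \land (z \land x)$, so that $[(x \land y) \land z] \land u = [y \land (z \land x)] \land u$. Second, I would apply (A8) once more, now to the whole resulting term: matching $[y \land (z \land x)] \land u$ against $(Z \land X) \land Y$ with $Z = y$, $X = z \land x$, $Y = u$ yields $[y \land (z \land x)] \land u = (z \land x) \land (u \land y)$. Chaining the two equalities gives $[(x \land y) \land z] \land u = (z \land x) \land (u \land y)$, which is exactly the claim.

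I do not anticipate a genuine obstacle, since the whole argument is two rewrites; the only point demanding care is the bookkeeping. Because (A8) is neither commutative nor associative, the three argument positions are not interchangeable, so the correct pairing in the second step ($Z = y$, $X = z \land x$, $Y = u$) must be chosen deliberately: a different instantiation sends the computation into an unproductive cycle, for instance re-deriving $[(x \land y) \land z] \land u = (x \land z) \land (y \land u)$, which is Lemma~\ref{L1}, without reaching the target. For this reason I would keep to the direct two-step route rather than first invoking Lemma~\ref{L1} and then transforming $(x \land z) \land (y \land u)$ into $(z \land x) \land (u \land y)$, which needs the same (A8) manipulations but passes through more intermediate terms.
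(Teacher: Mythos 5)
Your proof is correct and is essentially identical to the paper's own proof of Lemma~\ref{L5}: the paper likewise rewrites $(x \land y)\land z$ to $y \land (z \land x)$ by one application of (A8) and then applies (A8) to the whole term $[y \land (z \land x)]\land u$ to obtain $(z \land x)\land(u \land y)$. Both instantiations you chose match the paper's steps exactly, so there is nothing to add.
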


\begin{proof}
\begin{align*}   
[(x \land y) \land z] \land u &= [y \land (z \land x)] \land u  &\text{by (A8)}\\     
                                          &= (z \land x) \land (u \land y) &\text{by (A8)},    
\end{align*}
hence the lemma.
\end{proof}

\begin{lemma} \label{L6} 
 $(x \land y) \land [z \land (u \land w)] = (y \land x) \land (w \land (z \land u)).$  
\end{lemma}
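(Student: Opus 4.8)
The plan is to transform the left-hand side of Lemma~\ref{L6} into the right-hand side by a short chain of rewrites, using only (A8) together with two of the product-normalization lemmas already established, namely Lemma~\ref{L0} and Lemma~\ref{L4}. The governing observation is that the left-hand side $(x \land y) \land [z \land (u \land w)]$ is, at the top level, a product of two binary terms $(x\land y)$ and $(z \land (u \land w))$, so it has the shape $(a \land b)\land(c \land d)$ to which Lemma~\ref{L0} directly applies --- provided one is willing to read the fourth slot $d$ as the compound term $u \land w$ rather than a single variable.

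Concretely, I would proceed as follows. First, instantiate Lemma~\ref{L0}, which reads $(x \land y) \land (z \land u) = y \land [u \land (x \land z)]$, with its fourth variable replaced by $u \land w$; this rewrites the left-hand side as $y \land [(u \land w) \land (x \land z)]$. Second, apply (A8) in the form $p \land (q \land r) = (r \land p) \land q$ with $p = y$, $q = u \land w$, $r = x \land z$, producing $[(x \land z) \land y] \land (u \land w)$. Third, apply Lemma~\ref{L4}, namely $[(x \land y) \land z] \land (u \land w) = (z \land x) \land [w \land (y \land u)]$, under the matching $x \mapsto x$, $y \mapsto z$, $z \mapsto y$, $u \mapsto u$, $w \mapsto w$; this yields $(y \land x) \land [w \land (z \land u)]$, which is exactly the desired right-hand side. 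The whole derivation is thus a three-line computation closing on the target.

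The main obstacle is not the algebra itself, which is mechanical, but spotting the right entry point: one must recognize the left-hand side as an instance of the ``four-factor'' product normal form so that Lemma~\ref{L0} can be invoked with a compound argument in its last slot, and then correctly perform the variable renaming in the application of Lemma~\ref{L4}. Once those two pattern-matches are identified, the single intermediate use of (A8) to bring the term into the bracketed shape $[(\cdot \land \cdot) \land \cdot] \land (\cdot \land \cdot)$ that Lemma~\ref{L4} expects is routine, and the proof requires no appeal to (J5') at all.
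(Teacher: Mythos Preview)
Your proof is correct and lands on the same intermediate term $[(x \land z) \land y] \land (u \land w)$ before invoking Lemma~\ref{L4}, exactly as the paper does. The only difference is how you reach that intermediate: the paper gets there in one step by reading Lemma~\ref{L1} right-to-left (matching $(x \land y) \land [z \land (u\land w)]$ against the right-hand side $(X\land Z)\land(Y\land U)$ of Lemma~\ref{L1} with $U=u\land w$), whereas you use Lemma~\ref{L0} followed by a direct application of (A8). Since Lemma~\ref{L1} itself is just three applications of (A8), and your route is Lemma~\ref{L0} (two applications of (A8)) plus one more, the two arguments are essentially identical in content; your version is one line longer but avoids having to spot the right-to-left use of Lemma~\ref{L1}.
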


\begin{proof}
\begin{align*}
(x \land y) \land [z \land (u \land w)] &=  [x \land z) \land y] \land (u \land w) &\text{by Lemma \ref{L1}}\\ \ 
                                                         &= (y \land x) \land [w \land (z \land w)  &\text{by Lemma \ref{L4}}                 
\end{align*}
completing the lemma.
\end{proof}

\begin{lemma} \label{L7} 
 $(x \land y) \land (z \land u) = (y \land x) \land (u \land z).$  
\end{lemma}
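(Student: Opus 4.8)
The plan is to read the identity straight off the two normal forms already computed for the four-fold product $[(x \land y) \land z] \land u$. First I would invoke Lemma \ref{L1}, which gives
$[(x \land y) \land z] \land u = (x \land z) \land (y \land u)$,
and then Lemma \ref{L5}, which rewrites the very same expression as
$[(x \land y) \land z] \land u = (z \land x) \land (u \land y)$.
Since both right-hand sides are equal to the common left-hand side, transitivity yields the auxiliary identity $(x \land z) \land (y \land u) = (z \land x) \land (u \land y)$, valid for all $x,y,z,u$.

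The second and final step is a purely formal relabelling of the free variables. As the auxiliary identity holds universally, I may apply the substitution that transposes $y$ and $z$ while fixing $x$ and $u$; this is a legitimate renaming, so the resulting equation is again a valid consequence. Under this swap the left-hand side $(x \land z) \land (y \land u)$ becomes $(x \land y) \land (z \land u)$, and the right-hand side $(z \land x) \land (u \land y)$ becomes $(y \land x) \land (u \land z)$, which is exactly the asserted identity of Lemma \ref{L7}.

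I do not anticipate any genuine obstacle, since the content of the lemma is already packaged inside the pair of normal forms provided by Lemmas \ref{L1} and \ref{L5}: the product $[(x \land y) \land z] \land u$ has been expressed in two ways that differ precisely by swapping the factors within each pair, and this is what the lemma records. The only point requiring care is to carry out the variable transposition consistently, so that each of the two binary factors on both sides emerges in the intended order; once that bookkeeping is checked, the proof is complete.
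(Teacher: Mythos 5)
Your proposal is correct and is essentially the paper's own proof: the paper also derives Lemma \ref{L7} by combining Lemma \ref{L1} (read right-to-left, with $y$ and $z$ interchanged) and Lemma \ref{L5} applied to the same left-associated product. Your version merely makes explicit the transitivity step and the variable transposition that the paper performs implicitly through its substitution.
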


\begin{proof}
\begin{align*}
(x \land y) \land (z \land u) &= [(x \land z) \land y] \land u &\text{by Lemma \ref{L1}}\\ 
                                           &= (y \land x) \land (u \land z) &\text{by Lemma \ref{L5}}  
\end{align*}
\end{proof}

\begin{lemma} \label{L8} 
 $(x \land y) \land [z \land (u \land w))]= (x \land y) \land [(z \land u) \land w].$ %
\end{lemma}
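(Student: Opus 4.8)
The plan is to exploit the fact that the two sides of the asserted identity share the leading factor $x \land y$ and differ only in the internal bracketing of the trailing triple product. Since nothing needs to happen to the leading factor and only the grouping of $z,u,w$ is at issue, I would route the left-hand side through the two previously established lemmas that already rearrange products of exactly this shape, namely Lemma \ref{L6} and Lemma \ref{L7}. No appeal to (J5') should be necessary; everything is a consequence of (A8) already packaged in the earlier lemmas.

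Concretely, I would first apply Lemma \ref{L6} to the left-hand side, rewriting $(x \land y) \land [z \land (u \land w)]$ as $(y \land x) \land (w \land (z \land u))$. This single step already does the essential work: it groups $z$ and $u$ together and thereby effects the desired reassociation of the trailing triple, at the cost of transposing the outer pair and reordering the inner one. I would then apply Lemma \ref{L7}, which transposes both factors of a product of two pairs, to the result:
\begin{align*}
(x \land y) \land [z \land (u \land w)]
   &= (y \land x) \land (w \land (z \land u)) &\text{by Lemma \ref{L6}}\\
   &= (x \land y) \land [(z \land u) \land w] &\text{by Lemma \ref{L7}},
\end{align*}
which is exactly the right-hand side.

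I expect no genuine computational obstacle here; the only point requiring care is matching the variable pattern of Lemma \ref{L7} correctly, reading its right-hand factor $z \land u$ as a single entry so that the transposition simultaneously restores the outer factor to $x \land y$ and sends the inner $w \land (z \land u)$ to $(z \land u) \land w$. The real content has already been discharged in Lemma \ref{L6} (itself assembled from Lemmas \ref{L1} and \ref{L4}), so Lemma \ref{L8} emerges as an immediate two-line consequence rather than requiring any fresh manipulation.
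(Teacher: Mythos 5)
Your proof is correct and is essentially identical to the paper's own proof: both apply Lemma \ref{L6} to rewrite the left-hand side as $(y \land x) \land (w \land (z \land u))$ and then apply Lemma \ref{L7} (reading $z \land u$ as a single entry) to obtain $(x \land y) \land [(z \land u) \land w]$.
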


\begin{proof}
\begin{align*}
(x \land y) \land [z \land (u \land w)] &= (y \land x) \land [w \land (z \land u)] &\text{{by Lemma \ref{L6} (18)}}\\ 
                                                         &= (x \land y) \land [(z \land u)] \land w] &\text{by Lemma \ref{L7}}  \\
\end{align*}
proving the lemma.
\end{proof}

\begin{lemma} \label{L9} %
$[(x' \land y')' \land z] \land [u \land \{(x' \land y)' \land  w\}] = (x \land z) \land (u \land w).$ %
\end{lemma}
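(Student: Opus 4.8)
The plan is to derive Lemma~\ref{L9} in just two steps, by combining the expansion identity of Lemma~\ref{L3} with the right-hand regrouping identity of Lemma~\ref{L8}. The guiding observation is that the variable $y$ occurs only on the left-hand side of the asserted equation (the right-hand side $(x \land z) \land (u \land w)$ contains no $y$), so $y$ is really playing the role of the free auxiliary variable that appears on the right of Lemma~\ref{L3}. Recognizing this is what tells us which instance of Lemma~\ref{L3} to reach for.

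First I would start from the left-hand side $[(x' \land y')' \land z] \land [u \land \{(x' \land y)' \land w\}]$ and apply Lemma~\ref{L8}, read from left to right. The outer left factor $(x' \land y')' \land z$ already has the binary shape demanded by the $(x \land y)$ slot of Lemma~\ref{L8}, while the right factor $u \land \{(x' \land y)' \land w\}$ fits its $z \land (u \land w)$ slot, with $u$, $(x' \land y)'$, and $w$ playing the roles of $z$, $u$, and $w$ there. Lemma~\ref{L8} then regroups that right factor to $\{u \land (x' \land y)'\} \land w$, producing $[(x' \land y')' \land z] \land [\{u \land (x' \land y)'\} \land w]$.

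The second step is to read this last expression back through Lemma~\ref{L3}. Instantiate Lemma~\ref{L3} by sending its principal variables $x,z,y,u$ to $x,z,u,w$ respectively, and sending its auxiliary variable (written $w$ in the statement of Lemma~\ref{L3}) to $y$. Under this substitution the right-hand side of Lemma~\ref{L3} becomes exactly $[(x' \land y')' \land z] \land [\{u \land (x' \land y)'\} \land w]$, while its left-hand side becomes $(x \land z) \land (u \land w)$. Hence reading Lemma~\ref{L3} from right to left collapses the expression to $(x \land z) \land (u \land w)$, and the chain $[(x' \land y')' \land z] \land [u \land \{(x' \land y)' \land w\}] = \cdots = (x \land z) \land (u \land w)$ completes the proof.

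I expect the only delicate point to be the bookkeeping of the substitution into Lemma~\ref{L3}: one must verify that the auxiliary variable there is correctly renamed to $y$ and that the principal variables are matched so that the complemented subterms $(x' \land y')'$ and $(x' \land y)'$ land in the right positions; a single careless swap of $u$ and $w$, or of the two complemented factors, would break the match. Beyond selecting these two lemmas there is no genuinely new idea, since all uses of (A8) and (J5') are already packaged inside Lemmas~\ref{L3} and~\ref{L8}; the whole content of the step is choosing the right instances and the right reading directions.
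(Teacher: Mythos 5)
Your proof is correct and coincides exactly with the paper's own argument: it likewise applies Lemma~\ref{L8} to regroup $u \land \{(x' \land y)' \land w\}$ into $\{u \land (x' \land y)'\} \land w$, and then reads Lemma~\ref{L3} from right to left (with its auxiliary variable instantiated as $y$ and principal variables matched as you describe) to collapse the expression to $(x \land z) \land (u \land w)$. Your substitution bookkeeping checks out, so nothing further is needed.
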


\begin{proof}
\begin{align*}
[(x' \land y')' \land z] \land [u \land \{(x' \land y)' \land  w\}]
 &=  [(x' \land y')' \land z] \land [\{u \land (x' \land y)' \} \land  w] \\
 &\text{\qquad by Lemma \ref{L8}}\\ 
 &= (x \land z) \land (u \land w).\\ 
 &\text{\qquad by Lemma \ref{L3}}, 
\end{align*}
whence the lemma.
\end{proof}

\begin{lemma} \label{L10}  
 $[(x \land y) \land z] \land (u \land w) = z \land [w \land \{y \land (u \land x)\}]. $ %
\end{lemma}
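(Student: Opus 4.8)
The plan is to recognize that the asserted equation is an identity of associative--commutative type in the five variables $x,y,z,u,w$ (each occurs exactly once on each side, and the two sides agree as multisets), so it must hold in every commutative semigroup and should therefore be derivable from (A8) alone. In particular I expect that no appeal to (J5') or to the commutativity-flavored Lemmas \ref{L0}--\ref{L9} is needed. Accordingly, I would prove it by a short chain of applications of (A8) read right-to-left, i.e.\ in the form $(r \land p) \land q = p \land (q \land r)$ coming from $x \land (y \land z) \approx (z \land x) \land y$, peeling the left-nested term on the left-hand side apart one association at a time.

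Concretely, I would perform three rewrites, at each step matching the pattern $(r \land p) \land q$ against the outermost product of the current (sub)term:
\begin{align*}
   [(x \land y) \land z] \land (u \land w)
      &= z \land [(u \land w) \land (x \land y)] &\text{by (A8)}\\
      &= z \land [w \land \{(x \land y) \land u\}] &\text{by (A8)}\\
      &= z \land [w \land \{y \land (u \land x)\}] &\text{by (A8)}.
\end{align*}
In the first line I apply (A8) to the whole term with $r \land p = (x \land y) \land z$ and $q = u \land w$, which pulls $z$ to the front. In the second line I apply it to the inner factor $(u \land w) \land (x \land y)$ with $r = u$, $p = w$, $q = x \land y$. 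In the third line I apply it to $(x \land y) \land u$ with $r = x$, $p = y$, $q = u$, which produces exactly the right-hand side.

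The only genuine care required---and the step I expect to be the main obstacle---is the bookkeeping of which subterm plays the roles of $p,q,r$ at each application, since (A8) simultaneously reassociates and cyclically permutes three factors; choosing the wrong grouping steers the derivation toward a different (though equally valid) normal form and forces a longer detour. An alternative entry point is Lemma \ref{L4}, which already rewrites the left-hand side as $(z \land x) \land [w \land (y \land u)]$; however, disentangling the $z \land x$ block from there consumes more (A8) steps, so I would prefer the direct three-step route displayed above.
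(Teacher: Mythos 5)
Your derivation is correct and is essentially the paper's own proof: the paper first applies (A8) at the root to get $z \land [(u \land w) \land (x \land y)]$ and then invokes Lemma \ref{L0}, whose proof consists of exactly the two further (A8) rewrites you perform, so you have simply inlined that lemma. (Your opening heuristic---that validity in commutative semigroups forces derivability from (A8) alone---is not a sound inference, but it plays no role since your explicit three-step chain stands on its own.)
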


\begin{proof}
\begin{align*}
[(x \land y) \land z] \land (u \land w) &= z \land [(u \land w) \land (x \land y)] &\text{by (5)}   \\                                                          
                                                         &= z \land [w \land \{y \land (u \land x)\}] &\text{by Lemma \ref{L0}}, 
\end{align*}
completing the proof.
\end{proof}

\begin{lemma} \label{L11} 
 $ (x \land y) \land [z \land (u \land w)] = y \land [w \land \{z \land (u \land x)\}].$  %
\end{lemma}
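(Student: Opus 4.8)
The plan is to transform the left-hand side $(x \land y) \land [z \land (u \land w)]$ into the right-hand side $y \land [w \land \{z \land (u \land x)\}]$ using only the two structural lemmas already at hand, namely Lemma \ref{L1} and Lemma \ref{L10}, with no direct further appeal to (A8) or (J5'). The crucial observation is that the desired right-hand side is, up to renaming of variables, exactly the right-hand side of Lemma \ref{L10}: reading Lemma \ref{L10} with its second and third variables interchanged (that is, substituting $y \mapsto z$ and $z \mapsto y$ while keeping $x$, $u$, $w$ fixed) yields $[(x \land z) \land y] \land (u \land w) = y \land [w \land \{z \land (u \land x)\}]$. Hence it suffices to rewrite the left-hand side into the fully left-associated form $[(x \land z) \land y] \land (u \land w)$.

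First I would carry out this reassociation. Lemma \ref{L1} asserts $[(x \land y) \land z] \land u = (x \land z) \land (y \land u)$; instantiating it with $x \mapsto x$, $y \mapsto z$, $z \mapsto y$, and $u \mapsto u \land w$ turns $[(x \land z) \land y] \land (u \land w)$ into $(x \land y) \land [z \land (u \land w)]$, which is precisely the left-hand side. Read in the opposite direction, this says that the left-hand side equals $[(x \land z) \land y] \land (u \land w)$. Second, I would apply Lemma \ref{L10}, under the variable interchange described above, to this common expression, arriving at $y \land [w \land \{z \land (u \land x)\}]$ and closing the chain.

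The entire argument is therefore a two-step chain: one reassociation by Lemma \ref{L1} to move the left-hand side into left-associated form, followed by one application of Lemma \ref{L10}. I do not anticipate a genuine obstacle; the only delicate point is the bookkeeping of the substitutions, since Lemma \ref{L10} must be instantiated with its second and third variables swapped and it is easy to misplace $y$, $z$, and the inner factor $u \land x$. Once the target is recognized as a relabelled instance of Lemma \ref{L10}, the bridging step via Lemma \ref{L1} is essentially forced, and the proof is immediate.
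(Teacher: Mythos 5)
Your proposal is correct and coincides with the paper's own proof: the paper likewise rewrites $(x \land y) \land [z \land (u \land w)]$ as $[(x \land z) \land y] \land (u \land w)$ via Lemma \ref{L1} and then applies Lemma \ref{L10} (with its second and third variables interchanged) to reach $y \land [w \land \{z \land (u \land x)\}]$. Your substitution bookkeeping is accurate, so nothing further is needed.
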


\begin{proof}
\begin{align*}
 (x \land y) \land [z \land (u \land w)]&= [(x \land z) \land y] \land (u \land w)  &\text{by Lemma \ref{L1}}\\ 
                                                          &= y  \land [w \land \{z \land (u \land x))) &\text{by Lemma \ref{L10}}, %
\end{align*}
proving the lemma.
\end{proof}

\begin{lemma} \label{L12} 
 $[(x' \land y')' \land z] \land [u \land \{(x' \land y)' \land  w\}] = z \land [w \land (u \land x)].$  %
\end{lemma}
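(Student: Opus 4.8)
The plan is to exploit the fact that the left-hand side of this statement is literally the left-hand side of Lemma~\ref{L9}. First I would invoke Lemma~\ref{L9} to replace $[(x' \land y')' \land z] \land [u \land \{(x' \land y)' \land w\}]$ by $(x \land z) \land (u \land w)$. It then remains only to convert this product of two pairs into the right-nested form on the right-hand side. A single application of Lemma~\ref{L0}, with its variables $(x,y,z,u)$ instantiated as $(x,z,u,w)$, rewrites $(x \land z) \land (u \land w)$ as $z \land [w \land (x \land u)]$, which already has the outer shape $z \land [w \land (\,\cdots)]$ demanded by the statement.

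Alternatively, and staying closer to the self-contained computations of this section, I would give a derivation that postpones Lemma~\ref{L9}: apply Lemma~\ref{L8} to turn $u \land \{(x' \land y)' \land w\}$ into $\{u \land (x' \land y)'\} \land w$, then Lemma~\ref{L0} to collapse the resulting product of two pairs into a single right-nested chain, then (A8) to bring the two negated factors $(x' \land y)'$ and $(x' \land y')'$ next to one another, and finally (J5') to replace the juxtaposition $(x' \land y)' \land (x' \land y')'$ by $x$. Either route lands at an expression of the shape $z \land [w \land (x \land u)]$, so that the whole computation is just bookkeeping of the pair-shuffling lemmas on top of one invocation of (J5').

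The hard part is the order of the innermost meet: the computations above naturally deposit $x \land u$, whereas the statement records $u \land x$. This matters, since (A8) by itself only licenses the simultaneous flip of \emph{both} factors of a product of two pairs (this is exactly the content of Lemma~\ref{L7}), and not the transposition of a single variable across the two halves; hence the final interchange cannot be read off from (A8) alone. I would therefore arrange the (J5')-collapse so that the pair $(x' \land y)',\,(x' \land y')'$ is juxtaposed in precisely the order prescribed by (J5'), namely as $(x' \land y)' \land (x' \land y')'$, letting the substitution itself install $x$ in the intended slot. The single genuine transposition that (A8) cannot provide is thus supplied by re-entering the negation layer via (J5') (together with Lemma~\ref{L7}), and this is where I expect the only real friction in the proof to lie.
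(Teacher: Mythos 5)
Your diagnosis of where the difficulty lies is accurate, but your proposed resolution does not overcome it, so the proof has a genuine gap. Both of your routes terminate at $z \land [w \land (x \land u)]$: in the first, Lemma~\ref{L9} followed by Lemma~\ref{L0} (with the instantiation you give) yields exactly that term; in the second, after Lemma~\ref{L8} and Lemma~\ref{L0} the two negated factors reunite as $((x' \land y)' \land (x' \land y')') \land u$, which is \emph{already} in the order prescribed by (J5'), and the collapse therefore installs $x$ on the left of $u$. So the obstruction is not the internal order of the pair $(x' \land y)', (x' \land y')'$ (which your ``arrangement of the collapse'' is meant to control); it is the position of the reunited pair relative to $u$. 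Your remedy --- re-entering the negation layer via (J5') together with Lemma~\ref{L7} --- fails concretely: expanding both $x$ and $u$ by (J5') and applying Lemma~\ref{L7} flips each of the two pairs internally, producing factors of the shape $(x' \land a')' \land (x' \land a)'$, which is no longer an instance of the (J5') pattern, so the collapse back is blocked. Keep in mind that the transposition you need is precisely what separates Lemma~\ref{L0} from Lemma~\ref{L13}, and Lemma~\ref{L13} is proved \emph{from} the present lemma; commutativity is not available at this stage, so the final flip cannot be waved through.

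The missing idea is the paper's key step: Lemma~\ref{L11}, which you never invoke. Instantiating Lemma~\ref{L11} with $x \mapsto (x' \land y')'$, $y \mapsto z$, $z \mapsto u$, $u \mapsto (x' \land y)'$, $w \mapsto w$ rewrites the left-hand side in one stroke as $z \land [w \land \{u \land ((x' \land y)' \land (x' \land y')')\}]$; that is, it reunites the two negated factors in the (J5')-order but with $u$ to their \emph{left}, so a single application of (J5') yields $z \land [w \land (u \land x)]$ directly. (The paper's proof cites (A8) for this last step, but the identity actually used is (J5').) Your decomposition via Lemma~\ref{L9} is a reasonable alternative opening, but to finish it you would still need a statement with the force of Lemma~\ref{L11} or Lemma~\ref{L13}, and neither is derivable from the tools you list.
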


\begin{proof}
\begin{align*}
[(x' \land y')' \land z] \land [u \land \{(x' \land y)' \land  w\}]  &= z \land [w \land \{u \land ((x' \land y)' \land (x'  \land y')')\}] \\
                                                                         &\text{ \qquad \qquad \qquad \qquad by Lemma \ref{L11}}\\
                                                                 & = z \land [w \land (u \land x)] \text{\qquad \ by (A8)}, 
\end{align*}
completing the proof.\end{proof}

\begin{lemma} \label{L13} %
 $(x \land y) \land (z \land u) = y \land [u \land (z \land x)].$  %
\end{lemma}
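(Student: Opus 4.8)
The plan is to obtain Lemma~\ref{L13} directly from the two immediately preceding lemmas, \ref{L9} and \ref{L12}, exploiting the fact that they have \emph{identical} left-hand sides. Both lemmas evaluate the same compound term
\[
[(x' \land y')' \land z] \land [u \land \{(x' \land y)' \land w\}]:
\]
Lemma~\ref{L9} asserts it equals $(x \land z) \land (u \land w)$, while Lemma~\ref{L12} asserts it equals $z \land [w \land (u \land x)]$. Chaining these two conclusions through the common term yields the four-variable identity
\[
(x \land z) \land (u \land w) = z \land [w \land (u \land x)],
\]
which holds for all $x,z,u,w$; the auxiliary variable $y$ occurs only inside the discarded common left-hand side and hence does not appear in this conclusion.

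First I would read off this derived identity, and then apply the renaming $x \mapsto x$, $z \mapsto y$, $u \mapsto z$, $w \mapsto u$. Under this substitution the left-hand side $(x \land z) \land (u \land w)$ becomes $(x \land y) \land (z \land u)$ and the right-hand side $z \land [w \land (u \land x)]$ becomes $y \land [u \land (z \land x)]$, which is exactly the assertion of Lemma~\ref{L13}. There is no variable-capture issue, since $y$ is absent from the derived identity. In a written proof this is most transparently presented as a two-line display: one step citing Lemma~\ref{L9} and the other citing Lemma~\ref{L12}, both passed through the common instantiated term.

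I do not expect a genuine obstacle; the real work has already been done inside Lemmas~\ref{L9} and~\ref{L12}, and what remains is bookkeeping. The one point worth flagging is \emph{why} this roundabout route is necessary. The naive attempt would start from Lemma~\ref{L0}, which gives $(x \land y) \land (z \land u) = y \land [u \land (x \land z)]$, and then try to replace the innermost pair $x \land z$ by $z \land x$. But commutativity of $\land$ is not yet available at this point of the section, so that swap is illegitimate, and pure (A8)-manipulation cannot reorder the inner pair. The detour through the negation-bearing Lemmas~\ref{L9} and~\ref{L12}, which invoke (J5') together with (A8), is precisely what supplies the required reordering.
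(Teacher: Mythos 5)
Your proof is correct and is essentially identical to the paper's: the paper also derives Lemma~\ref{L13} by writing $(x \land y) \land (z \land u)$ as the common compound term via Lemma~\ref{L9} (read right-to-left, with auxiliary variable $t$) and then applying Lemma~\ref{L12} to that term, which is precisely your chaining-through-the-common-left-hand-side argument presented as a two-line display. Your observation about why the detour through the negation-bearing lemmas is needed (commutativity not yet being available) is a nice extra, but the substance of the argument matches the paper exactly.
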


\begin{proof}
\begin{align*}
 (x \land y) \land (z \land u) &= [(x' \land t)' \land y] \land [z \land \{(x' \land t)' \land u\}] &\text{by Lemma \ref{L9}}\\ %
                                                 &= y \land (u \land (z \land x)) &\text{by Lemma \ref{L12}}, %
\end{align*}
hence the lemma.
\end{proof}

\begin{lemma} \label{L14}  %
$(x \land (y \land z)) \land u = y \land (u \land (z \land x)). $  
\end{lemma}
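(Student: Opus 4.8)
The plan is to prove Lemma~\ref{L14} by two direct applications of (A8), with no appeal to the earlier lemmas L0--L13. First I would focus on the inner subterm $x \land (y \land z)$ appearing as the left factor of the left-hand side. Applying (A8) to this subterm rewrites $x \land (y \land z)$ as $(z \land x) \land y$, and since the subterm sits as the left argument of an outer conjunction with $u$, the whole left-hand side becomes $((z \land x) \land y) \land u$.

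Next I would recognize that $((z \land x) \land y) \land u$ is precisely an instance of the right-hand pattern of (A8). Reading (A8) from right to left gives $(c \land a) \land b \approx a \land (b \land c)$, and matching $c = z \land x$, $a = y$, $b = u$ yields $((z \land x) \land y) \land u = y \land (u \land (z \land x))$, which is exactly the desired right-hand side. Chaining the two equalities closes the argument, so the entire proof is the two-line computation $(x \land (y \land z)) \land u = ((z \land x) \land y) \land u = y \land (u \land (z \land x))$, each step justified by (A8).

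I expect essentially no obstacle here; the only point demanding care is the variable matching in the second application of (A8), since the three ``slots'' of the associative-type identity must be instantiated with $y$, $u$, and the compound term $z \land x$ in the correct order, and it is easy to transpose two of them by mistake. Unlike the surrounding lemmas, this statement does not require the detour through the composite identities L0--L13: it is a pure consequence of (A8) alone. I would therefore present it as a short two-step \emph{align*} computation citing (A8) twice, rather than building on any previous result.
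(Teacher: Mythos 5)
Your proof is correct and is essentially identical to the paper's own: the paper also rewrites $x \land (y \land z)$ as $(z \land x) \land y$ by (A8) and then applies (A8) once more (read right-to-left with the compound term $z \land x$ in the third slot) to obtain $y \land (u \land (z \land x))$. No differences worth noting.
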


\begin{proof}
\begin{align*}
(x \land (y \land z)) \land u &= [(z \land x) \land y] \land u &\text{by (A8)} \\
                                           &= y \land [u \land (z \land x)]  &\text{by (A8)},
\end{align*}
thus the lemma is proved.
\end{proof}

\begin{lemma} \label{L15} 
 $x \land (y \land (z \land u)) = (x \land z) \land (y \land u). $ 
\end{lemma}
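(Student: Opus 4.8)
The plan is to rewrite the nested product on the left, $x \land (y \land (z \land u))$, into the paired product on the right, $(x \land z) \land (y \land u)$, using only (A8) together with the structural identities already established. At this stage we have neither commutativity nor associativity available, so every rearrangement must be justified by an explicit lemma, and the whole difficulty lies in choosing a sequence of rewrites whose pattern-matches line up.

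First I would apply (A8) to the outer two factors, treating the inner product $z \land u$ as a single term. Since (A8) reads $x \land (y \land z) \approx (z \land x) \land y$, instantiating its last variable by $z \land u$ immediately collapses the left-hand side to the fully top-left-associated term $\bigl((z \land u) \land x\bigr) \land y$. This is the move that exposes a usable shape.

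Next I would recognize that this expression matches the left side of Lemma \ref{L1}, namely $[(x \land y) \land z] \land u = (x \land z) \land (y \land u)$: taking the first three slots to be $z, u, x$ and the last to be $y$, Lemma \ref{L1} rewrites $\bigl((z \land u) \land x\bigr) \land y$ as $(z \land x) \land (u \land y)$. This already has the desired shape of two parenthesized pairs, but the entries inside each pair sit in the wrong order relative to the target.

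The single load-bearing step is then Lemma \ref{L7}, $(x \land y) \land (z \land u) = (y \land x) \land (u \land z)$, which performs exactly the simultaneous swap within both factors needed to turn $(z \land x) \land (u \land y)$ into $(x \land z) \land (y \land u)$, completing the identity. I expect the only real subtlety to be this last move: because we cannot yet commute individual occurrences, the ordering mismatch produced by the Lemma \ref{L1} step cannot be repaired factor-by-factor, and it is precisely the ``double swap'' encapsulated in Lemma \ref{L7} that fixes it in one stroke. Driving the computation from the left-hand side, so that the top-left-associated pattern of Lemma \ref{L1} appears, rather than attempting to expand the right-hand side, is what keeps the argument to three rewrites.
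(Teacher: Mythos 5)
Your proof is correct, and every instantiation checks out: (A8) with third variable $z \land u$ gives $x \land (y \land (z \land u)) = ((z \land u) \land x) \land y$; Lemma~\ref{L1} with slots $(z,u,x,y)$ gives $(z \land x) \land (u \land y)$; and Lemma~\ref{L7} with slots $(z,x,u,y)$ performs the simultaneous swap to $(x \land z) \land (y \land u)$. Both lemmas you invoke are established before Lemma~\ref{L15}, so there is no circularity. However, your route differs from the paper's: the paper does not use Lemma~\ref{L1} or Lemma~\ref{L7} here at all, but instead reads Lemma~\ref{L14} (proved immediately beforehand) right-to-left to collapse $x \land (y \land (z \land u))$ directly to $[u \land (x \land z)] \land y$, and then a single application of (A8) regroups this into $(x \land z) \land (y \land u)$ --- a two-step proof. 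The paper's approach is shorter because Lemma~\ref{L14} already encodes the needed cyclic rearrangement; your approach buys a cleaner conceptual separation, isolating the regrouping into pairs (Lemma~\ref{L1}) from the order-repairing ``double swap'' (Lemma~\ref{L7}), and your remark that the mismatch cannot be fixed factor-by-factor in the absence of commutativity --- so the simultaneous swap is genuinely necessary --- is exactly the right diagnosis of where the difficulty sits.
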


\begin{proof}
\begin{align*}
x \land (y \land (z \land u)) &= [u \land (x \land z)] \land y &\text{by Lemma \ref{L14}}\\ 
                                            &= (x \land z) \land (y \land u) &\text{by (A8)},
\end{align*}
proving the lemma.
\end{proof}

\begin{lemma} \label{L16} 
 $x \land [y \land (z \land u)] = u \land [z \land (x \land y)].$ 
\end{lemma}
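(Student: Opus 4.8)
The plan is to bring both sides of the identity to the common ``product of two pairs'' normal form $(a \land b) \land (c \land d)$, and then re-nest, so that the whole equality collapses to a single application of each of two already-established rearrangement lemmas rather than to any fresh computation. The guiding observation is that Lemma \ref{L15} converts a doubly right-nested term $p \land (q \land (r \land s))$ into the symmetric form $(p \land r) \land (q \land s)$, while Lemma \ref{L13} converts any $(a \land b) \land (c \land d)$ back into a right-nested term $b \land [d \land (c \land a)]$; by feeding the output of one into the other with the right variable matching, the two permutations should compose to exactly the rearrangement demanded by Lemma \ref{L16}.

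Concretely, I would start from the right-hand side $u \land [z \land (x \land y)]$, which already has the shape to which Lemma \ref{L15} applies (take $p = u$, $q = z$, $r = x$, $s = y$), rewriting it as the normal form $(u \land x) \land (z \land y)$. I would then collapse this normal form with Lemma \ref{L13}, choosing the substitution $a = u$, $b = x$, $c = z$, $d = y$ so that $b \land [d \land (c \land a)] = x \land [y \land (z \land u)]$, which is precisely the left-hand side. Thus the entire argument is the two-step chain
\begin{align*}
x \land [y \land (z \land u)] &= (u \land x) \land (z \land y) &\text{by Lemma \ref{L13}}\\
 &= u \land [z \land (x \land y)] &\text{by Lemma \ref{L15}},
\end{align*}
read from top to bottom.

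The step where the argument could silently go wrong — and hence the one I regard as the real obstacle — is the bookkeeping of these substitutions, not any hard calculation. If at the re-nesting step one instead uses the (A8)-only Lemma \ref{L0}, the normal form $(u \land x) \land (z \land y)$ maps to $x \land [y \land (u \land z)]$, which differs from the target by an innermost transposition $z \leftrightarrow u$; that transposition is \emph{not} a consequence of (A8) alone (it already fails in the variety $\mathbb{A}8$), so a purely associative-type rearrangement stalls at exactly this point. The resolution is that the required commutation is silently encoded in Lemma \ref{L13}, whose proof is where (J5$'$) enters the development. Selecting Lemma \ref{L13} rather than Lemma \ref{L0} at the re-nesting step is therefore precisely what allows the chain to close.
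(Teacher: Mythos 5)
Your proof is correct and takes essentially the same approach as the paper: both arguments work entirely with Lemmas \ref{L13} and \ref{L15}, passing through the paired normal form $(a \land b) \land (c \land d)$, and your substitutions check out at each step. The only difference is economy---your two-step chain connects the two sides directly, whereas the paper rewrites each side separately into the common term $y \land [x \land (u \land z)]$ using six applications of the same two lemmas.
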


\begin{proof}
\begin{align*}
x \land [y \land (z \land u)]  &=  (x \land z) \land (y \land u) &\text{by Lemma \ref{L15}}\\ 
                                           &= z \land [u \land (y \land x)] &\text{by Lemma \ref{L13}}\\    
                                           &= (z \land y) \land (u  \land  x) &\text{by Lemma \ref{L15}}\\ 
                                           &= y \land [x \land (u \land z)] &\text{by Lemma \ref{L13}}.
\end{align*}
 On the other hand,
\begin{align*} 
u \land [z \land (x \land y)] &= (y \land u) \land (x \land z) &\text{by Lemma \ref{L13}}\\ 
                                           &= y \land [x \land (u \land z)] &\text{by Lemma \ref{L15}}.
\end{align*}
Hence the lemma follows.
\end{proof}

\begin{lemma} \label{L17} 
 $ x \land y = (x' \land z')' \land (y \land (x' \land z)').$  
\end{lemma}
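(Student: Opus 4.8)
The plan is to read this identity directly off (J5') followed by a single application of (A8). In fact, up to renaming the auxiliary variable (replacing $w$ by $z$), the statement coincides exactly with Lemma \ref{L2}, so essentially no new work is needed; I would simply repeat that two-line argument.

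First I would invoke (J5') in the form $x = (x' \land z)' \land (x' \land z')'$ and substitute it into the left factor of $x \land y$, obtaining $x \land y = [(x' \land z)' \land (x' \land z')'] \land y$. Then I would apply (A8) once. Writing $P := (x' \land z)'$ and $Q := (x' \land z')'$, so that $x = P \land Q$, the right-hand side of the claim is $Q \land (y \land P)$. Taking $a = Q$, $b = y$, $c = P$ in (A8), namely $a \land (b \land c) = (c \land a) \land b$, gives $Q \land (y \land P) = (P \land Q) \land y$, which is precisely the expression reached after the substitution step. Reading this short chain backwards establishes $x \land y = (x' \land z')' \land (y \land (x' \land z)')$.

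There is no genuine obstacle here: the only thing to check is the bookkeeping of the single (A8) rearrangement, and that is identical to the step already carried out in Lemma \ref{L2}. The apparent purpose of recording this as a separate lemma is merely to fix the auxiliary variable as $z$ (rather than $w$), so that the identity lines up cleanly with the substitutions demanded by the lemmas that follow.
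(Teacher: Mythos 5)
Your proof is correct and is essentially identical to the paper's own argument: the paper also proves Lemma \ref{L17} by substituting (J5') into the left factor of $x \land y$ and then applying (A8) once (reading $a \land (b \land c) = (c \land a) \land b$ from right to left), exactly as in Lemma \ref{L2}. Your observation that the statement is just Lemma \ref{L2} with the auxiliary variable $w$ renamed to $z$ is also accurate, so no new work is required beyond that two-line rearrangement.
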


\begin{proof}
\begin{align*}
x \land y &= [ (x' \land z')' \land (x \land z)'] \land y  &\text{by (J5')} \\
               &= (x' \land z')' \land (y \land (x' \land z)')  &\text{by (A8)},
\end{align*}
whence the lemma.
\end{proof}

\begin{lemma} \label{L18} 
 $x \land (y \land z) = (y' \land u)' \land (z \land  (x \land  (y' \land  u')')).$  
\end{lemma}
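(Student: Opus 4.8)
The plan is to peel off the middle factor $y$ on the left–hand side by a single application of (J5'), thereby reducing the whole statement to a pure consequence of (A8) that is already available among the earlier lemmas. Concretely, (J5') applied to $y$ (with the auxiliary variable $u$ read in the roles $x\mapsto y,\ y\mapsto u$) gives $y = (y' \land u)' \land (y' \land u')'$. Writing $a := (y' \land u)'$ and $b := (y' \land u')'$, the claim becomes the groupoid identity
\[
 x \land \bigl( (a \land b) \land z \bigr) \;=\; a \land \bigl( z \land (x \land b) \bigr),
\]
in which the complementation $'$ no longer appears, so it must follow from (A8) alone. The whole argument therefore splits into: a use of (J5') to introduce $a,b$ at the start, a short (A8)-manipulation in the middle, and a use of (J5') read backwards at the end to restore $a$ and $b$ to their closed forms.

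For the middle identity I would first rewrite the inner left–associated product by one backward application of (A8). Since (A8) reads $p \land (q \land r) = (r \land p) \land q$, matching $(a \land b) \land z$ to the pattern $(r \land p) \land q$ (so $r = a$, $p = b$, $q = z$) yields $(a \land b) \land z = b \land (z \land a)$, hence
\[
 x \land \bigl( (a \land b) \land z \bigr) \;=\; x \land \bigl( b \land (z \land a) \bigr).
\]
The right–hand side now has exactly the shape $x \land [\, y \land (z \land u)\,]$ to which Lemma~\ref{L16} applies, with its variables instantiated as $y \mapsto b$, $z \mapsto z$, $u \mapsto a$. Lemma~\ref{L16} then gives $x \land (b \land (z \land a)) = a \land (z \land (x \land b))$, which is precisely the required identity; substituting back $a = (y' \land u)'$ and $b = (y' \land u')'$ produces the statement of the lemma.

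I do not expect a serious obstacle: after the (J5')-decomposition of $y$ the remaining identity is purely of associative type, and the real work has already been done in establishing Lemma~\ref{L16} (and, before it, Lemmas~\ref{L13} and~\ref{L15}). The only point needing care is the bookkeeping — choosing the auxiliary variable $u$ so that the two halves $(y'\land u)'$ and $(y'\land u')'$ of the (J5') decomposition reappear in the correct outer and inner positions, and making sure the single backward use of (A8) matches $(a \land b) \land z$ to the grouping $(r \land p) \land q$ and not to some other one. Should Lemma~\ref{L16} fail to match cleanly, the fallback would be to push $x \land ((a \land b) \land z)$ into the symmetric product-of-pairs form $(z \land x) \land (a \land b)$ by (A8) and then reconcile it with the right–hand side using the four-variable identities (Lemmas~\ref{L1}, \ref{L5}, \ref{L7}); but the route through Lemma~\ref{L16} is by far the shortest, and that is the one I would write up.
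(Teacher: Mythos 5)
Your proof is correct and is essentially the paper's own argument: the paper rewrites $y \land z$ as $(y' \land u')' \land (z \land (y' \land u)')$ by citing Lemma~\ref{L17} (whose proof is exactly your two opening steps, (J5') followed by one backward application of (A8)) and then finishes with the same instantiation of Lemma~\ref{L16} that you use. The only difference is that you inline the derivation of Lemma~\ref{L17} rather than citing it.
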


.
\begin{proof}
\begin{align*}
x \land (y \land z)  &=  x \land [(y' \land u')') \land \{z \land (x' \land u)'\}]  &\text{by Lemma \ref{L17}} \\                                    &= (y' \land u)' \land (z \land (x \land  (y' \land  u')')).  &\text{by Lemma \ref{L16}}, 
\end{align*}
which completes the proof.
\end{proof}

\begin{lemma} \label{L19} 
 $(x \land y)\land (z \land u) = x \land ((u \land z) \land y).$ 
\end{lemma}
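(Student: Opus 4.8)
The plan is to derive the identity by a short chain of rewrites, using the previously established length-four rearrangement lemmas together with (A8), and viewing both sides as rearrangements of the same four-fold product $x,y,z,u$. Since the earlier lemmas already encode essentially all of the ``commutativity'' and ``associativity'' that (A8) and (J5') force at length four, I expect that only two rewriting steps will be required, with no appeal to (J5') at all.

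First I would start from the left-hand side $(x \land y) \land (z \land u)$ and apply Lemma \ref{L7}, which asserts $(x \land y) \land (z \land u) = (y \land x) \land (u \land z)$, thereby swapping the two factors inside each pair and swapping the two pairs simultaneously. This puts the expression into the intermediate form $(y \land x) \land (u \land z)$, whose outer shape $(\,\cdot \land \cdot\,) \land (\,\cdot \land \cdot\,)$ is now ready to be folded by a single application of (A8).

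Second, I would read (A8) right-to-left as $(r \land p) \land q = p \land (q \land r)$ and apply it to $(y \land x) \land (u \land z)$ with the matching $p = x$, $q = u \land z$, $r = y$; this collapses the product directly to $x \land ((u \land z) \land y)$, which is exactly the right-hand side. The only genuine obstacle is selecting the correct intermediate form: once Lemma \ref{L7} is invoked the target shape is completely determined, and the closing step is just a pattern-matched use of (A8). An alternative route could instead pass through Lemma \ref{L15} and Lemma \ref{L16} to regroup the associations, but the two-step path through Lemma \ref{L7} followed by (A8) appears to be the most economical.
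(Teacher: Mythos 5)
Your proof is correct. Lemma \ref{L7} indeed gives $(x \land y)\land (z \land u) = (y \land x)\land (u \land z)$, and (A8) instantiated with its second variable replaced by the block $u \land z$ and its third by $y$ reads $x \land ((u \land z)\land y) \approx (y \land x)\land (u \land z)$, which folds your intermediate form to the desired right-hand side. The paper takes a different, equally short route: it applies Lemma \ref{L15} (read right-to-left) to get $(x \land y)\land (z \land u) = x \land [z \land (y \land u)]$, and then rewrites the inner three-fold product by (A8), namely $z \land (y \land u) = (u \land z)\land y$. So the paper regroups first and applies (A8) \emph{inside} the right-hand factor, whereas you swap within the two pairs first (via \ref{L7}) and then apply (A8) at the \emph{outermost} level with a compound argument; the load is carried by a different rearrangement lemma in each case. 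Neither route has an advantage in length—each is one prior lemma plus one instance of (A8)—and your observation that (J5') is never invoked holds for both, since \ref{L7} and \ref{L15} are themselves derived from (A8) alone (via \ref{L1}, \ref{L5} and \ref{L14}, respectively).
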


\begin{proof}
\begin{align*}
(x \land y)\land (z \land u) &= x \land [z \land (y \land u)]  &\text{by Lemma \ref{L15}}\\ 
                                          &= x \land [(u \land z) \land y]  &\text{by (A8)},
\end{align*}
completing the proof.
\end{proof}

\begin{lemma} \label{L20} 
 $x \land (y \land z) = (x' \land u)' \land ((z \land y) \land (x' \land u')').$ 
\end{lemma}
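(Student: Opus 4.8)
The plan is to reconstruct the leading factor $x$ on the left-hand side by means of (J5'), which is exactly the move already packaged in Lemma~\ref{L17}: it rewrites a two-fold product $x \land y$ as $(x' \land z')' \land (y \land (x' \land z)')$. Applying it with $y$ instantiated to the compound term $y \land z$ and the auxiliary variable renamed to $u$ yields immediately
\[
x \land (y \land z) = (x' \land u')' \land \bigl[(y \land z) \land (x' \land u)'\bigr].
\]
Thus the two complement-factors $(x' \land u)'$ and $(x' \land u')'$ occurring in the claimed right-hand side are produced at a single stroke; what remains differs from the target only in the order of these two factors and in the order inside the block $y \land z$ versus $z \land y$.

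First I would emphasize that at this stage only (A8) and its already-derived consequences are available, since full commutativity and associativity are not yet in hand in this section. Hence the remaining rearrangement cannot be carried out by freely permuting variables but must be routed through the specific lemmas proved above. The cleanest route is to fold both complement-factors onto one side: a single application of (A8) turns $(x' \land u')' \land [(y \land z) \land (x' \land u)']$ into $[(x' \land u)' \land (x' \land u')'] \land (y \land z)$, which exhibits the whole expression as a product of two pairs.

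At this point the goal reduces to a single swap of the paired factors, and this is precisely the content of Lemma~\ref{L7}, which supplies $(a \land b) \land (c \land d) = (b \land a) \land (d \land c)$; it delivers $[(x' \land u')' \land (x' \land u)'] \land (z \land y)$. A final application of (A8) --- the inverse of the fold performed above --- then unbrackets this expression into $(x' \land u)' \land [(z \land y) \land (x' \land u')']$, which is exactly the right-hand side asserted by the lemma.

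The main obstacle I anticipate is bookkeeping rather than conceptual. Because (A8) permutes the three blocks of a product in a fixed pattern and commutativity is not yet in force, one must choose the intermediate bracketing so that Lemma~\ref{L7} applies verbatim. Recognizing that collapsing \emph{both} outer complement-factors onto one side reduces the entire problem to one invocation of Lemma~\ref{L7} is the decisive observation; once that is seen, the derivation is a short chain in which every individual step is a direct citation of (A8) or of Lemma~\ref{L7}.
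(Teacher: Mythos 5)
Your proof is correct, but it takes a different path through the lemma library than the paper does. The paper's proof is a two-step affair: it substitutes (J5') for $x$ directly, obtaining $[(x' \land u)' \land (x' \land u')'] \land (y \land z)$, and then applies Lemma \ref{L19} (namely $(x \land y)\land (z \land u) = x \land ((u \land z) \land y)$) once, which in a single stroke performs exactly the re-bracketing and the swap $y \land z \mapsto z \land y$ that the statement requires. You instead introduce the complement pair in split form via Lemma \ref{L17}, fold the two complement factors together with one application of (A8), swap the two pairs with Lemma \ref{L7}, and unfold with a second application of (A8); I checked each instantiation and every step is a verbatim application of the cited identity, with no circularity (both \ref{L17} and \ref{L7} precede \ref{L20}). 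The trade-off: the paper's route is shorter because Lemma \ref{L19} was manufactured precisely for this purpose, whereas your route leans only on the more symmetric pair-swapping fact \ref{L7} together with raw (A8), at the cost of two extra steps --- a perfectly reasonable alternative when one has not noticed (or not yet proved) \ref{L19}.
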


\begin{proof}
\begin{align*}
x \land (y \land z) &= [(x' \land u)' \land (x' \land u')'] \land (y \land z)  &\text{by (J5')}\\
                             &= (x' \land u)' \land ((z \land y) \land (x' \land u')')   &\text{by Lemma \ref{L19}}, 
\end{align*}
proving the lemma.
\end{proof}

\begin{lemma} \label{L21}  
 $x \land [y \land (z \land u)] = x \land [(z \land y) \land u].$  
\end{lemma}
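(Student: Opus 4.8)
The plan is to first recognize that the two sides of Lemma~\ref{L21} differ only in the order of the innermost pair. Applying (A8) to the right-hand inner term $(z \land y) \land u$ rewrites it as $y \land (u \land z)$, so the identity to be proved is equivalent to the pure commutation statement $x \land [y \land (z \land u)] = x \land [y \land (u \land z)]$. This reframing is useful because it shows that no sequence of the associative-type rearrangements alone will finish the job: the naive reductions through (A8), Lemma~\ref{L13}, or Lemma~\ref{L15} simply transport the required $z$--$u$ swap to a different position rather than removing it, and one ends up back where one started. The symmetry must instead be broken by routing through the lemmas that already carry the content of (J5'), namely the flattening identity of Lemma~\ref{L15} together with the four-cycle identity of Lemma~\ref{L16}.

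Concretely, I would start from the right-hand side and produce the chain
\begin{align*}
x \land [(z \land y) \land u]
&= (u \land x) \land (z \land y) &\text{by (A8)}\\
&= u \land [z \land (x \land y)] &\text{by Lemma~\ref{L15}}\\
&= x \land [y \land (z \land u)] &\text{by Lemma~\ref{L16}},
\end{align*}
which is exactly the asserted identity. The first step uses (A8) to move $u$ to the front. The second step recognizes $(u \land x) \land (z \land y)$ as an instance of the right-hand side of the flattening Lemma~\ref{L15}, read from right to left, under the substitution sending the displayed variables $(x,y,z,u)$ to $(u,z,x,y)$. The final step observes that $u \land [z \land (x \land y)]$ is precisely the right-hand side of Lemma~\ref{L16}, so Lemma~\ref{L16} read backwards returns $x \land [y \land (z \land u)]$.

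The main obstacle is conceptual rather than computational: one must notice that the discrepancy between the two sides is a commutation of $z$ and $u$ in the deepest position, and that this commutation is available only indirectly, via Lemmas~\ref{L15} and~\ref{L16}, since full commutativity of $\land$ has not yet been established at this stage of the development. The only genuine care required is in matching the variable patterns in the reversed applications of Lemma~\ref{L15} and Lemma~\ref{L16}; once the correct substitution is identified, each of the three steps is immediate.
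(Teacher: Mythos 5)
Your proof is correct, and it is essentially the paper's own argument traversed in the opposite direction: the paper applies Lemma \ref{L13} to the left-hand side to reach the pivot term $(u \land x) \land (z \land y)$ and then folds that term into the right-hand side by a single application of (A8) --- exactly the intermediate term your first step produces. The only difference is that where the paper closes the remaining leg with one application of Lemma \ref{L13}, you use Lemma \ref{L15} followed by Lemma \ref{L16}; this is a slightly longer but equally legitimate route, since both lemmas precede Lemma \ref{L21} in the development (indeed Lemma \ref{L16} is itself derived there from Lemmas \ref{L15} and \ref{L13}), and all three of your substitution instances check out.
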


\begin{proof}
\begin{align*}
x \land [y \land (z \land u)] &=  (u \land x) \land (z \land y) &\text{by Lemma \ref{L13}}\\ 
                                           &= x \land [(z \land y) \land u] &\text{by (A8)},                        \\
\end{align*}
which completes the proof.
\end{proof}

\begin{lemma} \label{L22} 
 $(x' \land y)' \land (z \land (u \land (x' \land y')')) = x \land (z \land u).$  
\end{lemma}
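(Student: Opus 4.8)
The plan is to exploit the two ``special'' subterms $(x' \land y)'$ and $(x' \land y')'$ appearing on the left-hand side. By (J5') these two terms, when multiplied in the precise order $(x' \land y)' \land (x' \land y')'$, collapse to $x$. So the entire strategy is one of pure \emph{rearrangement}: I will use the regrouping lemmas already established in this section to transport these two factors next to each other, in exactly the right order, so that a single application of (J5') produces the factor $x$ and leaves $x \land (z \land u)$.

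Concretely, I would proceed in four moves. First, apply Lemma~\ref{L16} to the left-hand side $(x' \land y)' \land [z \land (u \land (x' \land y')')]$, matching the pattern $p \land [q \land (r \land s)]$ with $p = (x' \land y)'$, $q = z$, $r = u$, $s = (x' \land y')'$; this rewrites it as $(x' \land y')' \land [u \land ((x' \land y)' \land z)]$, which has already brought the two special factors into a more favorable configuration. Next, apply Lemma~\ref{L15} to fold the nested product into a product of two pairs, giving $[(x' \land y')' \land (x' \land y)'] \land (u \land z)$; now both special factors sit together in the first pair. Then use the double-swap Lemma~\ref{L7} to correct their order (and simultaneously the order of $u,z$), yielding $[(x' \land y)' \land (x' \land y')'] \land (z \land u)$. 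Finally, (J5') collapses the first pair to $x$, giving $x \land (z \land u)$, as desired.

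The one point requiring care---and the only real obstacle---is that commutativity of $\land$ is \emph{not} yet available at this stage of the development, so I cannot simply transpose the two special factors once they land in the ``wrong'' order. This is precisely why the penultimate step must route through Lemma~\ref{L7}, whose simultaneous double swap $(p \land q) \land (r \land s) = (q \land p) \land (s \land r)$ lets me reorder the special pair into the exact form $(x' \land y)' \land (x' \land y')'$ demanded by (J5') without appealing to any commutativity law. Everything else is routine pattern-matching against the regrouping identities already proved.
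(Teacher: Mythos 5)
Your proof is correct: each step is a legitimate instance of a lemma proved before Lemma \ref{L22} (Lemmas \ref{L7}, \ref{L15} and \ref{L16} all precede it), so there is no circularity, and the chain
$(x' \land y)' \land [z \land (u \land (x' \land y')')] = (x' \land y')' \land [u \land ((x' \land y)' \land z)] = [(x' \land y')' \land (x' \land y)'] \land (u \land z) = [(x' \land y)' \land (x' \land y')'] \land (z \land u) = x \land (z \land u)$
checks out. It is, however, a genuinely different route from the paper's. The paper does it in two rewrites: Lemma \ref{L21} turns the inner term $z \land (u \land (x' \land y')')$ into $(u \land z) \land (x' \land y')'$, and then Lemma \ref{L20}, read from right to left, collapses $(x' \land y)' \land [(u \land z) \land (x' \land y')']$ directly to $x \land (z \land u)$; the application of (J5') is already packaged inside Lemma \ref{L20}. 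You bypass Lemmas \ref{L20} and \ref{L21} entirely, using the earlier regrouping lemmas \ref{L16} and \ref{L15} together with the double-swap Lemma \ref{L7}, and finish with a bare application of (J5'). The paper's version buys brevity, since Lemma \ref{L20} front-loads the work; yours buys transparency, because the moment at which the two complemented factors meet in the exact order $(x' \land y)' \land (x' \land y')'$ and collapse to $x$ is exhibited explicitly. Your closing caveat is also precisely the right point of care: commutativity is not yet available at this stage of the development, which is why the reordering must route through the simultaneous double swap of Lemma \ref{L7} --- the same constraint that forces the paper's Lemma \ref{L20} to carry its factors in the order $(z \land y)$ rather than $(y \land z)$.
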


\begin{proof}
\begin{align*}
(x' \land y)' \land (z \land (u \land (x' \land y')')) &= (x' \land y)' \land [z \land \{u \land (x' \land y')']  &\text{by Lemma \ref{L21}}\\ 
                                                                          &= x \land (z \land u) &\text{by lemma \ref{L20}},  
\end{align*}
that completes the proof.
\end{proof}

\begin{lemma} \label{L23} 
 $x \land (y \land z) = y \land (z \land x).$ 
\end{lemma}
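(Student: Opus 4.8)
The plan is to obtain Lemma~\ref{L23} immediately from the two ``expand'' and ``collapse'' lemmas that were evidently built for exactly this purpose, namely Lemma~\ref{L18} and Lemma~\ref{L22}. No further rearrangement is required; in particular I would avoid invoking commutativity or $x'' \approx x$, neither of which is available at this stage of the section. It is worth noting at the outset that Lemma~\ref{L23} is literally the identity (A6), so once it is proved we will have shown that (A8) together with (J5') implies (A6), whereupon the base can be completed by appeal to Theorem~\ref{T4.1}.

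First I would apply Lemma~\ref{L18} to the left-hand side, which introduces a fresh auxiliary variable $u$ and splits the single factor $y$ into two negated factors placed in prescribed positions:
\[
x \land (y \land z) = (y' \land u)' \land \bigl(z \land (x \land (y' \land u')')\bigr).
\]
The key observation is then that the term on the right is \emph{exactly} an instance of the left-hand side of Lemma~\ref{L22}. Concretely, substituting into Lemma~\ref{L22} the assignment $x \mapsto y$, $y \mapsto u$, $z \mapsto z$, $u \mapsto x$ turns its left-hand side into $(y' \land u)' \land (z \land (x \land (y' \land u')'))$ and its right-hand side into $y \land (z \land x)$, so that
\[
(y' \land u)' \land \bigl(z \land (x \land (y' \land u')')\bigr) = y \land (z \land x).
\]
Chaining the two displays yields $x \land (y \land z) = y \land (z \land x)$, as desired.

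The only delicate point --- and the step I would double-check --- is the bookkeeping of this substitution. Because (J1) and $x'' \approx x$ are not yet in force, the match of the output of Lemma~\ref{L18} against the input of Lemma~\ref{L22} has to be literal and position-by-position: the two negated factors $(y' \land u)'$ and $(y' \land u')'$ must land precisely in the slots where Lemma~\ref{L22} expects $(x' \land y)'$ and $(x' \land y')'$, and the remaining variables $z$ and $x$ must occupy the slots $z$ and $u$ of Lemma~\ref{L22}. Once this alignment is verified the identity is immediate, so I expect the bulk of the work to lie in that verification rather than in any genuinely new computation.
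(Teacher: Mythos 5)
Your proof is correct and is exactly the paper's own argument: the paper likewise derives Lemma~\ref{L23} by applying Lemma~\ref{L18} to $x \land (y \land z)$ and then collapsing the result via Lemma~\ref{L22}, with the same substitution you verify. Your explicit position-by-position check of that substitution is sound and matches what the paper leaves implicit.
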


\begin{proof}
\begin{align*}
 x \land (y \land z)  &= (y' \land u)' \land (z \land  (x \land  (y' \land  u')')) &\text{by Lemma   \ref{L18}}\\ 
                                 &= y \land (z \land x) &\text{by Lemma \ref{L22}}, 
\end{align*}
proving the lemma. 
\end{proof}
 
 We are now ready to prove Theorem \ref{T5.1}.
 
\begin{proof} {\bf of Theorem \ref{T5.1}}:\\ 

We wish to prove that the axioms (A8) and (J5') imply (A6). 
\begin{align*}
 x \land (y \land z)  &=  y \land (z \land x) &\text{by Lemma \ref{L23}}\\ 
                              &=  z \land (x \land y) &\text{by Lemma \ref{L23}}, 
\end{align*}
proving the identity (A6).  Thus (A6) and (J5') imply the axioms of        
Theorem \ref{T4.1}.  For the converse, note that $\mathbf{2}$ satisfies (A6) and (J5'),
 implying that (A6) and (J5') form a base.
So, the proof of Theorem \ref{T5.1} is complete.  
\end{proof}

\medskip
\section{A 2-base for Boolean algebras containing the identity (A5)}

\begin{theorem} \label{T6.1}
    The following identities form a 2-base for Boolean algebras:
\begin{enumerate}
   \item[{\rm(A5)}] ${x \land ( y \land z )\approx (y \land x) \land z}$, %
   \item[{\rm(J5')}] ${x \approx (x' \land y)' \land (x' \land y')'}$.
\end{enumerate}
\end{theorem}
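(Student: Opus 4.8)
The plan is to follow the template of Sections 4 and 5: show that (A5) together with (J5$'$) entails a base already known to axiomatize $\mathbb{BA}$ --- the cleanest target being (A6), after which Theorem \ref{T4.1} finishes the forward direction --- while the converse is the routine observation that the two-element Boolean algebra $\mathbf 2$ satisfies both (A5) and (J5$'$), so that the variety they define contains $\mathbb{BA}$; combined with the forward direction this yields equality.

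First I would record the equivalent reformulation of (A5): renaming the variables in $x\land(y\land z)\approx(y\land x)\land z$ shows it is the same identity as
\[
\text{(A5$'$)}\qquad (x\land y)\land z \approx y\land(x\land z),
\]
a more convenient ``left-shift'' rewriting rule. Using (A5) and (A5$'$) alone (i.e.\ without negation) I would build a toolbox of rearrangement identities for products of three and four factors, exactly as Lemmas \ref{42}--\ref{48} do for (A6) and Lemmas \ref{L0}--\ref{L8} do for (A8). In particular (A5) already yields a form of associativity under a common left factor, e.g.\ $x\land\bigl(y\land(z\land u)\bigr)=x\land\bigl((y\land z)\land u\bigr)$, together with enough four-term shuffles to relocate any designated factor; these are the identities that will later absorb a (J5$'$)-expansion.

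The decisive phase is the use of (J5$'$): since $w\approx(w'\land t)'\land(w'\land t')'$ for every $t$, I can expand a variable into a four-factor product, push it through the rearrangement lemmas, and then collapse it back by a second application of (J5$'$), mirroring Lemmas \ref{417}--\ref{419} (for (A6)) and Lemmas \ref{L17}--\ref{L23} (for (A8)). The goal of this phase is to extract commutativity $x\land y\approx y\land x$ and associativity $x\land(y\land z)\approx(x\land y)\land z$; once both hold, every identity of associative type of length $3$ holds, so in particular (A6) holds, and Theorem \ref{T4.1} gives that (A5),(J5$'$) define $\mathbb{BA}$. Note that double negation (J4) need not be proved separately here, since it is already derived inside the proof of Theorem \ref{T4.1}; alternatively one could target Theorem \ref{T2.2} directly, additionally deriving $x''\approx x$ from (J5$'$) as in Lemma \ref{424} and using that (J5$'$) with (J4) yields (J5).

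The main obstacle I anticipate is precisely the extraction of commutativity. Because (A5) only equates a right-associated product to one specific left-associated permutation --- and, as one checks, generates no identification among like-associated three-variable terms on its own --- the symmetry available from (A5) alone is strictly weaker than that used in Section 4; a naive attempt to read off $x\land y\approx y\land x$ fails, and genuine commutativity appears only after the correct (J5$'$)-expansion is inserted into the right four-term lemma and reduced. Designing that exact combination --- the analogue of Lemma \ref{417} adapted to (A5) --- is where the real work lies; the remaining steps (associativity, then citing Theorem \ref{T4.1}, then the $\mathbf 2$ converse) are routine.
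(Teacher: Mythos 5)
Your scaffolding matches the paper's exactly: derive (A6) from (A5) and (J5'), invoke Theorem \ref{T4.1} for the forward direction, and dispose of the converse by noting that $\mathbf{2}$ satisfies both identities. But the proposal stops precisely where the proof has to happen: you yourself write that designing the (J5')-expansion which yields commutativity ``is where the real work lies,'' and you do not produce it. That is a genuine gap, not a routine omission --- that derivation \emph{is} the mathematical content of Theorem \ref{T6.1}; the rest (the appeal to Theorem \ref{T4.1}, the $\mathbf{2}$ argument) is boilerplate shared with Sections 4 and 5.

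Moreover, the intermediate target you set --- bare commutativity $x\land y\approx y\land x$ and then associativity --- is not the one the paper aims at, and it is arguably the harder one to reach first. The paper never proves (J1) or (J2) from (A5) and (J5') inside Section 6. Instead it exploits the fact that (A5) makes identities easy to prove \emph{under a common left factor}: first Lemma \ref{62}, $x\land[y\land(z\land u)] = x\land[z\land(y\land u)]$; then, after a chain of (J5')-manipulations, Lemma \ref{811}, the prefixed commutativity $x\land(y\land u)=x\land(u\land y)$, and Lemma \ref{812}, the prefix exchange $x\land[y\land(z\land u)]=y\land[x\land(z\land u)]$, which swaps the two outer factors whenever the core is itself a two-factor product. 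The punchline is then a single expansion--collapse: $x\land(y\land z) = x\land(z\land y)$ by Lemma \ref{811}, $= x\land[z\land\{(y'\land t)'\land(y'\land t')'\}]$ by (J5'); the expanded $y$ is now a product of two factors, so Lemma \ref{812} exchanges $x$ and $z$, and collapsing by (J5') gives $z\land(x\land y)$, i.e.\ the form (A6') of (A6). So the idea missing from your plan is concrete: do not try to strip the prefix to obtain commutativity of two bare variables; stay inside prefixed identities throughout, and use the (J5')-expansion exactly once, to turn the core into a product so that the prefix-exchange lemma applies --- the prefix is never stripped at all, but becomes the outer variable of (A6).
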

 
\begin{lemma} \label{62} %
    $x \land [y \land (z \land u)] = x \land [z \land (y \land u)]$.
\end{lemma}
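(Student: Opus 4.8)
The plan is to prove this opening lemma of the section using only the associative-type identity (A5); the Boolean identity (J5') is not needed here, exactly as the first lemmas of the previous two sections (Lemma \ref{42} and Lemma \ref{L0}) rested on (A6') and (A8) alone. The idea is to drive \emph{both} sides into one common left-associated normal form, which I expect to be $((y \land z) \land x) \land u$. The only tool is (A5), read as $a \land (b \land c) = (b \land a) \land c$ and used in both directions.

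The reduction I would carry out is to apply (A5) from the outside in on the left-hand side, pushing the nesting all the way to the left, and then to run it backwards to recover the right-hand side:
\begin{align*}
x \land [y \land (z \land u)]
   &= (y \land x) \land (z \land u) &\text{by (A5)}\\
   &= (z \land (y \land x)) \land u &\text{by (A5)}\\
   &= ((y \land z) \land x) \land u &\text{by (A5)}\\
   &= x \land [(y \land z) \land u] &\text{by (A5)}\\
   &= x \land [z \land (y \land u)] &\text{by (A5).}
\end{align*}
The first step applies (A5) with right factor $z \land u$; the second uses that $z \land u$ is itself a product; the third rewrites the subterm $z \land (y \land x)$; and the last two run (A5) in reverse, first on the whole term and then on $(y \land z) \land u$.

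The main obstacle, and the only genuine subtlety, is recognizing that one must bring in the outer variable $x$. A naive attempt to equate the two inner brackets directly only sends $y \land (z \land u)$ to $(z \land y) \land u$ and $z \land (y \land u)$ to $(y \land z) \land u$, and since (A5) by itself does not yet yield the commutativity $(z \land y) \land u = (y \land z) \land u$, that route stalls. The resolution is that the needed equality $((y \land z) \land x) \land u = ((z \land y) \land x) \land u$ nonetheless \emph{does} follow from (A5), because the single term $x \land (y \land (z \land u))$ reduces to each of these by different applications of (A5); pushing the nesting out to the left-associated form is precisely what exposes this coincidence. Once both sides are seen to meet at $((y \land z) \land x) \land u$, the lemma is immediate.
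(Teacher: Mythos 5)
Your proof is correct and is essentially identical to the paper's own proof: the same five applications of (A5), in the same order, passing through the left-associated form $((y \land z) \land x) \land u$ and then running (A5) backwards. Your closing remark about why the outer variable $x$ must be carried along is a nice observation, but the derivation itself matches the paper step for step.
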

\begin{proof}
\begin{align*}
    x \land [y \land (z \land u)] &= (y \land x) \land (z \land u)  &\text{by (A5)}\\
                                  &= [z \land (y \land x)] \land u  &\text{ by (A5)}\\
                                  &= [(y \land z) \land x] \land u &\text{ by (A5)}\\
                                  &= x \land [(y \land z) \land u] &\text{ by (A5)}\\
                                  &=x \land [z \land (y \land u)]. &\text{ by (A5)},
\end{align*}
proving the lemma.
\end{proof}

\begin{lemma} \label{63} %
    $ x' \land y = [(x' \land y)' \land (x' \land y')]' \land x'  $.
\end{lemma}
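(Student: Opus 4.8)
My plan is to obtain the present lemma directly from two applications of (J5'), via a self-substitution; somewhat surprisingly, neither (A5) nor Lemma~\ref{62} is needed for this particular step. The starting point is the observation that (J5'), read from right to left, is exactly the equation $(x' \land y)' \land (x' \land y')' = x$. Applying $'$ to both sides of this equation (a legitimate move, since equals may be substituted into any term, and one that does \emph{not} require the as-yet-unproved law $x'' \approx x$) yields the auxiliary identity $\big((x' \land y)' \land (x' \land y')'\big)' = x'$.

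The main step is to substitute into (J5') the composite terms $x' \land y$ and $x' \land y'$ for its two variables. Writing (J5') in the generic form $s \approx (s' \land t)' \land (s' \land t')'$ and taking $s := x' \land y$ and $t := x' \land y'$, the left-hand side becomes $x' \land y$, the first factor on the right becomes $\big((x' \land y)' \land (x' \land y')\big)'$, and the second factor becomes $\big((x' \land y)' \land (x' \land y')'\big)'$. Thus (J5') instantiates to
\[
  x' \land y = \big((x' \land y)' \land (x' \land y')\big)' \land \big((x' \land y)' \land (x' \land y')'\big)'.
\]

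Finally, I would collapse the second factor on the right using the auxiliary identity above: its inner term $(x' \land y)' \land (x' \land y')'$ equals $x$ by (J5'), so the whole factor equals $x'$. This replacement turns the displayed equation into precisely $x' \land y = \big((x' \land y)' \land (x' \land y')\big)' \land x'$, which is the assertion of the lemma.

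The only genuinely delicate point---and the step I expect to be the real content---is spotting the right substitution: one must feed (J5') back into itself with the specific choice $t = x' \land y'$, so that the second conjunct it produces is recognizable, again via (J5'), as $x'$. Once that substitution is identified, the verification is purely formal and uses no associativity or commutativity of $\land$, which is why (A5) and Lemma~\ref{62} play no role in this particular step.
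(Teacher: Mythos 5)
Your proof is correct and is essentially identical to the paper's own argument: the paper likewise instantiates (J5') with $s := x' \land y$, $t := x' \land y'$ and then collapses the second conjunct $\big((x' \land y)' \land (x' \land y')'\big)'$ to $x'$ via (J5'). Your observation that (A5) plays no role here is also accurate---the paper's citation of ``(A5)'' at its second step is evidently a mislabel for (J5'), since that step is just replacement of equals under the prime.
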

\begin{proof}
\begin{align*}
    x' \land y 
        &= [(x' \land y)' \land (x' \land y')]' \land [(x' \land y)' \land (x' \land y')']' &\text{ by (J5')}\\
        &= [(x' \land y)' \land (x' \land y')]' \land x'  &\text{ by (A5)},
\end{align*}
which proves the lemma.
\end{proof}

\begin{lemma} \label{64} %
    $x \land z  = (x' \land y')' \land [(x' \land y)' \land z] $.
\end{lemma}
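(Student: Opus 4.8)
The plan is to start from the right-hand side $(x' \land y')' \land [(x' \land y)' \land z]$ and rewrite it into $x \land z$ using only (A5) and (J5'); in fact the two auxiliary lemmas of this section, Lemma~\ref{62} and Lemma~\ref{63}, are not needed for this particular identity, since the structure of the term matches (A5) and (J5') verbatim. The key observation is that the right-hand side has the shape $A \land (B \land z)$ with $A = (x' \land y')'$ and $B = (x' \land y)'$, which is precisely the left-hand pattern of (A5).

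First I would apply (A5) with the substitutions $x \mapsto (x' \land y')'$, $y \mapsto (x' \land y)'$, and $z \mapsto z$, rewriting $(x' \land y')' \land [(x' \land y)' \land z]$ as $[(x' \land y)' \land (x' \land y')'] \land z$. This single step reorders the two negated conjunctions into exactly the order in which they occur on the right-hand side of (J5'). Second, I would read (J5') from right to left to recognize the bracketed subterm $(x' \land y)' \land (x' \land y')'$ as $x$, and replace it accordingly. This yields $[(x' \land y)' \land (x' \land y')'] \land z = x \land z$, which is the desired left-hand side, so the two-line rewriting is complete.

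I do not expect a genuine obstacle here: the argument is a direct two-step computation, with no induction, case split, or appeal to the earlier structural lemmas. The only point requiring care is the bookkeeping of subterms, namely ensuring that after the lone application of (A5) the factors appear in the order $(x' \land y)' \land (x' \land y')'$ that matches (J5') exactly, so that (J5') collapses them to $x$ with no further manipulation. If one preferred to route the proof through the section's running machinery instead, Lemma~\ref{62} would supply the commuting rearrangement and Lemma~\ref{63} the reduction to $x'$-type terms, but the direct route via (A5) and (J5') is the cleanest and is what I would carry out.
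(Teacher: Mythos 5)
Your proof is correct and is essentially identical to the paper's own proof: the paper writes the same two steps in the opposite direction, expanding $x$ in $x \land z$ via (J5') and then applying (A5), whereas you start from the right-hand side, apply (A5), and collapse via (J5'). The substitutions and the check that the factors land in the order $(x' \land y)' \land (x' \land y')'$ required by (J5') are exactly as in the paper.
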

\begin{proof}
\begin{align*}
    x \land z &= [(x' \land y)' \land (x' \land y')'] \land z &\text{ by (J5')}\\
              &= (x' \land y')' \land [(x' \land y)' \land z]   &\text{ by (A5)},
\end{align*}
thus the proof is complete.
\end{proof}

\begin{lemma} \label{65} %
    $x \land (u \land y) =  x \land [(y' \land z)' \land \{u \land (y' \land z')' \}]$.
\end{lemma}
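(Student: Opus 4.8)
The plan is to recognize the right-hand side as a single application of Lemma~\ref{62} followed by a single use of (J5'). The key observation is that the two negated conjunctions appearing on the right, namely $(y' \land z)'$ and $(y' \land z')'$, are exactly the two factors into which (J5') decomposes $y$: writing $a := (y' \land z)'$ and $b := (y' \land z')'$, we have $a \land b = y$ by (J5'). Hence the right-hand side is $x \land [a \land (u \land b)]$, and the whole lemma reduces to showing that, under the fixed outer factor $x$, the term $u$ can be pulled in front of the pair $a,b$ so that they may be recombined into $y$.

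The tool that performs this repositioning is Lemma~\ref{62}, which asserts $x \land [y \land (z \land u)] = x \land [z \land (y \land u)]$; it swaps the first inner factor past the head of the trailing product while leaving the leftmost factor $x$ untouched. I would apply it with the substitution sending its $y$ to $a$, its $z$ to $u$, and its $u$ to $b$, turning $x \land [a \land (u \land b)]$ into $x \land [u \land (a \land b)]$. Now (J5') collapses $a \land b = (y' \land z)' \land (y' \land z')'$ to $y$, giving $x \land (u \land y)$, which is the left-hand side. The entire proof is therefore two lines: one invocation of Lemma~\ref{62}, one invocation of (J5').

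I expect the only delicate point---and the nearest thing to an obstacle---to be matching the parenthesization of the right-hand side, $(y' \land z)' \land \{u \land (y' \land z')'\}$, exactly against the pattern $y \land (z \land u)$ of Lemma~\ref{62}; once the brackets are aligned the computation is immediate. It is worth emphasizing that nothing beyond Lemma~\ref{62} and (J5') is needed: at this stage of Section~6 we have neither commutativity nor full associativity of $\land$ at our disposal, so the argument must---and does---route entirely through the already-established Lemma~\ref{62}.
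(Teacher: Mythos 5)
Your proof is correct and is essentially the paper's own argument run in the opposite direction: the paper starts from $x \land (u \land y)$, expands $y$ via (J5') to get $x \land [u \land \{(y' \land z)' \land (y' \land z')'\}]$, and then applies Lemma~\ref{62} to reach the right-hand side, which is exactly your two steps read in reverse. Same decomposition, same key lemma, same substitution.
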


\begin{proof}
\begin{align*}
    x \land (u \land y) &= x \land [u \land \{(y' \land z)' \land (y' \land z')'\}]  &\text{ by (J5')}\\
              &= x \land [(y' \land z)' \land \{u \land (y' \land z')' \}]  &\text{by Lemma \ref{62}},
\end{align*}
proving the lemma.
\end{proof}

\begin{lemma} \label{66} 
    $(x' \land y) \land z = x' \land [\{(x' \land y)' \land (x' \land y')\}' \land z] $.
\end{lemma}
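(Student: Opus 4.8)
The plan is to obtain the identity in just two moves: use Lemma~\ref{63} to rewrite the inner factor $x' \land y$ on the left, and then invoke (A5) once to regroup the resulting product of three terms. Since Lemma~\ref{63} already has the combination $[(x' \land y)' \land (x' \land y')]'$ built into it, and this is exactly the term appearing (unnegated inside) on the right-hand side of the present statement, the two are tailor-made for each other.

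Concretely, I would begin from the left-hand side $(x' \land y) \land z$ and substitute the expression given by Lemma~\ref{63}, namely $x' \land y = [(x' \land y)' \land (x' \land y')]' \land x'$, into its first factor. Abbreviating $A := [(x' \land y)' \land (x' \land y')]'$, this turns the left-hand side into $(A \land x') \land z$. Reading (A5) in the form $(y \land x) \land z \approx x \land (y \land z)$ from right to left, with the substitution $y \mapsto A$ and $x \mapsto x'$, converts $(A \land x') \land z$ into $x' \land (A \land z)$, which is precisely
\[
x' \land \bigl[\{(x' \land y)' \land (x' \land y')\}' \land z\bigr],
\]
the desired right-hand side.

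I expect no genuine obstacle here: the argument is a two-line computation, and the only insight required is to notice that Lemma~\ref{63} produces exactly the factorization of $x' \land y$ that exposes a three-fold product to which (A5) applies directly. In particular, I do not anticipate needing Lemmas~\ref{62}, \ref{64}, or \ref{65} for this step.
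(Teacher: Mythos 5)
Your proof is correct and is essentially the paper's own argument: both factor $x' \land y$ as $[\{(x' \land y)' \land (x' \land y')\}' ] \land x'$ and then regroup with one application of (A5). The only cosmetic difference is that the paper re-derives this factorization inline from (J5') (repeating the computation of Lemma~\ref{63}) instead of citing Lemma~\ref{63} as you do.
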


\begin{proof}
\begin{align*}
    (x' \land y) \land z \\
       &= [\{(x' \land y)' \land (x' \land y')\}' \land \{(x' \land y)' \land (x' \land y')'\}'] \land z  &\text{ by (J5')}\\
       &= [\{(x' \land y)' \land (x' \land y')\}' \land x'] \land z  &\text{ by (J5')}\\
       &= x' \land [\{(x' \land y)' \land (x' \land y')\}' \land z]  &\text{ by (A5)},
\end{align*}
thus proving the lemma.
\end{proof}

\begin{lemma} \label{67} 
    $x \land (z \land u) = (x' \land y')' \land [z \land \{(x' \land y)' \land u\}]  $.
\end{lemma}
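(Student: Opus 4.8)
The plan is to obtain the identity by a two-step rewrite starting from the left-hand side $x \land (z \land u)$, using only Lemma~\ref{64} and Lemma~\ref{62}. The first move is to read Lemma~\ref{64} as a \emph{factoring} identity $x \land z = (x' \land y')' \land [(x' \land y)' \land z]$ in which $z$ is a free variable, and to instantiate it with the compound term $z \land u$. This produces
\[
x \land (z \land u) = (x' \land y')' \land [(x' \land y)' \land (z \land u)],
\]
introducing exactly the two negated ``Johnson factors'' $(x' \land y')'$ and $(x' \land y)'$ that occur in the target, while leaving $x$ and $y$ untouched so that the auxiliary variable $y$ matches the statement verbatim.

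The second move is to apply Lemma~\ref{62}, which licenses the transposition $X \land [Y \land (Z \land U)] = X \land [Z \land (Y \land U)]$ of the middle two factors. Instantiating $X := (x' \land y')'$, $Y := (x' \land y)'$, $Z := z$, and $U := u$ rewrites the previous right-hand side as $(x' \land y')' \land [z \land \{(x' \land y)' \land u\}]$, which is precisely the desired right-hand side. Thus the whole argument is the chain
\[
x \land (z \land u) = (x' \land y')' \land [(x' \land y)' \land (z \land u)] = (x' \land y')' \land [z \land \{(x' \land y)' \land u\}],
\]
justified by Lemma~\ref{64} and then Lemma~\ref{62}.

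The only point requiring care is the bookkeeping of the free variable $y$ across the two invocations: in both Lemma~\ref{64} and the target, $y$ sits inside the negated conjunctions as an auxiliary, so when applying Lemma~\ref{64} I must keep $x$ and $y$ fixed and substitute only $z \mapsto z \land u$. I expect no genuine obstacle here. Unlike several earlier lemmas of this section, where the permutations forced by (A5) had to be chained repeatedly, this identity drops out immediately because Lemma~\ref{62} already packages the precise swap of the inner two factors that distinguishes the two sides.
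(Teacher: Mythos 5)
Your proof is correct and essentially identical to the paper's: the paper's first two steps—expanding $x$ by (J5') and then regrouping via (A5)—are exactly the content of Lemma~\ref{64} instantiated at $z \mapsto z \land u$, which you invoke as a single step, and the final application of Lemma~\ref{62} is the same in both arguments. Citing Lemma~\ref{64} instead of unfolding its proof inline is a harmless, slightly tidier packaging of the same derivation.
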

\begin{proof}
\begin{align*}
    x \land (z \land u) \\
       &= [(x' \land y)' \land (x' \land y')']  \land (z \land u)  &\text{ by (J5')}\\
       &= (x' \land y')' \land [(x' \land y)' \land (z \land u)]  &\text{ by (A5)}\\
       &= (x' \land y')' \land [z \land \{(x' \land y)' \land u\}] &\text{by Lemma \ref{62}},
\end{align*}
which proves the lemma.
\end{proof}

\begin{lemma} \label{68} 
    $x \land (y \land u) = x \land [(y' \land z)' \land \{y' \land z')' \land u\}]  $.
\end{lemma}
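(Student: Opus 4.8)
The plan is to rewrite the product $y \land u$ occurring on the left-hand side in terms of the two ``half-negations'' $(y' \land z)'$ and $(y' \land z')'$, and then to arrange these two factors into the order and nesting demanded by the right-hand side. The one structural fact to keep firmly in mind is that full associativity has \emph{not} yet been derived from (A5) and (J5') at this point in the section, so one is \emph{not} permitted to simply reassociate $\{(y' \land z)' \land (y' \land z')'\}\land u$ into $(y' \land z)' \land \{(y' \land z')' \land u\}$; every manipulation must be routed through (A5), (J5') and the lemmas already in hand.

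Concretely, I would first invoke Lemma \ref{64}, which reads $x \land z = (x' \land y')' \land [(x' \land y)' \land z]$. Performing the simultaneous renaming $x \mapsto y$, $y \mapsto z$, $z \mapsto u$ turns its left-hand side into exactly $y \land u$ and yields the decomposition $y \land u = (y' \land z')' \land [(y' \land z)' \land u]$. Prefixing $x \land (\cdot)$ to both sides gives
\[
x \land (y \land u) = x \land [(y' \land z')' \land \{(y' \land z)' \land u\}],
\]
which already agrees with the target except that the two factors $(y' \land z')'$ and $(y' \land z)'$ appear in the reverse order. Interchanging them inside the $x \land (\cdot)$ context is precisely the content of Lemma \ref{62}: applying $x \land [p \land (q \land r)] = x \land [q \land (p \land r)]$ with $p = (y' \land z')'$, $q = (y' \land z)'$ and $r = u$ produces
\[
x \land [(y' \land z')' \land \{(y' \land z)' \land u\}] = x \land [(y' \land z)' \land \{(y' \land z')' \land u\}],
\]
which is the desired right-hand side, completing the argument in just two steps.

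The only delicate point---and what I regard as the main obstacle---is resisting the temptation to reach for associativity. The most natural first attempt, substituting $y = (y' \land z)' \land (y' \land z')'$ by (J5') directly into $x \land (y \land u)$ and then reassociating, stalls exactly because reassociation is not yet available here. The remedy is to let Lemma \ref{64} do the work: it already bundles a (J5')-expansion together with an (A5)-induced reassociation and delivers the decomposition in a conveniently right-nested form, after which Lemma \ref{62} merely corrects the order of the two half-negation factors.
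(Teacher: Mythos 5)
Your proof is correct and matches the paper's own argument exactly: the paper likewise derives $x \land (y \land u) = x \land [(y' \land z')' \land \{(y' \land z)' \land u\}]$ by Lemma \ref{64} and then swaps the two factors via Lemma \ref{62}. Your explicit attention to the substitution instances and to avoiding unproven associativity is a sound reconstruction of precisely those two steps.
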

\begin{proof}
\begin{align*}
    x \land (y \land u) 
       &= x \land [(y' \land z')' \land \{(y' \land z)' \land u\}] &\text{by Lemma \ref{64}}\\
       &= x \land [(y' \land z)' \land \{(y' \land z')' \land u\}] &\text{by Lemma \ref{62}},
\end{align*}
proving the lemma.
\end{proof}

\begin{lemma} \label{69} 
$y \land (x' \land z)= x' \land (((x' \land y)' \land (x' \land y'))' \land z)$\\
\end{lemma}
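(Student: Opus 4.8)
The plan is to recognize that the elaborate term on the right-hand side of the claimed identity is nothing other than the right-hand side of Lemma~\ref{66}, verbatim and with the same variables. Indeed, Lemma~\ref{66} asserts $(x' \land y) \land z = x' \land [\{(x' \land y)' \land (x' \land y')\}' \land z]$, and the right-hand side of the present statement is exactly $x' \land (((x' \land y)' \land (x' \land y'))' \land z)$. So reading Lemma~\ref{66} from right to left collapses the whole right-hand side to the compact term $(x' \land y) \land z$. This is the one observation that makes the lemma almost immediate.

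Once that reduction is made, the remaining task is to prove $y \land (x' \land z) = (x' \land y) \land z$, and this is a single direct application of (A5). In the notation of (A5), namely $a \land (b \land c) \approx (b \land a) \land c$, I would instantiate $a := y$, $b := x'$, $c := z$, which rewrites $y \land (x' \land z)$ as $(x' \land y) \land z$ by interchanging the two leading factors. Chaining the two steps gives
\[
y \land (x' \land z) = (x' \land y) \land z = x' \land \big(((x' \land y)' \land (x' \land y'))' \land z\big),
\]
the first equality by (A5) and the second by Lemma~\ref{66}.

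I do not anticipate any genuine obstacle: the proof is a two-step computation with no case analysis and no need for the auxiliary interchange Lemma~\ref{62} or for (J5') directly (the latter is already absorbed into Lemma~\ref{66}). The only thing to watch is the pattern-matching itself---confirming that the nested negated subterm $((x' \land y)' \land (x' \land y'))'$ appearing here is identical in structure to the one produced by Lemma~\ref{66}, so that no stray variable renaming is required. Given that alignment, the result follows at once.
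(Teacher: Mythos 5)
Your proof is correct and is essentially identical to the paper's: both apply (A5) to rewrite $y \land (x' \land z)$ as $(x' \land y) \land z$ and then invoke Lemma~\ref{66} (with the same variable instantiation) to obtain the stated right-hand side. No differences worth noting.
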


\begin{proof}
\begin{align*}
y \land (x' \land z) &= (x' \land y) \land z  &\text{ by (A5)}\\  
&= x' \land (((x' \land y)' \land (x' \land y'))' \land z) &\text{ by Lemma \ref{66}}.  
\end{align*}
Hence the proof.
\end{proof}

\begin{lemma} \label{189} 
    $x \land (y \land z) = x \land ((((z' \land u)' \land y)' \land ((z' \land u)' \land y'))' \land z).$
\end{lemma}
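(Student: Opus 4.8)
The plan is to read the identity from right to left, transforming the right-hand side back into $x \land (y \land z)$ using only reassociation together with one well-chosen substitution instance of an earlier lemma. Throughout, I would abbreviate $W := (z' \land u)'$, $V := (z' \land u')'$, and $R := ((W \land y)' \land (W \land y'))'$, so that the right-hand side of the lemma is exactly $x \land (R \land z)$. Note at the outset that (J5'), applied with the base variable $z$ and parameter $u$, gives the expansion $z = (z' \land u)' \land (z' \land u')' = W \land V$, which will be used to unfold and refold the trailing $z$.

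The engine of the proof is the observation that Lemma~\ref{63}, under the substitution $x \mapsto z' \land u$ (so that the term $x'$ becomes $W$), reads $W \land y = R \land W$. This is precisely the relation that allows us to trade the leading factor $y$ for the complicated factor $R$ while keeping a copy of $W$ adjacent to it. Since commutativity is not yet available in this section, it is essential that Lemma~\ref{63} delivers $R \land W$, with $W$ on the \emph{right}, rather than $W \land R$; matching this orientation is what dictates the order of the reassociation steps below.

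With this relation in hand I would argue as follows, starting from the left-hand side $x \land (y \land z)$. First, use (J5') to replace the inner $z$ by $W \land V$, obtaining $x \land (y \land (W \land V))$. Next, apply (A5) to regroup $y \land (W \land V)$ as $(W \land y) \land V$, and then invoke the key relation $W \land y = R \land W$ to rewrite this as $(R \land W) \land V$. A second application of (A5) turns $(R \land W) \land V$ into $W \land (R \land V)$; Lemma~\ref{62} then swaps the first two factors inside the $x \land(\cdots)$ to give $R \land (W \land V)$; and finally (J5') folds $W \land V$ back into $z$, leaving $x \land (R \land z)$, which is the desired right-hand side. Every step other than the single use of Lemma~\ref{63} is a pure reassociation via (A5), Lemma~\ref{62}, or the defining expansion (J5').

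The only genuine obstacle is conceptual rather than computational: recognizing that the specialization $x \mapsto z' \land u$ of Lemma~\ref{63} produces exactly the rewrite $W \land y = R \land W$ needed to introduce $R$. Once that is spotted, the remaining work is careful bookkeeping of the left/right order of the factors, which must be tracked closely because neither commutativity nor full associativity is available at this stage --- one has only the single-shift law (A5) together with the two ``$x \land(\cdots)$'' rearrangement lemmas (Lemma~\ref{62} and the expansions derived from (J5')). I expect no surprises beyond ensuring that each (A5) step lands the factors in the orientation demanded by Lemma~\ref{63}.
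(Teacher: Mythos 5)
Your proof is correct and is essentially the paper's own argument: the paper reaches the same right-hand side by citing Lemma \ref{69} (whose proof is (A5) plus Lemma \ref{66}, the latter re-deriving your key relation $W\land y = R\land W$ of Lemma \ref{63} inline) followed by Lemma \ref{65} (which is exactly your (J5')--Lemma \ref{62} contraction). In other words, unfolding the paper's two cited lemmas yields precisely your six-step chain (J5'), (A5), Lemma \ref{63}, (A5), Lemma \ref{62}, (J5'), so the two proofs differ only in packaging.
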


\begin{proof}     
\begin{align*}  
x \land (y \land z) \\                                              
 &\hspace{-1cm} = x \land [y \land ((z' \land u)' \land (z' \land u')')] \quad \text{ by (J5')}\\
&\hspace{-1cm}  = x \land [(z' \land u)' \land  \{(((z' \land u)' \land y)' \land ((z' \land u)'  \land y'))' \land (z' \land u')'\}] \\ 
&\hspace{-.6cm} \text{ by lemma \ref{69}}\\                                             
&\hspace{-1cm}  =x \land ((((z' \land u)' \land y)' \land ((z' \land u)' \land y'))' \land z) \quad \text{ by lemma \ref{65}},
 \end{align*} 
 proving the lemma.
\end{proof}

\begin{lemma} \label{810} 
    $x \land (y \land u) = x \land [\{((y' \land z')' \land u)' \land ((y' \land z')' \land u')\} \land y]  $.
\end{lemma}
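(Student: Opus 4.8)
The plan is to obtain the right-hand side of Lemma \ref{810} directly as a substitution instance of Lemma \ref{189}, and then to reconcile the two sides by a single inner transposition. The bracketed expression in Lemma \ref{810} is governed by the pivot term $(y' \land z')'$, together with the variable $u$ (occurring as both $u$ and $u'$) and the outer factor $y$. This is exactly the shape of the right-hand side of Lemma \ref{189}, whose pivot is $(z' \land u)'$, whose repeated variable is $y$, and whose outer factor is $z$. Matching these three roles suggests the substitution $y \mapsto u$, $z \mapsto y$, $u \mapsto z'$. Under it the pivot $(z' \land u)'$ becomes $(y' \land z')'$, the repeated variable becomes $u$, and the trailing factor becomes $y$, so the right-hand side of this instance of Lemma \ref{189} is, term for term, the right-hand side of Lemma \ref{810}.

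First I would record this instance of Lemma \ref{189}; its left-hand side is $x \land (u \land y)$. Comparing with the target left-hand side $x \land (y \land u)$, the whole of Lemma \ref{810} is thereby reduced to the single transposition
\[
x \land (y \land u) = x \land (u \land y).
\]
Once this transposition is in hand, Lemma \ref{810} drops out of the Lemma \ref{189} instance with no further work.

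The hard part will be exactly this transposition, since at this point in the section full commutativity has not yet been established (it is among the identities these lemmas are working toward), so the swap cannot merely be quoted. The route I would take is to manufacture a three-fold nest and then invoke Lemma \ref{62}, which transposes the two inner arguments of such a nest: expanding the factor $u$ by (J5') as $u = (u' \land t)' \land (u' \land t')'$ rewrites $x \land (y \land u)$ as $x \land \big(y \land \{(u' \land t)' \land (u' \land t')'\}\big)$, Lemma \ref{62} then moves $y$ across the inner pair, and a second collapse by (J5') restores $u$ in the transposed position. An alternative that sidesteps the transposition is to begin instead from Lemma \ref{68}, whose left-hand side is already $x \land (y \land u)$, and to transform its right-hand side into the bracketed form of Lemma \ref{810} by repeated use of (A5), Lemma \ref{62} and Lemma \ref{65}; this trades the transposition for a longer but purely mechanical regrouping. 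In either approach the only genuine labour is the bookkeeping of the nested complements produced by (J5'); the structural match with Lemma \ref{189} does all of the real work.
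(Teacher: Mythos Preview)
Your primary route has a circularity you have not resolved. The transposition
\[
x \land (y \land u) = x \land (u \land y)
\]
that you isolate as ``the hard part'' is precisely Lemma~\ref{811}, the lemma immediately \emph{after} Lemma~\ref{810}; in the paper it is obtained by comparing Lemma~\ref{189} with Lemma~\ref{810}. So reducing Lemma~\ref{810} to this transposition only reproduces the dependency the paper already has, in the reverse direction. Your proposed independent derivation of the transposition does not go through as written: after expanding $u$ via (J5') and applying Lemma~\ref{62} you obtain
\[
x \land \big[(u' \land t)' \land \{y \land (u' \land t')'\}\big],
\]
and at this point there is no (J5') pattern left to ``collapse''---the two factors $(u' \land t)'$ and $(u' \land t')'$ needed for (J5') are separated by $y$. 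If you try to undo this with Lemma~\ref{65} or Lemma~\ref{62} you simply return to $x\land(y\land u)$. The transposition really is the nontrivial output of this block of lemmas, not an input.

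Your alternative, starting from Lemma~\ref{68} and regrouping, is exactly what the paper does, and there the work is short: from Lemma~\ref{68} one has
\[
x \land (y \land u) = x \land \big[(y' \land z)' \land \{(y' \land z')' \land u\}\big],
\]
then Lemma~\ref{63} (with $x' = (y'\land z')'$ and $y = u$) rewrites the inner factor $(y'\land z')' \land u$ as
\[
\big[\{(y'\land z')' \land u\}' \land \{(y'\land z')' \land u'\}\big]' \land (y'\land z')',
\]
and finally Lemma~\ref{65} collapses the surrounding $(y'\land z)'$ and $(y'\land z')'$ back to $y$, yielding the right-hand side of Lemma~\ref{810}. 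So the ``purely mechanical regrouping'' you allude to is just two citations, Lemma~\ref{63} followed by Lemma~\ref{65}, and this is the intended proof.
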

\begin{proof}
\begin{align*}
             x \land (y \land u)    
              &= x \land [(y' \land z)' \land ((y' \land z')' \land u)]\\  &\text{by Lemma 6.8} \\    
          &= x \land [(y' \land z)' \land \{(((y' \land z')' \land u)' \land ((y' \land z')' \land u'))' \land (y' \land z')'\}]\\  &\text{by Lemma 6.4}\\ 
          &= x \land [\{((y' \land z')' \land u)' \land ((y' \land z')' \land u')\}' \land y]\\  &\text{by Lemma 6.5},   
\end{align*}
which proves the lemma.
\end{proof}

\begin{lemma} \label{811} 
    $x \land (y \land u) = x \land (u \land y)$.
\end{lemma}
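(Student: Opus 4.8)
The plan is to put the two terms $x \land (y \land u)$ and $x \land (u \land y)$ into a single common \emph{normal form} and observe that they coincide. The two immediately preceding lemmas are tailor-made for this: Lemma~\ref{810} rewrites a product so that its \emph{first} inner factor is pulled out to the far right, while Lemma~\ref{189} does the same for the \emph{second} inner factor. So I would apply Lemma~\ref{810} to the left-hand side $x \land (y \land u)$ (whose first inner factor is $y$), and apply Lemma~\ref{189} to the target right-hand side $x \land (u \land y)$ (whose second inner factor is also $y$). In both cases the trailing element is $y$ and the ``blob'' that gets negated is built entirely out of $u$ together with an auxiliary variable, which is exactly the alignment that makes the two sides collapse.

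Concretely, the derivation I would write reads:
\begin{align*}
x \land (y \land u)
&= x \land \big[\big( ((y' \land v')' \land u)' \land ((y' \land v')' \land u') \big)' \land y\big] &\text{by Lemma \ref{810}}\\
&= x \land (u \land y) &\text{by Lemma \ref{189}},
\end{align*}
where in the first line Lemma~\ref{810} is used with its free variable specialized to $v$, and in the second line Lemma~\ref{189} is applied with inner pair $(u,y)$ and its free variable specialized to the term $v'$. With $189$'s free variable set to $v'$, its right-hand side becomes $x \land \big[\big(((y' \land v')' \land u)' \land ((y' \land v')' \land u')\big)' \land y\big]$, literally identical to the middle expression produced by Lemma~\ref{810}; hence the two outer products are equal.

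The main obstacle, and the only real content, is noticing the correct alignment of the two auxiliary variables. Lemma~\ref{810} carries its free variable in primed form (it features $(y' \land z')'$), whereas Lemma~\ref{189} carries its free variable unprimed (it features $(z' \land u)'$). This apparent mismatch is precisely what is bridged by substituting the term $v'$ for $189$'s free variable: since each identity holds for \emph{every} value of its free variable, this substitution is legitimate and forces the two normal forms to agree on the nose. It is worth emphasizing that this argument uses neither commutativity nor full associativity (neither of which is available at this stage of Theorem~\ref{T6.1}); the inner-commutativity of Lemma~\ref{811} drops out purely from the fact that $(A5)$ and $(J5')$ already yield the two matching one-sided normal forms.
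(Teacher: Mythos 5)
Your proof is correct and takes essentially the same route as the paper: the paper likewise applies Lemma~\ref{810} to $x \land (y \land u)$ and Lemma~\ref{189} to $x \land (u \land y)$ (with its free variable instantiated at the primed auxiliary variable) and concludes equality from the two coinciding normal forms. Your explicit discussion of the primed/unprimed free-variable alignment merely spells out a substitution the paper leaves implicit.
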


\begin{proof}
From Lemma  \ref{810} we have  

$x \land (y \land u) = x \land [\{((y' \land z')' \land u)' \land ((y' \land z')' \land u')\} \land y]  $, \\
and from Lemma  \ref{189} we get 

$x \land (u \land y) = x \land [\{((y' \land z')' \land u)' \land ((y' \land z')' \land u')\} \land y]  $. \\
Hence,
 $x \land (y \land u) = x \land (u \land y)$. 
 \end{proof}

\begin{lemma} \label{812}
    $x \land [y \land (z \land u)] =  y \land [x \land (z \land u)]$.
\end{lemma}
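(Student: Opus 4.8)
The plan is to turn both sides into a common normal form by repeated use of (A5), so that the only residual difference between the left- and right-hand sides is the order of an inner product, which is then eliminated by Lemma \ref{811}. Concretely, applying (A5) once to the left-hand side (with the bracketed product playing the role of the outer factor's partner) gives $x \land [y \land (z \land u)] = (y \land x) \land (z \land u)$, and applying (A5) once to the target right-hand side gives $y \land [x \land (z \land u)] = (x \land y) \land (z \land u)$. Thus the whole identity reduces to showing $(y \land x) \land (z \land u) = (x \land y) \land (z \land u)$, i.e.\ that the leading two variables may be interchanged.

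To bridge this residual gap I would push the inner product $z \land u$ back out by a further application of (A5), namely $A \land (z \land u) = (z \land A) \land u$, applied to both $A = y \land x$ and $A = x \land y$. This reduces the problem to $(z \land (y \land x)) \land u = (z \land (x \land y)) \land u$, and hence to the inner equation $z \land (y \land x) = z \land (x \land y)$. The intended chain is therefore
\begin{align*}
 x \land [y \land (z \land u)] &= (y \land x) \land (z \land u) &\text{by (A5)}\\
 &= (z \land (y \land x)) \land u &\text{by (A5)}\\
 &= (z \land (x \land y)) \land u &\text{by Lemma \ref{811}}\\
 &= (x \land y) \land (z \land u) &\text{by (A5)}\\
 &= y \land [x \land (z \land u)] &\text{by (A5)}.
\end{align*}

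I expect the main obstacle to be conceptual rather than computational: at this stage of the section full commutativity $y \land x = x \land y$ is not yet available, so one cannot simply swap $x$ and $y$ directly. The key observation is that one never needs bare commutativity here, only commutativity of a product that sits under a leading factor, and this is exactly the content of Lemma \ref{811} (read with its universally quantified variables instantiated as $z \land (y \land x) = z \land (x \land y)$). Identifying that the double application of (A5) brings the discrepancy into precisely this ``guarded'' form is the crux of the argument; once that is seen, every step is a single rewrite by (A5) or by Lemma \ref{811}, and no further lemmas from the section are required.
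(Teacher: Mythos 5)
Your proof is correct and takes essentially the same route as the paper's: the first three rewrites (pulling out $(y\land x)$, reassociating to $(z\land(y\land x))\land u$, and swapping the guarded product via Lemma \ref{811}) are identical to the paper's own steps, with the same key observation that commutativity is only needed in a position protected by a leading factor. Your finish, two reverse applications of (A5) from $(z\land(x\land y))\land u$ back to $y\land[x\land(z\land u)]$, is in fact a slightly cleaner ending than the paper's, which takes three further rewrites to reach the same conclusion.
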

\begin{proof}
\begin{align*}
    x \land [y \land (z \land u)]\\
            &= (y \land x) \land (z \land u) &\text{by (A5)}\\
            &= [z \land (y \land x)] \land u) &\text{by (A5)}\\
            &= [z \land (x \land y)] \land u) &\text{by Lemma \ref{811}}\\          
            &= [(x \land z) \land y)] \land u) &\text{by (A5)}\\
            &= y \land [(x \land z) \land u)]  &\text{by (A5)}\\
            &= y \land [x \land (z \land u)]  &\text{by (A5}
\end{align*}
\end{proof}

\begin{proof} {\bf of Theorem \ref{T6.1}} :\\
We prove that (A5) and (J5') imply (A6):
    $x \land (y \land z) = z \land (x \land y)$.
\begin{align*}
    x \land (y \land z)
          &= x \land (z \land y) &\text{by Lemma \ref{811}} \\   
          &= x \land [z \land \{(y' \land t)' \land (y' \land t')']  &\text{by (J5')} \\ 
          &= z \land [x \land \{(y' \land t)' \land (y' \land t')'] &\text{by Lemma \ref{812}}\\    
          &= z \land (x \land y)  &\text{by (A5)}.  
\end{align*}
 Hence, (A6) and (J5') of Theorem \ref{T4.1} hold.  For the converse, note that $\mathbf{2}$ satisfies (A5) and (J5'),
 implying that (A5) and (J5') form a base.  This completes the proof of Theorem \ref{T6.1}.
\end{proof}

\medskip

\section{A 3-base for Boolean Algebras containing the identity (A9)}

       Recall that (A9) is one of the 14 identities of associative type listed in Proposition 3.1.
 We shall now present a 3-base for the variety of Boolean algebras, in which (A9)  occurs as one of the axioms. 
         
\begin{theorem} \label{T8.1}
    The following identities form a 3-base for Boolean algebras:
\begin{enumerate}
   \item[{\rm(A9)}] ${x \land ( y \land z )\approx z \land (y \land x)}$  (J2')   
   \item[{\rm(J4)}]  ${x'' \approx x}$,
   \item[{\rm(J5)}] ${x' \approx (x \land y)' \land (x \land y')'}$.
\end{enumerate}
\end{theorem}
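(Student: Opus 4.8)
The plan is to reduce the claim to one of the bases already established, exactly as in Sections 4--6. First I note that, in the presence of (J4), the given axiom (J5) is interderivable with the variant (J5') used in the previous sections: substituting $x \mapsto x'$ in (J5) and applying (J4) yields (J5'), and conversely. Hence I may work throughout with (A9), (J4) and (J5'). The target is to derive commutativity (J1) and associativity (J2); together with (J4) and (J5) these are precisely the axioms of the $4$-base in Theorem \ref{T2.2}, so the three given identities will then define $\mathbb{BA}$. Equivalently --- and this is the route I would actually follow, mirroring Theorems \ref{T5.1} and \ref{T6.1} --- it suffices to derive the cyclic identity (A6), after which Theorem \ref{T4.1} finishes the forward implication. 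The converse is routine: since $\mathbb{BA}$ is generated by $\mathbf 2$, I only check that $\mathbf 2$ satisfies (A9), (J4) and (J5), which is immediate.

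The engine of the derivation is the interplay between (A9) and (J5'). By itself (A9) is a very weak rewriting rule: it applies only to a right-associated triple $a \land (b \land c)$ and merely transposes the outer factors, returning $c \land (b \land a)$, again right-associated. The standard device, used repeatedly above, is therefore to invoke (J5') to replace a single variable by the three-fold meet $(x' \land y)' \land (x' \land y')'$, thereby manufacturing bracketed products on which (A9) can act, and then to collapse the complemented factors back via (J5'). Concretely, the first lemma I would prove is the (A9)-analogue of Lemma \ref{49}, namely $(x' \land y')' \land [(x' \land y)' \land z] = z \land x$, obtained by expanding $z \land x$ through (J5') and applying (A9) once. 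From such ``attachment'' identities I would build the usual ladder of rearrangement lemmas for triple and quadruple products, driving first towards commutativity and then towards full associativity (or towards (A6) directly).

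The hard part is exactly this bootstrapping. Because (A9) preserves right-association and realizes only the transposition of the two extreme factors, on its own it generates merely a two-element group of rearrangements and can never re-bracket a product; in particular it cannot perform the cyclic rotation that (A6) demands. All of the re-bracketing must be smuggled in through (J5') and (J4), and before commutativity is available one cannot even convert a left-associated product into the right-associated shape that (A9) requires. Managing this circularity --- extracting commutativity from a rule that a priori only swaps outer factors --- is where the ingenuity lies, and it is also the reason (J4) cannot be dispensed with: I expect there to be a model of (A9) and (J5') in which $x'' \neq x$, so that (J4) is genuinely independent and the system is a $3$-base rather than a $2$-base.

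Finally, to justify the word ``base'' I would establish independence by exhibiting three small algebras of type $\langle \land, {}' \rangle$ (found, as is customary in this setting, by a finite model search): one satisfying (J4) and (J5) but refuting (A9), one satisfying (A9) and (J5) but refuting (J4), and one satisfying (A9) and (J4) but refuting (J5). Their existence shows that none of the three identities follows from the other two, completing the proof that (A9), (J4) and (J5) form a $3$-base for $\mathbb{BA}$.
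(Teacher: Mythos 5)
Your overall strategy coincides with the paper's: derive commutativity and associativity from the three axioms, reduce to a previously established base (the paper reduces to the 4-base of Theorem \ref{T2.2}), and dispose of the converse by checking the two-element algebra $\mathbf{2}$. Your preliminary observations are also sound: (J5) and (J5') are indeed interchangeable in the presence of (J4), and your proposed first lemma $(x' \land y')' \land [(x' \land y)' \land z] = z \land x$ does follow from one application of (J5') and one of (A9). But the proposal stops exactly where the proof begins. The entire mathematical content of the theorem is the derivation of commutativity from (A9) and (J5)/(J5'), and you explicitly defer it --- you announce that you would build the usual ladder of rearrangement lemmas, acknowledge that this bootstrapping is where the ingenuity lies, and then do not supply any rung of the ladder. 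As it stands, the forward implication is unproved; a correct diagnosis of the difficulty is not its resolution.

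For comparison, the paper resolves the bootstrapping problem you describe with two concrete ideas absent from your proposal. First, commutativity is proved initially only for complemented elements: expanding $z'$ and $x'$ by (J5) and shuffling the resulting complemented factors with (A9) yields $x' \land y' = y' \land x'$ (Lemmas \ref{82}--\ref{86}); full commutativity (Corollary \ref{87}) then follows because (J4) makes every element a complement, $x = (x')'$. This, not independence, is the essential role of (J4) in the forward direction --- a point you miss when you relegate (J4) to a suspicion about models in which $x'' \neq x$. Second, once (J1) is available no further ladder is needed for associativity: (A9) gives $x \land (y \land z) = z \land (y \land x) = (y \land x) \land z = (x \land y) \land z$, a one-line deduction, after which (J1), (J2), (J4), (J5) are all in hand and Theorem \ref{T2.2} applies. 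Finally, your plan to exhibit three independence models demands more than the paper itself verifies (the paper only exhibits, in the remark following the theorem, a model separating (J4) from $\{$(A9),(J5)$\}$), but since you leave those models unexhibited as well, that part of your proposal is likewise a promissory note rather than an argument.
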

    We first prove the commutative law.   To this end we need the following lemmas. 
    
    {\bf In the following lemmas we assume that  
    axioms (A9), (J4) and (J5) are given as hypotheses.}
    
\begin{lemma} \label{82}
    $ z \land x' = (x \land y')' \land [(x \land y)' \land z]$.
\end{lemma}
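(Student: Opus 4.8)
The goal is to prove Lemma~\ref{82}, namely $z \land x' = (x \land y')' \land [(x \land y)' \land z]$, working under the hypotheses (A9), (J4), and (J5). The plan is to start from the right-hand side and collapse it down to $z \land x'$ using (J5) to recognize the pattern $(x \land y')' \land (x \land y)'$ as a (permuted) instance of the defining expression for $x'$. The main tool will be (A9), the identity $x \land (y \land z) \approx z \land (y \land x)$, which lets us reverse the outer two factors of any triple product while fixing the middle factor; this is exactly the kind of rearrangement needed to bring the two negated conjunctions $(x \land y')'$ and $(x \land y)'$ next to each other so that (J5) applies.

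First I would rewrite the right-hand side $(x \land y')' \land [(x \land y)' \land z]$ by applying (A9) to the inner bracketed term or to the whole expression, aiming to move $z$ to an outer position and juxtapose $(x \land y')'$ with $(x \land y)'$. Concretely, reading the whole product as $a \land (b \land c)$ with $a = (x\land y')'$, $b = (x\land y)'$, $c = z$, one application of (A9) yields $c \land (b \land a) = z \land [(x \land y)' \land (x \land y')']$. At this point the inner factor $(x \land y)' \land (x \land y')'$ is precisely the right-hand side of (J5), which equals $x'$. Substituting gives $z \land x'$, which is the desired left-hand side. So the proof should be very short: one use of (A9) to swap the outer factors, followed by one use of (J5).

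The step I expect to require the most care is the bookkeeping of exactly which associative-type rearrangement (A9) licenses, since (A9) only permits the specific permutation $x \land (y \land z) \approx z \land (y \land x)$ and does not by itself give full commutativity or associativity at this stage of the development (those are proved later). I must therefore make sure the expression is genuinely in the shape $a \land (b \land c)$ before invoking (A9), and that after the swap the middle factor is left untouched while the outer two are interchanged, so that the pair $(x \land y)' \land (x \land y')'$ emerges in the correct order for (J5). Since (J5) reads $x' \approx (x \land y)' \land (x \land y')'$ with $(x\land y)'$ first and $(x\land y')'$ second, the swap via (A9) must deliver exactly that ordering; if instead the reversed order appeared, I would note that commutativity is not yet available and rework the single (A9) application to produce the correct orientation. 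No other identities beyond (A9) and (J5) should be needed, and (J4) is not expected to play a role in this particular lemma.
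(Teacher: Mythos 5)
Your proposal is correct and is essentially the paper's own proof read in reverse: the paper starts from $z \land x'$, expands $x'$ by (J5), and then applies (A9) to reach $(x \land y')' \land [(x \land y)' \land z]$, which is exactly your single (A9) swap followed by the (J5) collapse, traversed in the opposite direction. Your careful check that (A9) delivers the factors in the order $(x \land y)' \land (x \land y')'$ required by (J5) is exactly right and is what makes the two-step argument go through.
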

\begin{proof}
\begin{align*}
    z \land x'&=  z \land [(x \land y)' \land (x \land y')'] &\text{by (J5)}\\
    &=(x \land y')' \land [(x \land y)' \land z] &\text{by (A9)},
\end{align*}
proving the lemma.
\end{proof}
\begin{lemma} \label{83}
    $ z'  = [x \land (y \land z)]' \land [z \land (y \land x)']' $.
\end{lemma}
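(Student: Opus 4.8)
The plan is to obtain this identity almost immediately from (J5) by choosing the right instance, and then correcting a single factor using (A9). Recall that (J5) reads $x' \approx (x \land y)' \land (x \land y')'$. I would instantiate it with $x := z$ and $y := y \land x$, which yields
\[
z' = (z \land (y \land x))' \land (z \land (y \land x)')'.
\]
Observe that the second conjunct is already $[z \land (y \land x)']'$, exactly as it appears in the target identity, so no further manipulation is needed there.

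It then remains only to rewrite the first conjunct. Here I would invoke (A9) in the form $z \land (y \land x) = x \land (y \land z)$, which is precisely (A9) read with the substitution $a := z$, $b := y$, $c := x$ in the universally quantified instance $a \land (b \land c) \approx c \land (b \land a)$. Substituting this equality inside the first negation turns $(z \land (y \land x))'$ into $[x \land (y \land z)]'$, so that the display becomes
\[
z' = [x \land (y \land z)]' \land [z \land (y \land x)']',
\]
which is exactly the claimed identity.

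I do not anticipate any real obstacle: once the instance of (J5) is chosen with $y$ replaced by $y \land x$, the second factor matches verbatim and only the first factor requires the single rewriting step via (A9). The one point demanding a little care is selecting that substitution so that exactly one factor needs adjustment; an unlucky choice, such as replacing $y$ by $x \land y$, would leave both factors mismatched, and since commutativity is not yet available at this stage (it is in fact the goal of this section), one could not simply swap $y \land x$ and $x \land y$ inside the negations. Note also that Lemma~\ref{82} is not needed for this particular derivation, although it exhibits the same pattern of pushing an application of (J5) through (A9).
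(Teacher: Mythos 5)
Your proof is correct and is essentially identical to the paper's: both instantiate (J5) with $x := z$, $y := y \land x$ and then rewrite the first conjunct $(z \land (y \land x))'$ as $[x \land (y \land z)]'$ via a single application of (A9). No discrepancies to report.
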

\begin{proof}
\begin{align*}
    z' &= [z \land (y \land x)]' \land [z \land (y \land x)']' &\text{by axiom (J5)}\\
       &=(x \land (y \land z)]' \land [z \land (y \land x)']' &\text{by axiom (A9)},
\end{align*}
which proves the lemma.
\end{proof}

\begin{lemma} \label{84}
    $(x \land y)' \land (z \land x') = [(x \land y)' \land z] \land x' $.
\end{lemma}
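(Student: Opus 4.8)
The statement to prove is Lemma~\ref{84}: under the working hypotheses (A9), (J4), (J5), we have
\[
(x \land y)' \land (z \land x') = [(x \land y)' \land z] \land x'.
\]

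The plan is to massage both sides into a common form using (A9) together with the substitution identity supplied by Lemma~\ref{82}, which gives $z \land x' = (x \land y')' \land [(x \land y)' \land z]$. First I would rewrite the inner factor $z \land x'$ on the left-hand side using Lemma~\ref{82}, turning the left side into a three-fold meet whose associativity is governed purely by (A9). Since (A9) is a permutation identity of associative type (it asserts $x \land (y \land z) \approx z \land (y \land x)$), it lets me cyclically and reflectively reposition the three arguments of any bracketed triple. The aim is to shuttle the distinguished factor $(x \land y)'$ outward on one side and inward on the other until the two expressions coincide syntactically.

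The key steps, in order, are: (1) expand $z \land x'$ on the left via Lemma~\ref{82}; (2) apply (A9) to the resulting nested triple to move $(x \land y)'$ and $(x \land y')'$ into the grouping pattern matching the right-hand side; (3) recognize that the right-hand side $[(x \land y)' \land z] \land x'$, upon expanding its own $x'$ (or equivalently by running Lemma~\ref{82} in reverse), produces the same term. In practice I expect the cleanest route is to compute both sides down to a canonical expression of the form $(x\land y')' \land \bigl[(x\land y)' \land z\bigr]$-shaped triple and check they agree; (A9) should supply every rebracketing needed, since with only one operation and a single associative-type identity the reachable rearrangements are constrained but sufficient here because the two sides differ only in where $x'$ is attached.

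The main obstacle will be bookkeeping rather than conceptual: (A9) permits only the specific rearrangement $x \land (y \land z) = z \land (y \land x)$, so unlike full commutativity-plus-associativity I cannot freely reorder arbitrarily and must chain several applications carefully, threading each bracketed triple through the allowed move. The risk is applying (A9) to a subterm whose bracketing does not match the $x \land (y \land z)$ template; I would guard against this by explicitly tracking the parenthesization at every line. I expect the proof to be a short \texttt{align*} block of three or four equalities, each justified by either Lemma~\ref{82} or (A9), with no appeal to commutativity (which has not yet been established at this point in the development).
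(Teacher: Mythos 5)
Your route is the paper's own: expand $z \land x'$ on the left by Lemma \ref{82}, rearrange the resulting triple with (A9), and collapse a residual pair back to $x'$. But the step you wave at --- ``the right-hand side, upon expanding its own $x'$, produces the same term'' --- is precisely where the proof has content, and as literally described it fails. After the (A9) move the left side is $\{(x \land y)' \land z\} \land [(x \land y')' \land (x \land y)']$, while expanding the $x'$ of the right side by (J5) gives $[(x \land y)' \land z] \land [(x \land y)' \land (x \land y')']$: the two inner factors occur in opposite orders, and you have (correctly) forbidden yourself commutativity, so these are not ``the same term.'' The same obstruction blocks your fallback of running Lemma \ref{82} in reverse: instantiating Lemma \ref{82} at $z \mapsto (x \land y)' \land z$ and matching it against the expanded left side requires the rearrangement $a \land (b \land c) = b \land (a \land c)$, which is (A4), not (A9), and does not follow from (A9) alone (for instance $x \land y := x - y$ on $\mathbb{Z}$ satisfies (A9) but not (A4)).

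The missing ingredient is (J4), which your proposal never invokes. The paper closes exactly this gap by rewriting the residual pair as $(x \land y')' \land (x \land y'')'$, which is the instance of (J5) with $y'$ substituted for $y$, and hence equals $x'$; that is, the identity actually needed is $(x \land y')' \land (x \land y)' \approx x'$, obtained from (J5) at $y \mapsto y'$ together with (J4), rather than the plain expansion of $x'$. With that one-line repair your outline becomes precisely the paper's three-step proof: Lemma \ref{82}, then (A9) with the (J4) rewriting, then (J5).
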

\begin{proof}
\begin{align*}
    (x \land y)' \land (z \land x')
               &= (x \land y)' \land [(x \land y')' \land \{(x \land y)' \land z\}] &\text{by Lemma \ref{82}}\\
               &= \{(x \land y)' \land z\} \land [(x \land y')' \land (x \land y'')'] &\text{by (J1)}\\
               &= \{(x \land y)' \land z\} \land x' &\text{by (J5)},
\end{align*}
proving the lemma.
\end{proof}

\begin{lemma} \label{85}
    $ z' \land x' = [(x \land (y \land z)]' \land [\{z \land (y \land x)'\}' \land x'] $.
\end{lemma}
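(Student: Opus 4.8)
The plan is to obtain Lemma~\ref{85} directly from the two immediately preceding lemmas, using Lemma~\ref{84} as the controlled substitute for associativity that is not yet available at this stage.

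First I would rewrite the left-hand side using Lemma~\ref{83}, which already expresses $z'$ as the conjunction $[x \land (y \land z)]' \land [z \land (y \land x)']'$. Conjoining both sides with $x'$ on the right yields
\[
z' \land x' = \big([x \land (y \land z)]' \land [z \land (y \land x)']'\big) \land x'.
\]
The key observation is that this right-hand side is exactly the right-hand side of Lemma~\ref{84} under the substitution in which $x \mapsto x$, $y \mapsto y \land z$, and the ``$z$'' of Lemma~\ref{84} is sent to $[z \land (y \land x)']'$: the outer factor $[x \land (y \land z)]'$ plays the role of the term $(x \land y)'$, and crucially its leading variable is $x$, so the trailing $x'$ is precisely the factor demanded by the lemma. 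Reading Lemma~\ref{84} from right to left therefore re-groups the expression as
\[
[x \land (y \land z)]' \land \big([z \land (y \land x)']' \land x'\big),
\]
which is the right-hand side of the statement, completing the argument.

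The one point that genuinely must be checked is the pattern-matching in the re-grouping step. Since full associativity has not yet been established, I cannot freely pass from a product of the form $(p \land q) \land x'$ to $p \land (q \land x')$; the only license to do so is Lemma~\ref{84}, whose form requires the final factor to be the negation of the \emph{leading} variable of the first conjunct. Verifying that $[x \land (y \land z)]'$ does have $x$ as its leading variable---so that the concluding $x'$ aligns correctly---is the whole content of the proof, and it holds by construction. No separate appeal to (J4) or (J5) is needed beyond what is already packaged inside Lemmas~\ref{83} and~\ref{84}.
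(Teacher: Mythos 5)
Your proof is correct, and the one point you flag is indeed the only thing that needs checking: the simultaneous substitution $y \mapsto y \land z$, $z \mapsto \{z \land (y \land x)'\}'$ in Lemma~\ref{84} gives the instance
\[
(x \land (y \land z))' \land \bigl[\{z \land (y \land x)'\}' \land x'\bigr] \;=\; \bigl[(x \land (y \land z))' \land \{z \land (y \land x)'\}'\bigr] \land x',
\]
whose right-hand side is $z' \land x'$ by Lemma~\ref{83}, which is exactly the statement. However, this is a genuinely different route from the paper's. The paper's own proof never invokes Lemma~\ref{83} or Lemma~\ref{84}: it expands \emph{both} $z'$ and $x'$ from scratch by (J5) (together with (J4), taking second arguments $(y \land x)'$ and $(y \land z)'$), obtaining a meet of four negated factors, then shuffles that term by three successive applications of (A9)---one of them applied inside a negation, via $z \land (y \land x) = x \land (y \land z)$---and finally collapses the pair $(x \land (y \land z))' \land (x \land (y \land z)')'$ back to $x'$ by (J5). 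Your two-step derivation is shorter, more modular, and arguably what the surrounding text intends, since in the paper as printed Lemmas~\ref{83} and~\ref{84} are proved but then never used anywhere. It also sidesteps a small blemish in the paper's computation: its first displayed line lists the two (J5)-factors of $z'$ in the order $\{z \land (y \land x)'\}' \land (z \land (y \land x))'$, while the next step, justified by (A9), treats them in the opposite order---a swap that would need commutativity, which is not yet available at this point (clearly a typo for the other order). What the paper's direct computation buys in exchange is only self-containedness: it uses nothing beyond the three axioms (A9), (J4), (J5).
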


\begin{proof}
\begin{align*}
    z' \land x'\\
        &{\hspace{-1cm}}= [\{z \land (y \land x)'\}' \land (z \land (y \land x))'] \land [(x \land (y \land z)')' \land (x \land (y \land z))'] \\
        &{\hspace{-1cm}}= (x \land (y \land z))' \land [(x \land (y \land z)')' \land \{(z \land (y \land x))' \land (z \land (y \land x)')'\}]\\ &\text{   by axiom (A9)}\\
        &{\hspace{-1cm}}= (x \land (y \land z))' \land [(z \land (y \land x)')' \land \{(z \land (y \land x))' \land (x \land (y \land z)')'\}]\\  &\text{   by axiom (A9)} \\
        &{\hspace{-1cm}}= (x \land (y \land z))' \land [(z \land (y \land x)')' \land \{(x \land (y \land z))' \land (x \land (y \land z)')'\}]\\  &\text{   by axiom (A9)} \\
        &{\hspace{-1cm}}= (x \land (y \land z))' \land [(z \land (y \land x)')' \land x'] \quad \text{by axiom (J5)},
\end{align*}
proving the lemma.
\end{proof}
\begin{lemma} \label{86}
    $x' \land y' = y' \land x'$.
\end{lemma}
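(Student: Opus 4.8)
The plan is to prove $x'\land y'=y'\land x'$; since (J4) gives $a=(a')'$ for every element $a$, this single identity already yields full commutativity of $\land$, so it is the natural target at this stage. The main engine will be Lemma \ref{85}, the only result so far that relates a product of two complements $z'\land x'$ to an (A9)-rearranged form; the remaining lemmas (\ref{82}, \ref{83}, \ref{84}) will be used to recognise and collapse the pieces that appear.

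Concretely, I would first instantiate Lemma \ref{85}, with its bound variables renamed and a fresh auxiliary variable $t$ playing the role of $y$, to write $x'\land y' = (y\land(t\land x))'\land[(x\land(t\land y)')'\land y']$. Setting $A=(y\land(t\land x))'$ and $R=(x\land(t\land y)')'$, the right-hand side is $A\land(R\land y')$, and a single application of (A9) moves $y'$ to the front, giving $x'\land y'=y'\land(R\land A)$. It then suffices to show $R\land A=x'$, for then $x'\land y'=y'\land x'$ follows immediately.

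The main obstacle is precisely the order in this last step. Lemma \ref{83}, instantiated with $\alpha=y,\ \beta=t,\ \gamma=x$, delivers $x'=A\land R$, i.e.\ the same product in the opposite order; so closing the argument amounts to the swap $R\land A=A\land R$, which is itself a special instance of the identity being proved. Every purely (A9)-driven manipulation preserves this circularity, because on a three-fold product (A9) only interchanges the two outer factors and fixes the middle one; the genuinely asymmetric information must therefore be injected through (J5). I expect the decisive step to use Lemma \ref{84}, the one lemma that transports a complemented variable across a meet, to peel the trailing $y'$ off $A\land(R\land y')$, after which a second, judiciously chosen application of (J5) collapses the remaining complemented block in a fixed order and forces the swap. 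This mirrors the mechanism of the earlier (A6)-section, where an (A14)-type rearrangement (Lemma \ref{416}) had to be established before commutativity could be read off.

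As a safeguard I would also prepare the symmetric computation: apply Lemma \ref{85} to both $x'\land y'$ and $y'\land x'$ with the same auxiliary variable, observe that their leading factors coincide by (A9) since $y\land(t\land x)=x\land(t\land y)$, strip the trailing complemented variables by Lemma \ref{84}, and thereby reduce the whole statement to a single residual identity that (J5) should close. Whichever route succeeds, the crux is the same: converting the order-sensitive law (J5) into a swap of two complemented factors.
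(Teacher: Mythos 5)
Your framework is the paper's own: instantiate Lemma \ref{85}, apply (A9) once at the top level to bring $y'$ to the front, and reduce the lemma to showing that the residual product $R \land A$ of the two complemented blocks equals $x'$. You also correctly observe that Lemma \ref{83} only yields $A \land R = x'$, and that invoking commutativity to swap the factors would be circular. But at exactly this point the proposal stops being a proof: you predict that Lemma \ref{84} plus ``a judiciously chosen application of (J5)'' will force the swap, without exhibiting either. That prediction is also misdirected --- the paper's proof of this lemma never uses Lemma \ref{84} at all, and no swap of a general product is ever performed.

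The single missing idea is that (J5), being an identity in a universally quantified second variable, may be instantiated at a complemented term: substituting $w'$ for $y$ in (J5) and then using (J4) gives $x' = (x \land w')' \land (x \land w'')' = (x \land w')' \land (x \land w)'$, i.e.\ the \emph{reversed} form of (J5), valid for every $w$. This is precisely the asymmetric tool you knew had to be injected through (J5); it is also how Lemma \ref{85} itself was built (its first line expands $z'$ and $x'$ by exactly this reversed instance). With it in hand, the argument closes directly: take the auxiliary variable of Lemma \ref{85} to be $y$ itself (the paper's choice; your fresh $t$ works equally well), so that $x' \land y' = y' \land (R \land A)$ with $R = (x \land (y \land y)')'$ and $A = (y \land (y \land x))'$; one inner application of (A9) rewrites $A = (x \land (y \land y))'$, and then $R \land A = (x \land (y \land y)')' \land (x \land (y \land y))'$ is literally the reversed instance of (J5) at $w = y \land y$, hence equals $x'$ --- no commutativity needed. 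Your closing paragraph already contains the inner (A9) rewriting $y \land (t \land x) = x \land (t \land y)$, so you were one substitution away; but as written, the decisive step is absent and the argument does not close.
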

\begin{proof}
\begin{align*}
    x' \land y'
          &= (y \land (y \land x)\}' \land [\{(x \land (y \land y)'\}' \land y'] &\text{by Lemma \ref{85} }\\
          &= y' \land [\{(x \land (y \land y)'\}' \land \{y \land (y \land x)\}'] &\text{by axiom (A9)}\\
          &= y' \land [\{(x \land (y \land y)'\}' \land \{x \land (y \land y)\}'] &\text{by axiom (A9)}\\
          &= y' \land x'  &\text{by axiom (J5)},
\end{align*}
which proves the lemma.
\end{proof}
\begin{corollary} \label{87}
    $x \land y = y \land x$.
\end{corollary}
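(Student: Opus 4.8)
The plan is to obtain the full commutative law as an immediate consequence of Lemma~\ref{86}, which already establishes commutativity for negated terms, by exploiting the involution (J4). First I would recall that Lemma~\ref{86} asserts the identity $a' \land b' = b' \land a'$, valid for arbitrary terms $a$ and $b$. Since this is an identity in the variety, it is closed under substitution, so I am free to substitute $a := x'$ and $b := y'$. This substitution instance reads $x'' \land y'' = y'' \land x''$. Applying (J4) to each of the four occurrences of a double negation, that is, rewriting $x''$ as $x$ and $y''$ as $y$ on both sides, collapses the identity to $x \land y = y \land x$, which is exactly the assertion of Corollary~\ref{87}.

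I expect no real obstacle at this step: the corollary is a one-line consequence of a substitution instance of Lemma~\ref{86} together with (J4), and indeed the heading ``Corollary'' signals that all the genuine work has already been carried out in the sequence culminating in Lemma~\ref{86}. The only point requiring a moment's care is the legitimacy of substituting the compound terms $x'$ and $y'$ into the variables of Lemma~\ref{86}; this is justified precisely because identities are preserved under substitution of arbitrary terms for variables. Notably, neither (A9) nor (J5) is invoked directly here, as those axioms were consumed entirely in the derivation of Lemma~\ref{86}.
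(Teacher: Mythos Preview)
Your proposal is correct and matches the paper's intended approach: the paper states Corollary~\ref{87} immediately after Lemma~\ref{86} with no proof, precisely because it follows at once by substituting $x'$ and $y'$ into Lemma~\ref{86} and applying (J4). Your write-up makes explicit what the paper leaves implicit.
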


We are now ready to prove Theorem \ref{T8.1}.\\  

\begin{proof}{\bf of Theorem \ref{T8.1}}:  \\      
    Now associativity (J2) (or (A1)) follows immediately from (A9) and Corollary \ref{87}.  Hence the axioms (J1), (J2), (J4) and (J5) of Theorem \ref{T2.2} hold. For the converse, note that $\mathbf{2}$ satisfies (A9), (J4) and (J5),
 implying that (A5) and (J5') form a base.  This completes the proof of Theorem \ref{T8.1}.
 \end{proof}

\begin{remark} 
  The identities (A9) and (J5) of Theorem \ref{T8.1}, alone fail to form a base for Boolean algebras, as these two identities hold in the following non-Boolean algebra: \\

\begin{tabular}{r|rr}
$\land$ & 0 & 1\\
\hline
    0 & 0 & 0 \\
    1 & 0 & 0
\end{tabular} \hspace{.5cm}
 \begin{tabular}{r|rr}
$'$:& 0 & 1\\
\hline
   & 0 & 0
\end{tabular} 
\end{remark}

The above remark raises the following problem:\\

{\bf PROBLEM} Investigate the variety of algebras defined by the two identities 
(A9) and (J5).\\

\medskip

\section{A 3-base for Boolean Algebras containing the identity (A1) of associative type}

    We will now present a 3-base for Boolean algebras by modifying the Johnson's 5th axiom, thereby obtaining a further refinement in Johnson's system of five axioms.  Meredith and Prior
remark in their Footnote 5 of \cite{cM68} that Kalman informed them that the following theorem follows from Huntington's fourth set of axioms as given in Huntington ~\cite{eH33i}.   
  
 We give a new proof here.
\begin{theorem} \label{T9.1}
    The following identities form a 3-base for the variety of Boolean algebras:
\begin{enumerate}
   \item[{\rm(J1)}] $x \land y \approx y \land x,$
   \item[{\rm(A1)}] ${x \land ( y \land z )\approx (x \land y) \land z}$ \quad (J2),
   \item[{\rm(J5')}] ${x \approx (x' \land y)' \land (x' \land y')'}$.
\end{enumerate}
\end{theorem}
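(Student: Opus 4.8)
The plan is to show that the three identities (J1), (A1)=(J2), and (J5') form a base for $\mathbb{BA}$, and since (J1), (J2), (J4), (J5) already form a base by Theorem \ref{T2.2}, it suffices to derive (J4) (namely $x'' \approx x$) and (J5) from the hypotheses, and then confirm independence. The crucial advantage here over the earlier sections is that commutativity (J1) and associativity (J2) are \emph{given outright}, so I may freely rearrange and regroup all meets without the laborious bootstrapping (the long chains of Lemmas \ref{L0}--\ref{L23} etc.) that was forced in the (A5), (A6), (A8) cases where commutativity had to be earned. This should make the derivation considerably shorter.

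First I would establish the analogue of Lemma \ref{421}, namely that $x \land x'$ is a constant: using (J5') with $y$ replaced appropriately and invoking (J1),(J2) to commute and reassociate, one shows $x \land x' = y \land y'$, so that Definition \ref{423} ($x \land x' := 0$) is legitimate. The computation mirrors Lemma \ref{421} but is cleaner since I can regroup at will. Next I would prove $x'' \approx x$ (the (J4) step), following the template of Lemma \ref{424}: instantiate (J5') at $x'$ and at suitable double/triple negations, use the constant $0$ together with commutativity/associativity to collapse the resulting terms, and conclude $x'' = x$. The key instances to feed into (J5') are those producing $(x' \land x'')'$ and $(x''' \land x')'$ type subterms, exactly as in Lemma \ref{424}, now justified by the freely available (J1) and (J2).

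Once (J4) is in hand, deriving (J5) from (J5') is immediate: (J5') reads $x = (x' \land y)' \land (x' \land y')'$; substituting $x \mapsto x'$ gives $x' = (x'' \land y)' \land (x'' \land y')'$, and applying (J4) to replace $x''$ by $x$ yields precisely (J5), $x' = (x \land y)' \land (x \land y')'$. Thus all four axioms of Theorem \ref{T2.2} hold, so the three identities define $\mathbb{BA}$.

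The hard part will not be the derivation — which I expect to be routine given the presence of (J1) and (J2) — but rather the \emph{independence} claim required to call the set a genuine $3$-base. For the ``$\mathbb{BA}$ satisfies these identities'' direction I would simply check that the two-element algebra $\mathbf{2}$ validates (J1), (A1), (J5'), as was done at the end of each previous theorem. For independence I must exhibit, for each of the three identities, a model satisfying the other two but failing it: dropping (J5') is easy (any commutative semigroup with a trivial or ill-behaved $'$), while dropping (J1) or (A1) requires a small groupoid (with a chosen unary $'$) satisfying (J5') together with exactly one of associativity/commutativity but not the other. Constructing these witnessing groupoids — most plausibly by a short finite-model search — is the step demanding the most care, and I would present the three separating tables explicitly to certify independence.
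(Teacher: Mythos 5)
Your proposal is correct and follows essentially the same route as the paper's own proof: the paper likewise first shows $x \land x'$ is constant (Lemma \ref{92}), then derives $x'' \approx x$ from the two instances of (J5') producing $(x' \land x'')'$ and $(x''' \land x')'$, collapsed via $0$ and (J1) (Lemma \ref{94}), obtains (J5) from (J5') by substitution, and concludes by appeal to an earlier base (the paper routes through Theorem \ref{T8.1} rather than Theorem \ref{T2.2}, an immaterial difference, and verifies the converse by deriving the identities inside $\mathbb{BA}$ rather than by your equally valid check on $\mathbf{2}$). The independence issue you flag as the hard part is handled in the paper exactly as you propose, in the remark following the theorem, via separating finite models (e.g.\ left projection $x \land y = x$ with $'$ the identity map satisfies (A1) and (J5') but not (J1)).
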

    The proof is obtained from the following lemmas.  
    
    {\bf The identities (J1), (J2) and (J5') are assumed to be given as hypotheses in the following lemmas.}

\begin{lemma} \label{92}
    $x \land x'= y \land y'$.
\end{lemma}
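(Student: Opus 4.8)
The plan is to prove $x \land x' = y \land y'$ by the same four-factor expansion used in Lemma \ref{421}, but the argument is now shorter because commutativity (J1) and associativity (J2) are available as hypotheses rather than having to be derived first. The whole idea is to unfold $x \land x'$ into a symmetric conjunction of four factors using (J5'), then refold it into $y \land y'$ using (J5') again in the other direction.

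First I would apply (J5') twice to unfold $x \land x'$. Instantiating (J5') at the variable $x$ with parameter $y'$ gives $x = (x' \land y')' \land (x' \land y'')'$, and instantiating (J5') at $x'$ (substituting $x'$ for the displayed variable) with the same parameter $y'$ gives $x' = (x'' \land y')' \land (x'' \land y'')'$. Conjoining these rewrites $x \land x'$ as $[(x' \land y')' \land (x' \land y'')'] \land [(x'' \land y')' \land (x'' \land y'')']$, a conjunction of four factors indexed by the two choices of inner argument ($x'$ or $x''$) and the two choices of parameter ($y'$ or $y''$).

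Second, since (J1) and (J2) hold I can reorder and reassociate these four factors freely, regrouping them so that the two $y'$-factors are together and the two $y''$-factors are together. Folding each pair back with (J5') then finishes: instantiating (J5') at the variable $y$ with parameter $x'$ (and using (J1) to swap arguments) shows the first pair equals $y$, while instantiating (J5') at $y'$ with parameter $x'$ shows the second pair equals $y'$. The chain is
\begin{align*}
x \land x' &= [(x' \land y')' \land (x' \land y'')'] \land [(x'' \land y')' \land (x'' \land y'')'] \\
&= [(x' \land y')' \land (x'' \land y')'] \land [(x' \land y'')' \land (x'' \land y'')'] \\
&= y \land y'.
\end{align*}

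The one point demanding care is that (J4), i.e. $x'' = x$, is \emph{not} yet available at this stage (it is established only later in the section), so in the refolding step one must genuinely match the pairs to the exact shape $(w' \land z)' \land (w' \land z')'$ of (J5')---with $w = y$, $z = x'$ in the first pair and $w = y'$, $z = x'$ in the second---rather than trying to simplify the double primes $x''$. Keeping the primes straight and checking that $z' = x''$ really is the complement required by (J5') is the only delicate bookkeeping; everything else is routine use of (J1) and (J2).
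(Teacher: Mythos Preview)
Your proposal is correct and follows essentially the same approach as the paper's own proof: expand $x$ and $x'$ via (J5') with parameter $y'$, use (J1) and (J2) to rearrange the resulting four factors, and refold via (J5') into $y \land y'$. The only cosmetic difference is that the paper applies (J1) inside each factor first (rewriting $(x'\land y')'$ as $(y'\land x')'$, etc.) and then regroups, whereas you regroup first and defer the (J1) swaps to the refolding step; your remark that (J4) is not yet available and must not be used is apt and matches the paper's logical order.
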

\begin{proof}
\begin{align*}
    x \land x' 
         &=[(x' \land y')' \land (x' \land y'')'] \land [(x'' \land y')' \land (x'' \land y'')']
                        &\text{by (J5')}\\
               &=[(y' \land x')' \land (y' \land x'')'] \land [(y'' \land x')' \land (y'' \land x'')']\\
               &=y \land y',
\end{align*}
proving the lemma.
\end{proof}

In view of the preceding lemma, the following definition of the element $0$ is unambiguous.
\begin{definition} \label{93}
    Let $0 := x \land x'$.
\end{definition}

\begin{lemma} \label{94}
    $x''=x$.
\end{lemma}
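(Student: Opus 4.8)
The plan is to prove $x'' = x$ using only the axioms (J1), (J2), (J5') and the element $0 := x \land x'$ just introduced in Definition \ref{93}. Since commutativity and associativity are now available as hypotheses, I may freely rearrange and reparenthesize all terms, which makes this much more tractable than the corresponding argument in Section 4 (Lemma \ref{424}), where associativity had to be derived first. The key is to exploit (J5') in the form $x \approx (x' \land y)' \land (x' \land y')'$, instantiating the auxiliary variable $y$ cleverly at higher negation levels.

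First I would substitute $y := x'$ into (J5') to obtain $x = (x' \land x')' \land (x' \land x'')'$, and recognize $x' \land x'$ via commutativity/associativity in relation to $0$; more usefully, substituting into (J5') with the roles arranged so that one conjunct collapses to $0'$ (using $x' \land x = 0$ from Definition \ref{93} together with (J1)) should yield a clean expression like $x = 0' \land (x' \land x'')'$. Running the same instantiation at one negation level up gives a parallel identity for $x''$, namely $x'' = 0' \land (x'' \land x''')'$ or a variant obtained by feeding $x''$ and $x'''$ into (J5'). The strategy is then to show these two expressions coincide, forcing $x'' = x$.

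The main step is to massage the two (J5')-instances so that their right-hand sides become syntactically identical after applying commutativity and associativity. Concretely, I expect to write $x = (x' \land x'')' \land (x' \land x''')'$ directly from (J5') (taking $y := x''$), note that $x' \land x'' = x' \land x'' $ can be rewritten using $0$, and likewise produce $x'' = (x''' \land x')' \land (x''' \land x'')'$ by applying (J5') to $x''$ with auxiliary variable $x'$; under (J1) these two right-hand sides match term-for-term, giving $x'' = x$. This is essentially the two-line argument sketched in Lemma \ref{424}, now justified because associativity is a hypothesis rather than something yet to be proved.

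The hard part will be bookkeeping the negation levels correctly so that the two (J5') instances genuinely produce the same pair of conjuncts; it is easy to land on expressions like $(x' \land x''')'$ versus $(x''' \land x')'$ that agree only after invoking (J1), or to need the identity $x' \land x'' = x'' \land x'$ at a spot where one must be careful that $0$ is being used consistently. Once the two right-hand sides are seen to be identical under commutativity and associativity, the conclusion $x'' = x$ is immediate. With Lemma \ref{94} in hand, the remaining verification that (J5') plus (J4) recovers (J5), and hence that (J1), (J2), (J5') imply the full base of Theorem \ref{T2.2}, will follow exactly as in the proof of Theorem \ref{T4.1}, with the converse handled by checking that $\mathbf{2}$ satisfies all three identities.
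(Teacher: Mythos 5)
Your proposal is correct and takes essentially the same route as the paper's own proof: instantiate (J5$'$) at $x$ with $y := x''$ and at $x''$ with $y := x'$, collapse one conjunct of each right-hand side to $0'$, and identify the two results via (J1) to conclude $x'' = x$. One caution: the two right-hand sides $(x' \land x'')' \land (x' \land x''')'$ and $(x''' \land x')' \land (x''' \land x'')'$ do not match ``term-for-term under (J1)'' alone --- you need the constancy of $t \land t'$ (Lemma \ref{92}, which makes Definition \ref{93} unambiguous) to see that $(x' \land x'')'$ and $(x''' \land x'')'$ are both equal to $0'$ --- but this is precisely the collapse step you describe, so the argument goes through exactly as in the paper.
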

\begin{proof}
    Since $(x' \land x'')' \land (x' \land x''')' = x$, we have $0' \land (x' \land x''')' = x$.  Also,
    from $(x''' \land x')' \land (x''' \land x'')' = x''$, we get $(x''' \land x') \land 0' = x''$.  Since $0' \land (x' \land x''')' = (x''' \land x') \land 0' $ by  (J1), it follows that $x''=x$.
\end{proof}

\begin{proof}{\bf of Theorem \ref{T9.1}}\\
    It is easy to see that (J5') and Lemma \ref{94} imply (J5).  It is also clear that (J1) and (J2) imply (J2'). Thus the axioms 
 (J2)', (J4) and (J5) of Theorem \ref{T8.1} hold.  It is also easy to see that the axioms of Theorem \ref{T8.1} imply the axioms (J1') (J2) and (J5').  Hence the proof of Theorem \ref{T9.1} is complete. 
\end{proof}
 
The question now arises as to whether  the axioms in Theorem \ref{T9.1} are independent?  The following remark answers this question positively.

\begin{remark} It is clear that \{(J1), (J2)\} do not axiomatize Boolean algebras.  
The following identities fail to be  a 2-base for Boolean algebras:\\
(J2) $x \land (y \land z)=(x \land y) \land z$. (A1)  \\
(J5') $x=(x' \land y)' \land (x' \land y')'$\\ 
since they hold in the following algebra: \\

\begin{tabular}{r|rr}
$\land$ & 0 & 1\\
\hline
    0 & 0 & 0 \\
    1 & 1 & 1
\end{tabular} \hspace{.5cm}
\begin{tabular}{r|rr}
$'$ & 0 & 1\\
\hline
   & 0 & 1
\end{tabular} 

\medskip
Also, it is easy to see that the following identities fail to be  a base for Boolean algebras:\\
(J1) $x \land y = y \land x$,   \\
(J5') $x=(x' \land y)' \land (x' \land y')'$\\ 
It is also clear that the identities (J1) and (J2) also fail to be  a base for Boolean algebras.
\end{remark}

The following problems may be of interest.\\
PROBLEM: Investigate the variety defined by (J2) and (J5').\\
PROBLEM: Investigate the variety defined by (J1) and (J5').\\

\medskip
\section{A 2-base for Boolean Algebras containing the identity (A13) of associative type}

\begin{theorem} \label{T10.1} The following identities form a 2-base for Boolean algebras:
\begin{thlist}
\item[A13] $(x \land y) \land z= (y \land z) \land x,$   

\item[J5']    $x=(x' \land y)' \land (x' \land y')'.$
\end{thlist}
\end{theorem}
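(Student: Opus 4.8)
The plan is to show that the two identities (A13) and (J5') together define exactly $\mathbb{BA}$, following the same overall architecture used for the (A6), (A8), and (A5) theorems: first derive commutativity (J1), then derive full associativity (J2) and the double-negation law (J4), and finally reduce to the already-established base of Theorem \ref{T2.2}, closing the converse by checking that the two-element algebra $\mathbf{2}$ satisfies both identities. Since (A13) is a pure groupoid identity (permuting the three variables in a fixed bracketing) and (J5') is the only identity involving negation, the whole difficulty is extracting enough commutation/association consequences from (A13) alone to manipulate the nested negated terms appearing in (J5').

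First I would explore the equational consequences of (A13), $(x\land y)\land z = (y\land z)\land x$, in isolation. This is a cyclic-type permutation, so iterating it should produce a small family of rearrangement lemmas for products of three and four factors (analogous to Lemmas \ref{42}--\ref{48} in the (A6) section and Lemmas \ref{L0}--\ref{L8} in the (A8) section). In particular I expect to be able to move a chosen factor to a designated position and to interchange the two inner factors of a suitable bracketing; these are the structural lemmas that let one ``absorb'' the terms $(x'\land y)'$ and $(x'\land y')'$ produced by (J5'). The guiding target identities are the mixed lemmas of the form $(x'\land y)'\land[\cdots(x'\land y')'\cdots] = (\text{simple term})\land x$, obtained by first rearranging via (A13) so that the pair $(x'\land y)',(x'\land y')'$ sits in the exact bracketing that (J5') collapses to $x$.

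Next, with those rearrangement lemmas in hand, I would prove commutativity $x\land y = y\land x$. The model to imitate is the chain Lemma \ref{417}$\to$\ref{418}$\to$\ref{419} from the (A6) section: introduce an auxiliary variable, apply (J5') to expand $x$ (or $y$) into a product of negated terms, use the (A13)-rearrangement lemmas to shuffle factors, and then re-collapse via (J5'). Once (J1) holds, associativity (J2) should follow quickly, since (A13) combined with commutativity forces the full symmetric group on three factors and hence the associative law (compare Corollary \ref{420} and the remark preceding Theorem \ref{T8.1}). After (J1) and (J2), I would establish $x\land x' = y\land y'$ (as in Lemmas \ref{421} and \ref{92}), name this common value $0$, and then prove $x''=x$ by the two-line argument of Lemma \ref{424}/\ref{94}. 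Finally, (J5') together with $x''=x$ yields (J5), so all four axioms of Theorem \ref{T2.2} hold; the converse is immediate because $\mathbf{2}$ satisfies both (A13) and (J5').

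The main obstacle I anticipate is the very first stage: unlike (A5), (A6), and (A8), which each pair a nicely ``mixed'' rearrangement with (J5') in essentially one move, (A13) permutes the \emph{outer} structure in a way that may not immediately line up the two negated subterms of (J5') into a collapsible bracketing. The hard part will be finding the right sequence of (A13)-applications (possibly four-factor identities requiring an auxiliary variable introduced through (J5')) that positions $(x'\land y)'$ and $(x'\land y')'$ adjacently in the pattern $(x'\land y)'\land(x'\land y')'$ so that (J5') can be applied to reduce to $x$. Once commutativity is secured, the remainder is routine and parallels the earlier sections almost verbatim.
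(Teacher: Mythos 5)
Your proposal gets the architecture right, and your one structural claim is correct: once (J1) is available, (A13) gives $(x\land y)\land z=(y\land z)\land x=x\land(y\land z)$, so (J2) is immediate, and then (J4) follows as in Lemma \ref{94}, landing you in Theorem \ref{T2.2} (or directly in Theorem \ref{T9.1}); the converse via $\mathbf{2}$ is routine. The genuine gap is that the entire mathematical content of this theorem lies in the one step you defer: an explicit equational derivation, from (A13) and (J5') alone, of commutativity (or of some other bridge identity strong enough to reach an established base). Your proposal only says you ``expect'' suitable rearrangement lemmas and that ``the hard part will be finding the right sequence of (A13)-applications''; no such sequence is exhibited. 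In a result of this kind, where completeness of equational logic already guarantees in the abstract that some derivation exists, producing the concrete lemma chain is the theorem --- leaving it as an acknowledged obstacle is not a routine detail but the missing proof.

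It is also worth knowing that the paper does not take the route you sketch: commutativity is never proved in this section at all. Instead the paper drives toward the identity (A5), $(x\land y)\land z=y\land(x\land z)$, through a chain of four-variable rearrangement lemmas (Lemmas \ref{1017}, \ref{108}, \ref{1014}, \ref{1021}, \ref{1028}, \ref{1019}, \ref{1031}, \ref{1023}, \ref{1033}), of which the key mixed ones are Lemma \ref{1028}, $((x\land y)\land z)\land u = z\land(y\land(x\land u))$, and Lemma \ref{1033}, $((x'\land y)'\land z)\land((x'\land y')'\land u)=x\land(z\land u)$; it then appeals to Theorem \ref{T6.1}, the already-proved 2-base $\{$(A5), (J5')$\}$, which in turn rests on Theorems \ref{T4.1} and \ref{T2.2}. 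So the reduction chain is (A13) $\to$ (A5) $\to$ (A6) $\to$ Theorem \ref{T2.2}, with (J1) emerging only at the far end of that chain. This choice of target reflects exactly the difficulty you anticipated: under (A13)'s cyclic permutation of the outer structure, the two negated subterms of (J5') are more easily aligned inside four-factor products than in a head-on attack on $x\land y=y\land x$. Until you supply a concrete chain of this kind (whether aimed at (J1) or at (A5)), your proposal remains a plausible plan rather than a proof.
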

The proof of Theorem \ref{T10.1} will be obtained through the following lemmas. 

{\bf The identities (A13) and (J5') are assumed to be given as hypotheses in the following lemmas.}

We wish to prove the identity (A5): $x \land (y \land z) = (y \land x) \land z$.

\begin{lemma} \label{1017} 
 $(x \land y) \land (z \land u) = (y \land (x \land z)) \land u.$  
\end{lemma}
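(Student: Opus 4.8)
The plan is to prove this identity using only axiom (A13); the hypothesis (J5') will not be needed here. The key observation is that a single application of (A13), namely $(a \land b) \land c \approx (b \land c) \land a$, shifts the three \emph{slots} of a left-associated product cyclically, and applying it twice yields full cyclic invariance
\[
(a \land b) \land c = (b \land c) \land a = (c \land a) \land b .
\]
In particular, any left-associated product of three (possibly compound) terms is unchanged under a cyclic permutation of those three terms, and this is the only structural fact from (A13) that the argument exploits.

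First I would treat the pair $z \land u$ as a single block and apply cyclic invariance to the outer triple whose slots are $x$, $y$, and $z \land u$, obtaining $(x \land y) \land (z \land u) = ((z \land u) \land x) \land y$. Next, within the first factor I would apply cyclic invariance to the inner triple with slots $z$, $u$, $x$ to rewrite $(z \land u) \land x$ as $(x \land z) \land u$, turning the expression into $((x \land z) \land u) \land y$. Finally, reading (A13) with $a = y$, $b = x \land z$, $c = u$ gives $(y \land (x \land z)) \land u = ((x \land z) \land u) \land y$, so the two sides coincide. Concretely:
\begin{align*}
(x \land y) \land (z \land u)
  &= ((z \land u) \land x) \land y \\
  &= ((x \land z) \land u) \land y \\
  &= (y \land (x \land z)) \land u,
\end{align*}
where the first and third equalities are single applications of (A13) to the outermost bracketing, and the middle equality applies (A13) twice to the inner factor $(z \land u) \land x$.

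The main subtlety, and essentially the only place where care is required, is the bookkeeping: at each step one must correctly identify which three objects currently form the left-associated triple of the form $(a \land b) \land c$ before permuting, since (A13) acts only on that outermost grouping. In particular the middle step permutes the \emph{inner} triple $z, u, x$ rather than the outer one, and it is easy to misapply the axiom to the wrong grouping. Once the slot structure is tracked carefully, no further identity is needed, and in particular (J5') plays no role in this lemma.
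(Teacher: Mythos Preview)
Your proof is correct and follows essentially the same route as the paper: both arguments use only repeated applications of (A13) to cycle the slots of left-associated triples until the desired form is reached (the paper's labels ``by (J5')'' on each step are evident typos for ``by (A13)''). Your observation that (J5') plays no role here is accurate.
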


\begin{proof}
\begin{align*}                                                            
(x \land y) \land (z \land u)  &= [y \land (z \land u)] \land x &\text{  by (J5')}\\
                                            &= [(z \land u) \land x] \land y  &\text{  by (J5')}\\
                                            &=[(u \land x) \land z] \land y  &\text{  by (J5')}\\
                                            &=[(x \land z)\land u] \land y  &\text{  by (J5')}\\
                                            & =(y \land (x \land z)] \land u &\text{  by (J5')},
\end{align*} 
proving the lemma.                                              
\end{proof}

\begin{lemma} \label{108}
 $((x \land y) \land z) \land u = (u \land (y \land z)) \land x.$  
 \end{lemma}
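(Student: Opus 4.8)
The plan is to derive Lemma~\ref{108} purely from two applications of (A13); neither (J5') nor the preceding Lemma~\ref{1017} is actually needed. Recall that (A13) reads $(a \land b) \land c = (b \land c) \land a$, a cyclic rule on the three slots, and I will exploit it twice, once in each direction.

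First I would rewrite the inner subterm on the left. Since $(x \land y) \land z = (y \land z) \land x$ is precisely (A13) with $a = x$, $b = y$, $c = z$, substituting this equal subterm inside $((x \land y) \land z) \land u$ turns it into $((y \land z) \land x) \land u$. Second, I would apply (A13) to the whole term, reading it from right to left: the instance $(b \land c) \land a = (a \land b) \land c$ with $b = y \land z$, $c = x$, $a = u$ rewrites $((y \land z) \land x) \land u$ as $(u \land (y \land z)) \land x$, which is exactly the right-hand side. Chaining the two steps gives
\begin{align*}
((x \land y) \land z) \land u &= ((y \land z) \land x) \land u &\text{by (A13)}\\
&= (u \land (y \land z)) \land x &\text{by (A13)},
\end{align*}
as desired.

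The only thing to watch is the bookkeeping of the substitutions: because (A13) encodes a cyclic permutation of its three arguments, it is easy to apply it in the wrong direction and land on $(x \land u) \land (y \land z)$ instead. So the main obstacle, such as it is, is simply matching $(u \land (y \land z)) \land x$ to the $(a \land b) \land c$ side of (A13) rather than the $(b \land c) \land a$ side, treating $y \land z$ as a single block throughout. Once the slots are identified correctly, each step is a one-line instance of (A13), and no appeal to (J5') or to any earlier lemma is required.
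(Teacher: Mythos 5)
Your proof is correct and is essentially identical to the paper's: the paper also rewrites the inner subterm $(x \land y) \land z$ as $(y \land z) \land x$ and then applies the cyclic identity once more to the whole term to reach $(u \land (y \land z)) \land x$. In fact your version is cleaner on one point: the paper's proof labels both steps ``by (J5')'', which is plainly a typo for (A13), and your attributions are the correct ones.
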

 
\begin{proof} 
\begin{align*}                                                                 
 ((x \land y) \land z) \land u &= [(y \land z) \land x] \land u &\text{ by (J5')}\\
                                             &= [u \land (y \land z)] \land x &\text{ by (J5')},
 \end{align*} 
 whence the lemma is proved.
 \end{proof}
 
 \begin{lemma} \label{1014} 
   $x \land y = [y \land (x' \land z)'] \land (x' \land z')'.$  
\end{lemma}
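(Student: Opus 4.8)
The plan is to derive this identity directly from (J5') and a single application of (A13), without needing the auxiliary Lemmas \ref{1017} and \ref{108}. First I would invoke (J5') to rewrite the standalone left factor $x$, obtaining $x \land y = [(x' \land z)' \land (x' \land z')'] \land y$. Abbreviating $a := (x' \land z)'$ and $b := (x' \land z')'$ for bookkeeping, the goal reduces to showing $(a \land b) \land y = (y \land a) \land b$, since the right-hand side of that equation is exactly the target expression $[y \land (x' \land z)'] \land (x' \land z')'$.

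The key step is to recognize that this reduced equation is nothing but (A13) read in the direction $(y \land a) \land b = (a \land b) \land y$. Concretely, substituting $x \mapsto y$, $y \mapsto a$, $z \mapsto b$ into the scheme $(x \land y) \land z = (y \land z) \land x$ produces precisely $(y \land a) \land b = (a \land b) \land y$. Reinstating the abbreviations then gives $[(x' \land z)' \land (x' \land z')'] \land y = [y \land (x' \land z)'] \land (x' \land z')'$, and chaining this with the first rewrite completes the argument. I would therefore present the proof as a two-line computation: apply (J5') to $x$, then cite (A13).

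Since each step is a single rewrite, there is no genuine obstacle here; the only point requiring care is orienting (A13) correctly. It is a cyclic rotation $(p \land q) \land r = (q \land r) \land p$, so one must apply it \emph{in reverse}, moving the standalone variable $y$ from the outermost right position into the inner parenthesis, rather than the forward direction which would rotate the wrong factor out. Getting this orientation backwards is the one way the short computation could go astray, so I would be explicit about which substitution instance of (A13) is being used.
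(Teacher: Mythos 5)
Your proposal is correct and is essentially identical to the paper's own proof: the paper likewise rewrites $x$ via (J5') to get $[(x' \land z)' \land (x' \land z')'] \land y$ and then applies (A13) (in the reverse orientation you describe, i.e.\ the instance $(y \land a) \land b = (a \land b) \land y$ with $a = (x' \land z)'$, $b = (x' \land z')'$) to reach $[y \land (x' \land z)'] \land (x' \land z')'$. Your explicit remark about orienting the cyclic rotation correctly is the only point of care, and you have it right.
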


\begin{proof}
\begin{align*}
x \land  y &= [(x' \land z)' \land (x' \land z')'] \land y  &\text{ by (J5')}\\
                &= [(y \land (x' \land z)'] \land (x' \land z')'  &\text{ by (A13)},
\end{align*}
completing the proof.
\end{proof}

\begin{lemma} \label{1021}
 $(x \land y) \land z = (x \land (z \land (y' \land u)')) \land (y' \land u')'.$ 
 \end{lemma}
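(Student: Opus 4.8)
The plan is to prove the identity from the two hypotheses (A13) and (J5') alone, by first unfolding the middle variable $y$ on the left and then rearranging the resulting product with repeated use of (A13). Concretely, abbreviate $P := (y' \land u)'$ and $Q := (y' \land u')'$, so that (J5'), read with its first variable instantiated to $y$ and its second to $u$, gives $y = P \land Q$. Substituting this for $y$ on the left-hand side reduces the goal to the purely rearrangement-theoretic identity $(x \land (P \land Q)) \land z = (x \land (z \land P)) \land Q$; re-expanding $P$ and $Q$ at the end recovers the stated right-hand side.

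To establish that rearrangement I would proceed in three steps. First, apply (A13) to $(x \land (P \land Q)) \land z$, treating the compound term $P \land Q$ as a single block, to obtain $((P \land Q) \land z) \land x$. Second, inside this term rewrite the genuinely left-nested triple $(P \land Q) \land z$ as $(z \land P) \land Q$, which is one cyclic shift furnished by (A13) (namely $(z \land P) \land Q = (P \land Q) \land z$); this yields $((z \land P) \land Q) \land x$. Third, now freezing $z \land P$ as a block, a final application of (A13) moves $x$ back to the front to give $(x \land (z \land P)) \land Q$, which is the target. Thus the whole argument is (J5') followed by exactly three invocations of (A13), and, pleasantly, it uses none of the preceding Lemmas \ref{1017}, \ref{108}, and \ref{1014}.

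The only genuine obstacle is that at this stage neither commutativity nor full associativity is available, so every reassociation or permutation must be justified by (A13) alone. Since (A13) realizes only the \emph{cyclic} (not the full symmetric) group on the three slots of a left-associated product, via $(a \land b) \land c \mapsto (b \land c) \land a$, the real work is to choose the bracketings so that each required move is a cyclic shift, and to decide at each step which compound subterm is frozen as an atom. In particular one must check that $(P \land Q) \land z$ and $(z \land P) \land Q$ lie in the same cyclic orbit, which they do by the single application of (A13) noted above. Once the grouping is fixed correctly, the three (A13)-steps go through mechanically and the identity follows.
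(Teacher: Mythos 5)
Your proof is correct and is essentially the paper's own argument: substitute $y=(y'\land u)'\land(y'\land u')'$ via (J5') and then perform cyclic shifts with (A13), two of them used right-to-left, which is legitimate in equational reasoning. The only cosmetic difference is that the paper packages your middle two (A13) moves as a single (reversed) appeal to Lemma \ref{108}, whose own proof is exactly those two cyclic shifts, so inlining it yields your three-step derivation verbatim.
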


\begin{proof} 
 \begin{align*}                                                                                                
(x \land y) \land z &= [x \land (y' \land u)' \land (y' \land u')'] \land z\\
                             &= [(z \land (y' \land u)'] \land (y' \land u')' \land x 
                                            &\text{ by Lemma \ref{108}}\\ 
                              &=  (x \land (z \land (y' \land u)')) \land (y' \land u')' &\text{ by (J5')},
 \end{align*} 
proving the lemma.
\end{proof}

\begin{lemma} \label{1028}
$((x \land y) \land z) \land u = z \land (y \land (x \land u)). $ 
\end{lemma}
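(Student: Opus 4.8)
\textbf{Proof proposal for Lemma \ref{1028}.}

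The goal is to establish the identity
\[
((x \land y) \land z) \land u = z \land (y \land (x \land u)),
\]
working only from (A13) and (J5'), together with all the lemmas already proved in this section. The left-hand side is a fully left-associated product of four terms, while the right-hand side is a fully right-associated product in a different variable order, so the whole task amounts to shuffling the parenthesization and the order of the four factors using the two basic moves available. Since (A13) acts on a product of exactly three factors of the shape $(a \land b) \land c$, rewriting it as $(b \land c) \land a$ (a cyclic rotation of a left-associated triple), the natural strategy is to keep the expression in a form that exposes a left-associated triple at each step and apply (A13) repeatedly.

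The plan is to start from the left-hand side and first use Lemma \ref{108}, which says $((x \land y) \land z) \land u = (u \land (y \land z)) \land x$, to relocate the variable $x$ to the outside and group $y \land z$ together. From there I would reinterpret the resulting four-fold product by treating $u \land (y \land z)$ as a single left-associated triple and applying (A13) (or Lemma \ref{1017}, which already packages a convenient four-variable rearrangement $(a \land b) \land (c \land d) = (b \land (a \land c)) \land d$) to migrate toward the target order $z, y, x, u$. The cleanest route is likely to chain Lemma \ref{108} with Lemma \ref{1017}: Lemma \ref{108} converts the left-hand side into a triple product whose factors can be regrouped as a product of two binary terms, at which point Lemma \ref{1017} rewrites such a product into the right-associated form that matches the right-hand side after one or two further applications of (A13).

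I expect the main obstacle to be purely combinatorial bookkeeping rather than any conceptual difficulty: with four variables there are many admissible orderings and parenthesizations, and (A13) only permits one specific cyclic move on a left-associated triple, so finding the exact short sequence of rewrites that lands on $z \land (y \land (x \land u))$ (with precisely this order and nesting) requires care. The risk is applying (A13) to the "wrong" associated triple and drifting away from the target; the safeguard is to track, at each step, which three-element subproduct is currently left-associated and to choose the rotation that moves one more variable into its final position. Because every intermediate identity I need (Lemmas \ref{1017}, \ref{108}, and \ref{1014}) is already available, no new element such as $0$ or any property of negation is required here—this lemma is a pure consequence of the associative-type manipulations, and the verification reduces to exhibiting the correct four- or five-line chain of equalities.
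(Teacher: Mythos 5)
There is a fatal gap here, not a bookkeeping issue: your concluding claim that this lemma ``is a pure consequence of the associative-type manipulations'' with ``no property of negation required'' is false. Every identity in the toolkit your plan actually uses---(A13) itself, Lemma \ref{108} ($((x \land y) \land z) \land u = (u \land (y \land z)) \land x$) and Lemma \ref{1017} ($(x \land y) \land (z \land u) = (y \land (x \land z)) \land u$)---has on \emph{both} sides a term whose outermost $\land$ carries a compound term (a meet) as its left argument. A single rewrite step with such an identity, applied in either direction, at the root or inside a subterm, preserves that shape. Consequently every term provably equal to $((x \land y) \land z) \land u$ using only these identities again has a meet as the left argument of its outermost $\land$, whereas the target $z \land (y \land (x \land u))$ has the \emph{variable} $z$ there. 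The situation is in fact starker: no subterm of $z \land (y \land (x \land u))$ is an instance of either side of (A13) (every subterm has a variable as its left factor), so this term is (A13)-convertible to nothing but itself. Hence Lemma \ref{1028} simply fails in the variety of groupoids defined by (A13) alone, and no ``four- or five-line chain'' of (A13)-type rearrangements, however cleverly ordered, can reach the right-hand side. Your proposal also contradicts itself: you list Lemma \ref{1014} among the lemmas you need, but Lemma \ref{1014} ($x \land y = [y \land (x' \land z)'] \land (x' \land z')'$) is precisely a negation identity coming from (J5').

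This is exactly why the paper's proof cannot stay inside associative-type territory. It first establishes the auxiliary identity $((x \land y) \land (z \land (u' \land w)')) \land (u' \land w')' = u \land (y \land (x \land z))$, using Lemma \ref{1017} and then Lemma \ref{1014} read from right to left; that contraction $[A \land (u' \land w)'] \land (u' \land w')' = u \land A$ is the \emph{only} available move that turns a term with a compound left factor into one with a variable left factor, and it is irreducibly a (J5') consequence. An instance of this auxiliary identity (with the roles of $z$ and $u$ exchanged) is then matched against Lemma \ref{1021} to recover $((x \land y) \land z) \land u$. To repair your proof you must follow this pattern: expand the offending variable via (J5') into $(u' \land w)' \land (u' \land w')'$, shuffle the enlarged product with (A13)/Lemma \ref{1017}, and contract with Lemma \ref{1014} (and Lemma \ref{1021}) to produce the variable-on-the-left shape of the right-hand side.
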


\begin{proof}
\begin{align*}                                               
((x \land y) \land (z \land (u' \land w)')) \land (u' \land w')'\\
          & \hspace{-3cm}= ((y \land (x \land z)) \land (u' \land w)') \land (u' \land w')'  &\text{ by Lemma \ref{1017}} \\
           & \hspace{-3cm}= u \land (y \land (x \land z)) &\text{by Lemma \ref{1014}}
\end{align*}
Hence,
\begin{equation} \label{1027}
 ((x \land y) \land (z \land (u' \land w)')) \land (u' \land w')' = u \land (y \land (x \land z))  
 \end{equation}
We, therefore, have
\begin{align*}
z \land (y \land (x \land u)) 
            &= [(x \land y) \land (u \land (z'  \land w)')] \land (z' \land w')'  &\text{ by (\ref{1027})}\\
               &= ((x \land y) \land z) \land u &\text{ by Lemma \ref{1021}},
\end{align*}
completing the proof.
\end{proof}

\begin{lemma}  \label{1019} 
 $(((x' \land y)' \land z) \land u) \land (x' \land y')' = x \land (z \land u).$  
\end{lemma}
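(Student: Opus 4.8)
The goal is to prove the identity
\[
(((x' \land y)' \land z) \land u) \land (x' \land y')' = x \land (z \land u),
\]
working under the hypotheses (A13) and (J5'), and with the derived lemmas \ref{1017}, \ref{108}, \ref{1014}, \ref{1021}, \ref{1028} available. The right-hand side is the target shape, so the plan is to start from the left-hand side and push the factor $(x' \land y')'$ inward until the subterm $(x' \land y)' \land (x' \land y')'$ materializes and collapses to $x$ via (J5'). The two ``$(J5')$-companion'' factors $(x' \land y)'$ and $(x' \land y')'$ are separated by the bracketing, so the whole maneuver is about re-associating and permuting the four leaves $(x' \land y)'$, $z$, $u$, $(x' \land y')'$ so that the two companions become adjacent in the correct order.

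First I would treat the left-hand side as a product of four factors $a \land b \land c \land d$ with $a = (x' \land y)'$, $b = z$, $c = u$, $d = (x' \land y')'$, currently grouped as $(((a \land b) \land c) \land d)$. The natural tool is Lemma \ref{1028}, $((p \land q) \land r) \land s = r \land (q \land (p \land s))$, which both re-associates to the right and cyclically reshuffles the leaves; applying it with $p = a \land b$ collapsed appropriately, or with a suitable choice of $p,q,r,s$, should bring $d$ into contact with $a$. Concretely I expect to aim for an intermediate form in which the innermost conjunct is $(x' \land y)' \land (x' \land y')'$ (possibly after one application of commutativity-type consequences such as Lemma \ref{1014} or the permutation identities derivable from (A13)), at which point (J5') replaces that block by $x$ and the surrounding structure is already $x \land (\text{something built from } z,u)$.

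The key algebraic fact I would lean on is that the entire earlier development has shown these bases derive full commutativity and associativity; but in \emph{this} section we are still in the middle of establishing (A5), so I must be careful to use only the lemmas proved \emph{before} Lemma \ref{1019}, namely \ref{1017}, \ref{108}, \ref{1014}, \ref{1021}, and \ref{1028}, together with (A13) and (J5'). The plan is therefore a short chain: apply Lemma \ref{1028} (or Lemma \ref{108}) to re-bracket $(((x' \land y)' \land z) \land u)$ so that $(x' \land y')'$ can be merged, use (A13)-driven permutations to juxtapose the two $(J5')$ companions in the order $(x' \land y)' \land (x' \land y')'$, invoke (J5') to turn that block into $x$, and finally reshuffle the residual $z,u$ into the form $z \land u$ hanging off $x$.

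The main obstacle I anticipate is the bookkeeping of which of the six leaf-permutations of three elements is actually available at this stage: since we have not yet proven general commutativity, each reordering must be justified by a specific one of the length-3 consequences of (A13) (such as those packaged inside Lemmas \ref{1017} and \ref{1028}), and it is easy to ``accidentally'' use a transposition that is not yet legitimate. So the delicate step is choosing the substitution instances of Lemmas \ref{1028} and \ref{1021} so that every reordering used is genuinely one of the already-derived identities, and so that the companions $(x' \land y)'$ and $(x' \land y')'$ end up adjacent \emph{in the exact order demanded by} (J5') rather than its reverse. Once that alignment is achieved the collapse to $x \land (z \land u)$ is immediate.
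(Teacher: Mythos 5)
Your proposal stops at the level of strategy: no chain of equalities is ever exhibited, and every step is hedged (``should bring $d$ into contact with $a$'', ``possibly after one application of\ldots''). That matters here, because the one concrete first move you commit to most strongly --- Lemma \ref{1028} --- actually stalls. Applying Lemma \ref{1028} with $x \mapsto (x'\land y)'$, $y \mapsto z$, $z \mapsto u$, $u \mapsto (x'\land y')'$ gives
\[
(((x'\land y)'\land z)\land u)\land (x'\land y')' \;=\; u \land \bigl(z \land ((x'\land y)'\land(x'\land y')')\bigr) \;=\; u \land (z \land x)
\]
by (J5'); the companions do become adjacent, but the result is $u \land (z \land x)$, not the target $x \land (z \land u)$. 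Converting one into the other is precisely the permutation $x \land (y\land z) = z \land (y \land x)$ of Lemma \ref{1031}, which the paper proves \emph{after} --- and \emph{from} --- Lemma \ref{1019}; using it here would be circular, and nothing proved before Lemma \ref{1019} supplies that swap. This is exactly the trap you yourself warn about (``a transposition that is not yet legitimate''), and your primary route walks straight into it.

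Your parenthetical alternative (Lemma \ref{108}) can be made to work: it yields $((x'\land y')'\land(z\land u))\land(x'\land y)'$, and two further applications of (A13) move this to $((x'\land y)'\land(x'\land y')')\land(z\land u)$, which is $x\land(z\land u)$ by (J5'). But you never carry this out, and choosing the working route over the failing one is essentially the whole content of the lemma. Note also that the paper's own proof is simpler than either route you sketch: it ignores the earlier lemmas entirely, derives the four-variable identity $(x\land y)\land(z\land u) = ((x\land z)\land u)\land y$ by two applications of (A13) alone, and then obtains Lemma \ref{1019} by the single substitution $x \mapsto (x'\land y)'$, $y \mapsto (x'\land y')'$ followed by one application of (J5').
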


\begin{proof}  
\begin{align*}  
(x \land y) \land (z \land u) &= [(z \land u) \land x] \land y  &\text{ by (A13)}\\
                                               & =[(x \land z) \land u] \land  y  &\text{ by (A13)}
\end{align*}
Thus,
\begin{equation}\label{9}
  (x \land y) \land (z \land u) = ((x \land z) \land u) \land y.  
\end{equation}
Hence,

$(((x' \land y)' \land z) \land u) \land (x' \land y')' 
                                  = [(x' \land y)' \land (x' \land y')'] \land (z \land u)$ \quad by \ref{9}.
\end{proof}

\begin{lemma} \label{1031} 
 $x \land (y \land z) = z \land (y \land x).$  
\end{lemma}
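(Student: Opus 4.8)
The identity to be established is exactly (A9), and the plan is to obtain it by a three-move combination: peel the variable $x$ off the left-hand side using Lemma \ref{1019}, regroup the resulting four-fold product using Lemma \ref{1028}, and then recombine the leftover factors into $x$ using (J5'). No separate induction or case analysis should be needed; everything reduces to matching substitutions into two already-proved lemmas.

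First I would read Lemma \ref{1019}, namely $(((x' \land y)' \land z) \land u) \land (x' \land y')' = x \land (z \land u)$, from right to left. Substituting $z \mapsto y$ and $u \mapsto z$, and renaming the auxiliary variable of that lemma to $w$ so that it does not collide with $x,y,z$, I can rewrite the left-hand side of the target as a four-fold product:
$$x \land (y \land z) = (((x' \land w)' \land y) \land z) \land (x' \land w')'.$$
Next I would apply Lemma \ref{1028}, which reads $((x \land y) \land z) \land u = z \land (y \land (x \land u))$, under the matching $x \mapsto (x' \land w)'$ and $u \mapsto (x' \land w')'$, leaving the middle variables $y,z$ in place. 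This regroups the product as
$$z \land \bigl(y \land ((x' \land w)' \land (x' \land w')')\bigr).$$
Finally, the inner factor $(x' \land w)' \land (x' \land w')'$ collapses to $x$ by (J5'), giving $z \land (y \land x)$, which is the right-hand side of the claim.

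The step I expect to be the genuine (if modest) obstacle is organizational rather than computational: one must choose the substitution into Lemma \ref{1019} so that the two "correction factors" $(x' \land w)'$ and $(x' \land w')'$ end up precisely in the outermost and innermost positions of the product delivered by Lemma \ref{1028}, since that is exactly the configuration in which (J5') can recombine them back into $x$. Provided the bound-variable renaming is handled carefully (so $w$ stays distinct from $x,y,z$) and the two substitutions are aligned, the three displayed equalities complete the proof with no residual calculation.
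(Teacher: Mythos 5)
Your proposal is correct and follows exactly the paper's own route: the paper likewise reads Lemma \ref{1019} from right to left, regroups via Lemma \ref{1028} with the correction factors $(x'\land y)'$ and $(x'\land y')'$ in the outermost and innermost slots, and collapses them to $x$ by (J5'). The only difference is cosmetic---the paper first derives the identity in the variables $x, z, u$ and then renames, while you substitute into the target variables directly.
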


\begin{proof}  
\begin{align*}                      
x \land (z \land u) &= (((x' \land y)' \land z) \land u) ^ (x' \land y')'  &\text{ by Lemma \ref{1019}} \\ 
                              &= u \land (z  \land ((x' \land y)' \land (x' \land y')')) &\text{ by  Lemma \ref{1028}}.
 \end{align*}
 Thus, we have
 \begin{equation} \label{eq:30}
 x \land (z \land u). = u \land (z \land ((x' \land y)' \land (x' \land y')')).  
\end{equation}
From \ref{eq:30}, we have
\begin{align*}              
x \land (y \land z) &= z \land (y \land ((x' \land w)' \land (x' \land w')')) \\
                               &= z \land (y \land x) &\text{  by (J5')}.
\end{align*}
Hence,\\
 $x \land (y \land z) = z \land (y \land x),$ 
 proving the lemma.
\end{proof}

\begin{lemma} \label{1023}  
 $x \land (y \land z) = ((x' \land u')' \land ((x' \land u)' \land y)) \land z. $  
\end{lemma}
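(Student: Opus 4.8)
The plan is to prove the identity
\[
x \land (y \land z) = \bigl((x' \land u')' \land ((x' \land u)' \land y)\bigr) \land z
\]
by expanding the left-hand side using \emph{only} the hypotheses (A13) and (J5'), since those are the two identities in force for this section. The natural starting move is to rewrite the factor $x$ on the left using (J5'): replacing $x$ by $(x' \land u')' \land (x' \land u)'$ (in the form given by (J5'), where I introduce the fresh variable $u$ as the auxiliary one) should produce a term built out of the pieces $(x' \land u')'$, $(x' \land u)'$, $y$, and $z$. The goal term on the right has exactly these ingredients, with $(x' \land u')'$ and $(x' \land u)'$ appearing in the roles that the two halves of the (J5') decomposition play, so the whole proof should reduce to shuffling a four-fold conjunction into the correct parenthesization.

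First I would substitute $x = (x' \land u)' \land (x' \land u')'$ (from (J5')) into $x \land (y \land z)$, obtaining
\[
x \land (y \land z) = \bigl[(x' \land u)' \land (x' \land u')'\bigr] \land (y \land z).
\]
Then I would repeatedly apply (A13), which is the cyclic-rotation law $(a \land b) \land c = (b \land c) \land a$, to migrate the variables into the target grouping. Because (A13) is the only rearrangement tool available at this point in the development (commutativity and associativity have not yet been proved in this section), every reparenthesization must be justified by a rotation, possibly after first re-associating implicitly via an intermediate application. I expect the bookkeeping to run parallel to the earlier lemmas of this section, in particular Lemma~\ref{1014} and Lemma~\ref{1017}, which already package similar (J5')-then-(A13) manipulations; invoking Lemma~\ref{1014} to turn $x \land y$-type factors into the $[\,y \land (x' \land z)'\,] \land (x' \land z')'$ shape is likely the cleanest route, so I would look to apply it (or the equation~\eqref{9} derived above) directly rather than re-deriving the rotations from scratch.

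The main obstacle will be tracking the precise placement of the two Sheffer-like factors $(x' \land u)'$ and $(x' \land u')'$ as the rotations are applied: (A13) moves the outermost left factor to the rightmost position, so it is easy to land the two negated-conjunction terms on the wrong sides or to produce $(x' \land u)'$ where the target wants $(x' \land u')'$. I would therefore carry out the rotations in small steps, writing each intermediate four-term expression in full and checking after each (A13)-step that the eventual outermost grouping is $\bigl((x' \land u')' \land (\cdots \land y)\bigr) \land z$ as required. Once the left-hand side has been massaged into a three-layer conjunction whose innermost block is $(x' \land u)' \land y$, one final rotation should seat $z$ as the outermost right factor and $(x' \land u')'$ as the outermost left factor, completing the identity.
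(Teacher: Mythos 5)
Your strategy coincides with the paper's own: expand $x$ by (J5') with a fresh auxiliary variable $u$, obtaining $[(x'\land u)'\land(x'\land u')']\land(y\land z)$, and then rearrange using (A13). But the proposal stops exactly where the content of this lemma begins. Everything after the (J5') substitution is stated in the conditional (``I would'', ``should'', ``is likely''), and no chain of (A13) instances is ever exhibited. In this setting that is not a routine omission: (A13) is a very weak rewriting tool --- note, for instance, that $x\land(y\land z)$ contains no (A13)-redex at all, which is precisely why the (J5') expansion is needed first --- so whether two given groupings are connected by (A13)-steps is exactly what must be demonstrated, not presumed. Acknowledging that the ``main obstacle'' is placing $(x'\land u)'$ and $(x'\land u')'$ correctly, and then deferring that obstacle, leaves the lemma unproved.

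Concretely, the missing ingredient is the paper's four-block rearrangement, displayed as equation (\ref{11}): $((x\land y)\land z)\land u = [u\land(z\land x)]\land y$, itself derived by several applications of (A13). With it the proof is three lines: $x\land(y\land z) = [(x'\land u)'\land(x'\land u')']\land(y\land z)$ by (J5'); $= [(y\land z)\land(x'\land u)']\land(x'\land u')'$ by (A13) applied twice at the root; and then (\ref{11}), instantiated at $x\mapsto y$, $y\mapsto z$, $z\mapsto(x'\land u)'$, $u\mapsto(x'\land u')'$, yields $[(x'\land u')'\land((x'\land u)'\land y)]\land z$, as required. The shortcuts you name do not supply this step: Lemma \ref{1014} (with $y\mapsto y\land z$) merely reproduces the intermediate form $[(y\land z)\land(x'\land u)']\land(x'\land u')'$, and equation (\ref{9}) cannot be applied to that term left-to-right, since its left-hand side requires both top-level factors to be conjunctions, whereas here the right factor is the single negated block $(x'\land u')'$. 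Your end-game observation is fine --- the target is indeed one (A13)-step from $(((x'\land u)'\land y)\land z)\land(x'\land u')'$ --- but transforming $((y\land z)\land c)\land d$ into that shape is the crux, and it is exactly the content of equation (\ref{11}), which your proposal neither derives nor replaces.
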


\begin{proof}  
\begin{align*}                                                               
[(x \land y) \land z] \land u &= [(z \land x) \land y] \land u\\
                                             &= [u \land (z \land x)] \land y. 
\end{align*}                                                                                   
Hence,
\begin{equation}\label{11}
 ((x \land y) \land z) \land u = [u \land (z \land x)] \land y.  
\end{equation}
Therefore,
\begin{align*} 
x \land (y \land z) &= [(x' \land u)' \and (x' \land u')'] \land (y \land z)  &\text{ by (J5')}\\
                             &= [(y \land z) \land (x' \land u)' ] \land (x' \land u')'  &\text{ by (A13)}\\                                                                                              
                            &=[(x' \land u')' \land \{(x' \land u)' \land y\}] \land z  &\text{ by (\ref{11})},
\end{align*}                                                                            
proving the lemma.
\end{proof}

\begin{lemma} \label{1033} 
 $((x' \land y)' \land z) \land ((x' \land y')' \land u) = x \land (z \land u).$  
\end{lemma}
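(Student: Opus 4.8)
The plan is to prove this directly by a short chain of rewrites, the key idea being to keep the two negated subterms intact as long as possible and to collapse them via (J5') only at the very end. Write $a := (x' \land y)'$ and $b := (x' \land y')'$, so that (J5') reads $a \land b = x$ and the claim becomes $(a \land z) \land (b \land u) = x \land (z \land u)$.

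First I would apply Lemma \ref{1017} (which states $(p \land q) \land (r \land s) = (q \land (p \land r)) \land s$) to the left-hand side $(a \land z) \land (b \land u)$, reading $p = a,\, q = z,\, r = b,\, s = u$, to obtain $(z \land (a \land b)) \land u$. The tempting next move is to replace $a \land b$ by $x$ at once, arriving at $(z \land x) \land u$. I expect exactly this premature collapse to be the main obstacle: converting $(z \land x) \land u$ into $x \land (z \land u)$ is an associativity/commutativity step that is \emph{not} yet available, since at this stage only (A13) and the derived (A9) of Lemma \ref{1031} are in force, and these cyclic/reflective laws alone do not bridge a left-associated product to a right-associated one (one can check they merely permute the three rotations of a left-nested triple and swap the ends of a right-nested triple, never crossing between the two shapes without a genuine four-letter lemma).

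The way around this is to resist collapsing $a \land b$ and instead exploit its internal structure. From $(z \land (a \land b)) \land u$ I would rewrite the inner factor $z \land (a \land b)$ by Lemma \ref{1031} (the derived (A9), $p \land (q \land r) = r \land (q \land p)$) into $b \land (a \land z)$, producing $(b \land (a \land z)) \land u$. Now Lemma \ref{1017} applies a second time, read in the reverse direction: matching $(b \land (a \land z)) \land u$ to the pattern $(q \land (p \land r)) \land s$ with $q = b,\, p = a,\, r = z,\, s = u$ recombines it into $(a \land b) \land (z \land u)$. Only at this final step do I invoke (J5') to replace $a \land b$ by $x$, yielding $x \land (z \land u)$ and completing the proof.

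In short, the argument uses Lemma \ref{1017} twice (once forward, once in reverse), Lemma \ref{1031} once, and (J5') once at the end; it never needs associativity or commutativity, which is precisely why the four letters $a, b, z, u$ must be kept separate until the closing application of (J5'). The delicate point to get right in the write-up is the bookkeeping of the variable matching in the two invocations of Lemma \ref{1017}, since the forward and reverse uses assign the roles $p, q, r, s$ differently.
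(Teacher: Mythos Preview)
Your proof is correct and follows essentially the same approach as the paper's: both rely on Lemma~\ref{1017} and Lemma~\ref{1031} to rearrange the four-factor product and then collapse $(x'\land y)'\land(x'\land y')'$ to $x$ via (J5'). The paper routes through the auxiliary identity $(x\land(y\land z))\land u=(y\land z)\land(x\land u)$ together with Lemma~\ref{1023}, whereas you apply Lemma~\ref{1017} twice (forward and backward) directly; the substance is the same, and your routing is if anything slightly cleaner.
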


\begin{proof} 
\begin{align*}   
(y \land z) \land (x \land u).&=  (z \land (y \land x)) \land u  &\text{ by Lemma \ref{1017}} \\                                    
                                            &= (x \land (y \land z)) \land u  &\text{ by \ref{1031}}.
\end{align*}                                            
Hence
\begin{equation}  \label{32}                                                                                     
(x \land (y \land z)) \land u = (y \land z) \land (x \land u).  
\end{equation}
  
\begin{align*}                                                                                                 
x \land (z \land u) &=  ((x' \land y')' \land ((x' \land y)' \land z)) \land u  &\text{ by Lemma \ref{23}}\\
                               &= ((x' \land y)' \land z)) \land  [(x' \land y')' \land u  &\text{ by (\ref{32})},  
\end{align*}
completing the proof.                                                                              
\end{proof}

We are now ready to prove Theorem \ref{T10.1}.

\begin{proof}{\bf of Theorem \ref{T10.1}}:
\begin{align*}                                                                 
((y' \land u)' \land x) \land ((y' \land u')' \land z)
&= [x \land ((y' \land u)'  \land (y' \land u')'] \land z  &\text{ by Lemma \ref{1017}}\\
&=(x \land y) \land z  &\text{ by (J3')}.
\end{align*}
Hence,

$(x \land y) \land z = ((y' \land u)' \land x) \land ((y' \land u')' \land z). $\\ 
Also, by Lemma \ref{1033}, we have

 $y \land (x \land z)=((y' \land u') \land x) \land ((y' \land u')' \land z)$.\\
So, 
  $ (x \land y) \land z  = y \land (x \land z).$  
  Thus,  (A13) and (J5') imply the axioms of Theorem \ref{T6.1}.  The converse being trivial, we conclude that the proof is complete.
  \end{proof}

\medskip
\section{Concluding Remarks}

We would like to conclude this paper by mentioning some of the results whose proofs will appear in the sequel to this paper. 
 
\begin{theorem} The following identities form  a 2-base for Boolean algebras:
\begin{thlist}
\item[A4] $x \land (y \land z)=y \land (x \land z)$   
\item[D] $x=(x' \land y')' \land (x' \land y)'$.
\end{thlist}
\end{theorem}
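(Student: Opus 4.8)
The plan is to reduce this statement to Theorem~\ref{T4.1}, whose base $\{$(A6), (J5')$\}$ is already known to define $\mathbb{BA}$. The crucial observation is that (D) differs from (J5') only by the order of its top-level meet, while (A4) differs from (A6) only by the order of an inner meet: if commutativity $x \land y \approx y \land x$ is available, then (D) rewrites as $x \approx (x' \land y)' \land (x' \land y')'$, which is exactly (J5'), and (A4) gives $x \land (y \land z) = y \land (x \land z) = y \land (z \land x)$, which is exactly (A6). Hence the whole theorem will follow once I establish commutativity from (A4) and (D) alone, after which Theorem~\ref{T4.1} supplies associativity, $x'' \approx x$, and (J5). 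The converse direction is immediate, since the two-element Boolean algebra $\mathbf{2}$ satisfies both (A4) and (D).

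To derive commutativity I would first exploit the permutation strength of (A4). Reading (A4) as the rule that transposes the two outermost factors of a right-nested product $a \land (b \land c)$, and applying it at every level of a right-nested chain, one finds that in $x_1 \land (x_2 \land (\cdots \land x_n))$ the factors $x_1,\dots,x_{n-1}$ may be permuted arbitrarily while the innermost factor $x_n$ stays fixed; this already yields rearrangement lemmas analogous to Lemmas~\ref{42}--\ref{48}. The key point is that (A4) by itself cannot swap the two factors of a binary meet, since the innermost slot is an invariant, so (D) must do the real work. I would therefore expand one factor of $x \land y$ using (D): writing $y = (y' \land c')' \land (y' \land c)'$ turns $x \land y$ into the right-nested product $x \land [(y' \land c')' \land (y' \land c)']$, to which (A4) applies to move $x$ inward, and a symmetric expansion of $y \land x$ (applied to the factor $x$) produces a structurally matching expression. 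Equating the two, choosing the auxiliary variables compatibly, and collapsing the negated subterms back by a further application of (D) should yield $x \land y = y \land x$.

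The main obstacle is precisely this commutativity step. Unlike the case of (A5) treated above, where (J5') is taken outright as a hypothesis and only (A6) must be manufactured, here (D) is not (J5') until commutativity is known, so the reduction cannot bypass it. I expect the bookkeeping of the iterated negations $(y' \land c')'$, $(y' \land c)'$ and their counterparts to be delicate, requiring a chain of intermediate identities in the spirit of Lemmas~\ref{49}--\ref{419} (building up enough (A2)- and (A3)-type rearrangements to let (D) be reapplied in the right place) rather than a single manipulation. Once commutativity is in hand, the remaining work consists only of the one-line rewrites of (D) into (J5') and of (A4) into (A6) described above, followed by an appeal to Theorem~\ref{T4.1}; the verification that $\mathbf{2}$ satisfies (A4) and (D) then completes the converse, establishing that the two identities form a $2$-base.
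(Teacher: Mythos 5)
First, note that the paper itself gives no proof of this theorem: it is one of the results stated in the Concluding Remarks whose proofs are explicitly deferred to the sequel \cite{Sa24}, so there is no argument of the paper to compare yours against, and your proposal must stand on its own. Judged that way, it has a genuine gap. Your reduction scaffolding is correct: once commutativity $x \land y \approx y \land x$ is available, (D) becomes (J5'), (A4) becomes (A6), and Theorem \ref{T4.1} finishes the forward direction, with the converse supplied by $\mathbf{2}$. You also correctly isolate the obstruction---(A4) alone only permutes the non-innermost factors of a right-nested product (indeed the right-projection groupoid $x \land y := y$ satisfies (A4) but is not commutative), so (D) must do the real work. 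But the derivation of commutativity from (A4) and (D) \emph{is} the entire mathematical content of the theorem, and you do not carry it out. Your sketch---expand $y$ in $x \land y$ by (D), expand $x$ in $y \land x$ by (D), then ``equate the two''---does not go through as described: after the (A4) move the first expansion reads $(y' \land c')' \land (x \land (y' \land c)')$, with $x$ free and $y$ buried under negations, while the second reads $(x' \land c')' \land (y \land (x' \land c)')$, with the roles reversed; these are not ``structurally matching,'' and no rule you have at that point equates them. Equating them is essentially the statement you are trying to prove.

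The proved cases in the paper calibrate how much is missing. Each of Theorems \ref{T4.1}, \ref{T5.1}, \ref{T6.1} and \ref{T10.1} needs a chain of ten to twenty carefully engineered lemmas to pass from an associative-type identity plus a (J5')-type identity to commutativity, and all of those chains use (J5') in its given orientation; under (D) the top-level meet in every such step is reversed, so none of that machinery can be imported wholesale---which is presumably why the author postponed this case to the sequel rather than deducing it in a paragraph. Acknowledging that the step will be ``delicate'' and will require ``a chain of intermediate identities'' is a statement of the problem, not a solution to it. (A smaller point, which the paper itself also glosses over: the paper's definition of a base requires independence, so a complete proof should also note that neither (A4) nor (D) alone defines $\mathbb{BA}$, e.g.\ via small counterexample algebras.)
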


\begin{theorem} The following identities form  a 2-base for Boolean algebras:
\begin{thlist}
\item[A5] $x \land (y \land z)=(y \land x)\land z $ 
\item[D] $x=(x' \land y')' \land (x' \land y)'$.
\end{thlist}
\end{theorem}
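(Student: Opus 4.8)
The plan is to reduce the claim to Theorem~\ref{T6.1}, which already establishes that (A5) together with (J5') is a base for $\mathbb{BA}$. The decisive observation is that (D) differs from (J5') only in the order of the two conjuncts on the right: (J5') reads $x \approx (x' \land y)' \land (x' \land y')'$, whereas (D) reads $x \approx (x' \land y')' \land (x' \land y)'$. Consequently, \emph{the moment commutativity is available}, (D) and (J5') become interchangeable, since $(x' \land y')' \land (x' \land y)' = (x' \land y)' \land (x' \land y')'$ by (J1). Thus it is enough to prove that (A5) and (D) force the commutative law $x \land y \approx y \land x$; once this is in hand, (J5') holds, and Theorem~\ref{T6.1} delivers the whole of $\mathbb{BA}$ (including involution and associativity) with no further work.

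To obtain commutativity I would mirror, step by step, the derivation of the commutative law in the (A5)+(J5') theory, that is, the proof of Theorem~\ref{T6.1} together with the portion of the proof of Theorem~\ref{T4.1} that it invokes, replacing every appeal to (J5') by an appeal to (D). The backbone of that derivation is Lemma~\ref{62} ($x \land [y \land (z \land u)] = x \land [z \land (y \land u)]$), whose proof uses (A5) alone and therefore transfers unchanged. Each later lemma that invokes (J5') has a mirror image obtained by interchanging the two leaf terms $(x' \land y)'$ and $(x' \land y')'$. For instance, the analogue of Lemma~\ref{64} is $x \land z = (x' \land y)' \land [(x' \land y')' \land z]$, obtained by applying (D) and then the single (A5)-rearrangement $(p \land q) \land z = q \land (p \land z)$. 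Carrying this mirrored chain through to its end reproduces the identity (A6) and then the commutative law, after which the reduction above applies.

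For the converse it suffices, exactly as in Theorems~\ref{T4.1}, \ref{T5.1} and \ref{T6.1}, to note that the two-element Boolean algebra $\mathbf 2$ satisfies both (A5) and (D) (each reduces to associativity, commutativity, and De~Morgan's law in $\mathbf 2$); hence (A5) and (D) define $\mathbb{BA}$. Independence is confirmed by exhibiting small algebras of type $\langle \land, {}' \rangle$ satisfying one of the two identities but not the other.

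The main obstacle is that (A5) is \emph{not} the commutative law, so the interchange of conjuncts distinguishing (D) from (J5') is genuine content rather than a cosmetic relabeling. One must check that every mirrored lemma actually closes, and in particular that those steps of the (A5)+(J5') proof which exploit the specific left-to-right order of the conjuncts remain valid once that order is reversed; the sensitive points are precisely the lemmas in which the outer factor of a product is transported by (A5) in a direction fixed by which conjunct sits outside. Verifying that the dual placement still reassociates correctly at each such step is where the real labor resides.
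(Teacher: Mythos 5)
You should first be aware that the paper itself contains no proof of this statement: it appears in the Concluding Remarks among results whose proofs are deferred to the sequel \cite{Sa24}. So your proposal can only be measured against the paper's general method, and at that level your skeleton is exactly the natural one and matches what the paper does elsewhere (Theorem~\ref{T5.1} is reduced to Theorem~\ref{T4.1}, and Theorem~\ref{T10.1} to Theorem~\ref{T6.1}): every Boolean algebra satisfies (A5) and (D); and if (A5) together with (D) entails commutativity, then (D) becomes interchangeable with (J5') and Theorem~\ref{T6.1} finishes the argument, the converse being a check in $\mathbf{2}$.

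The genuine gap is that the entailment ``(A5) and (D) imply $x \land y \approx y \land x$'' --- which is the entire mathematical content of the theorem --- is never derived; it is delegated to a claimed lemma-by-lemma ``mirroring'' of the proofs of Theorems~\ref{T6.1} and \ref{T4.1}, and that mirroring is not the leaf-swap you describe. The paper's chain hinges on the collapse trick of Lemmas~\ref{63} and \ref{66}: apply (J5') to the subject $x' \land y$ with parameter $x' \land y'$, so that the second factor $[(x' \land y)' \land (x' \land y')']'$ collapses to $x'$ by (J5') again, after which (A5), which moves the \emph{second} factor of a product to the front, yields the shape $x' \land [B' \land z]$ (where $B$ abbreviates $(x' \land y)' \land (x' \land y')$) consumed by Lemmas~\ref{69}, \ref{189} and \ref{810}. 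With (D) this collapse is impossible for the subject $x' \land y$: it would require a term $u$ with $u' = y$, which does not exist before involution is proved. One is forced to take the subject $x' \land y'$, and then either the collapsed $x'$ lands in the \emph{first} position of the product, so that (A5) produces the structurally different shape $C' \land (x' \land z)$ rather than $x' \land (C' \land z)$, or, if one insists on keeping it in the second position, the surviving block acquires the subterm $(x' \land y)''$, which cannot be simplified before $x'' \approx x$ is available. Since (A5) is order-sensitive, the downstream lemmas do not transcribe under an interchange of $(x' \land y)'$ and $(x' \land y')'$; they must be re-derived with differently shaped statements, and until that is done (or commutativity is reached by some other route) you have a program, not a proof. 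Note also that (D) is not a substitution instance of (J5'): replacing $y$ by $y'$ produces $(x' \land y'')'$, and $y'' \approx y$ is only available at the end. Finally, the independence half of the claim that the pair is a ``2-base'' is only gestured at, though the paper is equally casual on that point.
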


\begin{theorem}The following identities form  a 2-base for Boolean algebras:
\begin{thlist}
\item[A6] $x \land (y \land z)=y \land (z \land x) $ 
\item[D] $x=(x' \land y')' \land (x' \land y)'$.
\end{thlist}
\end{theorem}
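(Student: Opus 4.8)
The plan is to reduce the statement to Theorem \ref{T4.1}, which already establishes that (A6) together with (J5') is a 2-base. Since (A6) is common to both hypotheses, it suffices to derive (J5'), namely $x \approx (x' \land y)' \land (x' \land y')'$, from (A6) and (D). The crucial observation is that (D) and (J5') differ only in the order of the two outermost conjuncts: (D) reads $x = (x' \land y')' \land (x' \land y)'$, whereas (J5') reads $x = (x' \land y)' \land (x' \land y')'$. Hence, once commutativity of $\land$ is available, (J5') follows from (D) by a single application of commutativity, and Theorem \ref{T4.1} finishes the argument. For the converse I would note, as in the earlier sections, that the two-element Boolean algebra $\mathbf 2$ satisfies both (A6) and (D), so the two identities form a base.

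First I would record the equivalent form (A6') $x \land (y \land z) \approx z \land (x \land y)$ and carry over verbatim Lemmas \ref{42}--\ref{48}, which are consequences of (A6') alone and make no use of (J5'); these supply the rearrangement machinery for right-associated and flat products and therefore remain valid under the present hypotheses. Next I would re-prove, with (D) in place of (J5'), the mirror of Lemma \ref{49}: from (A6') and (D),
\begin{align*}
(x' \land y')' \land [(x' \land y)' \land z] &= z \land [(x' \land y')' \land (x' \land y)'] &\text{by (A6')}\\
&= z \land x &\text{by (D)}.
\end{align*}
From this point I would march through analogues of Lemmas \ref{410}--\ref{419}, at each stage using the rearrangement Lemmas \ref{42}--\ref{48} to bring the two factors $(x' \land y')'$ and $(x' \land y)'$ adjacent and in the order demanded by (D), and then collapsing them to $x$. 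The endpoint of this chain is commutativity $x \land y \approx y \land x$, exactly as in Lemma \ref{419}. Once commutativity is in hand, (D) immediately yields (J5'), so the hypotheses (A6) and (J5') of Theorem \ref{T4.1} are both established and that theorem supplies commutativity, associativity, double negation, and hence all of the axioms of Theorem \ref{T2.2}.

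The main obstacle is precisely this mirrored chain leading to commutativity. The subtlety is that passing from (J5') to (D) is \emph{not} a clean variable renaming: substituting $y \mapsto y'$ in (J5') produces $(x' \land y')' \land (x' \land y'')'$, which coincides with (D) only after double negation, and double negation is not yet available at this stage. Consequently each (J5')-using step of Section 4 must be re-derived individually, carefully tracking which of the two conjuncts carries $y$ and which carries $y'$. The rearrangement lemmas coming from (A6') are flexible enough to absorb the swapped order, so every step closes; the real work is the bookkeeping of which conjunct sits where. Because (A6) alone does not force binary commutativity, there is no shortcut that bypasses this chain, which is why the derivation is expected to parallel the full length of the argument in Section 4 rather than collapse to a few lines.
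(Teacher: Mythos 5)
Your proposal is correct, and there is in fact no proof in this paper to compare it against: the statement appears in the concluding section, whose proofs the author explicitly defers to the sequel \cite{Sa24}. Your route---reduce to Theorem \ref{T4.1} by deriving commutativity from (A6) and (D), after which (D) turns into (J5')---is the natural completion using the paper's own machinery, and it works. You also correctly flagged the one genuine trap: (D) is not a substitution instance of (J5'), since $y\mapsto y'$ produces $(x'\land y')'\land(x'\land y'')'$ and removing $y''$ needs double negation, which is unavailable at that stage; so the chain must be re-run, not merely re-instantiated. Your assertion that every mirrored step closes is right, and it is worth recording the structural reason, which makes the ``bookkeeping'' you worry about essentially automatic: in each of Lemmas \ref{49}--\ref{419}, the hypothesis (J5') enters \emph{only} through collapses or expansions of the adjacent pair $(s'\land t)'\land(s'\land t')'$, while every other step (Lemmas \ref{42}--\ref{48} and direct uses of (A6')) manipulates the two blocks $(s'\land t)'$ and $(s'\land t')'$ as opaque atoms. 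Hence swapping the two blocks uniformly in every statement and every proof line converts each (J5')-step into precisely a (D)-step and leaves all remaining steps untouched; since the terminal identities of the chain (Lemmas \ref{416}, \ref{418}, \ref{419}) contain no blocks, commutativity is obtained verbatim from (A6') and (D), exactly as you claim. One minor caveat, which applies equally to the paper's own theorems: verifying that $\mathbf 2$ satisfies (A6) and (D) establishes only that the two identities hold in all Boolean algebras; to call the pair a 2-\emph{base} one should also note their independence (e.g., $(\mathbb{Z}_2,+)$ with $x'=x$ satisfies (A6) but not (D), and the two-element groupoid with $x\land y=x$, $x'=x$ satisfies (D) but not (A6)).
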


\begin{theorem}The following identities form  a 2-base for Boolean algebras:
\begin{thlist}
\item[A8] $x \land (y \land z)=(z \land x) \land y$
\item[D] $x=(x' \land y')' \land (x' \land y)'$.  
\end{thlist}
\end{theorem}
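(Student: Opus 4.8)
The plan is to reduce the theorem to Theorem~\ref{T5.1}, which already asserts that (A8) together with (J5') is a 2-base. The key observation is that (D), namely $x \approx (x'\land y')'\land(x'\land y)'$, is obtained from (J5') merely by transposing the two conjuncts on the right-hand side. Hence, once the commutative law (J1) is available, (D) and (J5') coincide: commuting the outer meet in (D) turns it into (J5') verbatim. So it suffices to prove that (A8) and (D) imply (J1), after which (J5') follows and Theorem~\ref{T5.1} finishes the forward direction; the converse is immediate, since $\mathbf 2$ satisfies (A8) and (D).

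To prove commutativity from (A8) and (D), I would mirror the lemma sequences of Sections~5 and~4 with (D) playing the role of (J5'). Those lemmas of Section~5 that invoke only (A8) --- the pure rearrangement identities such as Lemmas~\ref{L0}, \ref{L1}, \ref{L4}--\ref{L8}, \ref{L10}--\ref{L12}, \ref{L14}, \ref{L15} and \ref{L19} --- transfer verbatim, since they never mention complementation. Each remaining lemma (\ref{L2}, \ref{L3}, \ref{L9}, \ref{L13}, \ref{L16}--\ref{L18}, \ref{L20}--\ref{L23}) is re-proved by replacing the substitution $x = (x'\land w)'\land(x'\land w')'$ coming from (J5') with the substitution $x = (x'\land w')'\land(x'\land w)'$ coming from (D), and then using the rotation identity $(a\land b)\land c \approx b\land(c\land a)$, a direct consequence of (A8), to absorb the reversed factor order. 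Running this mirrored chain produces the cyclic law (A6), exactly as Lemma~\ref{L23} and the proof of Theorem~\ref{T5.1} do. Continuing in the same spirit with the short chain of Section~4 (the analogues of Lemmas~\ref{42}--\ref{419}, now using (A6') together with (D) in place of (J5')) then yields (J1).

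The step I expect to be the main obstacle is the bookkeeping of primes on the auxiliary variable. The intermediate identities produced by the (D)-mirror differ from their Section~5 counterparts only by a transposition of two complemented factors; matching one form to the other would require the substitution $w \mapsto w'$ on the auxiliary variable, which under the double-negation law (J4) would be harmless. But (J4) is not yet available at this stage, so such a substitution introduces a stray $w''$ that cannot be collapsed to $w$. The way around this is to keep the auxiliary variable free and re-derive every intermediate identity natively from (D), never attempting to reuse a Section~5 statement verbatim: since $w$ is universally quantified in each lemma, I can specialize it so that the two complemented factors fed to (D) always occur in the order (D) requires. Tracking this consistent choice of primes across the roughly two dozen mirrored lemmas is where the real care lies; modulo that, every individual step is a routine application of (A8) and (D).

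Finally, with (J1) in hand, (D) yields (J5') by commuting the outer meet, so (A8) and (J5') both hold and Theorem~\ref{T5.1} (equivalently, the base (A6)+(J5') of Theorem~\ref{T4.1}) shows that $\mathbf B$ is a Boolean algebra. Conversely, $\mathbf 2$ satisfies both (A8) and (D), so the two identities form a base, completing the proof.
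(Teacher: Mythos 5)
The first thing to say is that the paper does not prove this theorem: it is stated in Section 10 (Concluding Remarks) among the results whose proofs are deferred to the sequel \cite{Sa24}. So your proposal cannot be compared against an in-paper argument; it can only be judged on its own merits and against the machinery of Sections 4 and 5, which is exactly what it builds on. On those terms your strategy is sound and is the natural one. The observation that (D) is (J5') with the two outer conjuncts transposed, so that the chain (A8)$+$(J5') $\vdash$ (A6) of Section 5 and then the chain (A6')$+$(J5') $\vdash$ (J1), (J2), (J4) of Section 4 can be replayed with (D) in place of (J5'), is correct; every prime-bearing intermediate identity reappears with the factors $(x'\land w)'$ and $(x'\land w')'$ exchanged, and since the final targets (A6), (J1), (J2), (J4) are prime-free, the swap washes out. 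You also correctly identified the one genuine danger --- that a mirrored lemma cannot be matched to its Section 5 counterpart via $w \mapsto w'$ because $w''$ cannot be collapsed before (J4) is available --- and the correct remedy, namely re-deriving each lemma natively in mirrored form with the auxiliary variable kept free. I spot-checked the mirrored versions of Lemmas \ref{L2}, \ref{L3}, \ref{L9}, \ref{L12}, \ref{L13}, \ref{L17}, \ref{L18}, \ref{L20}, \ref{L22}, \ref{L23}, and of Lemmas \ref{49}, \ref{413}, \ref{417}, \ref{418}, \ref{419}, and they all go through; the closing step (with (J1) in hand, (D) becomes (J5'), then Theorem \ref{T5.1}, or directly Theorem \ref{T2.2}, applies, with the converse read off from $\mathbf{2}$) is exactly the paper's own style in Theorems \ref{T4.1}, \ref{T5.1} and \ref{T6.1}.

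Two caveats. First, your partition of the Section 5 lemmas contains an error: Lemma \ref{L12} is not a pure rearrangement identity --- its statement visibly involves complementation, and the step labelled ``by (A8)'' in the paper's proof of it is in fact an application of (J5'), collapsing $(x'\land y)'\land(x'\land y')'$ to $x$ (the paper's label is a typo). So \ref{L12} belongs with the lemmas that must be re-proved from (D); fortunately its mirrored version does hold, so nothing downstream breaks, but the classification as you state it is wrong, and anyone executing your plan by taking the ``pure'' list on faith would import an unproved identity. Second, what you have written is a plan rather than a proof: the entire mathematical content of the theorem consists precisely of the two dozen mirrored derivations you defer, and they would need to be written out (at minimum the composite ones: \ref{L9}, \ref{L12}, \ref{L13}, \ref{L18}, \ref{L22}, \ref{L23}, and \ref{417}--\ref{419}) before this counts as a proof; you should also note, as the paper itself neglects to do in its proved theorems, that a base in the paper's sense additionally requires independence of the two identities.
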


\begin{theorem} The following identities form  a 2-base for Boolean algebras:
\begin{thlist}
\item[A13] $(x \land y) \land z=(y \land z) \land x$
\item[D] $x=(x' \land y')' \land (x' \land y)'$.  
\end{thlist}
\end{theorem}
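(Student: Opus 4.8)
The plan is to follow the reduction strategy used throughout the paper: assuming (A13) and (D) as hypotheses, I would derive the axioms of one of the bases already established above, and then dispatch the converse by checking that the two-element Boolean algebra $\mathbf{2}$ satisfies both identities. The crucial observation is that the present pair differs from the pair of Theorem~\ref{T10.1} only in that (J5'), namely $x \approx (x' \land y)' \land (x' \land y')'$, has been replaced by its \emph{outer-swapped} variant (D), namely $x \approx (x' \land y')' \land (x' \land y)'$. Since (A13) itself is unchanged, I would reuse verbatim every lemma in the proof of Theorem~\ref{T10.1} whose derivation appeals only to (A13) --- in particular the purely rearranging four-variable identities such as Lemmas~\ref{1017} and \ref{108}, which rotate and reassociate products without ever touching the negation axiom --- and re-derive only those lemmas that genuinely invoke the negation axiom, namely Lemmas~\ref{1014}, \ref{1021}, \ref{1019}, \ref{1028}, \ref{1031} and \ref{1033}.

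The re-derivations are where the real work lies. Each of these negation-dependent lemmas proceeds by using (J5') to expand a single letter into the two-factor product $(x' \land y)' \land (x' \land y')'$ and then applying a suitable rotation of (A13) to reabsorb it. Feeding (D) into the same slot produces the two factors in the reverse order, $(x' \land y')' \land (x' \land y)'$, so the particular (A13)-rotation needed to carry out the reabsorption changes, and the bookkeeping of which variable lands in which position must be redone from scratch. The target, however, is unchanged: I would aim to reach the identity $(x \land y) \land z \approx y \land (x \land z)$, which is exactly (A5), just as at the end of the proof of Theorem~\ref{T10.1}.

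Once (A5) is in hand I would close the forward direction by extending the chain to full commutativity (J1) $x \land y \approx y \land x$. As soon as (J1) holds, (A13) collapses to associativity (J2) --- since commuting the outer conjunction on the right-hand side of $(x \land y) \land z \approx (y \land z) \land x$ turns it into $(x \land y) \land z \approx x \land (y \land z)$ --- and (D) collapses to (J5') upon commuting its two outer conjuncts, so the axioms (J1), (J2), (J5') of Theorem~\ref{T9.1} all hold and we are done; alternatively, if (A5) together with (J5') becomes available first, one simply invokes Theorem~\ref{T6.1}. For the converse inclusion I would note that $\mathbf{2}$ validates (A13) and (D), and to certify that the set is genuinely a \emph{2-base} (an \emph{independent} set) I would exhibit one small groupoid-with-negation satisfying (A13) but not all the Boolean axioms and another satisfying (D) but not all of them, in the style of the Remark following Theorem~\ref{T9.1}.

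The main obstacle is precisely this outer swap. Because swapping the top-level conjuncts is not induced by any term endomorphism, there is no formal duality that mechanically converts the proof of Theorem~\ref{T10.1} into a proof of the present statement; the asymmetry must be absorbed by hand, and it concentrates almost entirely in the first ``splitting'' step (the analogue of Lemma~\ref{1014}). Once that lemma is correctly re-established with the factors presented in their (D)-order, I expect the remainder of the chain to propagate along the template of Theorem~\ref{T10.1} with only routine adjustments.
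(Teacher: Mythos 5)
Your structural analysis is sound, but what you have written is a plan for a proof, not a proof, and for this particular statement the paper gives you nothing to lean on: it appears in the Concluding Remarks among the results ``whose proofs will appear in the sequel,'' so the burden of actually exhibiting the derivations falls entirely on your argument. The gap is concrete: every step where the mathematical content lives is only promised, never performed. You correctly observe that Lemmas \ref{1017} and \ref{108} use only (A13) (indeed, the citations ``(J5')'' in those two proofs are evidently typos for (A13)) and hence carry over verbatim, and your endgame is correct --- once (J1) is available, (A13) turns into (J2) and (D) turns into (J5'), so Theorem \ref{T9.1} applies. But the negation-dependent lemmas (the analogues of Lemmas \ref{1014}, \ref{1021}, \ref{1028}, \ref{1019}, \ref{1031}, \ref{1033} with (D) in place of (J5')), the derivation of (A5) from them, and above all the passage from (A5) to full commutativity are nowhere exhibited; you write that you ``would aim to reach'' (A5) and ``expect the remainder of the chain to propagate,'' and in equational axiomatics of this kind that propagation is precisely the thing to be demonstrated. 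As you yourself concede, the outer swap is not induced by any endomorphism of the term algebra, so there is no a priori reason the template of Theorem \ref{T10.1} adapts; the existence of the adapted chains is exactly what is at issue, and asserting it does not settle it.

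Your fallback route also has a circularity worth naming. You say that if (A5) and (J5') ``become available'' you would invoke Theorem \ref{T6.1}; but from the hypotheses (A13) and (D), the identity (J5') only becomes available after commutativity of the outer conjunction has been established, and commutativity is the hard part --- in the paper each of the corresponding sections spends roughly a dozen lemmas reaching it. Likewise your main route needs (J1) from (A5) together with (D); the paper's own derivation of commutativity downstream of (A5) (Theorem \ref{T6.1}, via Lemmas \ref{62}--\ref{812}, feeding into Theorem \ref{T4.1}) invokes (J5') at nearly every step, so that chain too must be rebuilt from scratch under (D), which you have not done. Until those derivation chains are written down (or mechanically verified, e.g.\ with a theorem prover as the paper's acknowledgement suggests was done for its own results), your argument establishes only that the theorem is plausible, not that it is true.
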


\begin{theorem} The following is a 3-base for the variety of Boolean algebras:
\begin{thlist}
\item[J5']  $ x = (x' \land y)' \land (x' \land y')'$
\item[A1]  $ x \land (y \land z) = (x \land y) \land z$  
\item[A7]  $x \land (y \land z)=(y \land z) \land x.$            
\end{thlist}
\end{theorem}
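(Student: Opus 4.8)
The plan is to reduce to the already-established $3$-base of Theorem \ref{T9.1}, namely $\{$(J1), (A1), (J5')$\}$. Since (A1) (which is just (J2)) and (J5') are already among the present hypotheses, the entire task is to derive the commutative law (J1), $x \land y \approx y \land x$, from (A1), (A7) and (J5').

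The key step will be a single substitution followed by one application of (A7). Instantiating (J5') at the second factor (with auxiliary variable $z$) rewrites $y$ as $(y' \land z)' \land (y' \land z')'$, giving
\[
x \land y = x \land \big[(y' \land z)' \land (y' \land z')'\big].
\]
The right-hand side is literally of the shape $x \land (b \land c)$, so (A7) applies verbatim and moves the outer factor to the end:
\[
x \land \big[(y' \land z)' \land (y' \land z')'\big] = \big[(y' \land z)' \land (y' \land z')'\big] \land x.
\]
Collapsing the bracket back to $y$ by (J5') yields $x \land y = y \land x$. I would remark that this derivation invokes only (A7) and (J5')---associativity is not even needed to parse it---which is precisely why (A1) must be kept as a third, genuinely independent axiom; indeed the pair $\{$(A1), (J5')$\}$ alone fails to define $\mathbb{BA}$, as recorded in the Remark following Theorem \ref{T9.1}.

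With (J1) in hand, all three axioms (J1), (J2)$=$(A1) and (J5') of Theorem \ref{T9.1} hold, so by that theorem the hypotheses imply every identity of $\mathbb{BA}$; hence the variety they define is contained in $\mathbb{BA}$. For the reverse inclusion it suffices to check that $\mathbf{2}$ satisfies (A1), (A7) and (J5'): associativity and (A7) are immediate from the commutativity of $\land$ in $\mathbf{2}$, while (J5') reduces, via De Morgan and $x''=x$, to the tautology $(x \lor y') \land (x \lor y) = x$. Since $\mathbf{2}$ generates $\mathbb{BA}$, this gives $\mathbb{BA}$ inside the defined variety and completes the equivalence.

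The crux---and the only place calling for insight rather than routine checking---is the commutativity step: one must hit on the instantiation of (J5') in the second factor that produces a term in exactly the form to which (A7) applies. Once that is spotted the argument is immediate, so I expect no serious obstacle; the remaining work (the appeal to Theorem \ref{T9.1} and the model check on $\mathbf{2}$) is entirely routine. Establishing genuine \emph{independence}, needed to justify the word ``$3$-base'' rather than merely ``base,'' is a separate matter that I would settle by exhibiting, for each of the three identities, a non-Boolean model satisfying the other two.
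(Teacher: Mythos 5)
There is nothing in the paper to compare your argument against: this theorem is stated in the Concluding Remarks, whose proofs the paper explicitly defers to the sequel \cite{Sa24}. Judged on its own, your central derivation is correct, and it is the right key idea: instantiating (J5') as $y = (y' \land z)' \land (y' \land z')'$, applying (A7) to $x \land [(y' \land z)' \land (y' \land z')']$, and collapsing the bracket again by (J5') gives $x \land y = y \land x$ in three steps, using neither associativity nor any auxiliary lemma; then (J1), (A1), (J5') are all available, Theorem \ref{T9.1} yields that every model of your axioms is a Boolean algebra, and the verification in $\mathbf{2}$ (which generates $\mathbb{BA}$) gives the converse inclusion. This matches the proof pattern the paper uses for its other bases (e.g., Theorems \ref{T4.1}, \ref{T5.1}, \ref{T8.1}), only much shorter, since for (A7) commutativity falls out immediately.

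Two points need tightening. First, since the paper defines an $n$-base to be an \emph{independent} set of identities, independence is part of the statement, and you only promise to address it; moreover your parenthetical argument for the independence of (A1) is not valid as stated---the fact that your particular derivation of (J1) avoids (A1) does not show that (A1) is underivable from $\{$(A7), (J5')$\}$. It can be repaired from what you already proved: $\{$(A7), (J5')$\}$ derives (J1), and (J1) alone derives (A7), so $\{$(A7), (J5')$\}$ and $\{$(J1), (J5')$\}$ have the same equational consequences, and the Remark following Theorem \ref{T9.1} records that the latter pair is not a base; hence (A1) is not redundant. Second, the remaining independence checks should actually be exhibited: the left-projection algebra ($x \land y = x$, $x' = x$) from that same Remark satisfies (A1) and (J5') but falsifies (A7) (it gives $x$ on the left and $y$ on the right), and the two-element meet-semilattice with $x' = x$ satisfies (A1) and (A7) but falsifies (J5') (which there collapses to $x = x \land y$). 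With those two models and the repaired argument for (A1), your proof is complete.
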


 \begin{theorem}  The following is a 3-base for the variety of Boolean algebras:
 \begin{thlist}
\item[J5']  $ x = (x' \land y)' \land (x' \land y')'$
\item[A1] $ x \land (y \land z) = (x \land y) \land z$  
\item[A9]  $x\land (y \land z)=(z \land y) \land x$.   
\end{thlist}
\end{theorem}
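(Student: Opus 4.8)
The plan is to reduce the statement to the already-established base of Theorem \ref{T9.1}, namely (J1), (J2) (which is (A1)) and (J5'). Since (A1) and (J5') are already among our three hypotheses, it suffices to derive the commutative law (J1) from the set $\{$(A1),(A9),(J5')$\}$. First I would record the effect of associativity on (A9): under (A1) every product is parenthesis-free, and the displayed form $x\land(y\land z)=(z\land y)\land x$ of (A9) becomes the \emph{reversal law} $x\land y\land z=z\land y\land x$, i.e.\ the (A9) of Theorem \ref{T8.1}. Thus the whole problem is to show that an associative groupoid whose unary operation $'$ satisfies the reversal law and (J5') is commutative.

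The mechanism I would exploit is that reversal upgrades to full commutativity as soon as a two-sided identity is available: if $e$ satisfies $e\land a=a=a\land e$ for all $a$, then for all $a,c$ the reversal law gives $a\land c=a\land e\land c=c\land e\land a=c\land a$. Consequently the proof is organised around manufacturing such an $e$, the natural candidate being $0'$, where $0:=x\land x'$. Accordingly I would proceed in the following order: (i) show, as in Lemma \ref{92}, that $x\land x'$ does not depend on $x$, so that $0$ and hence $0'$ are well defined; (ii) show that $0'$ is a two-sided identity for $\land$; (iii) deduce (J1) by the one-line reversal argument just displayed; and (iv) invoke Theorem \ref{T9.1}. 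For the converse and the independence required of a $3$-base, I would check that the two-element algebra $\mathbf{2}$ satisfies (A9), (A1) and (J5'), and exhibit three small algebras, each satisfying two of the identities but failing the third.

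The hard part is step (ii), together with the left/right bookkeeping hidden in step (i). Before commutativity is available one must distinguish $x\land x'$ from $x'\land x$ and one-sided from two-sided neutrality, so the familiar computations of Lemmas \ref{92} and \ref{94}---which silently use (J1) to commute two-fold factors such as $x'\land y'$ inside a negation---are not directly available. Since the only nonlinear tools are the three-fold reversal law and the unfolding provided by (J5') (each element $a$ being rewritten as $(a'\land y)'\land(a'\land y')'$), the neutrality of $0'$ must be bootstrapped through a chain of rebracketing identities in exactly the style of Sections 4--8, for instance intermediate equalities of the form $(x\land y)\land(z\land u)=\cdots$ that let one insert or absorb the constant $0'$ inside a longer product and then reverse. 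Establishing that $0'$ absorbs on \emph{both} sides is where essentially all the work sits; once it is in hand, steps (iii)--(iv) are immediate.

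As an alternative to this commutativity-first route, I note that (J5) is equivalent to (J5') modulo the involution law $x''=x$, so one could instead aim to derive (J4), that is $x''=x$, together with (J5), and then cite Theorem \ref{T8.1} directly. This variant still passes through the same constancy of $x\land x'$ and the same neutrality facts, so it does not circumvent the main obstacle; I would therefore pursue the reduction to Theorem \ref{T9.1} above, keeping the reduction to Theorem \ref{T8.1} in reserve should the two-sided identity prove easier to reach via an involution argument.
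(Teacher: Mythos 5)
A preliminary remark: this theorem appears in the paper's Concluding Remarks, whose proofs are explicitly deferred to the sequel \cite{Sa24}, so there is no proof in this paper to compare yours against; your proposal has to stand on its own. Its skeleton is sound and surely the natural one: since (A1) and (J5') are two of the three axioms of Theorem \ref{T9.1}, it suffices to derive (J1) from $\{(\mathrm{A1}), (\mathrm{A9}), (\mathrm{J5'})\}$; your observation that, once associativity flattens (A9) into the reversal law $x \land y \land z = z \land y \land x$, a two-sided unit $e$ yields commutativity via $a \land c = a \land e \land c = c \land e \land a = c \land a$ is correct; and checking $\mathbf{2}$ for the converse is exactly the device the paper uses in Theorems \ref{T4.1}--\ref{T10.1}.

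The genuine gap is that the unit is never produced. Steps (i) and (ii) of your plan --- constancy of $x \land x'$ and, above all, two-sided neutrality of $0'$ --- are not proved; you say yourself that ``essentially all the work sits'' there and then leave that work as a gesture toward ``a chain of rebracketing identities in the style of Sections 4--8.'' In this genre the proof \emph{is} that chain: each such section of the paper spends ten to twenty explicit equational lemmas on precisely this kind of bootstrap, and none of them can be imported here, because the ones you invoke depend on commutativity. Lemma \ref{92} commutes $x' \land y'$ into $y' \land x'$ inside a negation, and Lemma \ref{94} needs (J1) both to identify $0' \land (x' \land x''')'$ with $(x''' \land x')' \land 0'$ and even to recognize $x''' \land x''$ as an instance of $0$ (without (J1) the terms $x \land x'$ and $x' \land x$ must be kept distinct, so ``$0$'' itself bifurcates). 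Note also that everywhere the paper derives constancy of $x \land x'$ or $x'' = x$ (Sections 4, 6, 8), commutativity is already in hand, either as an axiom or as a previously proved lemma; your plan inverts that order, and you exhibit no intermediate identity of the $(x \land y) \land (z \land u) = \cdots$ type you allude to that would show the inversion can be carried through. Nor can you argue that neutrality of $0'$ ``must'' be derivable because the three identities form a base --- that is the theorem being proved, so the appeal is circular. Until neutrality of $0'$ (or, on your reserve route, $x'' = x$ and (J5)) is actually derived from $\{(\mathrm{A1}), (\mathrm{A9}), (\mathrm{J5'})\}$, what you have is a sensible plan of attack, not a proof; the independence of the three identities, required by the word ``base,'' is likewise asserted but not verified, though the paper itself is equally brief on that point.
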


 \begin{theorem} The following is a 3-base for the variety of Boolean algebras:
 \begin{thlist}
\item[J5']  $ x = (x' \land y)' \land (x' \land y')'$
\item[A1]  $ x \land (y \land z) = (x  \land y) \land z$   
\item[A10]  $x \land (y \land z)=(z \land y) \land x$.  
\end{thlist}
\end{theorem}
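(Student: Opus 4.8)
The plan is to reduce the statement to Theorem \ref{T9.1}. Since (A1) is exactly associativity (J2), I will first record that, modulo (A1), the identity (A10) coincides with (A9): by associativity $(z \land y) \land x = z \land (y \land x)$, so (A10) reads $x \land (y \land z) \approx z \land (y \land x)$, which is (A9). Thus throughout we may work with associativity together with the reversal law
\[
x \land y \land z \approx z \land y \land x,
\]
and it suffices to derive the commutative law (J1): once (J1) is available, the three identities (J1), (A1)$\,=\,$(J2) and (J5') are precisely the hypotheses of Theorem \ref{T9.1}, which yields $\mathbb{BA}$. For the converse it is enough to observe that the two-element algebra $\mathbf 2$ satisfies (A1), (A10) and (J5'), so all three identities are sound; independence can then be checked by exhibiting, for each of the three, a non-Boolean model of the remaining two, in the spirit of the Remark following Theorem \ref{T9.1}.

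The whole difficulty is therefore concentrated in deriving (J1) from associativity, the reversal law, and (J5'). My route is to isolate a single key lemma, namely the ``reversed'' form of (J5'), which I denote (J5$''$):
\[
(x' \land y')' \land (x' \land y)' \approx x.
\]
Granting (J5$''$), commutativity drops out in one step: expanding $u \land t$ by (J5') and then reversing the resulting triple product,
\begin{align*}
u \land t &= (u' \land w)' \land (u' \land w')' \land t &&\text{by (J5')}\\
          &= t \land (u' \land w')' \land (u' \land w)' &&\text{by reversal}\\
          &= t \land u &&\text{by (J5$''$)},
\end{align*}
so that (J1) holds. Hence the plan is: (i) build the elementary collapse identities obtained by combining (J5') with the reversal law (for instance $(x' \land y)' \land (x' \land y')' \land z \approx x \land z$ and its reversed companion $(x' \land y')' \land (x' \land y)' \land z \approx z \land x$), observing along the way that every square $x \land x$ is central (a quick consequence of the reversal law); (ii) prove (J5$''$); (iii) conclude (J1) as above and invoke Theorem \ref{T9.1}.

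The main obstacle is step (ii). The reversal law together with associativity is genuinely weak: on a three-fold product it generates only the order-two symmetry $x \land y \land z \leftrightarrow z \land y \land x$, and in particular it can never transpose two adjacent factors, so (J5$''$), which differs from (J5') exactly by such a transposition of its two conjuncts, cannot be obtained by rearrangement alone. Equivalently, (J5$''$) is (J5') up to the law $y'' \approx y$, so the crux is to establish double negation (J4) \emph{without} first having commutativity. I expect to do this by a careful two-fold application of (J5') with matched parameters: writing $x$ via (J5') with parameter $x''$ and writing $x''$ via (J5') with a parameter chosen so that the two ``complementary'' conjuncts both reduce to a common term of the form $a \land a'$, and then using the reversal law to align the remaining factors. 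This is the delicate point, since precisely the length-two reorderings that the classical proof of (J4) performs by commutativity must here be routed through (J5') and reversal; forcing these alignments to match is where the real work lies.
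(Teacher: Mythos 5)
Your reduction framework is sound: modulo (A1), identity (A10) is indeed the reversal law $x \land y \land z \approx z \land y \land x$ (i.e.\ (A9)), and once (J1) is in hand, the identities (J1), (A1), (J5$'$) are exactly the hypotheses of Theorem \ref{T9.1}; closing the converse by checking $\mathbf{2}$ is also the paper's standard move. (For the record, the paper defers its own proof of this theorem to the sequel \cite{Sa24}, so your proposal must stand on its own.) It does not stand, however, because its one load-bearing step --- (J5$''$), equivalently $x'' \approx x$ established \emph{before} commutativity --- is never carried out: you offer only the intention to apply (J5$'$) twice ``with matched parameters.'' Worse, that plan runs into exactly the wall you yourself identified: matching $x = (x' \land x'')' \land (x' \land x''')'$ against $x'' = (x''' \land x')' \land (x''' \land x'')'$ requires swapping the two arguments \emph{inside} the primed subterms, which is an adjacent transposition in a two-factor product --- precisely what you argued reversal and associativity cannot perform. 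So step (ii) is not merely missing; as described, it would fail.

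The gap is avoidable, because the obstacle is an artifact of expanding only one conjunct. Expand \emph{both} factors of $a \land b$ by (J5$'$): with $P = (a' \land y)'$, $Q = (a' \land y')'$, $R = (b' \land z)'$, $S = (b' \land z')'$, associativity makes the four-fold product unambiguous, and reversal applied to consecutive triples swaps the two halves:
\begin{align*}
a \land b = P \land Q \land R \land S
&= (P \land Q \land R) \land S = (R \land Q \land P) \land S &&\text{reversal on } P, Q, R,\\
&= R \land (Q \land P \land S) = R \land (S \land P \land Q) &&\text{reversal on } Q, P, S,\\
&= (R \land S) \land (P \land Q) = b \land a &&\text{by (J5$'$)}.
\end{align*}
Thus (J1) follows in three lines, with no need for (J4) or (J5$''$), and Theorem \ref{T9.1} then finishes the argument ((J4) and (J5$''$) come out afterwards for free, as in Lemma \ref{94}). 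In short: right reduction, but the kernel you isolated is both unproven and unnecessary --- on four factors the reversal law is far from ``genuinely weak.''
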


\begin{theorem}  The following is a 3-base for the variety of Boolean algebras:
\begin{thlist}
\item[J5']   $ x = (x' \land y)' \land (x' \land y')'$
\item[A1]  $ x \land (y \land z) = (x \land y) \land z$  
\item[A12]  $(x \land y) \land z=(y \land x) \land z.$  
\end{thlist}
\end{theorem}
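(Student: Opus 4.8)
The plan is to reduce the statement to the already-established $2$-base of Theorem \ref{T6.1}, namely $\{\text{(A5)},\text{(J5')}\}$, where (A5) is $x \land (y \land z) \approx (y \land x) \land z$. Since (J5') is literally one of our three hypotheses, the entire forward direction collapses to deriving (A5) from (A1) and (A12).

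First I would observe that, in the presence of the associative law (A1), the identity (A5) is merely a reparenthesized form of (A12). Indeed,
\begin{align*}
x \land (y \land z) &= (x \land y) \land z &\text{by (A1)}\\
&= (y \land x) \land z &\text{by (A12)},
\end{align*}
so (A5) holds. Hence $\{\text{(A1)},\text{(A12)},\text{(J5')}\}$ imply $\{\text{(A5)},\text{(J5')}\}$, and Theorem \ref{T6.1} then guarantees that these identities define $\mathbb{BA}$. For the converse I would argue, exactly as in the earlier theorems, that the two-element algebra $\mathbf{2}$ satisfies all three identities: meet on $\mathbf{2}$ is associative and commutative, so (A1) and (A12) hold trivially, while (J5') was already checked on $\mathbf{2}$ above. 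Since $\mathbb{BA}$ is generated as a variety by $\mathbf{2}$ and (A1), (A12), (J5') are identities, every Boolean algebra satisfies them. This establishes that the three identities form a base for $\mathbb{BA}$.

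The remaining, and genuinely delicate, point is \emph{independence}, which is what upgrades ``base'' to ``$3$-base.'' Here I would exhibit three witnessing algebras. For (J5') being needed, a model of (A1) and (A12) in which (J5') fails is the two-element right-zero band $x \land y := y$ with $x' := x$: both $(x \land y) \land z$ and $(y \land x) \land z$ collapse to $z$, so (A1) and (A12) hold, whereas the right-hand side of (J5') reduces to $y$, forcing the false equation $x \approx y$. For (A12) and (A1) being needed I would have to produce a model of $\{\text{(A1)},\text{(J5')}\}$ failing (A12) and a model of $\{\text{(A12)},\text{(J5')}\}$ failing (A1).

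I expect the main obstacle to lie precisely in these last two constructions. Unlike the right-zero example, they are not produced by any obvious algebra, and moreover they \emph{must} exist for the claim to hold, since their existence is exactly the assertion that neither $\{\text{(A1)},\text{(J5')}\}$ nor $\{\text{(A12)},\text{(J5')}\}$ already axiomatizes $\mathbb{BA}$. Following the method used elsewhere in the paper, I would locate them by a finite search over small groupoids equipped with a unary operation (two- or three-element tables), verifying each candidate against the relevant pair of identities while refuting the third. Once such tables are in hand, independence---and hence the full $3$-base claim---follows immediately from the verifications together with the one-line reduction above.
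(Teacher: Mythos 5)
First, a point of comparison you could not have known: this theorem has \emph{no proof in the present paper} --- it is one of the results announced in the Concluding Remarks, with proofs deferred to the sequel \cite{Sa24}. So your proposal can only be measured against the paper's general method, and by that standard your main argument is both correct and exactly the paper's pattern. The one-line derivation $x \land (y \land z) = (x \land y) \land z = (y \land x) \land z$ shows that (A1) and (A12) yield (A5), so your three hypotheses imply the 2-base \{(A5), (J5')\} of Theorem \ref{T6.1}; and the converse --- that $\mathbf{2}$ satisfies the three identities and generates $\mathbb{BA}$ --- is precisely how the paper closes Theorems \ref{T4.1}, \ref{T5.1}, \ref{T6.1}, \ref{T8.1} and \ref{T10.1}. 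This part of your proposal is complete and needs no change.

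The gap is independence, which the paper's definition of an $n$-base does require (though, to be fair, the paper's own proofs of the analogous theorems never verify it explicitly). Your right-zero witness for the independence of (J5') is correct, but you do not need a machine search for the other two witnesses, because the paper already supplies the missing material. For the independence of (A12): the algebra displayed in the Remark following Theorem \ref{T9.1}, namely $x \land y := x$ with $x' := x$, satisfies (A1) and (J5') --- here $(x' \land y)' \land (x' \land y')' = x \land x = x$ --- but fails (A12), since $(x \land y) \land z = x$ while $(y \land x) \land z = y$. For the independence of (A1): the same Remark asserts that \{(J1), (J5')\} is not a base; any non-Boolean model of (J1) and (J5') satisfies (A12) (commutativity gives (A12) trivially) and must fail (A1), since by Theorem \ref{T9.1} every model of (J1), (A1), (J5') is Boolean. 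So the third witness exists, granted that remark of the paper (which the paper states without proof). If you insist on an explicit table instead, note that your proposed search over two-element algebras cannot succeed: the only two-element groupoids satisfying (A12) but not (A1) are the Sheffer stroke, the Peirce (NOR) operation, and $x \land y := 1 - y$ (arithmetic on $\{0,1\}$), and none of these satisfies (J5') under any of the four unary operations on a two-element set; hence a witness for the independence of (A1) must have at least three elements.
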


\begin{theorem}  The following is a 3-base for the variety of Boolean algebras:
\begin{thlist}
\item[J5'] $x=(x' \land y)' \land (x' \land y')'$
\item[A3] $x \land (y \land z)=(x \land z) \land y $  
\item[A7] $x \land (y \land z)=(y \land z) \land x$.  
 \end{thlist}
 \end{theorem}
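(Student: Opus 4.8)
The plan is to show that the three identities (A3), (A7) and (J5') together imply the identities (J1), (A1)=(J2) and (J5'), thereby placing us under the hypotheses of Theorem \ref{T9.1}. Since that theorem already establishes that \{(J1),(A1),(J5')\} is a base for $\mathbb{BA}$, it follows that every algebra satisfying (A3), (A7) and (J5') is a Boolean algebra. For the converse I would simply note, as in the proofs of Theorems \ref{T4.1}--\ref{T10.1}, that the two-element Boolean algebra $\mathbf 2$ satisfies (A3), (A7) and (J5') (all three hold throughout $\mathbb{BA}$, since Boolean meet is commutative and associative and (J5') is the De Morgan rewriting of $x=x\land(y\lor y')$); hence the three identities define exactly $\mathbb{BA}$.

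The decisive step is to derive commutativity (J1) straight from (A7) and (J5'). The key observation is that (J5') exhibits \emph{every} element as a meet: for any $w$ we have $w=(w'\land y)'\land(w'\land y')'$, so $w$ has the form $p\land q$. Applying (A7) in the form $x\land(p\land q)=(p\land q)\land x$ with $p=(w'\land y)'$ and $q=(w'\land y')'$, and then collapsing $p\land q$ back to $w$ by (J5'), I obtain
\begin{align*}
 x\land w &= x\land\bigl((w'\land y)'\land(w'\land y')'\bigr)\\
          &= \bigl((w'\land y)'\land(w'\land y')'\bigr)\land x &&\text{by (A7)}\\
          &= w\land x &&\text{by (J5')},
\end{align*}
so (J1) holds. (If a longer route were preferred, the auxiliary rearrangements $(x\land z)\land y=(y\land z)\land x$ and $(x\land y)\land z=(z\land y)\land x$, obtained directly from (A3) and (A7), are available, but they are not needed once commutativity is in hand.)

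The second step derives associativity (A1) from commutativity together with (A3). Moving single factors across meets by (J1) and using (A3) once in the middle, I compute
\begin{align*}
 (x\land y)\land z &= z\land(x\land y) &&\text{by (J1)}\\
                   &= (z\land y)\land x &&\text{by (A3)}\\
                   &= x\land(z\land y) &&\text{by (J1)}\\
                   &= x\land(y\land z) &&\text{by (J1)},
\end{align*}
which is (A1)=(J2). Now (J1), (A1) and (J5') all hold, so Theorem \ref{T9.1} applies and the forward direction is complete.

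There is no serious structural obstacle in the derivation itself: the only conceptual point is the commutativity trick above, after which everything reduces to the already-established Theorem \ref{T9.1}. The one remaining item needed to call the set a genuine \emph{3-base} is independence, which I would settle by exhibiting, for each of the three identities, a groupoid with a unary operation satisfying the other two but failing that one, in the spirit of the Remarks following Theorems \ref{T8.1} and \ref{T9.1}; this is a finite model check rather than a structural argument.
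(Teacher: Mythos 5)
There is no proof of this statement in the paper to compare yours against: it is one of the results announced in the Concluding Remarks, whose proofs are explicitly deferred to the sequel \cite{Sa24}. Judged on its own merits, your derivation is correct. The commutativity trick is sound equational reasoning: (J5') rewrites an arbitrary element $w$ as the meet $(w'\land y)'\land(w'\land y')'$, (A7) flips that meet past $x$, and (J5') collapses it back, giving $x\land w = w\land x$. The second computation, $(x\land y)\land z = z\land(x\land y) = (z\land y)\land x = x\land(z\land y) = x\land(y\land z)$, correctly uses (A3) once (in the form $z\land(x\land y)=(z\land y)\land x$) together with (J1), yielding (A1). Reducing to Theorem \ref{T9.1} and checking the converse on $\mathbf{2}$ is exactly the paper's own device in the proofs of Theorems \ref{T4.1}, \ref{T5.1}, \ref{T6.1} and \ref{T10.1}, so your route is very likely the intended one in spirit.

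The one genuine gap is independence. The paper defines a base to be an \emph{independent} set of identities defining $\mathbb{BA}$, so the claim ``3-base'' is not fully proved until, for each of the three identities, you exhibit a model of the other two in which it fails; you name this obligation but do not discharge it. Two of the three checks are in fact cheap. First, the constant groupoid on $\{0,1\}$ with $x\land y:=0$ and $x':=x$ satisfies (A3) and (A7) but violates (J5'). Second, your own commutativity derivation shows that (J5') together with (A7) defines the same variety as (J5') together with (J1) (since (J1) trivially implies (A7)); by the remark following Theorem \ref{T9.1}, that variety is strictly larger than $\mathbb{BA}$, so (A3) cannot be a consequence of (J5') and (A7) --- though note that the remark itself asserts this without exhibiting a witness algebra. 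The remaining check, a model of (J5') and (A3) in which (A7) fails, is the nontrivial one and is nowhere available in this paper; until it is produced, the independence half of the theorem remains a promissory note.
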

 
\begin{theorem} The following is a 3-base for the variety of Boolean algebras:
\begin{thlist}
\item[J5'] $x=(x' \land y)' \land (x' \land y')'$
\item[A3] $x \land (y \land z)=(x \land z) \land y $  
\item[A9] $x \land (y \land z)=(z \land y) \land x$.   
 \end{thlist}
\end{theorem}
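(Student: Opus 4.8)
The plan is to reduce the statement to one of the 2-bases already proved in this paper, thereby avoiding any new lemma chain. The crucial observation is that (A3) and (A9) have the \emph{same} left-hand side $x \land (y \land z)$, so equating their right-hand sides produces, in a single step, a pure associative-type identity; that identity turns out to be (A13). Since the pair $\{$(A13), (J5')$\}$ is already known to be a 2-base by Theorem~\ref{T10.1}, the three identities (A3), (A9), (J5') certainly define $\mathbb{BA}$, and the converse will be routine.

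Concretely, first I would derive (A13). From (A3) we have $x \land (y \land z) = (x \land z) \land y$ and from (A9) we have $x \land (y \land z) = (z \land y) \land x$; by transitivity through the common left-hand side, $(x \land z) \land y = (z \land y) \land x$ holds identically. Interchanging the variables $y$ and $z$ rewrites this as $(x \land y) \land z = (y \land z) \land x$, which is precisely (A13). Hence $\{$(A3), (A9)$\}$ implies (A13), so $\{$(A3), (A9), (J5')$\}$ implies $\{$(A13), (J5')$\}$, and Theorem~\ref{T10.1} shows the latter defines $\mathbb{BA}$. (Should (A9) instead be read in the form $x \land (y \land z) \approx z \land (y \land x)$ listed in $\Gamma$, the identical one-line argument yields (A8) rather than (A13), and one invokes Theorem~\ref{T5.1}; either way the reduction is immediate.) For the converse I would note, exactly as in the earlier theorems, that the two-element Boolean algebra $\mathbf{2}$ satisfies (A3), (A9) and (J5'), so all three hold throughout $\mathbb{BA}$. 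Together these two directions already prove that the three identities form a base.

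The one point that still requires genuine work — and the step I expect to be the main obstacle — is \emph{independence}, which is what is needed to call the base a \emph{3-base}. Ruling out the redundancy of (J5') is easy: the two-element meet-semilattice with $'$ taken to be the identity map satisfies every associative-type identity (hence both (A3) and (A9)) yet violates (J5'), so (J5') is not a consequence of $\{$(A3), (A9)$\}$. The harder task is to exhibit a non-Boolean model of $\{$(A9), (J5')$\}$ and a non-Boolean model of $\{$(A3), (J5')$\}$, certifying that (A3) and (A9) respectively cannot be dropped; such witnesses can be neither commutative nor associative (otherwise $\{$(Ai), (J5')$\}$ would already be a base for a suitable base index $i$), so I would locate them by a finite search over small algebras, in the same spirit as the counterexample tables displayed in the remarks following Theorems~\ref{T8.1} and~\ref{T9.1}.
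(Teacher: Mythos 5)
The paper itself contains \emph{no} proof of this statement: it is one of the results announced in the Concluding Remarks whose proofs are deferred to the sequel \cite{Sa24}, so there is no in-paper argument to compare yours against, and it must be judged on its own merits. Its derivational core is correct and attractively short. With (A9) read as printed, $x \land (y \land z) \approx (z \land y) \land x$, equating the right-hand sides of (A3) and (A9) and interchanging $y$ and $z$ does yield $(x \land y) \land z \approx (y \land z) \land x$, which is (A13), so Theorem~\ref{T10.1} applies; with (A9) read as in the list $\Gamma$ of Section~3, $x \land (y \land z) \approx z \land (y \land x)$, the same one-line computation yields (A8) and Theorem~\ref{T5.1} applies. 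Your caution about the two readings is warranted: the identity printed in the statement is literally (A10) of $\Gamma$, so the labelling there is inconsistent with Section~3. The converse via the two-element algebra $\mathbf{2}$ is exactly the device the paper uses in Theorems~\ref{T4.1}, \ref{T5.1}, \ref{T6.1} and \ref{T10.1}, and it is sound since $\mathbb{BA}$ is generated by $\mathbf{2}$. So the ``these identities define $\mathbb{BA}$'' half is fully established, by a reduction that is genuinely in the spirit of the paper (reduce a new base to an already-proved one) but far more economical than its lemma chains.

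The genuine gap, which you correctly identify but do not close, is independence. Section~2 defines a base to be an \emph{independent} set of identities, so the claim ``3-base'' needs three non-redundancy witnesses. Your two-element semilattice with $x'=x$ disposes of (J5'), but the required non-Boolean models of $\{\mathrm{(A3)},\mathrm{(J5')}\}$ and of $\{\mathrm{(A9)},\mathrm{(J5')}\}$ are only promised by an unperformed finite search; without them the theorem as stated is unproved, and indeed if $\{\mathrm{(A3)},\mathrm{(J5')}\}$ happened to be a base the theorem would be false. Your heuristic that such witnesses ``can be neither commutative nor associative'' is also only partly right: for $\{\mathrm{(A3)},\mathrm{(J5')}\}$ non-commutativity is indeed forced, since (A3) together with (J1) gives (A1) and then Theorem~\ref{T9.1} applies; but under the printed reading of (A9) that identity is itself a consequence of (J1) alone, so nothing rules out a commutative witness for $\{\mathrm{(A9)},\mathrm{(J5')}\}$ (any commutative non-Boolean model of (J5'), as contemplated in the remark following Theorem~\ref{T9.1}, would do), and the non-associativity claim has no support in the paper. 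In fairness, the paper's own proved theorems (e.g.\ Theorems~\ref{T4.1}, \ref{T5.1}, \ref{T6.1}, \ref{T10.1}) also omit independence checks, supplying them only in occasional remarks; so your proposal meets the paper's de facto standard of proof while falling short of its stated definition of an $n$-base.
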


\begin{theorem}  The following is a 3-base for the variety of Boolean algebras:
\begin{thlist}
\item[J5'] $x=(x' \land y)' \land (x' \land y')'$
\item[A3] $x \land (y \land z)=(x \land z) \land y $  
\item[A10] $x \land (y \land z)=(z \land y) \land x$.  
 \end{thlist}
\end{theorem}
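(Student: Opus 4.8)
The plan is to reduce, in just two lines, the groupoid content of the hypotheses to the identity (A6), and then to invoke Theorem \ref{T4.1}, which already establishes that $\{$(A6), (J5')$\}$ is a $2$-base for $\mathbb{BA}$. So assume (A3), (A10) and (J5') as hypotheses. The key observation is that (A3) and (A10) \emph{alone} (with no use of $'$) already force (A6). Indeed, (A3) gives $x \land (y \land z) = (x \land z) \land y$, while (A10), read under the cyclic relabelling $x \mapsto y,\ y \mapsto z,\ z \mapsto x$, gives $(x \land z) \land y = y \land (z \land x)$; chaining these yields
\begin{align*}
x \land (y \land z) &= (x \land z) \land y &\text{by (A3)}\\
&= y \land (z \land x) &\text{by (A10)},
\end{align*}
which is exactly (A6). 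Hence $\{$(A3),(A10),(J5')$\}$ implies $\{$(A6),(J5')$\}$, and Theorem \ref{T4.1} shows the three identities define $\mathbb{BA}$. For the converse it suffices, as usual in this paper, to verify that the two-element Boolean algebra $\mathbf{2}$ satisfies (A3), (A10) and (J5'): the first two hold because $\land$ is associative and commutative in $\mathbf{2}$, and (J5') is the familiar Boolean identity $x = (x \lor y') \land (x \lor y)$ in disguise.

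It remains to show that the base is \emph{independent}, i.e. that no one of the three identities is a consequence of the other two; this is the part I expect to require genuine work. For the non-redundancy of (J5') one can reuse a null groupoid: on $\{0,1\}$ set $a \land b := 0$ for all $a,b$ and $0' = 1' := 0$. Every associative-type identity, in particular (A3) and (A10), then holds trivially, while (J5') fails at $x = 1$, since its right-hand side evaluates to $0$. To see that (A3) is not a consequence of $\{$(A10),(J5')$\}$, and dually that (A10) is not a consequence of $\{$(A3),(J5')$\}$, I would exhibit two finite groupoids-with-negation $\mathbf{M_1} \models \{$(A10),(J5')$\}$ with $\mathbf{M_1} \not\models$ (A3), and $\mathbf{M_2} \models \{$(A3),(J5')$\}$ with $\mathbf{M_2} \not\models$ (A10). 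That such algebras must exist is equivalent to the assertions that neither $\{$(A10),(J5')$\}$ nor $\{$(A3),(J5')$\}$ defines $\mathbb{BA}$, which is consistent with the fact that neither (A10) nor (A3) occurs in any of the $2$-bases of this paper or its sequel.

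The main obstacle is precisely the construction of $\mathbf{M_1}$ and $\mathbf{M_2}$. The difficulty is that (J5') couples $\land$ and $'$ tightly and already imposes a great deal of Boolean-like structure, so a separating algebra must violate exactly one of the two associative-type laws while remaining a model of (J5'); such algebras are not obviously small. I would therefore locate them by a finite-model search (for instance with Mace4), starting at small cardinalities and then checking by hand that the chosen unary operation makes (J5') hold while the prescribed non-commutative/non-associative $\land$ breaks the intended law. Once these two witnesses and the null groupoid above are in place, independence---and hence the claim that $\{$(A3),(A10),(J5')$\}$ is a $3$-base for $\mathbb{BA}$---follows.
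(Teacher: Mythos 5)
Your reduction is correct: instantiating (A10) with the cyclic renaming $x \mapsto y$, $y \mapsto z$, $z \mapsto x$ gives $y \land (z \land x) = (x \land z) \land y$, and chaining this with (A3) yields exactly (A6); Theorem \ref{T4.1} then shows every model of the three identities is a Boolean algebra, and the converse holds since $\mathbf{2}$ generates $\mathbb{BA}$ and satisfies all three identities. Note that the paper itself contains no proof of this statement---it appears in the Concluding Remarks with proof deferred to the sequel \cite{Sa24}---so there is nothing in-paper to compare against line by line; but your strategy is precisely the one the paper uses for its analogous results (Theorems \ref{T5.1} and \ref{T10.1} are proved by reducing, through long chains of lemmas, to the previously established bases of Theorems \ref{T4.1} and \ref{T6.1}), and your derivation is far shorter than those because (A6) falls out of (A3) and (A10) purely equationally, with no use of $'$ or of (J5') at all.

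The genuine gap is independence. The paper defines a base to be an \emph{independent} set of identities defining $\mathbb{BA}$, so the claim ``3-base'' requires three separations, and you deliver only one: your null groupoid (the same algebra the paper exhibits in the remark following Theorem \ref{T8.1}) correctly shows that (J5') does not follow from $\{$(A3), (A10)$\}$. The two remaining claims---that (A3) does not follow from $\{$(A10), (J5')$\}$, and that (A10) does not follow from $\{$(A3), (J5')$\}$---are not proved; they are delegated to a Mace4 search that is never carried out, and no candidate algebras $\mathbf{M}_1$, $\mathbf{M}_2$ are exhibited. These are not optional details: if either of those pairs were itself a base for $\mathbb{BA}$, the stated theorem would be false as a ``3-base'' claim, so the existence of the separating models is part of what must be proved. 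As written, your proposal establishes that $\{$(A3), (A10), (J5')$\}$ axiomatizes the variety of Boolean algebras, but not that it is a 3-base in the paper's sense; to finish, you must actually produce the two models (or otherwise prove that neither two-element subfamily defines $\mathbb{BA}$).
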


\begin{theorem}  The following is a 3-base for the variety of Boolean algebras:
\begin{thlist}
\item[J5'] $x=(x' \land y)' \land (x' \land y')'$
\item[A3] $x \land (y \land z)=(x \land z) \land y$  
\item[A12] $(x \land y) \land z=(y \land x) \land z$.  
 \end{thlist}
\end{theorem}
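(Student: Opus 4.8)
The plan is to reduce the statement to the already-established $2$-base $\{(\text{A8}),(\text{J5}')\}$ of Theorem \ref{T5.1}. Since (J5') is common to both systems, it suffices to derive the identity (A8), namely $x \land (y \land z) \approx (z \land x) \land y$, from (A3) and (A12); the identity (J5') is then simply carried over. So the whole forward implication rests on a single, short equational derivation, with no appeal to J5' needed for that step.

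That derivation is the first thing I would write down. Applying (A3) rewrites the right-associated product $x \land (y \land z)$ as $(x \land z) \land y$, and then (A12), which permutes the first two factors of a left-associated product, rewrites $(x \land z) \land y$ as $(z \land x) \land y$:
\begin{align*}
x \land (y \land z) &= (x \land z) \land y &\text{by (A3)}\\
                    &= (z \land x) \land y &\text{by (A12)},
\end{align*}
which is exactly (A8). Hence every algebra satisfying (A3), (A12) and (J5') satisfies (A8) and (J5'), and so, by Theorem \ref{T5.1}, is a Boolean algebra. For the converse inclusion I would argue as in the earlier theorems: the two-element Boolean algebra $\mathbf{2}$ satisfies (A3), (A12) and (J5'), so $\{(\text{A3}),(\text{A12}),(\text{J5}')\}$ defines precisely $\mathbb{BA}$. (Every Boolean algebra satisfies these, being commutative and associative so that all identities of associative type of length $3$ hold, with (J5') a routine De Morgan consequence.)

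It remains to check that the three identities form an \emph{independent} set, and this is the only place where genuine work is required, since the equational reduction above is immediate. That (J5') cannot be dropped is clear, because (A3) and (A12) hold in many non-Boolean groupoids (for instance any constant or left-zero multiplication). To see that neither (A3) nor (A12) is redundant, I would exhibit two small finite algebras of type $\langle \land, {}' \rangle$ witnessing independence: a model of $\{(\text{A12}),(\text{J5}')\}$ in which (A3) fails, and a model of $\{(\text{A3}),(\text{J5}')\}$ in which (A12) fails. Producing these witnesses — most conveniently by a finite model search of the kind underlying the two-element counterexample tables displayed elsewhere in the paper — is the main obstacle; once they are in hand, independence follows, and hence the claim that $\{(\text{A3}),(\text{A12}),(\text{J5}')\}$ is a genuine $3$-base is established.
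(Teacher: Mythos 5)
Your equational core is correct, and it is exactly the style of argument the paper uses for its proven bases: a two-line derivation of an already-established base followed by the observation that $\mathbf{2}$ satisfies the new axioms. Note, however, that the paper itself offers no proof of this particular theorem --- it is listed in the concluding section among results whose proofs are deferred to the sequel \cite{Sa24} --- so there is nothing in-paper to compare against except the template of Theorems \ref{T4.1}, \ref{T5.1}, \ref{T6.1} and \ref{T10.1}, which you follow faithfully. Your key step is valid: (A3) rewrites $x \land (y \land z)$ as $(x \land z) \land y$, and (A12) under the substitution $y \mapsto z$, $z \mapsto y$ swaps the first two factors to give $(z \land x) \land y$, which is (A8); Theorem \ref{T5.1} then finishes the forward direction, and the converse via $\mathbf{2}$ is the paper's standard argument.

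Two remarks on the independence discussion, which you rightly identify as the remaining work. First, your parenthetical is misstated: a left-zero groupoid satisfies (A3) but \emph{violates} (A12) (it forces $x = y$), so it is not an example of a non-Boolean model of both; the constant groupoid does serve for that purpose and shows (J5') is not redundant. But this slip conceals a useful fact: the two-element left-zero groupoid equipped with the identity map as $'$ satisfies both (A3) and (J5') (the latter reduces to $x = x''$ and then to $x \land x = x$, both true there) while failing (A12) --- so it is precisely one of the two witnesses you said would require a model search. Second, the other witness (a model of $\{(\mathrm{A12}), (\mathrm{J5'})\}$ failing (A3)) can be extracted from the paper's own Section 7 remark that $\{(\mathrm{J1}), (\mathrm{J5'})\}$ is not a base: any non-Boolean commutative model of (J5') satisfies (A12) trivially, and it must fail (A3), since (A3) together with (A12) yields (A8) and hence Booleanness by Theorem \ref{T5.1}. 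With these two observations your plan closes completely; as written, the independence half of the ``3-base'' claim is left unexecuted, though this matches the level of detail in the paper's own proofs, which verify the defining property and treat independence, when at all, in separate remarks.
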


\begin{theorem}  The following is a 3-base for the variety of Boolean algebras:
\begin{thlist}
\item[J5'] $x=(x' \land y)' \land (x' \land y')'$
\item[A3] $x \land (y \land z)=(x \land z) \land y$  
\item[A14] $(x \land y) \land z=(z \land y) \land x. $ 
 \end{thlist}
\end{theorem}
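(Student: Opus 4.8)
The plan is to assume (A3), (A14), and (J5') and to show that together they imply an already-established base; the converse is then immediate because the two-element algebra $\mathbf{2}$ satisfies all three identities, so they hold throughout $\mathbb{BA}$. Concretely, I would aim to derive the identity (A5), after which Theorem \ref{T6.1} (the 2-base consisting of (A5) and (J5')) finishes the forward direction. (Deriving (A13) and invoking Theorem \ref{T10.1} instead would work equally well.)

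First I would extract the purely groupoid-theoretic content of (A3) and (A14). Interchanging $y$ and $z$ in (A3) gives $(x \land y)\land z = x \land (z \land y)$, and comparing this with (A14) yields the rearrangement $x \land (y \land z) = (y \land z)\land x$, i.e.\ a variable commutes with any product. Combining the latter with (A3) also gives $x \land (y \land z) = y \land (x \land z)$, which is exactly (A4). A short computation then shows that, modulo (A3) and (A14), each of (A5), (A6), (A13) is equivalent to the single inner identity $x \land (y \land z) \approx x \land (z \land y)$; indeed every one of these reductions collapses to that one identity. Thus the entire theorem reduces to proving this inner commutativity.

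The principal obstacle is therefore to derive $x \land (y \land z) = x \land (z \land y)$, and this is exactly the step that forces the use of (J5'): the reduction just described shows that inner commutativity loops back to itself under (A3) and (A14) alone, so it is not a consequence of the two groupoid identities in the variety of groupoids and negation must enter. Following the template of the (A5)-section (Lemmas \ref{810}, \ref{189} and \ref{811}), I would expand a variable via (J5'), permute the resulting negated terms using the rearrangement lemmas above, and then re-collapse via (J5'), so that both $x \land (y \land z)$ and $x \land (z \land y)$ are rewritten to one common expression $E(x,y,z)$. Locating the right intermediate terms so that the two sides meet at a common form is where the bulk of the bookkeeping lives.

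Once inner commutativity is available, (A5) follows quickly: by (A3) we have $x \land (y \land z) = (x \land z)\land y$, and the rearrangement lemmas together with inner commutativity convert $(x \land z)\land y$ into $(y \land x)\land z$, which is (A5). Hence (A5) and (J5') hold, and Theorem \ref{T6.1} gives that $\mathbf{B}$ is a Boolean algebra. For the converse it suffices to note that $\mathbf{2}$ satisfies (A3), (A14) and (J5'), so these three identities axiomatize $\mathbb{BA}$. Finally, to confirm that this is a genuine $3$-base I would verify independence by exhibiting, for each of the three axioms, a small finite algebra satisfying the other two but not it, in the style of the Remarks following Theorems \ref{T8.1} and \ref{T9.1}.
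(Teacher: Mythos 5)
The paper itself never proves this theorem: it appears in the Concluding Remarks with its proof deferred to the sequel \cite{Sa24}, so there is no argument of the author's to compare yours against, and your proposal has to stand on its own. Its skeleton is sound. Interchanging $y$ and $z$ in (A3) gives $(x\land y)\land z = x\land(z\land y)$, and combining with (A14) does yield (A7) $x\land(y\land z)\approx(y\land z)\land x$ and then (A4); modulo these rearrangements each of (A5), (A6), (A13) is indeed equivalent to the inner identity $x\land(y\land z)\approx x\land(z\land y)$; and your claim that this inner identity does not follow from (A3) and (A14) alone is correct (both are balanced identities, so any derivation from $x\land(y\land z)$ stays among the twelve three-variable products, which fall into three separate classes under (A3) and (A14)).

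The genuine gap is that the one step carrying all the content---deriving inner commutativity from (A3), (A14) and (J5')---is never executed. You describe a search strategy (``expand via (J5'), permute, re-collapse \ldots where the bulk of the bookkeeping lives''), but a strategy is not a derivation, and in this paper exactly such steps are what occupy a dozen lemmas per section; as written, the proposal is a reduction plus an IOU. The irony is that you already hold the key and do not turn it: your derived (A7) says every element commutes with every \emph{product}, while (J5') says every element \emph{is} a product. Hence for all $a,b$, by (J5') and then (A7) and then (J5') again, $a \land b \;=\; a \land \bigl[(b' \land w)' \land (b' \land w')'\bigr] \;=\; \bigl[(b' \land w)' \land (b' \land w')'\bigr] \land a \;=\; b \land a$, i.e.\ full commutativity (J1). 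Inner commutativity is then immediate by applying (J1) to the subterm, and (A5) follows from your own rearrangements: $(y\land x)\land z = y\land(z\land x) = y\land(x\land z) = x\land(y\land z)$, using the identity $(a\land b)\land c = a\land(c\land b)$, inner commutativity, and (A4). Theorem \ref{T6.1} (or, after deriving (A13) similarly, Theorem \ref{T10.1}) then completes the forward direction, and your appeal to $\mathbf{2}$ handles the converse. With that three-line insertion in place of the unexecuted ``bookkeeping,'' your argument becomes a complete and notably short proof.
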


\begin{theorem}  The following is a 3-base for the variety of Boolean algebras:
\begin{thlist}
\item[J5'] $x=(x' \land y)' \land (x' \land y')'$
\item[A7] $x \land (y \land z)=(y \land z) \land x$   
\item[A11] $(x \land y) \land z=(x \land z) \land y.$  
 \end{thlist}
\end{theorem}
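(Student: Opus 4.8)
The plan is to reduce this statement to the already-established $2$-base of Theorem \ref{T6.1}, namely $\{(\mathrm{A5}),(\mathrm{J5'})\}$. Since $(\mathrm{J5'})$ is common to both, it suffices to derive the identity $(\mathrm{A5})$, $x \land (y \land z) \approx (y \land x) \land z$, from the three hypotheses $(\mathrm{J5'})$, $(\mathrm{A7})$ and $(\mathrm{A11})$; the only real work is to first extract commutativity, after which $(\mathrm{A11})$ delivers $(\mathrm{A5})$ almost immediately.

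First I would derive the commutative law $(\mathrm{J1})$ from $(\mathrm{A7})$ and $(\mathrm{J5'})$ alone. The key observation is that $(\mathrm{J5'})$ exhibits every element as a right-associated product: instantiating it at $y$ gives $y = (y' \land t)' \land (y' \land t')'$. Hence
\begin{align*}
x \land y &= x \land \big[ (y' \land t)' \land (y' \land t')' \big] &\text{by (J5')}\\
          &= \big[ (y' \land t)' \land (y' \land t')' \big] \land x &\text{by (A7)}\\
          &= y \land x &\text{by (J5')},
\end{align*}
the middle step being exactly the rule $(\mathrm{A7})$, which permits $x$ to be moved past any term of the shape $u \land v$.

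With $(\mathrm{J1})$ in hand, $(\mathrm{A5})$ follows from $(\mathrm{A11})$ in two steps:
\begin{align*}
x \land (y \land z) &= (y \land z) \land x &\text{by (J1)}\\
                    &= (y \land x) \land z &\text{by (A11)},
\end{align*}
where the second equality is the instance of $(\mathrm{A11})$ obtained by the substitution $x \mapsto y,\ y \mapsto z,\ z \mapsto x$. This is precisely $(\mathrm{A5})$, so $\{(\mathrm{A7}),(\mathrm{A11}),(\mathrm{J5'})\}$ implies the axioms of Theorem \ref{T6.1} and therefore defines $\mathbb{BA}$. For the converse I would simply note that the two-element Boolean algebra $\mathbf{2}$ satisfies all three identities (the first two being consequences of associativity and commutativity, and $(\mathrm{J5'})$ holding in every Boolean algebra), so they hold throughout $\mathbb{BA}$, since $\mathbb{BA}$ is generated by $\mathbf{2}$.

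The equational deduction above is short, so the genuinely non-routine part is confirming that the set is a \emph{$3$-base}, i.e.\ that the three identities are independent. As in the Remarks accompanying Theorems \ref{T8.1} and \ref{T9.1}, this requires exhibiting, for each identity, a finite (necessarily non-Boolean) model of the other two in which it fails; in particular one must produce a non-commutative model of $\{(\mathrm{A11}),(\mathrm{J5'})\}$ to certify that $(\mathrm{A7})$ is not redundant, precisely so that the commutativity shortcut above cannot be obtained without it. I expect locating these small separating algebras (presumably with the aid of a model-finder) to be the main obstacle, the derivation of the base property itself being routine given Theorem \ref{T6.1}.
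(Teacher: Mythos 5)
Your derivation is correct, but there is nothing in the paper to measure it against: this theorem appears in the concluding section, whose proofs the paper explicitly defers to the sequel \cite{Sa24}. That said, your route is exactly the reduction pattern the paper uses for the theorems it does prove (Theorem \ref{T5.1} is reduced to Theorem \ref{T4.1}, and Theorem \ref{T10.1} to Theorem \ref{T6.1}): you derive (J1) from (A7) and (J5') by expanding the right-hand factor via (J5'), commuting the resulting compound term past $x$ with (A7), and collapsing again; you then get (A5) from $(y\land z)\land x=(y\land x)\land z$, which is indeed the instance of (A11) under $x\mapsto y$, $y\mapsto z$, $z\mapsto x$; and you finish by invoking the 2-base $\{(\mathrm{A5}),(\mathrm{J5'})\}$ of Theorem \ref{T6.1}, with the converse handled by the same ``$\mathbf{2}$ generates $\mathbb{BA}$'' argument the paper uses throughout. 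Every step checks out. On independence you are, if anything, more scrupulous than the paper, whose own proofs of base theorems never verify independence explicitly; note also that two of the three separating models come nearly for free. For (J5'), take the two-element meet-semilattice with $'$ the identity map: it satisfies (A7) and (A11) (both follow from commutativity and associativity) but violates (J5'), since there $(x'\land y)'\land(x'\land y')'=x\land y$. For (A11), your own first computation shows $\{(\mathrm{A7}),(\mathrm{J5'})\}$ is deductively equivalent to $\{(\mathrm{J1}),(\mathrm{J5'})\}$ (commutativity trivially gives back (A7)), and the Remark following Theorem \ref{T9.1} asserts that the latter pair is not a base; any non-Boolean model witnessing this must then violate (A11), since otherwise your derivation plus Theorem \ref{T6.1} would force it to be Boolean. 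So the only separating algebra genuinely requiring a search is a non-Boolean model of $\{(\mathrm{A11}),(\mathrm{J5'})\}$ certifying that (A7) is not redundant.
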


\begin{theorem}  The following is a 3-base for the variety of Boolean algebras:
\begin{thlist}
\item[J5'] $x=(x' \land y)' \land (x' \land y')'$
\item[A7] $x \land (y \land z)=(y \land z) \land x$  
\item[A14] $(x \land y) \land z=(z \land y) \land x.$  
 \end{thlist}
\end{theorem}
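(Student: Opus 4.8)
The plan is to deduce, from the hypotheses (A7), (A14) and (J5'), the base established in Theorem~\ref{T9.1}, namely $\{(\mathrm{J1}),(\mathrm{A1}),(\mathrm{J5}')\}$. Since (J5') is already one of our hypotheses, it suffices to derive commutativity (J1) and associativity (A1); the full strength of Boolean algebra then follows directly from Theorem~\ref{T9.1}.

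I would begin with a purely groupoid consequence of (A7) and (A14). Reading (A14) as $(y\land z)\land x=(x\land z)\land y$ and feeding it into the right side of (A7) gives
\begin{align*}
x\land(y\land z)=(y\land z)\land x=(x\land z)\land y,
\end{align*}
so the identity (A3) is available with no use of negation. The decisive point, however, is that commutativity follows from (A7) and (J5') \emph{alone}: setting $P:=(y'\land z)'$ and $Q:=(y'\land z')'$, axiom (J5') says precisely $y=P\land Q$, whence
\begin{align*}
y\land x=(P\land Q)\land x=x\land(P\land Q)=x\land y,
\end{align*}
the middle equality being (A7) with $P,Q$ in the roles of $y,z$ and the outer ones two applications of (J5'). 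This establishes (J1), and associativity is then immediate from (A3) and (J1):
\begin{align*}
x\land(y\land z)=x\land(z\land y)=(x\land y)\land z,
\end{align*}
the first step commuting the inner product and the second being (A3). Thus (A1) holds as well.

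With (J1), (A1) and (J5') in hand we have exactly the base of Theorem~\ref{T9.1}, so every model of $\{(\mathrm{A7}),(\mathrm{A14}),(\mathrm{J5}')\}$ is a Boolean algebra; conversely all three identities hold in $\mathbf 2$, so the set defines precisely $\mathbb{BA}$.

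It remains to confirm that the three identities form an \emph{independent} set, and this is where I expect the genuine labour, precisely because the derivations above are so short. Dropping (J5') is easy to rule out: the groupoid with $x\land y=0$ for all $x,y$ (with any unary $'$) satisfies (A7) and (A14) but is not Boolean. To show (A14) is not redundant I would need a non-Boolean model of $\{(\mathrm{A7}),(\mathrm{J5}')\}$; the commutativity argument above shows any such model is already commutative, so associativity must be the property that fails, and producing the model is expected to require a finite-model search. The symmetric task for (A7) is a non-Boolean model of $\{(\mathrm{A14}),(\mathrm{J5}')\}$. Exhibiting these two small witnessing algebras, rather than the equational chain, is the main obstacle.
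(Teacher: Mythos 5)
Your derivation cannot be compared against a proof in this paper, because there is none: the statement is one of the theorems in the concluding section, whose proofs the author explicitly defers to the sequel \cite{Sa24}. Judged on its own terms, your equational work is correct, and it follows exactly the template the paper uses for the base theorems it does prove (Theorems \ref{T4.1}, \ref{T5.1}, \ref{T6.1}, \ref{T8.1}, \ref{T10.1}): derive a previously established base, then dispose of the converse by checking the identities in $\mathbf{2}$. Specifically, the substitution instance $(y\land z)\land x=(x\land z)\land y$ of (A14) combined with (A7) does give (A3); the decomposition $y=(y'\land z)'\land(y'\land z')'$ supplied by (J5'), together with the instance $x\land(P\land Q)=(P\land Q)\land x$ of (A7), does give commutativity (J1); and (A3) with (J1) gives (A1). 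Reduction to Theorem \ref{T9.1} then yields that every model of $\{(\mathrm{A7}),(\mathrm{A14}),(\mathrm{J5'})\}$ is Boolean.

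The loose end is independence, which the paper's definition of an $n$-base does require and which you leave open; here you can extract more from the paper than you realized. You show $\{(\mathrm{A7}),(\mathrm{J5'})\}$ implies (J1), and conversely (A7) is an instance of (J1), so $\{(\mathrm{A7}),(\mathrm{J5'})\}$ and $\{(\mathrm{J1}),(\mathrm{J5'})\}$ have the same models. The remark following the proof of Theorem \ref{T9.1} asserts that $\{(\mathrm{J1}),(\mathrm{J5'})\}$ is not a base for $\mathbb{BA}$, so it has a non-Boolean model, and in any such model (A14) must fail --- otherwise your own derivation would make that model Boolean. Hence (A14) is not redundant, and your constant-groupoid example settles (J5'). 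What genuinely remains is the non-Boolean model of $\{(\mathrm{A14}),(\mathrm{J5'})\}$ witnessing that (A7) is not redundant; as you say, that seems to require a finite-model search (the author acknowledges using Prover9/Mace-style tools). Note, though, that the paper's own proofs of its base theorems are no more complete on this point: they, too, verify only the axiomatization and the converse, relegating independence to remarks such as the one invoked above.
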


\begin{theorem}  The following is a 3-base for the variety of Boolean algebras:
\begin{thlist}
\item[J5'] $x=(x' \land y)' \land (x' \land y')'$
\item[A9] $x \land (y \land z)=(z \land y) \land x$  
\item[A11] $(x \land y) \land z=(x \land z) \land y. $ 
 \end{thlist}
\end{theorem}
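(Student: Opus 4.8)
The final statement to prove is the following 3-base:

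\begin{thlist}
\item[J5'] $x=(x' \land y)' \land (x' \land y')'$
\item[A9] $x \land (y \land z)=(z \land y) \land x$
\item[A11] $(x \land y) \land z=(x \land z) \land y.$
\end{thlist}

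The plan is to reduce this system to one of the bases already established earlier in the paper, since that is uniformly how every theorem in this excerpt is proved: one shows the new axioms imply the axioms of a previously-proved base, and then observes that the two-element Boolean algebra $\mathbf{2}$ satisfies the new identities (giving the converse for free). The most economical target is Theorem \ref{T10.1}, whose base is \{(A13), (J5')\}, where (A13) reads $(x \land y) \land z = (y \land z) \land x$. Since (J5') is already among our hypotheses, the entire task collapses to deriving (A13) from \{(J5'), (A9), (A11)\}. I would therefore state the lemmas under the standing assumption that (A9), (A11) and (J5') are given, and aim squarely at (A13).

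First I would explore the purely equational consequences of combining (A9) and (A11), before ever invoking (J5'). Note that (A9), written as $x \land (y \land z) = (z \land y) \land x$, lets me move a parenthesized product to the outer-left position while reversing the inner pair, and (A11), $(x \land y) \land z = (x \land z) \land y$, lets me transpose the two rightmost factors of a left-associated product. The key first step is to see what symmetry group on three letters these two generate on left-associated triples $(a \land b) \land c$: (A11) gives the transposition $(b\,c)$, and combining (A9) with (A11) should generate a larger subgroup of permutations of the positions, ideally enough to realize the cyclic rearrangement $(a \land b) \land c \mapsto (b \land c) \land a$ that is exactly (A13) — but only up to the constraints that (A9)/(A11) alone impose on associativity. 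Since (A9) and (A11) by themselves are associative-type identities that do not force genuine associativity or commutativity, I expect (J5') to be needed precisely to bridge between left- and right-associated groupings, in the same spirit as Lemmas \ref{1019}, \ref{1028}, \ref{1031} and \ref{1033} of Section on (A13), where expressions of the form $(x' \land y)'$ and $(x' \land y')'$ are inserted via (J5') and then pushed around by the associative-type identities until a clean rearrangement emerges.

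Concretely, the key steps I would carry out, in order, are: (1) derive a handful of ``rearrangement'' lemmas for four-fold products from (A9) and (A11) alone, establishing which permutations of factors are available within a fixed grouping; (2) use (J5') in the form $w = (w' \land t)' \land (w' \land t')'$ to substitute a chosen variable by a two-factor expression, then apply the rearrangement lemmas to transport the inserted block, and finally collapse the block back using (J5') — this is the mechanism that converts a grouping of one shape into another, mirroring Lemma \ref{1019}'s identity $(((x' \land y)' \land z) \land u) \land (x' \land y')' = x \land (z \land u)$; (3) assemble these into a proof that $(x \land y) \land z = (y \land z) \land x$, i.e. (A13). Then the theorem follows by citing Theorem \ref{T10.1}, with the converse handled by checking $\mathbf{2} \models \{$(J5'), (A9), (A11)$\}$.

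The main obstacle will be step (2): orchestrating the (J5')-insertions so that the associative-type identities (A9) and (A11) actually suffice to move the auxiliary block $(w' \land t)'$, $(w' \land t')'$ into exactly the position where it can be reabsorbed by (J5'). Because neither (A9) nor (A11) gives free reassociation, I expect that not every intermediate regrouping is legal, so the sequence of applications must be chosen carefully — likely requiring an auxiliary variable (a fresh $t$ or $u$) and two nested uses of (J5'), one to expand and one to contract, as is done repeatedly in Lemmas \ref{1017}--\ref{1033}. Verifying that each line respects the precise shapes permitted by (A9) and (A11), rather than tacitly assuming associativity, is where the real care lies.
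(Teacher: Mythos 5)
Your overall framework---derive from \{(J5'), (A9), (A11)\} the axioms of a base already established in the paper, and obtain the converse by checking that $\mathbf{2}$ satisfies the three identities---is the right one, and it is indeed how the paper proves every base in its body; note, however, that the paper never proves this particular theorem (it appears in the Concluding Remarks with proof deferred to the sequel \cite{Sa24}), so there is no in-paper derivation to compare against. The genuine gap is that your proposal stops at a plan: the actual equational derivation of (A13), which you yourself identify as the entire task (``the main obstacle will be step (2)''), is never exhibited. Steps (1)--(3) describe what such a derivation would look like and where its difficulties lie, but they produce none of the required lemmas, and the only reason offered to believe that the necessary chain of (J5')-insertions exists is, in effect, the truth of the theorem being proved.

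What makes this more than a formal complaint is that the computation you propose in step (1), had you carried it out, immediately yields a much shorter proof that bypasses (A13) and all of the (J5')-insertion machinery. From (A9) and (A11) alone one gets
\begin{align*}
x \land (y \land z) &= (z \land y) \land x &&\text{by (A9)}\\
&= (z \land x) \land y &&\text{by (A11)},
\end{align*}
and this is precisely (A8). Since Theorem \ref{T5.1} establishes that \{(A8), (J5')\} is a base for $\mathbb{BA}$, the three identities (J5'), (A9), (A11) define the variety of Boolean algebras, the converse again being supplied by $\mathbf{2}$. This two-line derivation also refutes the working assumption behind your step (2), namely that (J5') is needed to bridge right- and left-associated groupings: (A8) is itself such a bridge and follows from the two associative-type identities alone, so the correct target among the paper's proven bases is Theorem \ref{T5.1}, not Theorem \ref{T10.1}. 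Finally, a complete proof of the ``3-base'' claim under the paper's definition of base also requires checking that the three identities are independent (for instance, that (A11) is not a consequence of \{(J5'), (A9)\}); neither your proposal nor the reduction above addresses this point.
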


\begin{theorem}  The following is a 3-base for the variety of Boolean algebras:
\begin{thlist}
\item[J5'] $x=(x' \land y)' \land (x' \land y')'$
\item[A10] $x \land (y \land z)=(z \land y) \land x$  
\item[A11] $(x \land y) \land z=(x \land z) \land y. $  
 \end{thlist}
\end{theorem}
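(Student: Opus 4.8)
The plan is to show that the three identities (A10), (A11) and (J5$'$) together imply the axioms of a base already established in this paper, and then to obtain the converse from the two-element algebra $\mathbf 2$. Concretely, I would aim to reach the base \{(J1), (A1), (J5$'$)\} of Theorem~\ref{T9.1}, so that the whole burden reduces to deriving commutativity (J1) and associativity (A1) from (A10), (A11) and (J5$'$).

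First, working with (A10) and (A11) alone (with no appeal to (J5$'$)), I would record a few purely combinatorial consequences. Rewriting (A10) in the reversed form $(a \land b) \land c = c \land (b \land a)$ and combining it with (A11), namely $(a\land b)\land c = (a \land c)\land b$, yields after a two-line computation the identity (A4) $x \land (y \land z) = y \land (x \land z)$, together with the companion reshufflings $(x \land y)\land z = (x \land z)\land y = z\land(y\land x) = y\land(z\land x)$. A short inspection of these shows that, in the presence of (A10) and (A11), associativity and commutativity reduce to one another: for instance $(x\land y)\land z = z\land(y\land x) = y\land(z\land x)$ while $x\land(y\land z) = y\land(x\land z)$, so that $(x\land y)\land z = x\land(y\land z)$ holds exactly when $x\land z = z\land x$ does. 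Hence the single genuine obstacle is commutativity.

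The heart of the proof is therefore to derive (J1) using (J5$'$). Here I would follow the template of Theorems~\ref{T6.1} and~\ref{T10.1}: build a stock of four-variable reshuffling lemmas from (A10) and (A11), and then prove a ``collapse'' lemma that reunites the two (J5$'$)-pieces $(x' \land w)'$ and $(x' \land w')'$ of a variable $x$ back into $x$ while simultaneously transposing two other arguments --- schematically, a lemma of the form $[(x' \land w)' \land z] \land [\,u \land (x' \land w')'\,] = x \land (z \land u)$, obtained by moving the two pieces adjacent in the correct order and then applying (J5$'$). Feeding $z,u$ suitable values and expanding $x \land y$ via (J5$'$) in two different ways should then force $x \land y = y \land x$, exactly as (A6) and (A5) are extracted in the earlier sections. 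I expect this collapse lemma, together with the precise bookkeeping of which reshuffles are legal under the non-associative, non-commutative laws (A10) and (A11), to be the main difficulty; everything before it is formal, and everything after it is immediate.

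Once (J1) is in hand, associativity (A1) follows at once from (A4) and (J1) (or directly from (A10) and (J1)), so (J1), (A1) and (J5$'$) all hold and Theorem~\ref{T9.1} applies: every model of our three identities is a Boolean algebra. For the converse it suffices, as in the earlier theorems, to note that $\mathbf 2$ satisfies (J5$'$) and satisfies (A10) and (A11) because meet in $\mathbf 2$ is associative and commutative; hence every Boolean algebra satisfies all three, and the three identities define $\mathbb{BA}$. Finally, to see that the base is independent, and not merely a defining set, I would exhibit separating algebras as in the Remarks following Theorems~\ref{T8.1} and~\ref{T9.1}: a non-Boolean commutative semigroup witnesses that \{(A10),(A11)\} alone is too weak, while small algebras satisfying (J5$'$) together with exactly one of (A10), (A11) show that neither associative-type identity is redundant.
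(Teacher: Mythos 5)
Be aware first that the paper never proves this theorem: it sits in the Concluding Remarks, whose proofs are explicitly deferred to the sequel \cite{Sa24}, so there is no proof here to compare yours against. Judged on its own, your reduction is the right one and follows the paper's standard template: derive an already-established base, here $\{$(J1), (A1), (J5')$\}$ of Theorem \ref{T9.1}, then quote the two-element algebra $\mathbf{2}$ for the converse. Your preparatory computations are also correct: reading (A10) right-to-left gives $(a\land b)\land c = c\land(b\land a)$, combining it with (A11) does yield (A4) and your reshufflings, and the collapse lemma you propose, $[(x'\land w)'\land z]\land[u\land(x'\land w')'] = x\land(z\land u)$, is in fact a consequence of (A10), (A11) and (J5').

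The genuine shortfall is that the crux --- the derivation of (J1) --- is only announced (``I expect this collapse lemma \ldots to be the main difficulty''), not carried out, so what you have is a plan rather than a proof. The plan does close, and more cleanly than your sketch suggests, because (A10) and (A11) identify \emph{all} products of four distinct variables: using (A10) (and, for the balanced shape, (A11) first) every four-letter term rewrites to a left comb $((w\land x)\land y)\land z$; applying (A11) at the outer and at the inner node of a comb permutes the last three letters arbitrarily, so combs with the same first letter coincide; and the first letters are tied together through balanced terms, since $(a\land b)\land(c\land d)$ equals $(d\land c)\land(a\land b)$, $(b\land a)\land(d\land c)$, $(c\land d)\land(b\land a)$ by repeated (A10) at the top, and these four balanced forms rewrite (by (A11), then (A10) inside) to combs beginning with $d$, $b$, $c$, $a$ respectively. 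In particular $(A\land B)\land(C\land D) = (C\land D)\land(A\land B)$, so writing $x = p\land q$ and $y = r\land s$ via (J5') (with $p=(x'\land w)'$, $q=(x'\land w')'$, $r=(y'\land v)'$, $s=(y'\land v')'$) gives $x\land y = (p\land q)\land(r\land s) = (r\land s)\land(p\land q) = y\land x$ at once, with no need to feed special values into the collapse lemma. Two further repairs: your parenthetical claim that (A1) follows ``directly from (A10) and (J1)'' is false --- modulo commutativity (A10) is a tautology, satisfied by every commutative groupoid, associative or not --- so you must go through (A4) or (A11): from (A4) and (J1), $x\land(y\land z) = y\land(x\land z) = (x\land z)\land y$, and interchanging $y$ and $z$ in this derived identity gives $(x\land y)\land z = x\land(z\land y) = x\land(y\land z)$. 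Finally, independence is part of the word ``3-base'' in this paper's terminology, and your treatment of it is likewise only a sketch: the two-element meet-semilattice with $x'=x$ does witness $\{$(A10), (A11)$\}\not\vdash$ (J5'), but you still owe concrete models of (J5') satisfying exactly one of (A10), (A11).
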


\begin{theorem}  The following is a 3-base for the variety of Boolean algebras:
\begin{thlist}
\item[J5'] $x=(x' \land y)' \land (x' \land y')'$
\item[A11] $(x \land y) \land z=(x \land z) \land y $  
\item[A12] $(x \land y) \land z=(y \land x) \land z.$  
 \end{thlist}
\end{theorem}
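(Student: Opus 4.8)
The plan is to reduce the statement to Theorem \ref{T10.1}, which already establishes that $\{$(A13)$,$ (J5')$\}$ is a $2$-base for $\mathbb{BA}$. The key observation is that (A11) and (A12) imply (A13) \emph{on their own}, with no reference to the negation $'$ or to (J5'). Indeed, reading a left-associated triple $(a \land b) \land c$ as the word $abc$, (A11) interchanges the second and third letters while (A12) interchanges the first and second; these are precisely the two adjacent transpositions that generate every rearrangement of a three-fold product, and (A13) — the cyclic rearrangement — is simply their composite. So the first and only substantive step is to derive (A13) purely equationally from (A11) and (A12).

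Concretely, the derivation is the two-step computation
\begin{align*}
(x \land y) \land z &= (y \land x) \land z &\text{by (A12)}\\
                    &= (y \land z) \land x &\text{by (A11)},
\end{align*}
which is exactly (A13). Hence every model of $\{$(A11)$,$ (A12)$\}$ satisfies (A13), so every model of $\{$(J5')$,$ (A11)$,$ (A12)$\}$ satisfies $\{$(J5')$,$ (A13)$\}$ and is therefore a Boolean algebra by Theorem \ref{T10.1}. For the reverse inclusion it suffices, exactly as in the earlier sections, to note that the two-element Boolean algebra $\mathbf{2}$ satisfies all three identities: (A11) and (A12) hold in every Boolean algebra because $\land$ is commutative and associative, and (J5') is a Boolean identity (one computes $(x' \land y)' \land (x' \land y')' = (x \lor y') \land (x \lor y) = x \lor (y' \land y) = x$). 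Thus the three identities define exactly $\mathbb{BA}$.

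The remaining point, and the one I expect to require the real work, is \emph{independence}, which is what is needed to justify the word ``base'' (an independent defining set) rather than merely ``defining set''. Since (J5') is the only axiom mentioning $'$, any commutative, associative groupoid with $'$ interpreted as the identity map satisfies (A11) and (A12) but fails (J5'), so (J5') is not a consequence of the other two. The delicate part is to separate (A11) from (A12): I would need one algebra validating $\{$(J5')$,$ (A12)$\}$ but refuting (A11), together with a symmetric one validating $\{$(J5')$,$ (A11)$\}$ but refuting (A12). Because $\{$(A13)$,$ (J5')$\}$ already suffices and (A13) is the composite of the two transpositions, neither (A11) nor (A12) can be dropped; producing explicit small witnesses for these two failures — most plausibly by a finite model search — is where I expect the genuine effort to lie.
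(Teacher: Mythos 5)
Your main argument is correct, and it follows exactly the proof pattern this paper uses for its base theorems; note that the paper itself contains no proof of this particular statement (it is one of the results announced in the concluding section, with proofs deferred to the sequel \cite{Sa24}), so the only comparison available is with the paper's general method, which you have reproduced. The two-step computation $(x\land y)\land z=(y\land x)\land z=(y\land z)\land x$ does derive (A13) from (A12) and (A11), so every model of the three identities (J5'), (A11), (A12) is a model of (A13) and (J5') and hence a Boolean algebra by Theorem \ref{T10.1}; and closing the converse by checking the identities in $\mathbf{2}$ is precisely the step used in the proofs of Theorems \ref{T4.1}, \ref{T5.1}, \ref{T6.1} and \ref{T10.1}, legitimate because $\mathbf{2}$ generates the variety $\mathbb{BA}$.

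On independence --- which, as you say, the paper's definition of ``base'' genuinely requires --- your assessment is sound but more pessimistic than necessary, and one of your witnesses needs repair. First, ``any commutative, associative groupoid with $'$ the identity map'' does not work verbatim, since the one-element algebra satisfies (J5'); take concretely the two-element meet-semilattice with $x':=x$, in which (A11) and (A12) hold but (J5') collapses to $x=x\land y$ and fails. Second, separating (A12) requires no search: in the two-element left-projection groupoid, $x\land y:=x$ with $x':=x$, every term equals its leftmost variable, so (J5') and (A11) hold while (A12) fails whenever $x\neq y$. Third --- and this is the pleasant point --- the separation of (A11), which you single out as the delicate one, is supplied by your own reduction: by the Remark following Theorem \ref{T9.1}, the pair (J1), (J5') is not a base, i.e., there exists a commutative non-Boolean model $M$ of (J5'); commutativity gives (A12) in $M$, and $M$ cannot satisfy (A11), for otherwise your derivation would yield (A13) in $M$ and Theorem \ref{T10.1} would force $M$ to be Boolean. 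So the independence argument closes using only facts already asserted in the paper, with no finite model search beyond them.
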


\begin{theorem} The following is a 3-base for the variety of Boolean algebras.
\begin{thlist}
\item[J5']   $ x = (x' \land y)' \land (x' \land y')' $
\item[A12] $(x \land y) \land z=(y \land x) \land z$  
\item[c] $x \land (x \land y)' = x \land y'. $    
\end{thlist}
\end{theorem}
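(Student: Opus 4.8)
The plan is to reduce the three hypotheses to the $2$-base $\{(\text{A5}),(\text{J5}')\}$ already established in Theorem~\ref{T6.1}. Since (J5') is one of our hypotheses, it suffices to derive the single identity (A5), $x \land (y \land z) \approx (y \land x) \land z$; once (A5) holds, Theorem~\ref{T6.1} immediately yields that the variety defined is $\mathbb{BA}$. A useful preliminary observation is that, \emph{in the presence of} (A12), deriving (A5) is equivalent to deriving the associative law (A1). Indeed, if $x \land (y \land z) \approx (x \land y) \land z$ is available, then one application of (A12) gives $(x \land y) \land z \approx (y \land x) \land z$, hence (A5); and conversely (A5) followed by (A12) returns (A1). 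So the genuine task is to prove associativity from (A12), (J5') and (c).

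Before attacking associativity, I would establish the two structural facts that recur in every section of the paper: that $x \land x'$ does not depend on $x$, permitting the definition $0 := x \land x'$, and the double-negation law $x'' \approx x$. In Sections~4--6 these emerge from (J5') once enough commuting and associating is available, but here the extra identity (c), $x \land (x \land y)' \approx x \land y'$, should supply the missing complementation content more directly: expanding a term by (J5') and then collapsing the negated inner products by (c) is exactly the kind of simplification that forces $x \land x'$ to be constant and $x'' \approx x$. I expect $x'' \approx x$ to be needed as a working tool throughout, since both (J5') and (c) introduce negations that must be cancelled; and with it in hand, (J5') together with $x'' \approx x$ recovers Johnson's (J5), as already noted in the proof of Theorem~\ref{T4.1}.

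To obtain associativity itself I would exploit the one thing (A12) actually gives: inside any left-grouped product $(p \land q) \land r$ the factors $p$ and $q$ may be transposed. The method is the same expand--rearrange--collapse pattern used in the chains of Lemmas~\ref{49}--\ref{416}: expand a chosen side of $x \land (y \land z) \approx (x \land y) \land z$ via (J5') until the subterms to be moved occupy the grouped-left slot, apply (A12) to swap them, and re-collapse using (J5') and (c). A sequence of roughly a dozen such rewriting lemmas is likely to be required.

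The main obstacle is precisely this last step. The identity (A12) is very weak: it licenses a swap only \emph{within} a left-grouped pair and never converts $a \land b$ into $b \land a$ outright, nor re-associates a product, so that neither commutativity nor associativity is anywhere close to immediate -- indeed $\{(\text{A12}),(\text{J5}')\}$ alone does not define Boolean algebras. The whole burden of breaking this symmetry barrier falls on (c), and the real difficulty is finding the correct interleaving of (c) with the (A12)-transpositions and the (J5')-expansions. Once associativity (A1) is secured, (A5) follows by a single (A12)-step and Theorem~\ref{T6.1} completes the direction showing every model is Boolean. For the converse it is enough, as in the earlier sections, to verify that the two-element Boolean algebra $\mathbf 2$ satisfies (A12) and (c) (it satisfies (J5') by Theorem~\ref{T4.1}); finally, independence of the three identities would be confirmed by producing, for each one, a model of the other two in which the remaining identity fails.
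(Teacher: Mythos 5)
A preliminary point: this theorem is one of the results announced in the Concluding Remarks, and the paper explicitly states that the proofs of those results "will appear in the sequel" \cite{Sa24}; there is no in-paper proof to compare against, so your argument has to stand entirely on its own. Its skeleton is sound and consistent with the paper's methods: since (J5') is one of the hypotheses, it does suffice to derive (A5) and then invoke Theorem~\ref{T6.1}; your observation that, modulo (A12), deriving (A5) is equivalent to deriving (A1) is correct (each is exactly one (A12)-step from the other); and the converse direction is handled the same way the paper handles it in Theorems~\ref{T4.1}, \ref{T5.1}, \ref{T6.1} and \ref{T10.1}, by checking that the two-element Boolean algebra $\mathbf{2}$ satisfies the three identities, which is enough because $\mathbb{BA}$ is generated by $\mathbf{2}$.

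The genuine gap is that the entire mathematical content of the theorem --- the equational derivation of (A1), equivalently (A5), from (A12), (J5') and (c) --- is never carried out. Every hard step is deferred in the conditional mood: (c) "should supply" the constancy of $x \land x'$, you "expect" $x'' \approx x$ to follow, "a sequence of roughly a dozen rewriting lemmas is likely to be required," and you yourself name the interleaving of (c) with the (A12)-transpositions and (J5')-expansions as "the real difficulty." That difficulty is precisely what a proof must resolve; in the analogous sections of the paper the strategy statement is the trivial part and the substance is the explicit chain of rewriting lemmas (Lemmas~\ref{42}--\ref{424} for (A6), Lemmas~\ref{L0}--\ref{L23} for (A8), Lemmas~\ref{1017}--\ref{1033} for (A13)). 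Without such a chain here, nothing you have written excludes the possibility that $\{(\mathrm{A12}), (\mathrm{J5'}), (\mathrm{c})\}$ is simply too weak --- a live concern, since you correctly note that $\{(\mathrm{A12}), (\mathrm{J5'})\}$ alone is not a base, so everything hinges on how (c) is used, and no use of (c) is ever exhibited. Finally, the independence of the three identities, which the paper's definition of "base" requires for the claim "3-base," is likewise only announced ("would be confirmed by producing \dots models") and not established. In short: correct reduction target, but a plan rather than a proof.
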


\begin{theorem}  The following is a 4-base for the variety of Boolean algebras.
\begin{thlist}
\item[J5'] $ x = (x' \land y)' \land (x' \land y')' $
\item[A1] $ x \land (y \land z) = (x \land y) \land z$  
\item[A2] $ x \land (y \land z)=x \land (z \land y)$ 
\item[A9] $ x \land x=x. $
\end{thlist}
\end{theorem}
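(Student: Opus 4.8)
The plan is to reduce the claim to the $3$-base $\{$(J1), (J2), (J5')$\}$ of Theorem~\ref{T9.1}. Since the associative law (A1) is literally (J2), and (J5') is already among our hypotheses, the entire problem collapses to deriving the commutative law (J1) from the four given identities. Once (J1) is in hand, Theorem~\ref{T9.1} finishes the forward direction; for the converse one checks, as usual, that $\mathbf{2}$ (indeed every Boolean algebra) satisfies all four identities, and the ``$4$-base'' assertion is then completed by exhibiting, for each of the four identities, a finite algebra that satisfies the other three but fails that one.

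First I would record the purely associative-type consequences of (A1) and (A2). Writing products without brackets, which is legitimate by (A1), a routine induction shows that (A2) permits arbitrary permutation of all factors of a product \emph{except the leading one}: concretely, $p_1 \land p_2 \land \cdots \land p_n = p_1 \land p_{\sigma(2)} \land \cdots \land p_{\sigma(n)}$ for every permutation $\sigma$ of $\{2,\dots,n\}$. (The inductive step swaps two adjacent non-leading factors by applying (A2) twice, first to rotate the trailing block to the front and then to swap and rotate back.) Combined with the idempotent law $x \land x = x$, this yields the absorption rules I expect to use repeatedly, for instance $x \land y \land x = x \land x \land y = x \land y$ and, more generally, the collapse of any factor that duplicates the leading one.

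The crux, and the step I expect to be the genuine obstacle, is to move the \emph{leading} factor, which neither (A2) nor idempotency can accomplish on its own; hence this work must be carried entirely by (J5'). Here I would imitate the mechanism that produced commutativity in the (A6) and (A5) cases (Lemmas~\ref{417}--\ref{419} and Lemmas~\ref{811}--\ref{812}): expand a chosen occurrence of a variable through (J5'), use the permutation lemma of the previous paragraph to shuffle the two resulting conjuncts $(w' \land v)'$ and $(w' \land v')'$ into adjacency, and then re-collapse them to a single variable by reading (J5') backwards. The delicate point is that (J5') recovers a variable cleanly only when its two defining conjuncts sit immediately after some leading factor, whereas the variable I must relocate begins life \emph{in} the leading position; so the argument has to first manufacture a configuration in which the target variable occurs as a movable, non-leading factor. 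I anticipate that, exactly as in the proof of Theorem~\ref{T9.1}, this will force me to establish $x'' \approx x$ en route, by the route of Lemmas~\ref{92} and~\ref{94} (first proving $x \land x' \approx y \land y'$ so that $0$ is well defined, then deducing $x'' \approx x$), since the double negations introduced by nested applications of (J5') cannot otherwise be eliminated.

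Once commutativity (J1) is secured, the remaining verification is immediate: (J1), (J2), and (J5') are precisely the hypotheses of Theorem~\ref{T9.1}, so the four identities define $\mathbb{BA}$, and the independence needed for the $4$-base claim is dispatched by four small separating algebras. I should stress that the weakness of (A2) (it fixes the leading coordinate and says nothing about it) is exactly what makes the commutativity derivation the whole battle; every genuine use of negation must flow through (J5'), and organizing those uses so that the leading factor finally becomes mobile is where I expect the proof to be both long and subtle.
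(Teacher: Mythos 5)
First, a point about the comparison you were asked for: this theorem has no proof in the present paper. It is stated in the Concluding Remarks among the results ``whose proofs will appear in the sequel'' \cite{Sa24}, so your proposal can only be assessed on its own merits, not matched against the paper's argument.

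On those merits there is a genuine gap. Your reduction is the right one: (A1) is literally (J2), so by Theorem \ref{T9.1} everything hinges on deriving the commutative law (J1) from the four hypotheses, and your preliminary observations are sound --- with (A1) and (A2) one can indeed permute all non-leading factors of a product, and idempotency then collapses repeated factors. But that is exactly where the proposal stops. The step you yourself call ``the whole battle'' --- making the \emph{leading} factor mobile by means of (J5$'$) --- is never carried out; you only announce an intention to ``imitate the mechanism'' of Lemmas \ref{417}--\ref{419} and \ref{811}--\ref{812}. That analogy cannot be taken on faith: those derivations lean at every step on (A6$'$) and (A5), identities which by themselves already relocate the leading variable (Lemma \ref{42}, for instance, is nothing but two applications of (A6$'$)), whereas your three identities (A1), (A2) and $x \land x \approx x$ provably cannot move it --- the left-zero groupoid $x \land y = x$ satisfies all three while failing commutativity. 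So every one of the crucial rearrangements used in those sections is unavailable here, and an entirely new derivation, in which all the work flows through (J5$'$), would have to be invented; nothing in the proposal indicates how it goes, nor is the intermediate target $x'' \approx x$ actually derived. In a theorem of this kind the equational derivation \emph{is} the content, so what you have is a plausible strategy, not a proof. Separately, the ``4-base'' claim requires independence of the four identities; you correctly note that four separating algebras are needed, but you exhibit none of them (in particular, you would need a model of (J5$'$), (A1), (A2) in which $x \land x \approx x$ fails, which is not something one can wave at).
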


\begin{theorem} The following is a 4-base for the variety of Boolean algebras.
\begin{thlist}
\item[J5']  $ x = (x' \land  y)' \land (x' \land y')'$
\item[A1]  $x \land (y \land z) = (x \land y) \land z$  
\item[A7] $x \land (y \land z)=(y \land z) \land x$   
\item[d] $(x \land y) \land (z \land x)=(x \land (y \land z)) \land x.$
\end{thlist}
\end{theorem}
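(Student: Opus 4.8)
The plan is to reduce these four identities to the already established $3$-base of Theorem~\ref{T9.1}, namely (J1), (A1) and (J5'). Since (A1), which is the associative law (J2), and (J5') occur verbatim among the four given axioms, it is enough to derive the commutative law (J1); once (J1) is in hand, the hypotheses of Theorem~\ref{T9.1} are all met, so every model of the four axioms is a Boolean algebra. The converse direction will, as in the earlier theorems of this paper, follow simply by checking that the two-element algebra $\mathbf 2$ satisfies all four identities.

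The heart of the argument is a single observation: (J5') presents each element as a meet of two terms, and (A7) forces any such meet to commute with everything. Assuming only (A7) and (J5'), I would argue, for arbitrary $x,y$, as follows:
\begin{align*}
x \land y &= x \land ( (y' \land w)' \land (y' \land w')' ) &\text{by (J5')}\\
          &= ( (y' \land w)' \land (y' \land w')' ) \land x &\text{by (A7)}\\
          &= y \land x &\text{by (J5')}.
\end{align*}
This establishes (J1). Note that the derivation uses neither (A1) nor (d): the commutative law already follows from (A7) and (J5') alone, the point being that in the middle line (A7) is applied with its variables $y,z$ instantiated to $(y'\land w)'$ and $(y'\land w')'$.

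With (J1) available, together with the given associativity (A1) and with (J5'), the full list of axioms of Theorem~\ref{T9.1} holds, so the four identities imply that the algebra is Boolean. For the reverse inclusion it suffices to observe that $\mathbf 2$ (indeed every Boolean algebra) satisfies (J5'), (A1), (A7) and (d), each being an immediate consequence of the commutativity and associativity of meet; since $\mathbb{BA}$ is the variety generated by $\mathbf 2$ and the four identities define a variety, all of $\mathbb{BA}$ satisfies them. Thus the four identities axiomatize $\mathbb{BA}$.

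The only delicate point I anticipate concerns the identity (d) and the independence that is built into the phrase ``$4$-base''. Under the associativity axiom (A1) both sides of (d) collapse to the common associative word $x \land y \land z \land x$, so (d) is a logical consequence of (A1) and is not used anywhere in the forward derivation; the substantive content rests entirely on (A7), (A1) and (J5'). Consequently, while the claim that these identities \emph{axiomatize} $\mathbb{BA}$ goes through cleanly along the lines above, verifying \emph{independence} would require a separate model for each omitted axiom, and the case of (d) against (A1) is exactly where I expect the difficulty, if not an outright failure of independence, to lie.
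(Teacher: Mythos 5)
The paper never proves this theorem: it sits in the Concluding Remarks, whose proofs are explicitly deferred to the sequel \cite{Sa24}, so there is no proof of record to compare yours against. Judged on its own, your forward derivation is correct: instantiating (A7) at $y \mapsto (y'\land w)'$, $z \mapsto (y'\land w')'$ and rewriting with (J5') on both sides does yield (J1), and then (J1), (A1), (J5') are precisely the hypotheses of Theorem \ref{T9.1}, so every model of the four identities is a Boolean algebra. The converse---that $\mathbf{2}$, and hence every Boolean algebra, satisfies the four identities---is the same argument the paper uses to close its other proofs. So your route (derive commutativity, invoke an earlier base, check $\mathbf{2}$) is exactly in the paper's style, and it is sound as a proof that the four identities \emph{define} $\mathbb{BA}$.

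Your final paragraph identifies the real defect, and you could state it as a fact rather than a worry: independence fails outright. Under (A1) both sides of (d) reduce to $((x\land y)\land z)\land x$, so (d) is an equational consequence of (A1) alone; hence the set is dependent and, under the paper's own definition (a base is an \emph{independent} set of identities defining the variety), cannot be a 4-base. The paper even undercuts itself on this point: an earlier theorem in the same section asserts that (J5'), (A1), (A7) already form a 3-base, which by itself forces (d)---an identity valid in $\mathbb{BA}$---to be derivable from the other three axioms, again contradicting independence. What survives is exactly what you proved: the four identities axiomatize $\mathbb{BA}$ in the sense of defining it; the ``4-base'' claim as stated is false, and the intended theorem in the sequel presumably replaces (A1) or (d) by something weaker.
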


{\bf Acknowledgements:}  We wish to acknowledge the use of \cite{Mc09} in the early phase of our research that led to the present paper.

\medskip

\end{document}